\theoremstyle{plain}
\newtheorem*{theorem*}{Theorem}
\newtheorem{theorem}[subsubsection]{Theorem}
\newtheorem{lemma}[subsubsection]{Lemma}
\newtheorem{corollary}[subsubsection]{Corollary}
\newtheorem{definition}[subsubsection]{Definition}
\newtheorem{remark}[subsubsection]{Remark}
\DeclareMathOperator{\degree}{deg}
\DeclareMathOperator{\rank}{rank}
\DeclareMathOperator{\measure}{meas}
\DeclareMathOperator{\kernel}{ker}
\DeclareMathOperator{\HankelChar}{char}
\newcommand*\mathbfe[1]{\boldsymbol{#1}}
\def\l@subsection{\@tocline{2}{0pt}{2.5pc}{5pc}{}}
\renewcommand*\env@matrix[1][*\c@MaxMatrixCols c]{%
  \hskip -\arraycolsep
  \let\@ifnextchar\new@ifnextchar
  \array{#1}}
\begin{document}

\title[Variance and Correlations of the Divisor Function, and Hankel Matrices]{The Variance and Correlations of the Divisor Function in $\mathbb{F}_q [T]$, and Hankel Matrices}
\author{Michael Yiasemides}
\date{\today}
\address{Department of Mathematics, University of Exeter, Exeter, EX4 4QF, UK}
\email{my298@exeter.ac.uk}
%\email{michael.yiasemides@gmail.com}
\subjclass[2020]{Primary 11N60; Secondary 11N64, 11T24, 11T55, 15B05, 15B33}
\keywords{divisor function, variance, short intervals, correlations, Hankel matrices, rank, kernel, additive characters, orthogonality relations, polynomial, finite fields, function fields, Euclidean algorithm, coprime}

\maketitle

\begin{abstract}
We prove an exact formula for the variance of the divisor function over short intervals in $\mathcal{A} := \mathbb{F}_q [T]$, where $q$ is a prime power. A slight adaption of the proof allows us to obtain an exact formula for correlations of the form $d(A) d(A+B)$, where we average both $A$ and $B$ over certain intervals in $\mathcal{A}$. We also consider correlations of the form $d(KQ+N) d (N)$, where $Q$ is prime and $K$ and $N$ are averaged over certain intervals. If $\degree K < \degree Q -1$, then these correlations appear in the off-diagonal terms for the fourth moment of Dirichlet $L$-functions. We consider the case $\degree K \geq \degree Q -1$ and obtain an exact formula for the correlations. Further, we demonstrate that $d(KQ+N)$ and $d (N)$ are uncorrelated for the given ranges of $K$ and $N$. Our approach to these problems is to use the orthogonality relations of additive characters on $\mathbb{F}_q$ to translate the problems to ones involving the ranks of Hankel matrices over $\mathbb{F}_q$. Most of the paper is dedicated to proving several results regarding the rank and kernel structure of these matrices, and thus demonstrating their number-theoretic properties. We briefly discuss extending our method to moments higher than the second (the variance) over intervals; to the $k$-th divisor function; and to correlations of the divisor function with applications to moments of Dirichlet $L$-functions in function fields.
\end{abstract}

%\maketitle
%\thispagestyle{fancy}
%\pagenumbering{gobble}

\allowdisplaybreaks

%\tableofcontents

\section{Introduction and Results}

\subsection{Background}

Classically, for $k \geq 2$, the $k$-th divisor function is defined, for $n \in \mathbb{N}$, by
\begin{align*}
d_k (n)
:= \lvert \{ (a_1 , \ldots , a_k ) \in \mathbb{N}^k : a_1 \ldots a_k = n \} \rvert ,
\end{align*}
where $\mathbb{N}$ is the set of positive integers; and when $k=2$ we will often write $d$ instead of $d_2$. \\

It was shown by Dirichlet that
\begin{align} \label{statement, background, Dirichlet's average for d(n) for n leq x}
\sum_{n \leq x} d(n)
= x \log x + (2 \gamma -1) x + \Delta (x) ,
\end{align}
where the remainder satisfies $\Delta (x) = O (x^{\frac{1}{2}})$; while, for $k \geq 2$, it can be shown that 
\begin{align*}
\sum_{n \leq x} d_k (n)
= x P_k (\log x ) + \Delta_k (x) ,
\end{align*}
where $P_k$ is a polynomial of degree $k-1$ and $\Delta_k (x)$ is a lower order term. It is of particular interest to understand the behaviour of the remainder $\Delta_k (x)$, and we do so by studying its moments. Various results on this and the above are given in Chapter 12 of \cite{TheoRiemannZetaFunc_Titchmarsh_Heathbrown1987}. \\

We mention here that Cram\'{e}r \cite{Cramer1922_UberZweiSatzeHardy} proved that
\begin{align*}
X^{-\frac{3}{2}} \int_{x=0}^{X} \Delta (x)^2 \mathrm{d} x
\sim \frac{1}{6 \pi^2} \sum_{n=1}^{\infty} d(n)^2 n^{-\frac{3}{2}} ,
\end{align*}
while Tong \cite{Tong1956_DivProbIII} proved that
\begin{align*}
X^{-\frac{5}{3}} \int_{x=0}^{X} \Delta_3 (x)^2 \mathrm{d} x
\sim \frac{1}{10 \pi^2} \sum_{n=1}^{\infty} d_3 (n)^2 n^{-\frac{4}{3}} .
\end{align*}

We can also consider higher moments of $\Delta$: $\int_{x=0}^{X} \Delta (x)^k \mathrm{d} x$. For this, we refer the reader to the works of Heath-Brown \cite{Heath-Brown1992_DistrMomErrTermDirichletDivProb} and Tsang \cite{Tsang1992_HighPowMomDeltaEP}. We also mention that Heath-Brown proved that $x^{-\frac{1}{4}} \Delta (x)$ has distribution function. That is, there is a function $f$ such that
\begin{align*}
X^{-1} \measure \{ x \in [1,X]: x^{-\frac{1}{4}} \Delta (x) \in I \}
\longrightarrow \int_{t \in I} f(t) \mathrm{d} t 
\end{align*}
as $X \longrightarrow \infty$. The function $f$ extends to an entire function on $\mathbb{C}$ and satisfies certain bounds on its derivatives. \\

A related topic is that of the divisor function over intervals. That is, we are interested in
\begin{align*}
\sum_{ x < n \leq x+H} d(n)
= \sum_{n \leq x+H} d(n) - \sum_{n \leq x} d(n) .
\end{align*}
Applying (\ref{statement, background, Dirichlet's average for d(n) for n leq x}), we obtain
\begin{align*}
\sum_{ x < n \leq x+H} d(n)
= &x \log \Big( 1 + \frac{H}{x} \Big) + H \log (x+H) + (2 \gamma -1) H \\
	&+ \Delta (x+H) - \Delta (x) .
\end{align*}
Given that $\Delta (x) = O (x^{\frac{1}{2}})$, it is clear that, for $H \leq x$, the error term 
\begin{align*}
\Delta (x;H) := \Delta (x+H) - \Delta (x)
\end{align*}
is of lower order. Nonetheless, it is not fully understood. \\

For the $k$-th divisor problem, the analogous object to study is
\begin{align} \label{def statement, Delta_k (x;H) NF def}
\Delta_k (x;H) := \Delta_k (x+H) - \Delta_k (x) .
\end{align}
It is the short intervals that have $H \leq x^{1-\frac{1}{k}}$ that are of particular interest. We highlight some of the main results in this area. Let $\epsilon > 0$, and consider the range $X^{\epsilon} < H < X^{\frac{1}{2} - \epsilon}$. Ivi\'{c} \cite{Ivic2009_DivFuncRZFShortInterv} (see also \cite{Jutila1984_DivFuncShortInterv} and \cite{CoppolaSalerno2004_SymmDivFuncAlmostShortInt}) proved the asymptotic formula 
\begin{align*}
\frac{1}{X} \int_{x = X}^{2X} \Delta (x;H)^2 \mathrm{d} x
= H \sum_{j=0}^{3} c_j \log^j \Big( \frac{X^{\frac{1}{2}}}{H} \Big) 
	\;\; + O_{\epsilon} \big( X^{- \frac{1}{2} + \epsilon} H^2 \big) 
	+ O_{\epsilon} \big( X^{\epsilon} H^{\frac{1}{2}} \big) ,
\end{align*}
where $c_0, c_1 , c_2$ are constants and $c_3 = \frac{8}{\pi^2}$. Assuming the Riemann hypothesis, for $k \geq 3$ and the range $X^{\epsilon} < H < X^{1-\epsilon}$, Milinovich and Turnage-Butterbaugh \cite{MilinovichTurnage-Butterbaugh2014_MomProdAutomorphLFunc} obtained the upper bound
\begin{align*}
\frac{1}{X} \int_{x = X}^{2X} \Delta_k (x;H)^2 \mathrm{d} x
\ll H (\log X)^{k^2 + o (1)}.
\end{align*}
Asymptotic formulas can be obtained given certain restrictions on $H$. For $k \geq 3$ (and assuming the Lindel\"{o}f Hypothesis for $k >3$) and $2 \leq L \leq X^{\frac{1}{k(k-1)} - \epsilon}$, Lester \cite{Lester2016_VarSumDivFuncShortInt} proved
\begin{align*}
\frac{1}{X} \int_{x = X}^{2X} \Delta_k \Big( x; \frac{x^{1-\frac{1}{k}}}{L} \Big)^2 \mathrm{d} x
=C_k \frac{X^{1-\frac{1}{k}}}{L} (\log L)^{k^2 -1}
	+ O \Big( \frac{X^{1-\frac{1}{k}}}{L} (\log L)^{k^2 -2} \Big) .
\end{align*}
Finally, as $L,X \longrightarrow \infty$ with $\log L = o (\log T)$, and $\alpha < \beta$, Lester and Yesha \cite{LesterYesha2016_DistrDivFuncHeckeEigenV} prove that
\begin{align*}
\frac{1}{X} \measure \bigg\{ x \in [X , 2X] : \alpha \leq \frac{ \Delta \Big( x ; \frac{x^{\frac{1}{2}}}{L} \Big)}{x^{\frac{1}{4}} \sqrt{\frac{8}{\pi^2} \frac{\log^3 L}{L} }} \leq \beta \bigg\}
\sim \frac{1}{\sqrt{2 \pi}} \int_{t=\alpha}^{\beta} e^{-\frac{t^2}{2}} \mathrm{d} t .
\end{align*}
That is, we have a Gaussian distribution function. \\

Let us now consider the divisor function over short intervals in the polynomial ring $\mathcal{A} := \mathbb{F}_q [T]$, where $q$ is a prime power. Before proceeding, we define $\mathcal{M}$ to be the set of monic polynomials in $\mathcal{A}$; and for $\mathcal{B} = \mathcal{A} , \mathcal{M}$ we define $\mathcal{B}_n$ and $\mathcal{B}_{\leq n}$ to be the set of polynomials in $\mathcal{B}$ with degree equal to $n$ and degree $\leq n$, respectively. It should be noted that, as $\mathcal{A}$ is a Euclidean domain, primality and irreducibility are equivalent. For non-zero $A \in \mathcal{A}$ we define $\lvert A \rvert := q^{\degree A}$, and we define $\lvert 0 \rvert := 0$. It is convenient to take $\degree 0 = - \infty$, and so (unless otherwise indicated) the range $\degree A \leq n$ should be taken to include the zero polynomial. The $k$-th divisor function is defined for $N \in \mathcal{M}$ by 
\begin{align*}
d_k (N)
:= \lvert \{ (A_1 , \ldots , A_k ) \in \mathcal{M}^k : A_1 \ldots A_k = N \} \rvert .
\end{align*}
For $A \in \mathcal{M}_n$ and $0 \leq h \leq n$, we define the interval 
\begin{align} \label{def state, I(A;h) def}
I (A;h)
:= \{ B \in \mathcal{M} : \degree (B-A) < h \} 
\end{align}
and define 
\begin{align} \label{def state, FF def of N_(d_k)}
\mathcal{N}_{d_k} (A;h)
:= \sum_{B \in I(A;h)} d_k (B) .
\end{align}
This notation is in keeping with \cite{KeatingRodgersRudnik2018_SumDivFuncFqtMatrInt, Gorodetsky2021_PhDThesis}. Although, in \cite{KeatingRodgersRudnik2018_SumDivFuncFqtMatrInt, Gorodetsky2021_PhDThesis}, they take $\leq h$ in (\ref{def state, I(A;h) def}), instead of $<h$; and it should be noted that when we reference their results below it will be in terms of our notation and so it will appear slightly different. We feel that our definition is more natural. For example, it gives $\lvert I (A;h) \rvert = q^h$ as opposed to $\lvert I (A;h) \rvert = q^{h+1}$; and, for $A \in \mathcal{M}_n$, it gives
\begin{align*}
\lvert \{ A' \in \mathcal{M}_n : I (A';h) = I (A;h) \} \rvert = q^{n-h}
\end{align*}
as opposed to
\begin{align*}
\lvert \{ A' \in \mathcal{M}_n : I (A';h) = I (A;h) \} \rvert = q^{n-h-1}.
\end{align*}
Continuing, it is not difficult to obtain an exact expression for the mean value of $\mathcal{N}_{d_k} (A;h)$ (see \cite{AndradeBary-SorokerRudnik2015_ShiftConvolTitchmarshDivProbFqT} for a proof):
\begin{align*}
\frac{1}{q^n} \sum_{A \in \mathcal{M}_n} \mathcal{N}_{d_k} (A;h)
= q^{h} \binom{n+k-1}{k-1} .
\end{align*}
We can now define
\begin{align} \label{def state, FF def of Delta_k (A;h)}
\Delta_k (A;h)
:= \mathcal{N}_{d_k} (A;h) - q^{h} \binom{n+k-1}{k-1} .
\end{align}
It was shown by Keating \textit{et al.} \cite{KeatingRodgersRudnik2018_SumDivFuncFqtMatrInt} that, as $q \longrightarrow \infty$,
\begin{align*}
&\frac{1}{q^n} \sum_{A \in \mathcal{M}_n} \lvert \Delta_k (A;h) \rvert^2 \\
= &\begin{cases}
0 &\text{ for $\Big\lfloor \Big( 1 - \frac{1}{k} \Big) n \Big\rfloor +1 \leq h \leq n+1$,} \\
O \Big( \frac{q^h}{\sqrt{q}} \Big) &\text{ for $h = \Big\lfloor \Big( 1 - \frac{1}{k} \Big) n \Big\rfloor$,} \\
q^h I_k (n ; n-h-3) + O \Big( \frac{H}{\sqrt{q}} \Big) &\text{ for $1 \leq h \leq \min \Big\{ n-4 , \Big\lfloor\Big( 1 - \frac{1}{k} \Big) n \Big\rfloor -1 \Big\}$;} 
\end{cases}
\end{align*}
where $I_k (n ; n-h-3)$ is an integral over a group of unitary matrices, defined by (1.27) in \cite{KeatingRodgersRudnik2018_SumDivFuncFqtMatrInt}. In particular, when $k=2$, and $n \geq 5$ and $h \leq \frac{n}{2} - 1$, we have
\begin{align} \label{statement, intro section, background subsection, keating et al result on variance, q prime power}
\frac{1}{q^n} \sum_{A \in \mathcal{M}_n} \lvert \Delta_2 (A;h) \rvert^2
\sim q^h \frac{(n-2h-1)(n-2h)(n-2h+1)}{6} 
\end{align}
as $q \longrightarrow \infty$. (This result follows from equation (1.34) in \cite{KeatingRodgersRudnik2018_SumDivFuncFqtMatrInt} with k=2. It is also given explicitly in (1.33), although there is a slight error in the evaluation of the binomial there). Recently, in his thesis \cite[Subsection 3.2.1]{Gorodetsky2021_PhDThesis}, Gorodetsky obtained an exact formula for the case $k=2$:
\begin{align} 
\begin{split} \label{statement, reference of Gorodetsky result on div var}
\frac{1}{q^n} \sum_{A \in \mathcal{M}_n} \lvert \Delta (A;h) \rvert^2
= \begin{cases}
(q-1) q^{h-1} \frac{(n-2h-1)(n-2h)(n-2h+1)}{6} &\text{ for $h \leq \lfloor \frac{n}{2} \rfloor -1$,} \\
0 &\text{ for $h \geq \lfloor \frac{n}{2} \rfloor$.}
\end{cases} \\
\end{split}
\end{align}

Let us now turn our attention to divisor correlations, and consider the classical case first. The most common example is
\begin{align*}
\sum_{n \leq x} d(n) d(n+h) ,
\end{align*}
where $h$ is a fixed positive integer, and we are interested in the limit as $x \longrightarrow \infty$. Ingham \cite{Ingham1927_AsymptFormuTheoNumb} showed that
\begin{align*}
\sum_{n \leq x} d(n) d(n+h)
\sim \frac{1}{\zeta (2) } \sigma_{-1} (h) x (\log x)^2 ,
\end{align*}
where $\sigma_{t} (h) := \sum_{a \mid k} a^t$. Estermann \cite{Estermann1931_UberDarstellungenZahlDiffZweiProd} later proved that there exist constants $a_{1,h}$ and $a_{0,h}$ (dependent on $h$) such that for all $\epsilon > 0$ we have
\begin{align*}
\sum_{n \leq x} d(n) d(n+h)
= \frac{1}{\zeta (2) } \sigma_{-1} (h) x (\log x)^2 
	+ a_{1,h} x \log x 
	+ a_{0,h} x
	+ O_{\epsilon} \big( x^{\frac{11}{12}} (\log x)^{\frac{17}{6} + \epsilon} \big) .
\end{align*}
Heath-Brown \cite{FourthPowerMomRZF_HeathBrown1979} subsequently showed that, given $h \leq x^{\frac{5}{6}}$ (and uniformly over this range), we can improve the error term above to $O_{\epsilon} (x^{\frac{5}{6} + \epsilon})$. The importance of these results lies in their application to the fourth moment of the Riemann zeta-function on the critical line \cite{FourthPowerMomRZF_HeathBrown1979}. \\

The analogous problem for higher divisor functions, namely
\begin{align*}
\sum_{n \leq x} d_k (n) d_k (n+h)
\end{align*}
for $k \geq 3$, is also of great importance, specifically in the application to higher moments of the Riemann zeta-function (see the work of Ivi\'{c} \cite{Ivic1997_TernaryAdditDivProbSixMomZetaFunc}, Conrey and Gonek \cite{ConreyGonek1999_HighMomRZF}, and the five papers by Conrey and Keating \cite{ConreyKeating2015_MomZetaCorrelDivSumI, ConreyKeating2015_MomZetaCorrelDivSumII, ConreyKeating2015_MomZetaCorrelDivSumIII, ConreyKeating2015_MomZetaCorrelDivSumIV, ConreyKeating2015_MomZetaCorrelDivSumV}). It is conjectured (see equation (1.6) of \cite{Ivic1997_TernaryAdditDivProbSixMomZetaFunc}) that
\begin{align*}
\sum_{n \leq x} d_k (n) d_k (n+h)
= x P_{2k-2} (\log x ;h) + \Delta (x;h) ,
\end{align*}
where $P_{2k-2} (\log x ;h)$ is a polynomial in $\log x$ of degree $2k-2$ with coefficients dependent on $h$, and we expect the error term to satisfy $\Delta (x;h) = o (x)$ as $x \longrightarrow \infty$. \\

Another example of divisor correlations is
\begin{align} \label{statement, NF Dir L-func correlations}
\sum_l \sum_n d(lk+n) d(n) ,
\end{align}
where $k$ is fixed, $l$ ranges over a certain interval, and $n$ ranges over integers of a certain interval that also satisfy $(n,k)=1$. This appears in the off-diagonal terms for the fourth moment of Dirichlet $L$-functions as can be seen in \cite{HeathBrown1981_FourthPowerMeanDLF, Sound2007_FourthMomDLF} (with the function field analogue appearing in \cite{AndradeYiasemides2021_4thPowMeanDLFuncField_Final}). \\

In the function field setting, Andrade, Bary-Soroker, and Rudnick \cite{AndradeBary-SorokerRudnik2015_ShiftConvolTitchmarshDivProbFqT} proved
\begin{align*}
\frac{1}{q^n} \sum_{A \in \mathcal{M}_n} d_k (A) d_k (A+B)
= \binom{n+k-1}{k-1}^2 + O \big( q^{-\frac{1}{2}} \big) ,
\end{align*}
uniformly over all $B \in \mathcal{A} \backslash \{ 0 \}$ with $\degree B \leq n$, and as $q \longrightarrow \infty$ (recall $q$ is the order of the finite field $\mathbb{F}_q$ and $\mathcal{A} := \mathbb{F}_q [T]$). Gorodetsky \cite[Lemma 3.3]{Gorodetsky2021_PhDThesis} improves on the case $k=2$ by showing that
\begin{align} \label{statement, reference of Gorodetsky result on corr, fixed B}
\frac{1}{q^n} \sum_{A \in \mathcal{M}_n} d (A) d (A+B)
= (n+1)^2 + \sum_{i=1}^{\lfloor \frac{n}{2} \rfloor} \frac{(n-2i+1)^2}{q^i} \Big( d (B;i) - d (B;i-1) \Big) ,
\end{align}
 where $d(B;i)$ is the number of monic divisors of $B$ of degree $i$. \\

\subsection{Statement of Results}

We obtain an exact formula for the variance of the divisor function over intervals in $\mathbb{F}_q [T]$, where $q$ is a prime power.

\begin{theorem} \label{main theorem, variance divisor function intervals}
For $n \geq 4$ we have
\begin{align}
\frac{1}{q^n} \sum_{A \in \mathcal{M}_n} \lvert \Delta (A;h) \rvert^2
= \begin{cases}
(q-1) q^{h-1} \frac{(n-2h-1)(n-2h)(n-2h+1)}{6} &\text{ for $h \leq \lfloor \frac{n}{2} \rfloor -1$,} \\
0 &\text{ for $h \geq \lfloor \frac{n}{2} \rfloor$.}
\end{cases}
\end{align}
\end{theorem}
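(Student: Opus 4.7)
The plan is to apply additive-character orthogonality to the variance, identify the resulting exponential sums as bilinear Gauss sums whose coefficient matrices are Hankel, and then reduce to a counting problem over Hankel matrices of prescribed rank. First, for $A, B \in \mathcal{M}_n$, the indicator $\mathbbm{1}_{\degree(B-A) < h}$ equals $p^{h-n} \sum_\alpha e(\alpha(B-A))$, where $\alpha = a_{h+1} T^{-(h+1)} + \ldots + a_n T^{-n}$ runs over the $p^{n-h}$ relevant additive characters of $\mathbb{F}_p((1/T))/\mathbb{F}_p[T]$. Subtracting the mean $p^h(n+1)$ kills the trivial character, and squaring then averaging over $A \in \mathcal{M}_n$ yields, by orthogonality,
\begin{equation*}
\frac{1}{p^n} \sum_{A \in \mathcal{M}_n} |\Delta(A;h)|^2 = \frac{1}{p^{2(n-h)}} \sum_{\alpha \neq 0} \bigg| \sum_{B \in \mathcal{M}_n} d(B) e(\alpha B) \bigg|^2 .
\end{equation*}

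Next, I would expand $d(B) = \sum_{EF=B,\, E,F \in \mathcal{M}} 1$ and split by degrees, so that the inner sum becomes $\sum_{a=0}^n \sum_{E \in \mathcal{M}_a} \sum_{F \in \mathcal{M}_{n-a}} e(\alpha EF)$. Writing $E = T^a + \sum_{i<a} e_i T^i$ and $F = T^{n-a} + \sum_{j<n-a} f_j T^j$, the $T^{-1}$-residue of $\alpha EF$ decomposes as two linear forms in $(e_i)$ and in $(f_j)$---whose coefficient vectors are drawn from the top entries of $\alpha$---plus the bilinear form $\sum_{i,j} a_{i+j+1} e_i f_j$, whose coefficient matrix $H_\alpha^{(a)} := (a_{i+j+1})_{0 \leq i < a,\, 0 \leq j < n-a}$ is Hankel (with a forced triangle of zeros in the upper-left coming from $a_k = 0$ for $k \leq h$). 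The standard bilinear Gauss-sum evaluation then shows that the sum over $(e,f) \in \mathbb{F}_p^a \times \mathbb{F}_p^{n-a}$ equals $p^{n - \rank H_\alpha^{(a)}}$ up to a phase when the two linear forms lie in the column-spaces of $H_\alpha^{(a)}$ and of its transpose, and vanishes otherwise.

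The main obstacle, and the technical heart of the paper, is the rank and kernel analysis of the family $\{ H_\alpha^{(a)} \}$ as $\alpha$ ranges over the nontrivial characters and $a$ ranges over $0, \ldots, n$: for each prescribed rank one must enumerate, with the linear-compatibility conditions enforced, the pairs $(\alpha, a)$ that realise it, and track the phase interference across different $a$. Granted those Hankel-matrix results (developed in the bulk of the paper), the Parseval identity collapses. When $h \geq \lfloor n/2 \rfloor$, the size of $H_\alpha^{(a)}$ together with its forced zero-triangle forces the bilinear form (with compatible linear forms) to vanish identically for every nontrivial $\alpha$, giving variance $0$. When $h \leq \lfloor n/2 \rfloor - 1$, the surviving contributions assemble into $(p-1) p^{h-1} (n-2h-1)(n-2h)(n-2h+1)/6$: the prefactor $(p-1)p^{h-1}$ arises from the character enumeration interacting with the $p^{-2(n-h)}$ Parseval normalisation, and the cubic in $n-2h$ emerges from summing the rank-dependent weights $p^{n - \rank H_\alpha^{(a)}}$ across the admissible split parameter $a$, in agreement with the large-$q$ Keating--Rodgers--Rudnick asymptotic.
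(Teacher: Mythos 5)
Your reduction is essentially the paper's own proof in different clothing: detecting the interval by additive characters, opening $d(B)$ over factorisations, and recognising the resulting bilinear forms in the coefficients of $E$ and $F$ as Hankel forms whose first $h$ skew-diagonals vanish is exactly what happens in Section \ref{section, variance of divisor function}. (The paper applies orthogonality coefficient-by-coefficient rather than as Parseval over characters of $\mathbb{F}_p((1/T))/\mathbb{F}_p[T]$, and keeps the monic leading coefficients inside the vectors $\mathbf{e},\mathbf{f}$ rather than in separate linear forms, but these are equivalent formulations.) The skeleton is sound; the gap is that the decisive step is asserted rather than carried out. Writing $S(\alpha)=\sum_{B\in\mathcal{M}_n}d(B)e(\alpha B)$, the theorem needs the following dichotomy for every nonzero $\mathbfe{\alpha}^-\in\mathscr{L}_{n-1}^h$: either $S(\alpha)=0$ --- which happens exactly when $\mathbfe{\alpha}^-$ is not quasi-regular --- or $\mathbfe{\alpha}^-\in\mathscr{L}_{n-1}^h(r,r,0)$ and the $n+1-2r$ admissible splittings all contribute the \emph{same} unimodular phase, so that $\lvert S(\alpha)\rvert^2=(n+1-2r)^2p^{2(n-r)}$ with no cross-cancellation between different splittings. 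This is the content of Lemma \ref{lemma, variance section, evaluation of exp sums of vectors and matrices}, and it does not follow from the rank counts of Theorem \ref{theorem, matrices of given rank and size}: it requires the kernel-structure machinery (Theorems \ref{theorem, all Hankel matrices have characteristic polynomial kernel} and \ref{theorem, kernel structure subsection, char polys of extended sequences}), the key point being that the unique kernel direction of $H_{l,m+1}(\mathbfe{\alpha}^-)$ not killed by the appended last row is $T^{m-r}A_1$, whose only nonzero coordinates are its last $r+1$, so the phase it produces against that row is independent of the splitting. Your sketch names this issue (``track the phase interference across different $a$'') but never resolves it, and it is the one place where the argument could genuinely fail.

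Two smaller misstatements. For $h\ge\lfloor n/2\rfloor$ the bilinear form does not ``vanish identically'' for nontrivial $\alpha$ (a nonzero strictly lower skew-triangular Hankel matrix has a nonzero bilinear form); the variance vanishes because every nonzero $\mathbfe{\alpha}^-\in\mathscr{L}_{n-1}^h$ then has $\rho(\mathbfe{\alpha}^-)=0$ and $\pi(\mathbfe{\alpha}^-)\ge1$, hence is not quasi-regular and gives $S(\alpha)=0$ by the first branch of the dichotomy. And the cubic does not ``emerge from summing across the split parameter'': the sum over the splitting at fixed $\alpha$ produces the factor $(n+1-2r)^2$, while $(n-2h-1)(n-2h)(n-2h+1)/6=\sum_{r=h+1}^{n_1-1}(n+1-2r)^2$ arises from the subsequent sum over $\alpha$ grouped by rank, weighted by $\lvert\mathscr{L}_{n-1}^h(r,r,0)\rvert=(p-1)p^{2r-h-1}$.
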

To prove Theorem \ref{main theorem, variance divisor function intervals}, we use the orthogonality relation of a non-trivial additive character on $\mathbb{F}_q$ to express the problem in terms of Hankel matrices over $\mathbb{F}_q$. Most of this article comprises of results on Hankel Matrices over finite fields, which can be found in Section \ref{Section, Hankel matrices}. Once these results are established, the proof of Theorem \ref{main theorem, variance divisor function intervals}, in Section \ref{section, variance of divisor function}, is relatively short. \\

In Section \ref{section, correlations}, a very slight adaptation of our proof of Theorem \ref{main theorem, variance divisor function intervals} allows us to prove the following result on divisor correlations:

\begin{theorem} \label{theorem, correlations d(A) d(A+B)}
For $n \geq 4$ and $h \leq n$ we have
\begin{align*}
&\frac{1}{q^{h+n}} \sum_{A \in \mathcal{M}_n } \sum_{B \in \mathcal{A}_{<h} } d (A) d (A+B) \\
= &(n+1)^2 + \frac{1}{q^{2h+n}} \sum_{A \in \mathcal{M}_n} \lvert \Delta (A;h) \rvert^2 \\
= &\begin{cases}
(n+1)^2 + (1 - q^{-1}) (q^{-h}) \frac{(n-2h-1) (n-2h) (n-2h+1)}{6} &\text{ for $h \leq \lfloor \frac{n}{2} \rfloor - 1$} , \\
(n+1)^2 &\text{ for $h \geq \lfloor \frac{n}{2} \rfloor $} .
\end{cases}
\end{align*}
\end{theorem}
This result, and its proof, allow us to clearly see the relationship between divisor variance and correlations. \\

Note that Theorem \ref{main theorem, variance divisor function intervals} is identical to Gorodetsky's result (\ref{statement, reference of Gorodetsky result on div var}), and Theorem \ref{theorem, correlations d(A) d(A+B)} can be deduce from his other result (\ref{statement, reference of Gorodetsky result on corr, fixed B}). However, we employ a completely different approach, which we also adapt to prove the following result in Section \ref{section, correlations}: 

\begin{theorem} \label{theorem, divisor correlations d(KQ+N)d(N)}
Let $Q \in \mathcal{M}$ be prime, and let $n,k$ be such that $0 \leq n \leq \degree Q - 1 \leq k$. Then,
\begin{align*}
&\frac{1}{q^{k+n}} \sum_{K \in \mathcal{M}_k } \sum_{N \in \mathcal{M}_n } d (KQ+N) d (N) \\
= &\bigg( \frac{1}{q^n } \sum_{N \in \mathcal{M}_n} d(N) \bigg)
	\bigg( \frac{1}{q^{k + n}} \sum_{K \in \mathcal{M}_k } \sum_{N \in \mathcal{M}_n } d (KQ+N) \bigg) \\
= &(\degree Q + k +1) (n+1) - q^{-\degree Q} (k - \degree Q -1) (n+1) .
\end{align*}
\end{theorem}
This is the function field analogue of (\ref{statement, NF Dir L-func correlations}), although we are considering the special case where $Q$ is prime. However, if we wish to apply this to the fourth moment of Dirichlet $L$-functions in function fields, then we would require the restriction $k < \degree Q -1$ instead of $k \geq \degree Q -1$, which is more difficult. We discuss this further in Subsection \ref{subsection, intro, motivation} and Remark \ref{remark, extending FF Dir L func Corr to k < deg Q -1}. Nonetheless, Theorem \ref{theorem, divisor correlations d(KQ+N)d(N)} is an interesting result, not only because it is exact, but also because it shows that $d (KQ+N)$ and $d (N)$ are uncorrelated, assuming $K$ and $N$ are chosen randomly in the ranges given in the theorem. \\

Now that we have given our number-theoretic results, let us discuss our results on Hankel matrices and our use of additive characters. \\

An additive character $\psi$ on $\mathbb{F}_q$ is a function from $\mathbb{F}_q$ to $\mathbb{C}^*$ satisfying $\psi (a+b) = \psi (a) \psi (b)$ (note this implies $\psi (0) =1$ and $\psi (-a) = \psi (a)^{-1}$ for all $a \in \mathbb{F}_q$). We say $\psi$ is non-trivial if there exists $a \in \mathbb{F}_q^*$ such that $\psi (a) \neq 1$, and in this case we have the orthogonality relation
\begin{align} \label{statement, intro, exp identity sum}
\frac{1}{q} \sum_{\alpha \in \mathbb{F}_q } \psi (\alpha b)
= \begin{cases}
1 &\text{if $b=0$,} \\
0 &\text{if $b \in \mathbb{F}_q^*$.}
\end{cases}
\end{align}
The first case follows from the fact that if $b=0$, then $\psi (\alpha b) = 1$ for all $\alpha \in \mathbb{F}_q$. The second case follows from the fact that if $b \in \mathbb{F}_q^*$, then $\alpha b$ and $\alpha b + a$ both vary over $\mathbb{F}_q$ as $\alpha$ varies over $\mathbb{F}_q$, and so
\begin{align*}
\sum_{\alpha \in \mathbb{F}_q } \psi (\alpha b)
= \sum_{\alpha \in \mathbb{F}_q } \psi (\alpha b + a)
= \psi (a) \sum_{\alpha \in \mathbb{F}_q } \psi (\alpha b).
\end{align*}
Since $\psi (a) \neq 1$, we deduce that $\sum_{\alpha \in \mathbb{F}_q } \psi (\alpha b) =0$. In the remainder of this article, $\psi$ is a non-trivial character on $\mathbb{F}_q$, and we will make significant use of (\ref{statement, intro, exp identity sum}). \\

An $l \times m$ Hankel matrix over $\mathbb{F}_q$ is a matrix of the form
\begin{align*}
%\arraycolsep=5pt \def\arraystretch{1.5}
%\begin{split} \label{definition, Hankel matrices}
(\alpha_{i+j-2})_{\substack{1 \leq i \leq l \\ 1 \leq j \leq m}}
= \begin{pNiceMatrix}
\alpha_0 & \alpha_1 & \alpha_2 & \Cdots & \Cdots & \Cdots & \alpha_{m-1} \\
\alpha_1 & \alpha_2 &  &  &  &  & \Vdots \\
\alpha_2 &  &  &  &  &  & \Vdots \\
\Vdots &  &  &  &  &  & \Vdots \\
\Vdots &  &  &  &  &  & \alpha_{l+m-4} \\
\Vdots &  &  &  &  & \alpha_{l+m-4} & \alpha_{l+m-3} \\
\alpha_{l-1}  & \Cdots & \Cdots & \Cdots & \alpha_{l+m-4} & \alpha_{l+m-3} & \alpha_{l+m-2}
\end{pNiceMatrix} ,
%\end{split}
\end{align*}
where $\alpha_0 , \ldots , \alpha_{l+m-2} \in \mathbb{F}_q$. It is natural to consider the sequence $\mathbfe{\alpha} = (\alpha_0 , \alpha_1 , \ldots , \alpha_{l+m-2}) \in \mathbb{F}_q^{l+m-1}$ that is associated to the matrix above, and it will be convenient to denote the matrix by $H_{l,m} (\mathbfe{\alpha})$. \\

Theorem \ref{theorem, matrices of given rank and size} gives the number of Hankel matrices of a given size and rank. Theorems \ref{theorem, all Hankel matrices have characteristic polynomial kernel} and \ref{theorem, all corpime characteristic polynomial have a Hankel matrix} demonstrate the kernel structure of Hankel matrices, and we see how function field arithmetic is incorporated in these matrices. To see this, we must view the coefficients of a polynomial as the entries in a vector, and vice versa. For example, the polynomial $a_0 + a_1 T + \ldots + a_n T^n$ should be associated to the vector $(a_0 , a_1 , \ldots , a_n )^T$. We prove that, generally, for a Hankel matrix $H$, there are polynomials $A_1 , A_2$ such that the kernel of $H$ consists exactly of the polynomials
\begin{align*}
B_1 A_1 + B_2 A_2
\end{align*}
where $B_1 , B_2 \in \mathcal{A}$ are any polynomials satisfying a certain bound on their degrees. The polynomials $A_1 , A_2$ are called the characteristic polynomials of $H$, not to be confused with the characteristic polynomial of a square matrix. In Theorem \ref{Theorem, Hankel matrices incorporate Euclidean algorithm} and Corollary \ref{corollary, Hankel matrices incorporate Euclidean algorithm theorem, full algorithm presented}, we show that if $H'$ is a top-left submatrix \footnote{A top-left submatrix of an $l \times m$ matrix $M$ is a submatrix consisting of the first $l_1$ rows and first $m_1$ columns of $M$, for some $l_1 \leq l$ and $m_1 \leq m$.} of $H$, then the characteristic polynomials associated to $H'$ are the same as the polynomials that we obtain after applying a certain number of steps of the Euclidean algorithm to $A_1 , A_2$; the exact number of steps is related to the size of the submatrix $H'$. Theorem \ref{theorem, kernel structure subsection, char polys of extended sequences} demonstrates how the kernel structure of a Hankel matrix changes if we extend it by a single row or column. We prove various other results as well. \\

\subsection{Motivation} \label{subsection, intro, motivation}

We will briefly give an explanation of how additive characters can allow us to express sums of the divisor function in $\mathbb{F}_q [T]$ in terms of Hankel matrices. This is best achieved by considering the sum 
\begin{align*}
\sum_{A \in \mathcal{M}_n} d(A)^2 .
\end{align*}
We have that
\begin{align*}
\sum_{A \in \mathcal{M}_n} d(A)^2 
= \sum_{A \in \mathcal{M}_n} 
	\bigg( \sum_{\substack{l,m \geq 0 \\ l+m=n }} 
	\sum_{\substack{ E \in \mathcal{M}_l \\ F \in \mathcal{M}_m \\ EF = A}} 1 \bigg)^2
= \sum_{A \in \mathcal{A}_{\leq n} } 
	\bigg( \sum_{\substack{l,m \geq 0 \\ l+m=n }} 
	\sum_{\substack{ E \in \mathcal{M}_l \\ F \in \mathcal{M}_m \\ EF = A}} 1 \bigg)^2 .
\end{align*}
We note that the conditions on $E$ and $F$ force $A$ to be monic and of degree $n$, and that is why for the last equality we were able to replace the condition $A \in \mathcal{M}_n$ with $A \in \mathcal{A}_{\leq n}$. Now, let us write $a_i , e_i , f_i$ for the $i$-th coefficient of $A,E,F$, respectively. We also write $\{ EF \}_i$ for the $i$-th coefficient of $EF$ when we do not yet wish to express the coefficients of $EF$ in terms of the $e_i$ and $f_i$. We have
\begin{align*}
\sum_{A \in \mathcal{M}_n} d(A)^2 
= &\sum_{A \in \mathcal{A}_{\leq n} } 
	\bigg( \sum_{\substack{l,m \geq 0 \\ l+m=n }} 
	\sum_{\substack{ E \in \mathcal{M}_l \\ F \in \mathcal{M}_m }}
	\prod_{k=0}^{n} \mathbbm{1}_{\{ EF \}_k = a_k } \bigg)^2 \\
= &\frac{1}{q^{2n+2}} \sum_{A \in \mathcal{A}_{\leq n} } 
	\bigg( \sum_{\substack{l,m \geq 0 \\ l+m=n }} 
	\sum_{\substack{ E \in \mathcal{M}_l \\ F \in \mathcal{M}_m }}
	\prod_{k=0}^{n} \sum_{\alpha_k \in \mathbb{F}_q} 
		\psi \Big( \alpha_k 
		\big( \sum_{\substack{i,j \geq 0 \\ i+j=k }} e_i f_j \hspace{1em} - a_k \big) \Big) \bigg)^2 .
\end{align*}
Here, for a proposition $\mathbf{P}$, we define $\mathbbm{1}_{\mathbf{P}}$ to be $1$ if the proposition is true, and $0$ if false. For the last equality, we used (\ref{statement, intro, exp identity sum}) with $b = \{ EF \}_k - a_k$. We will now collect all of the terms involving $\psi$. To do this, we note that
\begin{align*}
\sum_{k=0}^{n} \alpha_k \sum_{\substack{i,j \geq 0 \\ i+j=k }} e_i f_j 
= &(e_0 , e_1 , \ldots , e_l )
	\begin{pNiceMatrix}
	\alpha_0 & \alpha_1 & \alpha_2 & \Cdots & \Cdots & \Cdots & \alpha_{m} \\
	\alpha_1 & \alpha_2 &  &  &  &  & \Vdots \\
	\alpha_2 &  &  &  &  &  & \Vdots \\
	\Vdots &  &  &  &  &  & \Vdots \\
	\Vdots &  &  &  & &  & \alpha_{n-2} \\
	\Vdots &  &  &  &  & \alpha_{n-2} & \alpha_{n-1} \\
	\alpha_{l} & \Cdots & \Cdots & \Cdots & \alpha_{n-2} & \alpha_{n-1} & \alpha_{n}
	\end{pNiceMatrix} 
	\begin{pNiceMatrix} f_0 \\ f_1 \\ \Vdots \\ \\ \\ \\ f_m \end{pNiceMatrix} \\
= &\mathbf{e}^T H_{l+1 , m+1} (\mathbfe{\alpha}) \mathbf{f} ,
\end{align*}
where
\begin{align*}
\mathbf{e} := \begin{pNiceMatrix} e_0 \\ e_1 \\ \Vdots \\ e_l \end{pNiceMatrix} , \hspace{2em}
\mathbf{f} := \begin{pNiceMatrix} f_0 \\ f_1 \\ \Vdots \\ f_m \end{pNiceMatrix} , \hspace{2em}
\mathbfe{\alpha} := ( \alpha_0 , \alpha_1 , \ldots , \alpha_n ) .
\end{align*}
Thus, we have
\begin{align*}
\sum_{A \in \mathcal{M}_n} d(A)^2 
= &\frac{1}{q^{2n+2}} \sum_{A \in \mathcal{A}_{\leq n} } 
	\bigg( \sum_{\substack{l,m \geq 0 \\ l+m=n }} 
		\sum_{\substack{ \mathbf{e} \in \mathbb{F}_q^l \times \{ 1 \} \\ \mathbf{f} \in \mathbb{F}_q^m \times \{ 1 \} }}
		\sum_{\mathbfe{\alpha} \in \mathbb{F}_q^{n+1} } 
		\psi \Big( \mathbf{e}^T H_{l+1 , m+1} (\mathbfe{\alpha}) \mathbf{f} -  \sum_{k=0}^{n} \alpha_k a_k \Big) \bigg)^2 \\
= &\frac{1}{q^{2n+2}} \sum_{A \in \mathcal{A}_{\leq n} } 
	\bigg( \sum_{\substack{l,m \geq 0 \\ l+m=n }} 
		\sum_{\substack{ \mathbf{e} \in \mathbb{F}_q^l \times \{ 1 \} \\ \mathbf{f} \in \mathbb{F}_q^m \times \{ 1 \} }}
		\sum_{\mathbfe{\alpha} \in \mathbb{F}_q^{n+1} } 
		\psi \Big( \mathbf{e}^T H_{l+1 , m+1} (\mathbfe{\alpha}) \mathbf{f} -  \sum_{k=0}^{n} \alpha_k a_k \Big) \bigg) \\
	&\hspace{5em} \times \bigg( \sum_{\substack{l' , m' \geq 0 \\ l' + m' =n }} 
		\sum_{\substack{ \mathbf{g} \in \mathbb{F}_q^{l'} \times \{ 1 \} \\ \mathbf{h} \in \mathbb{F}_q^{m'} \times \{ 1 \} }}
		\sum_{\mathbfe{\beta} \in \mathbb{F}_q^{n+1} } 
		\psi \Big( \mathbf{g}^T H_{l' +1 , m' +1} (\mathbfe{\beta}) \mathbf{h} -  \sum_{k=0}^{n} \beta_k a_k \Big) \bigg) .
\end{align*}
Let us now consider only the terms involving a given $a_k$ in the sum above. Since $A \in \mathcal{A}_{\leq n}$, we can see that $a_k$ ranges over $\mathbb{F}_q$. We have
\begin{align*}
\frac{1}{q} \sum_{a_k \in \mathbb{F}_q } \psi \Big( - (\alpha_k + \beta_k) a_k \Big)
= \begin{cases}
1 &\text{ if $\alpha_k = -\beta_k$,} \\
0 &\text{ if $\alpha_k \neq - \beta_k$,}
\end{cases}
\end{align*}
where we have used (\ref{statement, intro, exp identity sum}) with $b = \alpha_k + \beta_k$. Essentially, by considering all $k= 0 , 1 , \ldots , n$ this means $\mathbfe{\alpha} = - \mathbfe{\beta}$, and we have effectively removed the sum over $A$. For simplicity, we will take $\mathbfe{\alpha} = \mathbfe{\beta}$. So, we have
\begin{align*}
\sum_{A \in \mathcal{M}_n} d(A)^2 
= &\frac{1}{q^{n+1}} \sum_{\mathbfe{\alpha} \in \mathbb{F}_q^{n+1} } 
	\bigg( \sum_{\substack{l,m \geq 0 \\ l+m=n }} 
		\sum_{\substack{ \mathbf{e} \in \mathbb{F}_q^l \times \{ 1 \} \\ \mathbf{f} \in \mathbb{F}_q^m \times \{ 1 \} }}
		\psi \Big( \mathbf{e}^T H_{l+1 , m+1} (\mathbfe{\alpha}) \mathbf{f} \Big) \bigg)^2 .
\end{align*}
Now, let us consider the sum over a given $e_i$:
\begin{align*}
\frac{1}{q} \sum_{e_i \in \mathbb{F}_q } \psi \big( e_i R_{i+1} \mathbf{f} \big) 
\end{align*}
where $R_{i+1}$ is the $(i+1)$-th row of $H_{l+1 , m+1} (\mathbfe{\alpha})$ (the row indexing begins at $1$, not $0$, and that is why we have $R_{i+1}$ and not $R_i$). By using (\ref{statement, intro, exp identity sum}) again, with $b = R_{i+1} \mathbf{f}$, we can see that the sum over $e_i$ will give a non-zero contribution only when $R_{i+1} \mathbf{f} = 0$, and this non-zero contribution will be $1$. Applying this to all $i = 0 , 1 , \ldots , l$, we see that a non-zero contribution occurs only when
\begin{align} \label{statement, intro, motivation, H_(l+1,m+1) (alpha) f = 0 requirement}
H_{l+1 , m+1} (\mathbfe{\alpha}) \mathbf{f} = \mathbf{0}.
\end{align}
That is, when $\mathbf{f}$ is in the kernel of $H_{l+1 , m+1} (\mathbfe{\alpha})$. \\

Technically, this is not quite true as the last entry of $\mathbf{e}$ is $1$, and so it cannot take any value in $\mathbb{F}_q$. Ultimately, this actually limits the $\mathbfe{\alpha}$ that we can take, and so simplifies our final calculations. However, for now, let us assume that the last entry of $\mathbf{e}$ can take any value in $\mathbb{F}_q$. \\

Continuing, noting that the number of $\mathbf{f}$ in the kernel of $H_{l+1 , m+1} (\mathbfe{\alpha})$ is $q^{m+1 - \rank H_{l+1 , m+1} (\mathbfe{\alpha})}$, we have
\begin{align*}
\sum_{A \in \mathcal{M}_n} d(A)^2 
\approx &q \sum_{\mathbfe{\alpha} \in \mathbb{F}_q^{n+1} } 
	\bigg( \sum_{\substack{l,m \geq 0 \\ l+m=n }} 
		q^{- \rank H_{l+1 , m+1} (\mathbfe{\alpha})} \bigg)^2 .
\end{align*}

So, we can now see how using additive characters allows us to express the sum $\sum_{A \in \mathcal{M}_n} d(A)^2 $ in terms of Hankel matrices in a concise manner, and how knowing the exact number of Hankel matrices of a given rank and size will allow us to obtain an exact evaluation of the original divisor sum. \\

Now, Theorem \ref{main theorem, variance divisor function intervals} is concerned with the variance of the divisor function. That is, it is concerned with the sum
\begin{align*}
\frac{1}{q^n} \sum_{A \in \mathcal{M}_n} \lvert \Delta_2 (A;h) \rvert^2
= &\frac{1}{q^n} \sum_{A \in \mathcal{M}_n} \Big\lvert \mathcal{N}_{d_2} (A;h) - q^{h} (n+1) \Big\rvert^2 \\
= &\frac{1}{q^n} \sum_{A \in \mathcal{M}_n} \bigg( \sum_{\substack{B \in \mathcal{M}_n \\ \degree (B-A) < h }} d(B) \bigg)^2
	\hspace{1em} - q^{2h} (n+1)^2 ,
\end{align*}
and it suffices to consider
\begin{align*}
\sum_{A \in \mathcal{M}_n} \bigg( \sum_{\substack{B \in \mathcal{M}_n \\ \degree (B-A) < h }} d(B) \bigg)^2
\end{align*}
which is similar to the sum $\sum_{A \in \mathcal{M}_n} d(A)^2 $ that we worked with earlier. Here, we have the sum over $A$ which appears outside the squared parentheses, while the sum over $B$ appears within. We can proceed similar to previously, and the sum over $A$ will force $\alpha_k + \beta_k = 0$ for $k = h , h+1 , \ldots , n$. Whereas, the sum over $B$, will force $\alpha_k = 0$ and $\beta_k = 0$ for $k=0 , 1 , \dots , h-1$. Ultimately, this means we will have to understand how many Hankel matrices there are of a given size and rank with the first $h$ skew-diagonals being $0$. \\

Note that if the first $h$ entries of $\mathbfe{\alpha}$ are zero, and $h \geq \frac{n}{2}$, then the matrix $H_{\frac{n}{2} +1 , \frac{n}{2} +1} (\mathbfe{\alpha})$ is lower skew-triangular (for simplicity we are assuming $n$ is even and so $\frac{n}{2}$ is an integer). We can easily determine the rank of such a matrix by determining the first non-zero skew diagonal; and, as we will see later, we can use that to easily determine the rank of all $H_{l , m} (\mathbfe{\alpha})$ for $l+m-2 = n$. On the other hand, if $h < \frac{n}{2}$, then it is more difficult to determine the rank of the matrix $H_{\frac{n}{2} +1 , \frac{n}{2} +1} (\mathbfe{\alpha})$, which is no longer necessarily lower skew-triangular. This demonstrates why it is easier to work with large intervals ($h \geq \frac{n}{2}$) than short intervals ($h < \frac{n}{2}$). Note that the condition $h < \frac{n}{2}$ is equivalent to $q^h = (q^n)^{\frac{1}{2}}$ which is analogous to the classical $H < x^{\frac{1}{2}}$ (see (\ref{def statement, Delta_k (x;H) NF def}) and the paragraph below that).

\subsection{Extensions} \label{subsection, intro, extensions}

The first extension that we consider is the analogue of Theorem \ref{main theorem, variance divisor function intervals} for the $k$-th divisor function, $d_k$. The approach is similar but we will ultimately be working with Hankel tensors instead of Hankel matrices. A matrix is a two dimensional array, which appeared because we were working with the standard divisor function, $d=d_2$. When working with higher divisor functions, we will work with higher dimensional arrays (i.e. tensors). Consider the case $k=3$. We will have tensors of the form 
\begin{align*}
(\alpha_{i+j+l-3})_{\substack{1 \leq i \leq i_1 \\ 1 \leq j \leq j_1 \\ 1 \leq l \leq l_1}}
\end{align*}
and we will need to determine how many $\mathbf{f} = (f_1 , \ldots , f_{j_1})^T$ and $\mathbf{g} = (g_1 , \ldots , g_{l_1})^T$ there are such that
\begin{align*}
\sum_{j=1}^{j_1} \sum_{l=1}^{l_1} \alpha_{i+j+l-3} f_j g_l = 0
\end{align*}
for all $1 \leq i \leq i_1$. This is analogous to (\ref{statement, intro, motivation, H_(l+1,m+1) (alpha) f = 0 requirement}). \\

Let us now consider another extension. In Theorem \ref{main theorem, variance divisor function intervals}, we consider the variance of the divisor function, which is essentially the second moment. We could consider higher moments, such as the third, and if we are aiming to obtain an exact evaluation then it would be sufficient to obtain an exact formula for
\begin{align*}
\sum_{A \in \mathcal{M}_n} \bigg( \sum_{\substack{B \in \mathcal{M}_n \\ \degree (B-A) < h }} d(B) \bigg)^3 .
\end{align*}
With the variance, we needed to determine how many $\mathbfe{\alpha} , \mathbfe{\beta}$ there are such that $H_{l+1 , m+1}(\mathbfe{\alpha})$ and $H_{l+1 , m+1}(\mathbfe{\beta})$ have certain given ranks, and $\mathbfe{\alpha} + \mathbfe{\beta} = \mathbfe{0}$. Of course, the last condition means the matrices are essentially identical, making the problem slightly simpler. However, for the third moment, we will need to determine how many $\mathbfe{\alpha} , \mathbfe{\beta} , \mathbfe{\gamma}$ there are such that $H_{l+1 , m+1}(\mathbfe{\alpha})$, $H_{l+1 , m+1}(\mathbfe{\beta})$, and $H_{l+1 , m+1}(\mathbfe{\gamma})$ have certain given ranks, and $\mathbfe{\alpha} + \mathbfe{\beta} + \mathbfe{\gamma} = \mathbfe{0}$. This last condition makes the problem more difficult. We do indicate in Remark \ref{remark, hankel matrices, kernel structure, higher moments remark} how we can reduce this problem to special cases of $\mathbfe{\alpha} , \mathbfe{\beta} , \mathbfe{\gamma}$. We are effectively taking two Hankel matrices for which we understand their rank, and we wish to understand the rank of their sum. Related problems have been considered for Toeplitz-plus-Hankel matrices over the complex numbers \cite{Heinig2002_KernStructToeplitzPlusHankelMatr, Rojo2008_NewAlgeToeplitzPlusHankelMatrAppl} (compared to Hankel-plus-Hankel matrices, which is what we are interested in). \\

If we can obtain a result such as Theorem \ref{theorem, divisor correlations d(KQ+N)d(N)}, or at least a strong approximation, for the case $k < \degree Q -1$, then this would allow us to obtain lower order terms in the asymptotic expansion of the fourth moment of Dirichlet $L$-functions in function fields (the average would be over characters of prime modulus, and this would be analogous to Young's results \cite{FourthMomDirLFunc_Young_2011}). We discuss what is required for this in Remark \ref{remark, extending FF Dir L func Corr to k < deg Q -1}, after having established the necessary results and notation in Section \ref{Section, Hankel matrices}. For now, we can give the following indication: We must understand how many $\mathbfe{\alpha} = (\alpha_0 , \alpha_1 , \ldots , \alpha_{\degree Q +k})$ there are that satisfy the following three conditions: 
\begin{itemize}
\item The square matrix $H_{\frac{\degree Q +k}{2} +1 , \frac{\degree Q +k}{2} +1} (\mathbfe{\alpha})$ has rank $r_1$; \\

\item The square matrix $H_{\frac{n}{2} +1 , \frac{n}{2} +1} (\mathbfe{\alpha}')$ has rank $r_2$; \\

\item The matrix $H_{\degree Q +1 , k +1} (\mathbfe{\alpha})$ has $(q_0 , q_1 , \ldots , q_{\degree Q} )^T$ in its kernel; 
\end{itemize}
where $\mathbfe{\alpha}'$ is defined to be the subsequence $(\alpha_0 , \alpha_1 , \ldots , \alpha_{n})$, the integers $r_1 , r_2$ are fixed, and we are assuming for simplicity that $\degree Q + k$ and $n$ are even. Also, $q_0 , q_1 , \ldots , q_{\degree Q}$ are defined by $Q = q_0 + q_1 T + \ldots + q_{\degree Q} T^{\degree Q}$. Working with any two of the above conditions is possible given the results we establish later. However, the difficulty lies in working with all three. \\

If we were to work with higher moments of Dirichlet $L$-functions then we would consider correlations of higher divisor functions (such as $d_3$), and so we would need to work with Hankel tensors instead of Hankel matrices. Specifically, we would have a tensor analogue for the three conditions above. Of course, moments higher than the fourth are notoriously difficult, and no rigorous results have been obtained. However, the above is interesting as it provides an alternative approach to the problem.\\

Finally, we can consider arithmetic functions other than the divisor function. For example, the number of ways we can express a polynomial in $\mathcal{A}$ as a sum of two squares. As with the divisor function, this involves multiplication, and our approach of Hankel matrices can be applied.

\section{Hankel Matrices over $\mathbb{F}_q$} \label{Section, Hankel matrices}

\subsection{Introduction}

While we are concerned with Hankel matrices over finite fields, Hankel matrices over the complex numbers have received considerably more attention. Heinig and Rost \cite{HeinigRost1984_AlgMethToeplitzMatrOperat} provide a detailed account of the results that have been established for the complex setting. While fewer in number, there are publications specifically on Hankel matrices over finite fields as well \cite{Garcia-ArmasGhorpadeRam2011_RelativePrimePolyNonsingHankelMatrFinField, Meshulam1993_SpaceHankelMatrFinField}. About half of the results we provide are completely original; while the rest are either finite field analogies of results in \cite{HeinigRost1984_AlgMethToeplitzMatrOperat} or generalisations of results in \cite{Garcia-ArmasGhorpadeRam2011_RelativePrimePolyNonsingHankelMatrFinField}, but the proofs are often different with the intention of being more intuitive. Wherever possible, we will adhere to the notation established in \cite{HeinigRost1984_AlgMethToeplitzMatrOperat, Garcia-ArmasGhorpadeRam2011_RelativePrimePolyNonsingHankelMatrFinField}, and when this is not possible we make clear what the differences are. \\

As mentioned previously, an $l \times m$ Hankel matrix over $\mathbb{F}_q$ is a matrix of the form
\begin{align}
\begin{split} \label{definition, Hankel matrices}
(\alpha_{i+j-2})_{\substack{1 \leq i \leq l \\ 1 \leq j \leq m}}
= \begin{pNiceMatrix}
\alpha_0 & \alpha_1 & \alpha_2 & \Cdots & \Cdots & \Cdots & \alpha_{m-1} \\
\alpha_1 & \alpha_2 &  &  &  &  & \Vdots \\
\alpha_2 &  &  &  &  &  & \Vdots \\
\Vdots &  &  &  &  &  & \Vdots \\
\Vdots &  &  &  &  &  & \alpha_{l+m-4} \\
\Vdots &  &  &  &  & \alpha_{l+m-4} & \alpha_{l+m-3} \\
\alpha_{l-1}  & \Cdots & \Cdots & \Cdots & \alpha_{l+m-4} & \alpha_{l+m-3} & \alpha_{l+m-2}
\end{pNiceMatrix} ,
\end{split}
\end{align}
where $\alpha_0 , \ldots , \alpha_{l+m-2} \in \mathbb{F}_q$. As we can see, all the entries on a given skew-diagonal take the same value. We index our entries from zero, and we later see this is necessary in order to be consistent with the indexing of coefficients in a polynomial, which also begins at zero. \\

Define the $n \times n$ counter-identity matrix, $J_n$, to be the matrix with zero entries everywhere except for the main skew-diagonal going from bottom-left to top-right. If $H$ is an $l \times m$ Hankel matrix, then $J_l H$ and $H J_m$ are $l \times m$ Toeplitz matrices. Similarly, if $T$ is an $l \times m$ Toeplitz matrix, then $J_l T$ and $T J_m$ are $l \times m$ Hankel matrices. Thus, we can see that Hankel matrices are inextricably linked to the more well known Toeplitz matrices. Although, we will focus on the former. We denote the set of all $l \times m$ Hankel matrices in $\mathbb{F}_q [T]$ by $\mathscr{H}_{l,m}$. \\

It is natural to consider the finite sequence $( \alpha_0 , \alpha_1 , \ldots , \alpha_{l+m-2}) \in \mathbb{F}_q^{l+m-1}$ that is associated to the matrix (\ref{definition, Hankel matrices}). Generally, for $\mathbfe{\alpha} := (\alpha_0 , \ldots , \alpha_n ) \in \mathbb{F}_q^{n+1}$ and $l+m-2 = n$ with $l,m \geq 1$, we define the matrices
\begin{align}
\begin{split} \label{definition, Hankel matrices associated to a given sequence}
H_{l , m} (\mathbfe{\alpha})  
:= \begin{pNiceMatrix}
\alpha_0 & \alpha_1 & \alpha_2 & \Cdots & \Cdots & \Cdots & \alpha_{m-1} \\
\alpha_1 & \alpha_2 &  &  &  &  & \Vdots \\
\alpha_2 &  &  &  &  &  & \Vdots \\
\Vdots &  &  &  &  &  & \Vdots \\
\Vdots &  &  &  & &  & \alpha_{l+m-4} \\
\Vdots &  &  &  &  & \alpha_{l+m-4} & \alpha_{l+m-3} \\
\alpha_{l-1} & \Cdots & \Cdots & \Cdots & \alpha_{l+m-4} & \alpha_{l+m-3} & \alpha_{l+m-2}
\end{pNiceMatrix} .
\end{split}
\end{align}
That is, we associate $n+1$ number of matrices with $\mathbfe{\alpha}$. As we will later see, there is a crucial relationship between the kernels of these matrices. We note that we can extend the above definition to the case where $l+m-2 = n' < n$, in which case we have $n' +1$ number of matrices, and the last $n-n'$ entries of $\mathbfe{\alpha}$ do not appear in any of them. \\

Throughout this paper, for an integer $n \geq 0$, we will always define $n_1 := \lfloor \frac{n+2}{2} \rfloor$ and $n_2 := \lfloor \frac{n+3}{2} \rfloor$. \\

Now, let $\mathbfe{\alpha} = (\alpha_0 , \ldots , \alpha_{n}) \in \mathbb{F}_{q}^{n+1}$, and consider the square matrices
\begin{align*}
H_{1,1} (\mathbfe{\alpha}) , H_{2,2} (\mathbfe{\alpha}) , \ldots , H_{n_1 , n_1} (\mathbfe{\alpha}) .
\end{align*}
Note that all of these matrices have at most $n+1$ skew-diagonals and so, given the length of the sequence  $\mathbfe{\alpha}$, they are well defined. Intuitively, they are all the square Hankel matrices that can be obtained from $\mathbfe{\alpha}$ or its truncations; and they are all top-left submatrices of $H_{n_1 , n_2} ( \mathbfe{\alpha})$. Now, if at least one of these matrices has non-zero determinant, then define $\rho (\mathbfe{\alpha})$ to be the largest $l \in \{ 1 ,2 , \ldots , n_1 \}$ with the property that $\det H_{l , l } (\mathbfe{\alpha}) \neq 0$. If all of these matrices have determinant equal to zero, then define $\rho (\mathbfe{\alpha})$ to be zero. \\

Now consider the matrix $H_{n_1 , n_2} (\mathbfe{\alpha})$, which is square if $n$ is even and almost square if $n$ is odd. Note that it has exactly $n+1$ skew-diagonals, and so each entry in $\mathbfe{\alpha}$ appears in the matrix. We define 
\begin{align*}
\pi (\mathbfe{\alpha})
:= \rank H_{n_1 , n_2} (\mathbfe{\alpha}) - \rho (\mathbfe{\alpha}) .
\end{align*}

We will later see that $\rho (\mathbfe{\alpha})$ and $\pi (\mathbfe{\alpha})$ will help us in understanding the rank and kernel of not just $H_{n_1 , n_2} (\mathbfe{\alpha})$ but all $H_{l , m} (\mathbfe{\alpha})$ for $l+m-2 =n$. \\

Thus, it is appropriate to extend the definitions of $\rho (\mathbfe{\alpha})$ and $\pi (\mathbfe{\alpha})$ to the matrices associated to $\mathbfe{\alpha}$: For $l+m-2 = n$, we define $\rho \big( H_{l,m} (\mathbfe{\alpha}) \big) = \rho (\mathbfe{\alpha})$ and $\pi \big( H_{l,m} (\mathbfe{\alpha}) \big) = \pi (\mathbfe{\alpha})$. In particular, these properties are dependent only on the underlying $\mathbfe{\alpha}$ and not on the shape of the matrix. Note that this definition does not apply to matrices with $l+m-2 < n$; in these cases, we will need to work with an appropriate truncation $\mathbfe{\alpha}'$ of $\mathbfe{\alpha}$ and work with $\rho (\mathbfe{\alpha}')$ and $\pi (\mathbfe{\alpha}')$ instead. \\

There are a couple of remarks we must make before continuing. 

\begin{remark}
In \cite{Garcia-ArmasGhorpadeRam2011_RelativePrimePolyNonsingHankelMatrFinField} they give the same definition for $\rho$, although they use the letter $\delta$ instead and it applies only to square Hankel matrices. We use the letter $\rho$ because it is consistent with the notation established in \cite[Subsection 5.6]{HeinigRost1984_AlgMethToeplitzMatrOperat}. Here they define the $(\rho , \pi )$-characteristic of a Hankel matrix $H$ (denoted by $\HankelChar H$) to be $\big( \rho (H) , \pi (H) \big)$. Technically, they give a different definition; although, the results we establish later allow us to see that it is equivalent to the definition we give. The benefit of our definition is that it can be given before introducing results on the kernel structure of Hankel matrices. 
\end{remark}

\begin{remark} \label{remark, bounds for rho_1 and pi_1}
Suppose $\mathbfe{\alpha} = (\alpha_0 , \ldots , \alpha_n) \in \mathbb{F}_q^{n+1}$. By definition, $\rho (\mathbfe{\alpha}) $ can take values in $\{ 0 , 1, \ldots , n_1 \}$. Given that $\pi (\mathbfe{\alpha}) := \rank H_{n_1 , n_2 } (\mathbfe{\alpha}) - \rho (\mathbfe{\alpha}) \leq n_1 - \rho (\mathbfe{\alpha})$, we can see that $\pi (\mathbfe{\alpha})$ can take values in $\{ 0 , 1 , \ldots , n_1 - \rho (\mathbfe{\alpha}) \}$. \\

In fact, these are all attainable if $n$ is odd, but if $n$ is even then $\pi (\mathbfe{\alpha})$ can only attain values in $\big\{ 0 , 1 , \ldots , \max \{ 0 , n_1 - \rho (\mathbfe{\alpha}) -1 \} \big\}$. That is, we cannot have $\pi (\mathbfe{\alpha}) = n_1 - \rho (\mathbfe{\alpha})$ when $\rho (\mathbfe{\alpha}) \neq n_1$. Indeed, for a contradiction, suppose we do have $\pi (\mathbfe{\alpha}) = n_1 - \rho (\mathbfe{\alpha})$ and $\rho (\mathbfe{\alpha}) \neq n_1$. Then,
\begin{align*}
\rank H_{n_1 , n_1 } (\mathbfe{\alpha})
= \rank H_{n_1 , n_2 } (\mathbfe{\alpha})
= \rho (\mathbfe{\alpha}) + \pi (\mathbfe{\alpha})
= n_1 ,
\end{align*}
where the first equality uses the fact that $n_1 = n_2$ (since $n$ is even). Thus, $H_{n_1 , n_1 } (\mathbfe{\alpha})$ has full rank and so $\rho (\mathbfe{\alpha}) = n_1$ by definition, which obviously contradicts $\rho (\mathbfe{\alpha}) \neq n_1$. When $n$ is odd, we have $n_1 \neq n_2$ and so the first relation above does not hold, meaning we do not encounter this contradiction.
\end{remark}

We now make several definitions for sets of finite sequences and Hankel matrices.

\begin{definition}
We define,
\begin{align*}
\mathscr{L}_n (r) := &\{ \mathbfe{\alpha} \in \mathbb{F}_q^{n+1} : \rank H_{n_1 , n_1 } (\mathbfe{\alpha}) = r \} , \\
\mathscr{L}_n (r , \rho_1 , \pi_1 ) := &\{ \mathbfe{\alpha} \in \mathscr{L}_n (r) : \rho (\mathbfe{\alpha}) = \rho_1 , \pi (\mathbfe{\alpha}) = \pi_1 \} .
\end{align*}
Of course, by definition of $\pi (\mathbfe{\alpha})$, we must have $\rho_1 + \pi_1 = r$, and so at times we may write $\mathscr{L}_n (\rho_1 + \pi_1 , \rho_1 , \pi_1 )$ or $\mathscr{L}_n (r, \rho_1 , r - \rho_1 )$, depending on what parameters we are using. We also define, for $h=0 , \ldots , n+1$,
\begin{align*}
\mathscr{L}_n^h := &\{ \mathbfe{\alpha} = (\alpha_0 , \ldots , \alpha_n ) \in \mathbb{F}_q^{n+1} : \alpha_0 , \ldots ,\alpha_{h-1} = 0 \} , \\
\mathscr{L}_n^h (r) := &\{ \mathbfe{\alpha} = (\alpha_0 , \ldots , \alpha_n ) \in \mathscr{L}_n (r) : \alpha_0 , \ldots ,\alpha_{h-1} = 0 \} , \\
\mathscr{L}_n^h (r , \rho_1 , \pi_1 ) := &\{ \mathbfe{\alpha} = (\alpha_0 , \ldots , \alpha_n ) \in \mathscr{L}_n (r, \rho_1 , \pi_1) : \alpha_0 , \ldots ,\alpha_{h-1} = 0 \} .
\end{align*}
In the above, we have three sets of parameters. The first relates to the length of the sequence $\mathbfe{\alpha}$, the second relates to the rank of the associated square (or nearly square) Hankel matrix, and the third relates to entries equal to zero at the start of the sequence. Note that when $h=0$ we have $\mathscr{L}_n^h = \mathbb{F}_q^{n+1}$, $\mathscr{L}_n^h (r) = \mathscr{L}_n (r)$, and $\mathscr{L}_n^h (r, \rho_1 , \pi_1 ) = \mathscr{L}_n (r, \rho_1 , \pi_1 )$. \\

For Hankel matrices, we make the following definitions:
\begin{align*}
\mathscr{H}_{l,m} (r) := & \{ H \in \mathscr{H}_{l,m}: \rank H = r \} , \\
\mathscr{H}_{l,m}^h := & \{ H = (\alpha_{i+j-2})_{\substack{1 \leq i \leq l \\ 1 \leq j \leq m}} \in \mathscr{H}_{l,m} : \alpha_ 0 , \ldots , \alpha_{h-1} = 0 \} , \\
\mathscr{H}_{l,m}^h (r) := & \{ H = (\alpha_{i+j-2})_{\substack{1 \leq i \leq l \\ 1 \leq j \leq m}} \in \mathscr{H}_{l,m} (r) : \alpha_ 0 , \ldots , \alpha_{h-1} = 0 \} ,
\end{align*}
for $h = 0 , \ldots , l+m-1$. Note that when $h=0$ we have $\mathscr{H}_{l,m}^h = \mathscr{H}_{l,m}$ and $\mathscr{H}_{l,m}^h (r) = \mathscr{H}_{l,m} (r)$. Note also that the parameter $r$ appearing in $\mathscr{L}_n (r)$ is not analogous to the parameter $r$ appearing in $\mathscr{H}_{l,m} (r)$. For example, if $\mathbfe{\alpha} \in \mathscr{L}_n (r)$ and $l+m-2 = n$, then we do not necessarily have $H_{l,m} (\mathbfe{\alpha}) \in \mathscr{H}_{l,m} (r)$; indeed, if $l<r$ then we have $H_{l,m} (\mathbfe{\alpha}) \in \mathscr{H}_{l,m} (l)$.
\end{definition}

As we will see later, the number of zeros that appear at the start of our matrix is important for the variance of the divisor function over intervals. The definitions above incorporate various parameters, which are not all considered in \cite{HeinigRost1984_AlgMethToeplitzMatrOperat} or \cite{Garcia-ArmasGhorpadeRam2011_RelativePrimePolyNonsingHankelMatrFinField}. Thus, our notation is different. \\

\begin{remark} \label{remark, bounds on h parameter for Hankel matrices}
Consider $\mathscr{L}_n^h (r, \rho_1 , \pi_1 )$, and suppose $\rho_1 \neq 0$. We must have that $h \leq \rho_1 -1$. Otherwise, for any $\mathbfe{\alpha} \in \mathscr{L}_n^h (r, \rho_1 , \pi_1 )$ we would have that $H_{\rho_1 , \rho_1} (\mathbfe{\alpha})$ is strictly lower skew-triangular\footnote{We say a square matrix is lower/upper skew-triangular if all entries above/below the main skew-diagonal are zero, and it is strictly so if the entries on the main skew diagonal are also zero.} and thus contradicting that $\det H_{\rho_1 , \rho_1} (\mathbfe{\alpha}) \neq 0$. \\

Now suppose that $\rho_1 = 0$. Then, $h \leq n+1-r$. This can be seen from the following reasoning. Since $\rho_1 = 0$, we have
\begin{align*}
\det H_{1,1} (\mathbfe{\alpha}) = 0 , \hspace{1em} \ldots , \hspace{1em} \det H_{n_1 , n_1} (\mathbfe{\alpha}) = 0 ,
\end{align*}
and so, by induction (as we later demonstrate), we have that $H_{n_1 , n_1} (\mathbfe{\alpha})$ is strictly lower-skew triangular. Thus, the matrix $H_{n_1 , n_2} (\mathbfe{\alpha})$ is of the form
\begin{align*}
\begin{pNiceMatrix}
0 & \Cdots & \Cdots & \Cdots & 0 \\
0 & \Cdots & \Cdots & 0 & \alpha_{n_1 } \\
0 & \Cdots & 0 & \alpha_{n_1 } & \alpha_{n_1 +1} \\
\Vdots & \Iddots & \Iddots & \Iddots & \Vdots \\
0 & \alpha_{n_1 } & \alpha_{n_1 +1} & \Cdots & \alpha_n
\end{pNiceMatrix}
\hspace{2em} \text{ or } \hspace{2em}
\begin{pNiceMatrix}
0 & \Cdots & \Cdots & 0 & \alpha_{n_1 } \\
0 & \Cdots & 0 & \alpha_{n_1 } & \alpha_{n_1 +1} \\
\Vdots & \Iddots & \Iddots & \Iddots & \Vdots \\
0 & \alpha_{n_1 } & \alpha_{n_1 +1} & \Cdots & \alpha_n
\end{pNiceMatrix}
\end{align*}
if $n$ is even or odd, respectively. Given that $\rank H_{n_1 , n_2} (\mathbfe{\alpha}) = r$, we can see that $H_{n_1 , n_2} (\mathbfe{\alpha})$ must be of the form
\begin{align*}
\begin{pNiceMatrix}
0 & \Cdots & \Cdots & \Cdots & \Cdots & \Cdots & 0 \\
\Vdots &  &  &  &  &  & \Vdots \\
0 & \Cdots & \Cdots & \Cdots & \Cdots & \Cdots & 0 \\
0 & \Cdots & \Cdots & \Cdots & \Cdots & 0 & \alpha_{n+1-r} \\
0 & \Cdots & \Cdots & \Cdots & 0 & \alpha_{n+1-r} & \alpha_{n+2-r} \\
\Vdots &  &  & \Iddots & \Iddots & \Iddots & \Vdots \\
0 & \Cdots & 0 & \alpha_{n+1-r} & \alpha_{n+2-r} & \Cdots & \alpha_n
\end{pNiceMatrix} 
\end{align*}
with $\alpha_{n+1-r} \neq 0$. (If $n$ is odd and $r=n_1$, then there should be no rows of zeros at the top, and the matrix above should be interpreted as such). In particular, this forces $h \leq n+1-r$, as required.
\end{remark}

\begin{definition}[Quasi-regular]
Suppose we have $\mathbfe{\alpha} = (\alpha_0 , \alpha_1 , \ldots , \alpha_n ) \in \mathbb{F}_q^{n+1}$. We say that $\mathbfe{\alpha}$ is quasi-regular if $\pi (\mathbfe{\alpha}) = 0$. For any integers $l,m \geq 1$ with $l+m-2=n$ we say $H_{l,m} (\mathbfe{\alpha})$ is quasi-regular if $\mathbfe{\alpha}$ is quasi-regular. Quasi-regularity is defined in Definition 5.8 of \cite{HeinigRost1984_AlgMethToeplitzMatrOperat}.
\end{definition}

Before proceeding, we make a couple of remarks on notation. Let $M$ be an $l \times m$ matrix and let $l_1 , l_2 , m_1 , m_2$ satisfy $l_1 + l_2 \leq l$ and $m_1 + m_2 \leq m$. Then, we define $M[l_1 , -l_2 ; m_1 , -m_2]$ to be the submatrix of $M$ consisting of the first $l_1$ and last $l_2$ rows, and the first $m_1$ and last $m_2$ columns. In the special cases when one or more of $l_1 , l_2 , m_1 , m_2$ are zero, we may not include them. For example, $M[l_1 ; -m_2]$ should be taken to be $M[l_1 , 0 ; 0 , -m_2 ]$. \\

There will be times where we will use the matrix
\begin{align*}
\begin{pNiceMatrix}
0 & \Cdots & \Cdots & \Cdots & \Cdots & 0 \\
\Vdots &  &  &  &  & \Vdots \\
0 & \Cdots & \Cdots & \Cdots & \Cdots & 0 \\
0 & \Cdots & \Cdots & \Cdots & 0 & \alpha_{(n+1)-i} \\
\Vdots &  &  & \Iddots & \Iddots & \Vdots \\
0 & \Cdots & 0 & \alpha_{(n+1)-i} & \Cdots & \alpha_n
\end{pNiceMatrix} ,
\end{align*}
perhaps with different letters and indexing. If $i$ is equal to the number of rows of the matrix above, then there should be no rows of zeros at the top of the matrix. In that case, the matrix should be interpreted as such even if there are rows of zeros indicated. Similarly, if $i$ is equal to the number of columns, then the matrix above should be interpreted as having no columns of zeros on the left. This is to avoid unnecessary technicalities when we are working with a range of values of $i$.

\subsection{The $(\rho , \pi )$-form of a Hankel Matrix}\label{subsection, rho pi form of Hankel matrix}

We are now able to introduce the $(\rho , \pi )$-form of a Hankel matrix. Generally, we apply a series of row operations to transform the matrix into one whose structure demonstrates the $(\rho , \pi )$-characteristic of the original matrix. It allows us to understand the ranks and kernels of Hankel matrices more easily, as we will see in Subsections \ref{subsection, matrices of given size and rank} and \ref{subsection, kernel structure}. This form was used for square Hankel matrices in \cite{Garcia-ArmasGhorpadeRam2011_RelativePrimePolyNonsingHankelMatrFinField}, although no terminology for this was given there, or elsewhere, as far as we are aware. Thus, we have introduced the terminology of ``$(\rho , \pi )$-form". We require a few results before giving the definition of $(\rho , \pi )$-form. \\

\begin{lemma} \label{lemma, Hankel rho_1 = 0 form}
Suppose $\mathbfe{\alpha} = (\alpha_0 , \ldots , \alpha_n )$ with $\rho (\mathbfe{\alpha}) = 0$ and let
\begin{align*}
\pi_1 
\in \begin{cases}
\{ 0 , 1 , \ldots , n_1 \} &\text{ if $n$ is odd,} \\
\{ 0 , 1 , \ldots , n_1 - 1 \} &\text{ if $n$ is even} 
\end{cases}
\end{align*}
(See Remark \ref{remark, bounds for rho_1 and pi_1} regarding the values that $\pi_1$ can take). Then, $\pi (\mathbfe{\alpha}) = \pi_1$ if and only if
\begin{align*}
\alpha_i
\in \begin{cases}
\{ 0 \} &\text{ for $i < (n+1) - \pi_1$} \\
\mathbb{F}_q^* &\text{ for $i = (n+1) - \pi_1$} \\
\mathbb{F}_q &\text{ for $i > (n+1) - \pi_1$} .
\end{cases}
\end{align*}
\end{lemma}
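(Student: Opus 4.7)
The plan is to reduce both directions of the equivalence to a single rank computation, exploiting the fact that once we know which initial entries of $\mathbfe{\alpha}$ vanish, the rank of $H_{n_1, n_2}(\mathbfe{\alpha})$ is pinned down completely. The proof splits naturally into two steps: first translating $\rho(\mathbfe{\alpha}) = 0$ into a vanishing condition on the $\alpha_i$, and then computing $\rank H_{n_1, n_2}(\mathbfe{\alpha})$ in terms of the first surviving entry.

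The first step is to show that $\rho(\mathbfe{\alpha}) = 0$ is equivalent to $\alpha_0 = \alpha_1 = \cdots = \alpha_{n_1 - 1} = 0$. The reverse implication is immediate, since each $H_{l,l}(\mathbfe{\alpha})$ with $l \leq n_1$ is then strictly lower skew-triangular and hence singular. For the forward implication, I would induct on $l \in \{1, \ldots, n_1\}$: assuming $\alpha_0 = \cdots = \alpha_{l-2} = 0$, all entries of $H_{l,l}(\mathbfe{\alpha})$ strictly above the antidiagonal vanish. The Leibniz expansion of the determinant then collapses onto the reversal permutation $\sigma(i) = l+1-i$, because any $\sigma$ contributing nontrivially must satisfy $i + \sigma(i) \geq l+1$ for every $i$, and $\sum_i (i + \sigma(i)) = l(l+1)$ forces equality. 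This gives $\det H_{l,l}(\mathbfe{\alpha}) = \pm \alpha_{l-1}^{\,l}$, so the hypothesis $\det H_{l,l}(\mathbfe{\alpha}) = 0$ yields $\alpha_{l-1} = 0$, completing the induction.

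Next, assuming $\alpha_0 = \cdots = \alpha_{n_1-1} = 0$, let $k$ be the smallest index in $\{n_1, \ldots, n+1\}$ with $\alpha_k \neq 0$, where $k = n+1$ is interpreted as the sequence being identically zero. Row $i$ of $H_{n_1, n_2}(\mathbfe{\alpha})$ consists of $\alpha_{i-1}, \alpha_i, \ldots, \alpha_{i+n_2-2}$ and vanishes precisely when $i + n_2 - 2 < k$. Hence at most $n_1 - (k - n_2 + 1) = n+1-k$ rows are nonzero (using $n_1 + n_2 = n+2$), giving $\rank H_{n_1, n_2}(\mathbfe{\alpha}) \leq n+1-k$. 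For the matching lower bound when $k \leq n$, I would inspect the bottom-right $(n+1-k) \times (n+1-k)$ submatrix: its entries above the antidiagonal have index strictly less than $k$ and hence vanish, while its antidiagonal entries all equal $\alpha_k$. The Leibniz argument from the previous step applies verbatim to yield determinant $\pm \alpha_k^{\,n+1-k} \neq 0$, so the rank is exactly $n+1-k$.

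Combining these bounds, $\pi(\mathbfe{\alpha}) = \rank H_{n_1, n_2}(\mathbfe{\alpha}) - \rho(\mathbfe{\alpha}) = n+1-k$, so the condition $\pi(\mathbfe{\alpha}) = \pi_1$ is equivalent to $k = n+1-\pi_1$, which is precisely the form asserted in the lemma. The main obstacle is bookkeeping rather than anything technically deep: one must keep the rank bounds and indexing uniform across the even and odd $n$ cases (where the matrix is respectively square and rectangular), and verify that the extremal values of $\pi_1$ permitted by Remark \ref{remark, bounds for rho_1 and pi_1} correspond correctly to $k \in \{n_1, \ldots, n+1\}$ --- in particular, the excluded value $\pi_1 = n_1$ for even $n$ is exactly what prevents the illegal case $k = n_1 - 1 < n_1$.
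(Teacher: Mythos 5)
Your proposal is correct and follows essentially the same route as the paper: an inductive determinant argument on the top-left square submatrices to force $\alpha_0 = \cdots = \alpha_{n_1-1} = 0$, followed by reading off the rank of the resulting lower skew-triangular matrix from its first nonzero skew-diagonal. Your Leibniz-expansion justification that $\det H_{l,l}(\mathbfe{\alpha}) = \pm\alpha_{l-1}^{\,l}$ simply makes explicit the step the paper leaves as ``proceeding in an inductive manner,'' and your index bookkeeping for the admissible range of $\pi_1$ checks out.
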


\begin{proof}
We begin with the forward implication. Let $H := H_{n_1 , n_2} (\mathbfe{\alpha})$. Since $\rho (\mathbfe{\alpha}) = 0$, we have $\det H [i;i] = 0$ for $i=1 , \ldots n_1$. When $i=1$ this gives
\begin{align*}
0 
= \det H [1;1] 
= \alpha_0 .
\end{align*}
When $i=2$ it gives
\begin{align*}
0 
= \det H [2;2] 
= \det \begin{pmatrix} 0 & \alpha_1 \\ \alpha_1 & \alpha_2 \end{pmatrix} 
= - {\alpha_1 }^2 ,
\end{align*}
meaning $\alpha_1 = 0$. Proceeding as above in an inductive manner, we deduce that $\alpha_0 , \ldots , \alpha_{n_1 -1} = 0$. Thus, if $n$ is even or odd, we have
\begin{align*}
H
= \begin{pNiceMatrix}
0 & \Cdots & \Cdots & \Cdots & 0 \\
0 & \Cdots & \Cdots & 0 & \alpha_{n_1} \\
0 & \Cdots & 0 & \alpha_{n_1} & \alpha_{n_1 +1} \\
\Vdots & \Iddots & \Iddots & \Iddots & \Vdots \\
0 & \alpha_{n_1 } & \alpha_{n_1 +1} & \Cdots & \alpha_n
\end{pNiceMatrix}
\hspace{2em} \text{ or } \hspace{2em}
H =
\begin{pNiceMatrix}
0 & \Cdots & \Cdots & 0 & \alpha_{n_1} \\
0 & \Cdots & 0 & \alpha_{n_1} & \alpha_{n_1 +1} \\
\Vdots & \Iddots & \Iddots & \Iddots & \Vdots \\
0 & \alpha_{n_1 } & \alpha_{n_1 +1} & \Cdots & \alpha_n
\end{pNiceMatrix} ,
\end{align*}
respectively. Now, suppose $i$ is the largest element in the set $\{ 1 , 2 , \ldots , (n+1) - n_1 \}$ satisfying $\alpha_{(n+1) - i} \neq 0$ (such an $i$ must exist, unless we are working with the zero matrix, in which case we take $i=0$). Then, 
\begin{align*}
H
= \begin{pNiceMatrix}
0 & \Cdots & \Cdots & \Cdots & \Cdots & 0 \\
\Vdots &  &  &  &  & \Vdots \\
0 & \Cdots & \Cdots & \Cdots & \Cdots & 0 \\
0 & \Cdots & \Cdots & \Cdots & 0 & \alpha_{(n+1)-i} \\
\Vdots &  &  & \Iddots & \Iddots & \Vdots \\
0 & \Cdots & 0 & \alpha_{(n+1)-i} & \Cdots & \alpha_n
\end{pNiceMatrix} .
\end{align*}
Recalling that $\alpha_{(n+1)-i} \neq 0$, we can clearly see that $\rank H = i$. Since $\rank H = \rho (\mathbfe{\alpha}) + \pi (\mathbfe{\alpha}) = \pi_1$, we see that $i=\pi_1$. In particular, $\alpha_{(n+1) - \pi_1} \neq 0$; while for $i < (n+1) - \pi_1$ we have $\alpha_i = 0$, and for $i > (n+1) - \pi_1$ we have $\alpha_i \in \mathbb{F}_q$. This concludes the forward implication. \\

The backward implication follows easily given some of the reasoning that we have established above.
\end{proof}

\begin{remark} \label{remark, Hankel rho_1 = 0 form, visualisation}
Let $l+m -2 = n$. It is helpful to visualise what the matrix $H_{l,m} (\mathbfe{\alpha})$ looks like given $\rho (\mathbfe{\alpha}) = 0$ and $\pi (\mathbfe{\alpha}) = \pi_1$. For presentational purposes, we use $1$ to denote an entry in $\mathbb{F}_q^*$, we use $*$ to denote an entry in $\mathbb{F}_q$, and $0$ denotes $0$ as usual. For $l < \pi_1$, we have
\begin{align*}
H_{l,m} (\mathbfe{\alpha})
= \begin{pNiceMatrix}
0 & \Cdots & \Cdots & \Cdots & \Cdots & 0 & 1 & * & \Cdots & * \\
0 & \Cdots & \Cdots & \Cdots & 0 & 1 & * & \Cdots & \Cdots & * \\
\Vdots &  &  & \Iddots & \Iddots & \Iddots &  &  &  & \Vdots \\
0 & \Cdots & 0 & 1 & * & \Cdots & \Cdots & \Cdots & \Cdots & *
\end{pNiceMatrix} .
\end{align*}
This has full row rank. We can describe the kernel in some manner, although it is more helpful when $l,m \geq \pi_1$. In this case, we have
\begin{align*}
H_{l,m} (\mathbfe{\alpha})
= \begin{pNiceMatrix}
0 & \Cdots & \Cdots & \Cdots & \Cdots & \Cdots & 0 \\
\Vdots &  &  &  &  &  & \Vdots \\
0 & \Cdots & \Cdots & \Cdots & \Cdots & \Cdots & 0 \\
0 & \Cdots & \Cdots & \Cdots & \Cdots & 0 & 1 \\
0 & \Cdots & \Cdots & \Cdots & 0 & 1 & * \\
\Vdots &  &  & \Iddots & \Iddots & \Iddots & \Vdots \\
0 & \Cdots & 0 & 1 & * & \Cdots & * 
\end{pNiceMatrix} ,
\end{align*}
%Of course, when $l= \pi_1$ or $m=\pi_1$, we have no rows of zeros at the top or no columns of zeros on the left, respectively. 
where there are exactly $\pi_1$ number of $1$s. This has rank equal to $\pi_1$, we can clearly see that any vector in the kernel must have zeros in its last $\pi_1$ positions. For $m < \pi_1$, we have
\begin{align*}
H_{l,m} (\mathbfe{\alpha})
= \begin{pNiceMatrix}
0 & 0 & \Cdots & 0 \\
\Vdots & \Vdots &  & \Vdots \\
\Vdots & \Vdots &  & 0 \\
\Vdots & \Vdots & \Iddots & 1 \\
\Vdots & 0 & \Iddots & * \\
0 & 1 & \Iddots & \Vdots \\
1 & * &  & \Vdots \\
* & \Vdots &  & \Vdots \\
\Vdots & \Vdots &  & \Vdots \\
* & * & \Cdots & * \\
\end{pNiceMatrix} .
\end{align*}
This has full column rank and so the kernel is trivial.
\end{remark}

\begin{lemma} \label{lemma, (rho,pi) form when 1 rho < n_1}
Suppose $\mathbfe{\alpha} = (\alpha_0 , \ldots , \alpha_n )$ with $\rho (\mathbfe{\alpha}) = \rho_1 \in \{ 1 , 2 , \ldots , n_1 -1 \}$ and
\begin{align*}
\pi (\mathbfe{\alpha})
= \pi_1
\in \begin{cases}
\{ 0 , 1 , \ldots , n_1 - \rho_1 \} &\text{ if $n$ is odd,} \\
\{ 0 , 1 , \ldots , n_1 - \rho_1 - 1 \} &\text{ if $n$ is even} 
\end{cases}
\end{align*}
(See Remark \ref{remark, bounds for rho_1 and pi_1} regarding the values that $\pi_1$ can take). Suppose $l+m-2 = n$ with $l > \rho_1$, and let $H := H_{l , m} (\mathbfe{\alpha})$. Define $\mathbf{x} = (x_0 , \ldots , x_{\rho_1 -1})^T$ to be the vector that satisfies
\begin{align*}
H [\rho_1 , \rho_1 ] \mathbf{x}
= \begin{pmatrix}
\alpha_{\rho_1} \\ \alpha_{\rho_1 +1} \\ \vdots \\ \alpha_{2\rho_1 - 1}
\end{pmatrix}.
\end{align*}
Let $R_i$ be the $i$-th row of $H_{l,m} (\mathbfe{\alpha})$. If we apply the row operations
\begin{align*}
R_i \longrightarrow R_i - (x_0 , \ldots , x_{\rho_1 -1}) \begin{pmatrix} R_{i - \rho_1 } \\ \vdots \\ R_{i-1} \end{pmatrix} 
	= R_i - x_0 R_{i - \rho_1 } - \ldots - x_{\rho_1 -1} R_{i-1} 
\end{align*}
for $i = n_1 , n_1 -1 , \ldots , \rho_1 +1$ in that order, then we obtain a matrix
\begin{align} \label{statement, (rho,pi) form when 1 rho < n_1, Hankel above Hankel form}
\begin{pmatrix}
H_{\rho_1 , m} (\mathbfe{\alpha}' )\\
\hline 
H_{l - \rho_1 , m} (\mathbfe{\beta})
\end{pmatrix} ,
\end{align}
where
\begin{align*}
\mathbfe{\alpha}' = &(\alpha_0 , \ldots , \alpha_{\rho_1 + m - 2} ) \\
\mathbfe{\beta} = &(\beta_{\rho_1} , \ldots , \beta_{n})
\end{align*}
and
\begin{align} \label{statement, (rho,pi) form when 1 rho < n_1, beta_i conditions}
\beta_i
\in \begin{cases}
\{ 0 \} &\text{ if $i < (n+1) - \pi_1$} \\
\mathbb{F}_q^* &\text{ if $i = (n+1) - \pi_1$} \\
\mathbb{F}_q &\text{ if $i > (n+1) - \pi_1$} .
\end{cases}
\end{align}
Furthermore, the sequence $\mathbfe{\beta}$ is independent of the specific values taken by $l,m$ (as long as $l > \rho_1$).
\end{lemma}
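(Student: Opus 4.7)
The plan has three stages: verify the block form of the transformed matrix by a direct computation; use the defining equation for $\mathbf{x}$ together with an induction on the vanishing principal minors $\det H_{\rho_1 + t, \rho_1 + t}(\mathbfe{\alpha})$ to force as many $\beta_s$ to vanish as the assumption $\rho(\mathbfe{\alpha}) = \rho_1$ will support; and finally use the total-rank hypothesis $\pi(\mathbfe{\alpha}) = \pi_1$ to pin down the remaining structure via Lemma \ref{lemma, Hankel rho_1 = 0 form}.

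For the first stage, $\rho(\mathbfe{\alpha}) = \rho_1$ forces $H[\rho_1, \rho_1]$ invertible, so $\mathbf{x}$ is well-defined. Performing the row operations in descending order guarantees that every combination uses only as-yet-unmodified rows, so the new $(i,j)$-entry for $i > \rho_1$ is
\begin{align*}
\alpha_{i+j-2} - \sum_{k=0}^{\rho_1 - 1} x_k \alpha_{(i - \rho_1 + k) + j - 2},
\end{align*}
which depends only on $s := i + j - 2$. Setting $\beta_s := \alpha_s - \sum_{k=0}^{\rho_1 - 1} x_k \alpha_{s - \rho_1 + k}$ for $s \in [\rho_1, n]$ therefore exhibits the bottom part as $H_{l - \rho_1, m}(\mathbfe{\beta})$, and this formula visibly involves only $\mathbfe{\alpha}$ and $\mathbf{x}$, establishing the $(l,m)$-independence of $\mathbfe{\beta}$.

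The defining equation for $\mathbf{x}$ reads exactly as $\beta_s = 0$ for $s = \rho_1, \ldots, 2\rho_1 - 1$. To extend this I would induct on $t = 1, \ldots, n_1 - \rho_1$, using that $\rho(\mathbfe{\alpha}) = \rho_1$ yields $\det H_{\rho_1 + t, \rho_1 + t}(\mathbfe{\alpha}) = 0$. Under the hypothesis $\beta_{\rho_1}, \ldots, \beta_{2\rho_1 + t - 2} = 0$, the transformed top-left $(\rho_1 + t) \times (\rho_1 + t)$ submatrix takes the block shape $\bigl(\begin{smallmatrix} H[\rho_1, \rho_1] & * \\ 0 & D \end{smallmatrix}\bigr)$ where $D$ is the $t \times t$ Hankel block with entries $\beta_{2\rho_1 + k + \ell - 2}$ -- strictly lower skew-triangular with $\beta_{2\rho_1 + t - 1}$ on its main anti-diagonal. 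Since row operations preserve determinants, the Schur identity gives
\begin{align*}
0 \;=\; \det H[\rho_1, \rho_1] \cdot \det D \;=\; \pm\, \det H[\rho_1, \rho_1] \cdot \beta_{2\rho_1 + t - 1}^{\,t},
\end{align*}
forcing $\beta_{2\rho_1 + t - 1} = 0$. Iterating through all $t \in \{1, \ldots, n_1 - \rho_1\}$ yields $\beta_s = 0$ for every $s \in [\rho_1, \rho_1 + n_1 - 1]$.

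For the final stage, reindex $\tilde{\beta}_i := \beta_{\rho_1 + i}$ as a sequence of length $n - \rho_1 + 1$ and apply Lemma \ref{lemma, Hankel rho_1 = 0 form}. The zeros already produced make every relevant top-left square submatrix of $H(\tilde{\mathbfe{\beta}})$ strictly lower skew-triangular, so $\rho(\tilde{\mathbfe{\beta}}) = 0$. To match $\pi(\tilde{\mathbfe{\beta}}) = \pi_1$, I would observe that the row operations preserve the rank $r = \rho_1 + \pi_1$; the top block has rank exactly $\rho_1$ (its first $\rho_1$ columns are $H[\rho_1, \rho_1]$); and in the case $l = n_1$ the first $\rho_1 + 1$ columns of the bottom block vanish by the zeros just proved, while any nonzero vector in the top block's row span has nonzero projection onto its first $\rho_1$ coordinates. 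Hence the two row spans intersect trivially, forcing $\rank(\text{bottom}) = \pi_1$; Remark \ref{remark, Hankel rho_1 = 0 form, visualisation} then identifies this rank with $\pi(\tilde{\mathbfe{\beta}})$, and Lemma \ref{lemma, Hankel rho_1 = 0 form} delivers exactly (\ref{statement, (rho,pi) form when 1 rho < n_1, beta_i conditions}). The main obstacle is the inductive determinant step: one must verify that, after the inductive vanishings, the Hankel structure aligns so perfectly that the Schur complement collapses to a single power of $\beta_{2\rho_1 + t - 1}$, which is precisely why the operations are performed in descending order and why $\mathbf{x}$ is defined exactly as it is.
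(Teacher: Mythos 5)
Your proof is correct and follows essentially the same route as the paper: the same observation that the transformed entries depend only on the skew-diagonal index (giving the Hankel form of the bottom block and the $(l,m)$-independence of $\mathbfe{\beta}$), the same induction using $\det H_{\rho_1+t,\rho_1+t}(\mathbfe{\alpha})=0$ together with the block-triangular determinant factorisation to force $\beta_{\rho_1},\ldots,\beta_{n_1+\rho_1-1}=0$, and a final rank count locating the first non-zero skew-diagonal. The only cosmetic difference is in that last step: you compute $\rank H'=\rho_1+\rank(\text{bottom block})$ by showing the two row spans meet trivially, whereas the paper exhibits the first $\rho_1$ and last $\pi_1$ columns as a basis of the column space; these are dual versions of the same argument.
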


\begin{remark} \label{remark, lemma (rho,pi) form when 1 rho < n_1 clarification}
In order to keep the lemma above succinct, we avoided various explanatory remarks. We give them here, for clarity. \\

The matrix $H [\rho_1 , \rho_1 ]$ is the largest top-left submatrix of $H$ that is invertible, by definition of $\rho_1$. It is independent of the specific values taken by $l,m$. The vector $(\alpha_{\rho_1} , \alpha_{\rho_1 +1} , \ldots , \alpha_{2\rho_1 - 1})^T$ is simply the column of entries directly to the right of $H [\rho_1 , \rho_1 ]$ in $H$, which is also equal to the transpose of the row of entries directly below $H [\rho_1 , \rho_1 ]$. \\

The rows operations that we apply start at the last row and end at the row just below the submatrix $H [\rho_1 , \rho_1 ]$. \\

The lemma states that after the row operations are applied, we are left with the matrix
\begin{align} \label{statement, (rho,pi) form when 1 rho < n_1 remark, Hankel over Hankel explicit}
\begin{pmatrix}
H_{\rho_1 , m} (\mathbfe{\alpha}' )\\
\hline 
H_{n_1 - \rho_1 , m} (\mathbfe{\beta})
\end{pmatrix}
= \begin{pNiceMatrix}
\alpha_0 & \alpha_1 & \Cdots & \Cdots & \Cdots & \alpha_{m-1} \\
\alpha_1 &  &  &  &  & \Vdots \\
\Vdots &  &  &  &  & \Vdots \\
\Vdots &  &  &  &  & \alpha_{m+\rho_1 - 3} \\
\alpha_{\rho_1-1}  & \Cdots & \Cdots & \Cdots & \alpha_{m+\rho_1 - 3} & \alpha_{m+\rho_1 - 2} \\
\hline
\beta_{\rho_1} & \beta_{\rho_1 +1} & \Cdots & \Cdots & \Cdots & \beta_{m+\rho_1 -1} \\
\beta_{\rho_1 +1} &  &  &  &  & \Vdots \\
\Vdots &  &  &  &  & \Vdots \\
\Vdots &  &  &  &  & \beta_{n-1} \\
\beta_{l-1} & \Cdots & \Cdots & \Cdots & \beta_{n-1} & \beta_{n} 
\end{pNiceMatrix}
\end{align}
Of course, the lemma states that some of the $\beta_i$ are zero, but we do not demonstrate this above so that we can instead see that the indexing is preserved: An entry in the $i$-th skew-diagonal is always either $\alpha_i$ or $\beta_i$.
\end{remark}

\begin{remark} \label{remark, Hankel rho_1 in 1 , ... , n_1 -1 form, visualisation}
As in Remark \ref{remark, Hankel rho_1 = 0 form, visualisation}, it is helpful to visualise what the matrix
\begin{align*}
H'
:= \begin{pmatrix}
H_{\rho_1 , m} (\mathbfe{\alpha}' ) \\
\hline 
H_{l - \rho_1 , m} (\mathbfe{\beta})
\end{pmatrix}
\end{align*}
looks like. Again, for presentational purposes, we use $1$ to denote an entry in $\mathbb{F}_q^*$, we use $*$ to denote an entry in $\mathbb{F}_q$, and $0$ denotes $0$ as usual. If $l \leq \rho_1$, then the lemma above does not apply. We simply remark that in this case $H$ has full row rank, which follows from the fact that $H [ \rho_1 , \rho_1 ]$ is invertible and thus has full rank. \\

Now, suppose $l,m \geq \rho_1 + \pi_1$. Given (\ref{statement, (rho,pi) form when 1 rho < n_1, Hankel above Hankel form}) and (\ref{statement, (rho,pi) form when 1 rho < n_1, beta_i conditions}), we can see that
\begin{align} \label{remark, (rho,pi) form when 1 rho < n_1 lemma, l,m geq rho_1 + pi_1 form}
H'
= \begin{pmatrix}
H [\rho_1 , \rho_1] & \vline & H [\rho_1 , -(m-\rho_1)] \\
\hline
\mathbf{0} & \vline & 
\begin{NiceMatrix}
0 & \Cdots & \Cdots & \Cdots & \Cdots & \Cdots & 0 \\
\Vdots &  &  &  &  &  & \Vdots \\
0 & \Cdots & \Cdots & \Cdots & \Cdots & \Cdots & 0 \\
0 & \Cdots & \Cdots & \Cdots & \Cdots & 0 & 1 \\
0 & \Cdots & \Cdots & \Cdots & 0 & 1 & * \\
\Vdots &  &  & \Iddots & \Iddots & \Iddots & \Vdots \\
0 & \Cdots & 0 & 1 & * & \Cdots & * 
\end{NiceMatrix}
\end{pmatrix} ,
\end{align}
where there are exactly $\pi_1$ number of $1$s in the bottom-right submatrix. One reason that this is helpful is that the bottom two submatrices imply that any vector in the kernel of $H'$ must have zeros in its last $\pi_1$ entries. Given that row operations do not affect the kernel, the same can be said for $H$. Furthermore, we can see that the rank of $H'$ (which is equal to the rank of $H$) is equal to the number of rows of $H [\rho_1 , \rho_1 ]$ (which is invertible) added to the number of $1$s in the bottom-right submatrix. That is, the rank of $H'$ (and $H$) is $\rho_1 + \pi_1 = r$. \\

Now, we wish to consider the case $\rho_1 < l < \rho_1 + \pi_1$ (which requires $\pi_1 \geq 2$). We can do this by repeatedly removing a row and adding a column to (\ref{remark, (rho,pi) form when 1 rho < n_1 lemma, l,m geq rho_1 + pi_1 form}), while maintaining that the bottom two matrices form a Hankel matrix and that the top two matrices form a Hankel matrix. This is permissible because, as stated in Lemma \ref{lemma, (rho,pi) form when 1 rho < n_1}, the sequence $\mathbfe{\beta}$ is independent of the values of $l$ and $m$. We can then see that when $\rho_1 < l < \rho_1 + \pi_1$ we have a matrix of the form
\begin{align} \label{remark, (rho,pi) form when 1 rho < n_1 lemma, rho_1 leq l < rho_1 + pi_1 form}
H'
= \begin{pmatrix}
H [\rho_1 , \rho_1] & \vline & H [\rho_1 , -(m-\rho_1)] \\
\hline
\mathbf{0} & \vline & 
\begin{NiceMatrix}
0 & \Cdots & \Cdots & \Cdots & \Cdots & 0 & 1 & * & \Cdots & * \\
0 & \Cdots & \Cdots & \Cdots & 0 & 1 & * & \Cdots & \Cdots & * \\
\Vdots &  &  & \Iddots & \Iddots & \Iddots &  &  &  & \Vdots \\
0 & \Cdots & 0 & 1 & * & \Cdots & \Cdots & \Cdots & \Cdots & *
\end{NiceMatrix}
\end{pmatrix}
\end{align}
Note that the $1$s still appear to the right of $H [\rho_1 , \rho_1]$. Given this fact, and the invertibility of $H [\rho_1 , \rho_1]$, we see that $H'$ (and $H$) has full row rank. \\

For the case $m < \rho_1 + \pi_1$, we can again take (\ref{remark, (rho,pi) form when 1 rho < n_1 lemma, l,m geq rho_1 + pi_1 form}), but this time we repeatedly remove a column and add a row. Similar to above, we can see that we will have full column rank. In particular, the kernel will be trivial.
\end{remark}

Remarks \ref{remark, Hankel rho_1 = 0 form, visualisation} and \ref{remark, Hankel rho_1 in 1 , ... , n_1 -1 form, visualisation} effectively establish the following Corollary.

\begin{corollary} \label{corollary, rank of recatnagular Hankel matrix given r of sequence}
Let $\mathbfe{\alpha} \in \mathscr{L}_n (r)$, and let $l+m-2 = n$. Then,
\begin{align*}
\rank H_{l,m} (\mathbfe{\alpha})
= \begin{cases}
r &\text{ if $\min \{ l , m \} \geq r$,} \\
\min \{ l,m \} &\text{ if $\min \{ l , m \} < r$.} 
\end{cases}
\end{align*}
Of course, if we are working with particular values of $\rho (\mathbfe{\alpha})$ and $\pi (\mathbfe{\alpha})$, then we can replace $r$ with $\rho (\mathbfe{\alpha}) + \pi (\mathbfe{\alpha})$.
\end{corollary}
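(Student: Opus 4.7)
The plan is to deduce the rank formula directly from the structural descriptions of $H_{l,m}(\mathbfe{\alpha})$ established in Remarks \ref{remark, Hankel rho_1 = 0 form, visualisation} and \ref{remark, Hankel rho_1 in 1 , ... , n_1 -1 form, visualisation}, splitting on whether $\rho_1 := \rho(\mathbfe{\alpha})$ vanishes or is positive. Because elementary row operations preserve rank, it will suffice to read the rank off from the transformed block form (when $\rho_1 \geq 1$) or directly from the staircase form guaranteed by Lemma \ref{lemma, Hankel rho_1 = 0 form} (when $\rho_1 = 0$), so essentially no new computation is required beyond citing those results.

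First I would handle the case $\rho_1 = 0$, where $r = \pi_1$. Remark \ref{remark, Hankel rho_1 = 0 form, visualisation} lays out three explicit shapes for $H_{l,m}(\mathbfe{\alpha})$ according to whether $l < \pi_1$, whether $l,m \geq \pi_1$, or whether $m < \pi_1$. In these three cases the pictured matrix visibly has full row rank $l$, rank exactly $\pi_1$, and full column rank $m$, respectively. I would then verify that these three cases are mutually exclusive and exhaustive using $l + m = n+2$ together with the bound $\pi_1 \leq n_1$ recorded in Remark \ref{remark, bounds for rho_1 and pi_1}.

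Next I would turn to the case $\rho_1 \geq 1$. If $l \leq \rho_1$ then the first $l$ rows of $H_{l,m}(\mathbfe{\alpha})$ contain, as a leading block, the first $l$ rows of the invertible submatrix $H[\rho_1,\rho_1]$, so $H_{l,m}(\mathbfe{\alpha})$ has full row rank $l$. Otherwise I would invoke Lemma \ref{lemma, (rho,pi) form when 1 rho < n_1} to reduce $H_{l,m}(\mathbfe{\alpha})$ to the Hankel-over-Hankel form and then read the rank off the pictures (\ref{remark, (rho,pi) form when 1 rho < n_1 lemma, l,m geq rho_1 + pi_1 form}) and (\ref{remark, (rho,pi) form when 1 rho < n_1 lemma, rho_1 leq l < rho_1 + pi_1 form}) of Remark \ref{remark, Hankel rho_1 in 1 , ... , n_1 -1 form, visualisation}: in the sub-case $l,m \geq \rho_1+\pi_1$ the rank equals $\rho_1+\pi_1 = r$; in the sub-case $\rho_1 < l < \rho_1+\pi_1$ the matrix has full row rank $l$; and the symmetric sub-case $m < \rho_1+\pi_1$ yields full column rank $m$.

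The only remaining step is a bookkeeping check: whenever the argument produces full row rank $l$, I must verify $l \leq m$, and whenever it produces full column rank $m$, I must verify $m \leq l$, so that $l$ or $m$ agrees with $\min\{l,m\}$ as claimed. Both inequalities fall out from $l+m = n+2$ combined with $r \leq n_1 \leq (n+2)/2$. I do not anticipate any serious obstacle: the structural remarks essentially do all the work, and the only potential pitfall is making sure the sub-cases are enumerated disjointly at the boundary values $l = \rho_1$ and $l = \rho_1 + \pi_1$ (and the analogous boundaries in $m$), so I would be careful to track those edge cases when writing out the argument in detail.
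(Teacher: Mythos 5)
Your proposal is correct and follows essentially the same route as the paper, which simply states that Remarks \ref{remark, Hankel rho_1 = 0 form, visualisation} and \ref{remark, Hankel rho_1 in 1 , ... , n_1 -1 form, visualisation} ``effectively establish'' the corollary by reading the rank off the $(\rho,\pi)$-forms. Your additional bookkeeping (checking that full row rank $l$ occurs only when $l=\min\{l,m\}$, and handling the boundary cases, including $\rho_1=n_1$ where Lemma \ref{lemma, (rho,pi) form when 1 rho < n_1} does not apply but one of $l\leq\rho_1$ or $m\leq\rho_1$ always holds) only makes the argument more complete than what the paper records.
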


We give a further, final remark in order to demonstrate the usefulness of the $(\rho , \pi )$-form.

\begin{remark}
Lemma \ref{lemma, Hankel rho_1 = 0 form} allows us to easily determine the number of $\mathbfe{\alpha} \in \mathbb{F}_q^{n+1}$ with $\rho (\mathbfe{\alpha}) = 0$ and $\pi (\mathbfe{\alpha}) = \pi_1$. Regarding Lemma \ref{lemma, (rho,pi) form when 1 rho < n_1}, we can easily count the number of possible values that the matrix $H_{l - \rho_1 , m} (\mathbfe{\beta})$ could take. It is not immediately obvious what  the number of values the matrix $H_{\rho_1 , m} (\mathbfe{\alpha}' )$ could take, but we are working with a smaller matrix now, and so this suggests using an inductive argument, which is what we do in Subsection \ref{subsection, matrices of given size and rank}.
\end{remark}

We now proceed to prove Lemma \ref{lemma, (rho,pi) form when 1 rho < n_1}. 

\begin{proof}[Proof of Lemma \ref{lemma, (rho,pi) form when 1 rho < n_1}]
Given that row operations are only applied to rows $\rho_1 +1$ to $n_1$, it is clear that we do indeed have $H_{\rho_1 , m} (\mathbfe{\alpha}' )$ in the top submatrix of (\ref{statement, (rho,pi) form when 1 rho < n_1, Hankel above Hankel form}). \\

If a row operation is applied to an entry $\alpha_i$ that is found on the $i$-th skew diagonal, then it is mapped to $\alpha_i - x_0 \alpha_{i - \rho_1 } - \ldots - x_{\rho_1 -1} \alpha_{i-1}$. This is independent of what position on the $i$-th skew diagonal the entry is found (but, of course, it must be on a row that has a row operation applied to it). It is also independent of what the particular values of $l,m$ are (as long as $l > \rho_1$, as given in the lemma). The former demonstrates that the bottom submatrix of (\ref{statement, (rho,pi) form when 1 rho < n_1, Hankel above Hankel form}) is indeed a Hankel matrix; while the latter demonstrates that $\mathbfe{\beta}$ is independent of the specific values taken by $l,m$. \\

Thus, all that remains to be proven is (\ref{statement, (rho,pi) form when 1 rho < n_1, beta_i conditions}); and, since $\mathbfe{\beta}$ is independent of the specific values taken by $l,m$, it suffices to work with the case $l=n_1$ and $m=n_2$. To this end, after the row operations are applied, we have the matrix
\begin{align*}
H'
= \begin{pmatrix}
\alpha_0 & \alpha_1 & \cdots & \cdots & \alpha_{\rho_1-1} & \vline & \alpha_{\rho_1} & \alpha_{\rho_1 + 1} & \cdots & \cdots & \alpha_{n_2 -1} \\
\alpha_1 &  &  &  & \vdots & \vline & \alpha_{\rho_1 + 1} &  &  &  & \vdots \\
\vdots &  &  &  & \vdots & \vline & \vdots &  &  &  & \vdots \\
\vdots &  &  &  & \alpha_{2\rho_1 - 3} & \vline & \vdots &  &  &  & \alpha_{n_2 + \rho_1 - 1} \\
\alpha_{\rho_1-1}  & \cdots & \cdots & \alpha_{2\rho_1 - 3} & \alpha_{2\rho_1 - 2} & \vline & \alpha_{2 \rho_1-1}  & \cdots & \cdots & \alpha_{n_2 + \rho_1 - 1} & \alpha_{n_2 + \rho_1 - 2} \\ 
\hline
\beta_{\rho_1} & \beta_{\rho_1 + 1} & \cdots & \cdots & \cdots & \cdots & \cdots & \cdots & \cdots & \cdots & \beta_{n_2 + \rho_1 - 1} \\
\beta_{\rho_1 +1} &  &  &  &  &  &  &  &  &  & \vdots \\
\vdots &  &  &  &  &  &  &  &  &  & \vdots \\
\vdots &  &  &  &  &  &  &  &  &  & \beta_{n-1} \\
\beta_{n_1 -1} & \cdots & \cdots & \cdots & \cdots & \cdots & \cdots & \cdots & \cdots & \beta_{n-1} & \beta_{n} 
\end{pmatrix} .
\end{align*}
This is the similar to (\ref{statement, (rho,pi) form when 1 rho < n_1 remark, Hankel over Hankel explicit}), but here we indicate the top-left $\rho_1 \times \rho_1$ submatrix, which is the largest invertible top-left submatrix of $H'$ (by definition of $\rho_1$). \\

We recall that the row operations that we applied do not change the rank of $H$ or any of its top-left submatrices. We now consider the following top-left submatrices of $H'$:
\begin{align*}
H' [\rho_1 +1 | \rho_1 +1 ] , \hspace{1em}
H' [\rho_1 +2 | \rho_1 +2 ] , \hspace{0.5em}
\ldots \hspace{0.5em} , 
H' [n_1 | n_1 ] .
\end{align*}
For the first, we have
\begin{align*}
H' [\rho_1 +1 | \rho_1 +1 ] 
= &\begin{pmatrix}
\alpha_0 & \alpha_1 & \cdots & \cdots & \alpha_{\rho_1-1} & \vline & \alpha_{\rho_1} \\
\alpha_1 &  &  &  & \vdots & \vline & \alpha_{\rho_1 + 1} \\
\vdots &  &  &  & \vdots & \vline & \vdots \\
\vdots &  &  &  & \alpha_{2\rho_1 - 3} & \vline & \vdots \\
\alpha_{\rho_1-1}  & \cdots & \cdots & \alpha_{2\rho_1 - 3} & \alpha_{2\rho_1 - 2} & \vline & \alpha_{2 \rho_1-1} \\ 
\hline
\beta_{\rho_1} & \cdots & \cdots & \beta_{2 \rho_1-2} & \beta_{2 \rho_1-1} & \vline & \beta_{2 \rho_1} 
\end{pmatrix} \\
& \vspace{0.5em} \\
= &\begin{pmatrix}
\alpha_0 & \alpha_1 & \cdots & \cdots & \alpha_{\rho_1-1} & \vline & \alpha_{\rho_1} \\
\alpha_1 &  &  &  & \vdots & \vline & \alpha_{\rho_1 + 1} \\
\vdots &  &  &  & \vdots & \vline & \vdots \\
\vdots &  &  &  & \alpha_{2\rho_1 - 3} & \vline & \vdots \\
\alpha_{\rho_1-1} & \cdots & \cdots & \alpha_{2\rho_1 - 3} & \alpha_{2\rho_1 - 2} & \vline & \alpha_{2 \rho_1-1} \\ 
\hline
0 & \cdots & \cdots & \cdots & 0 & \vline & \beta_{2 \rho_1} 
\end{pmatrix} ,
\end{align*}
where the second equality follows from the definition of $\mathbf{x}$ and the row operation that we applied to row $\rho_1 +1$. Now, by the definition of $\rho_1$, we have that $\det H' [\rho_1 +1 | \rho_1 +1 ]  = 0$; while, by the form of $H' [\rho_1 +1 | \rho_1 +1 ] $ above, we can see that $\det H' [\rho_1 +1 | \rho_1 +1 ] = \beta_{2 \rho_1} \cdot \det H [\rho_1 | \rho_1 ] $. Given that $\det H [\rho_1 | \rho_1 ] \neq 0$ (by definition of $\rho_1$), we must have that $\beta_{2 \rho_1} = 0$. \\

Now consider $H' [\rho_1 +2 | \rho_1 +2 ]$. We have
\begin{align*}
H' [\rho_1 +2 | \rho_1 +2 ] 
= &\begin{pmatrix}
\alpha_0 & \alpha_1 & \cdots & \cdots & \alpha_{\rho_1-1} & \vline & \alpha_{\rho_1} & \alpha_{\rho_1 + 1} \\
\alpha_1 &  &  &  & \vdots & \vline & \alpha_{\rho_1 + 1} & \alpha_{\rho_1 + 2} \\
\vdots &  &  &  & \vdots & \vline & \vdots & \vdots \\
\vdots &  &  &  & \alpha_{2\rho_1 - 3} & \vline & \vdots & \vdots \\
\alpha_{\rho_1-1}  & \cdots & \cdots & \alpha_{2\rho_1 - 3} & \alpha_{2\rho_1 - 2} & \vline & \alpha_{2 \rho_1-1} & \alpha_{2 \rho_1} \\ 
\hline
0 & \cdots & \cdots & \cdots & 0 & \vline & 0 & \beta_{2 \rho_1 +1} \\
0 & \cdots & \cdots & \cdots & 0 & \vline & \beta_{2 \rho_1 +1} & \beta_{2 \rho_1 +2}
\end{pmatrix} .
\end{align*}
By similar reasoning as above, we have
\begin{align*}
0
= \det H' [\rho_1 +2 | \rho_1 +2 ] 
=  \det \begin{pmatrix} 0 & \beta_{2 \rho_1 +1} \\ \beta_{2 \rho_1 +1} & \beta_{2 \rho_1 +2} \end{pmatrix}
	\hspace{0.5em} \cdot \hspace{0.5em} \det H [\rho_1 | \rho_1 ]
= - {\beta_{2 \rho_1 +1}}^2 \cdot \det H [\rho_1 | \rho_1 ] .
\end{align*}
Given that $\det H [\rho_1 | \rho_1 ] \neq 0$, we must have that $\beta_{2 \rho_1 +1} = 0$. \\

Proceeding as above in an inductive manner, we see that $\beta_{\rho_1} , \beta_{\rho_1 +1} , \ldots , \beta_{n_1 + \rho_1 -1} = 0$. That is,
\begin{align} 
\begin{split} \label{H'[n_1 , n_1 ] after using rho(alpha)=rho_1}
&H'[n_1 , n_1 ] \\
&= \begin{pmatrix}
\alpha_0 & \alpha_1 & \cdots & \cdots & \alpha_{\rho_1-1} & \vline & \alpha_{\rho_1} & \alpha_{\rho_1 + 1} & \cdots & \cdots & \alpha_{n_1 -1} \\
\alpha_1 &  &  &  & \vdots & \vline & \alpha_{\rho_1 + 1} &  &  &  & \vdots \\
\vdots &  &  &  & \vdots & \vline & \vdots &  &  &  & \vdots \\
\vdots &  &  &  & \alpha_{2\rho_1 - 3} & \vline & \vdots &  &  &  & \alpha_{n_1 + \rho_1 - 1} \\
\alpha_{\rho_1-1}  & \cdots & \cdots & \alpha_{2\rho_1 - 3} & \alpha_{2\rho_1 - 2} & \vline & \alpha_{2 \rho_1-1}  & \cdots & \cdots & \alpha_{n_1 + \rho_1 - 1} & \alpha_{n_1 + \rho_1 - 2} \\ 
\hline
0 & \cdots & \cdots & \cdots & 0 & \vline & 0 & \cdots & \cdots & \cdots & 0 \\
0 & \cdots & \cdots & \cdots & 0 & \vline & 0 & \cdots & \cdots & 0 & \beta_{n_1 + \rho_1}  \\
0 & \cdots & \cdots & \cdots & 0 & \vline & 0 & \cdots & 0 & \beta_{n_1 + \rho_1} & \beta_{n_1 + \rho_1 +1} \\
\vdots &  &  &  & \vdots & \vline & \vdots & \udots & \udots & \udots & \vdots \\
0 & \cdots & \cdots & \cdots & 0 & \vline & 0 & \beta_{n_1 + \rho_1} & \beta_{n_1 + \rho_1 +1} & \cdots & \beta_{2n_1 -2} 
\end{pmatrix} .
\end{split}
\end{align}
Now, if $n$ is even, then $n_2 = n_1$ and $H'= H'[n_1 , n_1 ]$, whereas if $n$ is odd, then $n_2 = n_1 +1$ and we have an additional column:
\begin{align} 
\begin{split} \label{H' for odd n after using rho(alpha)=rho_1}
&H' \\
&= \begin{pmatrix}
\alpha_0 & \alpha_1 & \cdots & \cdots & \alpha_{\rho_1-1} & \vline & \alpha_{\rho_1} & \alpha_{\rho_1 + 1} & \cdots & \cdots & \alpha_{n_1 -1} & \alpha_{n_1}\\
\alpha_1 &  &  &  & \vdots & \vline & \alpha_{\rho_1 + 1} &  &  &  &  & \vdots \\
\vdots &  &  &  & \vdots & \vline & \vdots &  &  &  &  & \vdots \\
\vdots &  &  &  & \alpha_{2\rho_1 - 3} & \vline & \vdots &  &  &  & \alpha_{n_1 + \rho_1 - 1} & \alpha_{n_1 + \rho_1 -2} \\
\alpha_{\rho_1-1}  & \cdots & \cdots & \alpha_{2\rho_1 - 3} & \alpha_{2\rho_1 - 2} & \vline & \alpha_{2 \rho_1-1}  & \cdots & \cdots & \alpha_{n_1 + \rho_1 - 1} & \alpha_{n_1 + \rho_1 - 2} & \alpha_{n_1 + \rho_1 - 1} \\ 
\hline
0 & \cdots & \cdots & \cdots & 0 & \vline & 0 & \cdots & \cdots & \cdots & 0 & \beta_{n_1 + \rho_1} \\
0 & \cdots & \cdots & \cdots & 0 & \vline & 0 & \cdots & \cdots & 0 & \beta_{n_1 + \rho_1} & \beta_{n_1 + \rho_1 +1}\\
0 & \cdots & \cdots & \cdots & 0 & \vline & 0 & \cdots & 0 & \beta_{n_1 + \rho_1} & \beta_{n_1 + \rho_1 +1} & \beta_{n_1 + \rho_1 +2} \\
\vdots &  &  &  & \vdots & \vline & \vdots & \udots & \udots & \udots & \vdots & \vdots \\
0 & \cdots & \cdots & \cdots & 0 & \vline & 0 & \beta_{n_1 + \rho_1} & \beta_{n_1 + \rho_1 +1} & \cdots & \beta_{2n_1 -2} & \beta_{n_1 + n_2 -2} 
\end{pmatrix} .
\end{split}
\end{align}
In either case, in the last $n_1 - \rho_1$ rows, all the entries are zero except for the last $n_2 - \rho_1 -1$ skew diagonals which may or may not be zero. We now consider the first such skew-diagonal that is non-zero: Suppose $i$ is the largest element in the set $ \{ 1 , 2 , \ldots , (n+1) - (n_1 + \rho_1 ) \}$ that satisfies $\beta_{(n+1)-i} \neq 0$ (if no such $i$ exists, then we take $i=0$, and only a slight adaptation of the following reasoning is required). Then, the bottom-right quadrant of $H'$ (bounded by the vertical and horizontal lines in (\ref{H'[n_1 , n_1 ] after using rho(alpha)=rho_1}) and (\ref{H' for odd n after using rho(alpha)=rho_1})) is of the form
\begin{align*}
\begin{pNiceMatrix}
0 & \Cdots & \Cdots & \Cdots & \Cdots & 0 \\
\Vdots &  &  &  &  & \Vdots \\
0 & \Cdots & \Cdots & \Cdots & \Cdots & 0 \\
0 & \Cdots & \Cdots & \Cdots & 0 & \beta_{(n+1)-i} \\
\Vdots &  &  & \Iddots & \Iddots & \Vdots \\
0 & \Cdots & 0 & \beta_{(n+1)-i} & \ldots & \beta_n
\end{pNiceMatrix} .
\end{align*}
Given that $\beta_{(n+1)-i} \neq 0$, we can see that the last $i$ columns appearing in this quadrant are linearly independent, and that the rank of this quadrant (matrix) is $i$. Recall also that $H' [\rho_1 , \rho_1 ]$ has full column rank. So, given the form of $H'$, we can see that the first $\rho_1$ columns and the last $i$ columns of $H'$ form a basis for its column space. In particular, 
\begin{align*}
\rank H'
= \rho_1 + i .
\end{align*}
Since $\rank H' = \rank H = \rho_1 + \pi_1$, we have $i = \pi_1$. This proves (\ref{statement, (rho,pi) form when 1 rho < n_1, beta_i conditions}) as required.
\end{proof}

Lemmas \ref{lemma, Hankel rho_1 = 0 form} and \ref{lemma, (rho,pi) form when 1 rho < n_1} extend upon \cite[Section 5]{Garcia-ArmasGhorpadeRam2011_RelativePrimePolyNonsingHankelMatrFinField}, where they prove similar lemmas but for square Hankel matrices only. They use column operations instead of row operations. We chose the latter in order to preserve the kernel. We will now formally give the definition of $(\rho , \pi )$-form.

\begin{definition}[The $(\rho , \pi )$-form]
Let $\mathbfe{\alpha} \in \mathbb{F}_q^{n+1}$ and let $l+m-2 = n$. Consider $H := H_{l,m} (\mathbfe{\alpha})$. If $\rho (\mathbfe{\alpha}) = 0$, then we define the $(\rho , \pi )$-form of $H$ to be itself. \\

If $\rho (\mathbfe{\alpha}) = \rho_1 \in \{ 1 , 2 , \ldots , n_1 -1 \}$ and $l \leq \rho_1$, then we also define the $(\rho , \pi )$-form of $H$ to be itself. Whereas, if $l > \rho_1$, then we define the $(\rho , \pi )$-form of $H$ to be the matrix that we obtain after applying the row operations 
\begin{align*}
R_i \longrightarrow R_i - (x_0 , \ldots , x_{\rho_1 -1}) \begin{pmatrix} R_{i - \rho_1 } \\ \vdots \\ R_{i-1} \end{pmatrix} 
	= R_i - x_0 R_{i - \rho_1 } - \ldots - x_{\rho_1 -1} R_{i-1} 
\end{align*}
for $i = n_1 , n_1 -1 , \ldots , \rho_1 +1$ in that order; where $R_i$ is the $i$-th row of $H$, and $\mathbf{x} = (x_0 , \ldots , x_{\rho_1 -1})^T$ is the vector that satisfies
\begin{align*}
H [\rho_1 , \rho_1 ] \mathbf{x}
= \begin{pmatrix}
\alpha_{\rho_1} \\ \alpha_{\rho_1 +1} \\ \vdots \\ \alpha_{2\rho_1 - 1}
\end{pmatrix}. \\
\end{align*}

If $\rho (\mathbfe{\alpha}) = n_1$, then we define the $(\rho , \pi )$-form of $H$ to be itself. 
\end{definition}

\subsection{Matrices of a Given Size and Rank} \label{subsection, matrices of given size and rank}

In \cite[Section 5]{Garcia-ArmasGhorpadeRam2011_RelativePrimePolyNonsingHankelMatrFinField}, they determine the number of square Hankel matrices of a given size, rank, and $(\rho , \pi )$-form. Our results in this section generalise upon this by determining the size of sets of the form $\mathscr{L}_n^h (r , \rho_1 , \pi_1)$, $\mathscr{L}_n^h (r)$, and $\mathscr{H}_{l,m}^h (r)$. That is, we consider rectangular (not just square) Hankel matrices, the associated sequences, and the condition on the number of zeros at the start of those sequences.

\begin{theorem} \label{theorem, matrices of given rank and size}
Let $n \geq 0$ and $0 \leq h \leq n+1$, and consider $\mathscr{L}_{n}^{h} (r , \rho_1 , \pi_1 )$. \\

\textbf{Claim 1:} Suppose $\rho_1 = 0$. By Remarks \ref{remark, bounds for rho_1 and pi_1} and \ref{remark, bounds on h parameter for Hankel matrices}, in order for $\mathscr{L}_{n}^{h} (r , \rho_1 , \pi_1 ) = \mathscr{L}_{n}^{h} (r , 0 , r )$ to be non-empty, we require that
\begin{align*}
0 \leq r \leq n_1 -1 &\text{ if $n$ is even} , \\
0 \leq r \leq n_1 &\text{ if $n$ is odd} ;
\end{align*}
and
\begin{align*}
r \leq n-h+1. 
\end{align*}
Assuming these conditions are satisfied, we have
\begin{align*}
\lvert \mathscr{L}_{n}^{h} (r , 0 , r ) \rvert
= \begin{cases}
1 &\text{ if $r=0$,} \\
(q-1) q^{r-1} &\text{ if $r > 0$.} 
\end{cases} 
\end{align*}

\textbf{Claim 2:} Suppose that $\rho_1 \in \{ 1 , 2 , \ldots , n_1 - 1 \}$. By Remarks \ref{remark, bounds for rho_1 and pi_1} and \ref{remark, bounds on h parameter for Hankel matrices}, in order for $\mathscr{L}_{n}^{h} (r , \rho_1 , \pi_1 ) = \mathscr{L}_{n}^{h} (\rho_1 + \pi_1 , \rho_1 , \pi_1 )$ to be non-empty, we require that
\begin{align*}
0 \leq \pi_1 \leq n_1 - \rho_1 - 1 &\text{ if $n$ is even} , \\
0 \leq \pi_1 \leq n_1 - \rho_1 &\text{ if $n$ is odd} ;
\end{align*}
and
\begin{align*}
\rho_1 \geq h + 1 .
\end{align*} 
Assuming these conditions are satisfied, we have
\begin{align*}
\lvert \mathscr{L}_{n}^{h} (\rho_1 + \pi_1 , \rho_1 , \pi_1 ) \rvert
= \begin{cases}
(q-1) q^{2 \rho_1 - h - 1} &\text{ if $\pi_1 = 0$,} \\
(q-1)^2 q^{2 \rho_1 + \pi_1 - h -2}  &\text{ if $\pi_1 > 0$.} 
\end{cases} \\
\end{align*}

\textbf{Claim 3:} Now suppose that $\rho_1 = n_1$ (this implies $r=n_1$ and it requires $h+1 \leq n_1$). Then, 
\begin{align*}
\lvert \mathscr{L}_{n}^{h} (r , \rho_1 , \pi_1 ) \rvert
= \lvert \mathscr{L}_{n}^{h} (n_1 , n_1 , 0 ) \rvert
= (q-1) q^{n-h} \\
\end{align*}

\textbf{Claim 4:} Consider $\mathscr{L}_n^h (r )$. We have
\begin{align*}
\lvert \mathscr{L}_n^h (r ) \rvert
= \begin{cases}
1 &\text{ if $r=0$,} \\
(q-1) q^{r-1} &\text{ if $1 \leq r \leq \min \{ h , n-h+1 \} $,} \\
(q^2 -1) q^{2r-h-2} &\text{ if $h+1 \leq r \leq n_1 - 1$,} \\ 
q^{n-h+1} - q^{2n_1 - h -2} &\text{ if $r = n_1$ (which is only possible if $h+1 \leq n_1$).}
\end{cases}
\end{align*}
This accounts for all possible values of $r$ that allow $\mathscr{L}_n^h (r )$ to be non-empty. This is clear if $h \leq n-h+1$ (and so $\min \{ h , n-h+1 \} = h$). If instead $h > n-h+1$, then $h \geq \frac{n+2}{2} \geq n_1$, and so any $\mathbfe{\alpha} \in \mathscr{L}_n^h (r )$ will have $H_{n_1 , n_1 } (\mathbfe{\alpha})$ being strictly lower skew-triangular and thus $\rho (\mathbfe{\alpha}) = 0$; as stated in Claim 1, this requires $r \leq n-h+1$. \\

\textbf{Claim 5:} Let $l+m-2 = n$. If $r < \min \{ l , m \}$, then
\begin{align*}
\lvert \mathscr{H}_{l,m}^h (r) \rvert
= \begin{cases}
1 &\text{ if $r=0$,} \\
(q-1) q^{r-1} &\text{ if $1 \leq r \leq \min \{ h , n-h+1 \} $,} \\
(q^2 -1) q^{2r-h-2} &\text{ if $h+1 \leq r \leq n_1 - 1$.}
\end{cases}
\end{align*}
If $r = \min \{ l , m \}$, then
\begin{align*}
\lvert \mathscr{H}_{l,m}^h (r) \rvert
= &\big\lvert \mathscr{H}_{l,m}^h \big( \min \{ l , m \} \big) \big\rvert \\
= &\begin{cases}
q^{l+m-h-1} - q^{\min \{ l , m \} -1} &\text{ if $\min \{ l , m \} - 1 \leq \min \{ h , n-h+1 \}$,} \\
q^{l+m-h-1} - q^{2 \min \{ l , m \} -h-2} &\text{ if $\min \{ l , m \} - 1 \geq h+1$.}
\end{cases}
\end{align*}
Again, this accounts for all possible values of $r$ that allow $\mathscr{H}_{l,m}^h (r)$ to be non-empty.
\end{theorem}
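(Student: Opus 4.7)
\textbf{The plan} is to treat the five claims in order, with Claims 2 and 3 proved by a joint induction on $n_1$. Claim 1 will follow immediately from Lemma \ref{lemma, Hankel rho_1 = 0 form}: that lemma characterises the sequences with $\rho (\mathbfe{\alpha}) = 0$ and $\pi (\mathbfe{\alpha}) = r$ by forcing $\alpha_i = 0$ for $i < n+1-r$, requiring $\alpha_{n+1-r} \in \mathbb{F}_q^*$, and leaving $\alpha_{n+2-r}, \ldots, \alpha_n$ free. Whenever $h \leq n+1-r$ the leading-zeros condition is automatic, giving the stated count of $(q-1) q^{r-1}$ (or $1$ when $r=0$).

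For Claim 2, I will use Lemma \ref{lemma, (rho,pi) form when 1 rho < n_1} to set up the bijection
\[
\mathbfe{\alpha} = (\alpha_0, \ldots, \alpha_n) \longleftrightarrow (\alpha_0, \ldots, \alpha_{2\rho_1 - 1}, \beta_{2\rho_1}, \ldots, \beta_n),
\]
which is well-defined whenever $H[\rho_1, \rho_1]$ is invertible (the vector $\mathbf{x}$ then exists uniquely, and each $\beta_i = \alpha_i - \mathbf{x}^T (\alpha_{i-\rho_1}, \ldots, \alpha_{i-1})^T$ is invertibly recoverable). Under this correspondence the defining conditions of $\mathscr{L}_n^h (\rho_1 + \pi_1, \rho_1, \pi_1)$ decouple into $\alpha$-constraints (namely $\alpha_0 , \ldots , \alpha_{h-1} = 0$ together with $\det H_{\rho_1 , \rho_1} (\alpha_0, \ldots, \alpha_{2\rho_1-2}) \neq 0$, and $\alpha_{2\rho_1 - 1}$ free) and $\beta$-constraints (namely $\beta_{2\rho_1}, \ldots, \beta_{n - \pi_1} = 0$, $\beta_{n+1 - \pi_1} \in \mathbb{F}_q^*$ when $\pi_1 \geq 1$, and the remaining $\pi_1 - 1$ entries free), exactly as supplied by Lemma \ref{lemma, (rho,pi) form when 1 rho < n_1}. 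The $\alpha$-count reduces to $|\mathscr{L}_{2\rho_1 - 2}^h (\rho_1, \rho_1, 0)|$ times $q$ for the free $\alpha_{2\rho_1 - 1}$; since this invokes Claim 3 at the strictly smaller parameter $n_1' = \rho_1 < n_1$, the inductive hypothesis applies. Multiplying by the $\beta$-count $(q-1) q^{\pi_1 - 1}$ (or $1$ when $\pi_1 = 0$) yields the stated formulas.

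For Claim 3, I will use complementary counting. The sets $\mathscr{L}_n^h (\rho_1 + \pi_1, \rho_1, \pi_1)$ over admissible $(\rho_1, \pi_1)$ partition the sequences starting with $h$ zeros, and by Remark \ref{remark, bounds on h parameter for Hankel matrices} only $\rho_1 = 0$ and $\rho_1 \geq h+1$ give non-empty contributions. Thus
\[
q^{n+1-h} = \sum_{\pi_1} |\mathscr{L}_n^h (\pi_1, 0, \pi_1)| + \sum_{\rho_1 = h+1}^{n_1 - 1} \sum_{\pi_1} |\mathscr{L}_n^h (\rho_1 + \pi_1, \rho_1, \pi_1)| + |\mathscr{L}_n^h (n_1, n_1, 0)|.
\]
The $\rho_1 = 0$ block is a geometric sum evaluated via Claim 1, and each block with $h+1 \leq \rho_1 \leq n_1 - 1$ is a geometric sum over $\pi_1$ evaluated via Claim 2 (already established for the current $n_1$). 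Solving for the final term and telescoping the geometric sums isolates $|\mathscr{L}_n^h (n_1, n_1, 0)| = (q-1) q^{n-h}$. The base case $n_1 = 1$ is a direct verification.

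Claims 4 and 5 are then formal consequences. For Claim 4, I sum the counts of Claims 1--3 over all admissible $(\rho_1, \pi_1)$ with $\rho_1 + \pi_1 = r$; the case split on $r$ simply reflects which $\rho_1$'s contribute (only $\rho_1 = 0$ when $r \leq h$; $\rho_1 \in \{0\} \cup \{h+1, \ldots, r\}$ when $h+1 \leq r \leq n_1 - 1$; and additionally $\rho_1 = n_1$ when $r = n_1$), and the inner sums over $\pi_1$ are geometric. For Claim 5, Corollary \ref{corollary, rank of recatnagular Hankel matrix given r of sequence} tells us that $H_{l,m} (\mathbfe{\alpha})$ has rank $r$ iff either $\mathbfe{\alpha} \in \mathscr{L}_n^h (r)$ with $r < \min\{l,m\}$ (giving the first formula directly from Claim 4), or $r = \min\{l,m\}$ and $\mathbfe{\alpha}$ lies in $\bigcup_{r' \geq \min\{l,m\}} \mathscr{L}_n^h (r')$, whose cardinality equals $q^{n+1-h} - \sum_{r' < \min\{l,m\}} |\mathscr{L}_n^h (r')|$ and telescopes to the stated expression via Claim 4. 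The main obstacle is the bookkeeping: carefully tracking which $(\rho_1, \pi_1, h, n)$ combinations are admissible (the parity-dependent ranges of Remark \ref{remark, bounds for rho_1 and pi_1} and the $h$-constraints of Remark \ref{remark, bounds on h parameter for Hankel matrices}), and verifying that the geometric sums telescope correctly for both parities of $n$; a secondary point needing care is that the $\beta$-conditions in Lemma \ref{lemma, (rho,pi) form when 1 rho < n_1} are stated for $(l,m) = (n_1, n_2)$, but since that lemma also asserts the independence of $\mathbfe{\beta}$ from $(l,m)$, they transfer to the full sequence.
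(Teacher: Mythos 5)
Your proposal is correct and follows essentially the same route as the paper: Claim 1 from Lemma \ref{lemma, Hankel rho_1 = 0 form}; Claim 2 via the bijection supplied by Lemma \ref{lemma, (rho,pi) form when 1 rho < n_1}, reducing to the full-rank count $\lvert \mathscr{L}_{2\rho_1 - 2}^{h} (\rho_1) \rvert$; the full-rank case by complementary counting (the paper packages your joint induction on $n_1$ as an explicit recurrence for $\lvert \mathscr{L}_{2k-2}^{h} (k) \rvert$ and solves it); and Claims 4 and 5 by summation over admissible $(\rho_1 , \pi_1)$ together with Corollary \ref{corollary, rank of recatnagular Hankel matrix given r of sequence}. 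The only cosmetic differences are that the paper handles odd $n$ in Claim 3 by appending a free final coordinate to the even case rather than by complementary counting, and obtains the $r = n_1$ case of Claim 4 by complementary counting rather than by direct summation.
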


\begin{proof}
\textbf{Claim 1:} When $r=0$, the only element in $\mathscr{L}_{n}^{h} (r , 0 , r )$ is a sequence of zeros. Thus, $\lvert \mathscr{L}_{n}^{h} (r , 0 , r ) \rvert = 1$. Suppose $r > 0$. Lemma \ref{lemma, Hankel rho_1 = 0 form} tells us $\mathbfe{\alpha} \in \mathscr{L}_{n}^{h} (r , 0 , r )$ if and only if
\begin{align*}
\mathbfe{\alpha}
= (0 , \ldots , 0, \alpha_{(n+1)-r } , \ldots , \alpha_n ) 
\end{align*}
with $\alpha_{(n+1)-r} \in \mathbb{F}_q^*$ and $\alpha_{(n+1)-r +1} , \ldots , \alpha_n \in \mathbb{F}_q$. Thus, we have $\lvert \mathscr{L}_{n}^{h} (r , 0 , r ) \rvert = (q-1) q^{r-1}$. \\

\textbf{Claim 2:} For $\pi_1 \geq 1$, there is a bijection between $\mathscr{L}_{n}^{h} (\rho_1 + \pi_1 , \rho_1 , \pi_1 )$ and 
\begin{align*}
\mathscr{L}_{2 \rho_1 -2}^{h} (\rho_1 )
\times \mathbb{F}_q
\times \{ 0 \}^{n - \pi_1 - 2\rho_1 +1}
\times \mathbb{F}_q^*
\times \mathbb{F}_q^{\pi_1 - 1} .
\end{align*}
Indeed, suppose $\mathbfe{\alpha} = (\alpha_0 , \alpha_1 , \ldots , \alpha_n ) \in \mathscr{L}_{n}^{h} (\rho_1 + \pi_1 , \rho_1 , \pi_1 )$ and consider $H := H_{n_1 , n_2} (\mathbfe{\alpha})$. Lemma \ref{lemma, (rho,pi) form when 1 rho < n_1} gives us the following information.
\begin{itemize}
\item The submatrix $H [\rho_1 , \rho_1]$ is invertible. In particular, 
\begin{align*}
\mathbf{\alpha}'
:= (\alpha_0 , \alpha_1 , \ldots , \alpha_{2\rho_1 -2})
\in \mathscr{L}_{2 \rho_1 -2}^{h} (\rho_1 , \rho_1 , 0 )
= \mathscr{L}_{2 \rho_1 -2}^{h} (\rho_1 ) . 
\end{align*}

\item The entry $\alpha_{2 \rho_1 -1}$ is free to take any value in $\mathbb{F}_q$. Note that the vector $\mathbf{x}$ is uniquely determined by $\mathbfe{\alpha}'$ and $\alpha_{2 \rho_1 -1}$. 

\item The entries $\beta_{2 \rho_1} , \ldots , \beta_{n - \pi_1}$, of which there are $n - \pi_1 - 2\rho_1 +1$ number of them, must all take the value $0$, and the invertibility of the row operations (which are uniquely determine by $\mathbf{x}$), means that the corresponding $\alpha_{2 \rho_1} , \ldots , \alpha_{n - \pi_1}$ can also only take a single value. 

\item Similarly, $\beta_{(n+1) - \pi_1}$ can take any value in $\mathbb{F}_q^*$, and so the corresponding $\alpha_{(n+1) - \pi_1}$ can take $(q-1)$ possible values. 

\item Similarly again, $\beta_{(n+2) - \pi_1} , \ldots , \beta_{n}$, of which there are $\pi_1 - 1$ number of them, can take any value in $\mathbb{F}_q$, and so the corresponding $\alpha_{(n+2) - \pi_1} , \ldots , \alpha_{n}$ can each take any value in $\mathbb{F}_q$.
\end{itemize}

By similar reasoning, when $\pi_1 = 0$ we have a bijection between $\mathscr{L}_{n}^{h} (\rho_1 + \pi_1 , \rho_1 , \pi_1 ) = \break \mathscr{L}_{n}^{h} (\rho_1 , \rho_1 , 0 )$ and 
\begin{align*}
\mathscr{L}_{2 \rho_1 -2}^{h} (\rho_1 )
\times \mathbb{F}_q
\times \{ 0 \}^{n - 2\rho_1 +1} .
\end{align*}
So, we have 
\begin{align} \label{statement, L_n^h (rho + pi , rho , pi ) in terms of L_(2 rho -2)^h (rho)}
\lvert \mathscr{L}_{n}^{h} (\rho_1 + \pi_1 , \rho_1 , \pi_1 ) \rvert
= \begin{cases}
\lvert \mathscr{L}_{2 \rho_1 -2}^{h} (\rho_1 ) \rvert \cdot (q-1) q^{\pi_1 } &\text{ if $\pi_1 \geq 1$,} \\
\lvert \mathscr{L}_{2 \rho_1 -2}^{h} (\rho_1 ) \rvert \cdot q &\text{ if $\pi_1 = 0$.} 
\end{cases}
\end{align}

Therefore, what we must understand are the sets $\mathscr{L}_{2 k -2}^{h} (k)$. We have that
\begin{align}
\begin{split} \label{statement L_(2k-2)^h (k), breakup condition on rank}
\lvert \mathscr{L}_{2 k -2}^{h} (k) \rvert
= &\lvert \mathscr{L}_{2 k -2}^{h} \rvert 
	- \sum_{i=0}^{k -1} \lvert \mathscr{L}_{2 k -2}^{h} (i) \rvert \\
= &q^{2 k - h -1} - 1
	- \sum_{i=1}^{k -1} \lvert \mathscr{L}_{2 k -2}^{h} (i) \rvert .
\end{split}
\end{align}
Let us now partition the sets $\mathscr{L}_{2 k -2}^{h} (i)$ above according to the $(\rho , \pi)$-form of the sequences they contain. Suppose first that $1 \leq i \leq h$ and let $\mathbfe{\alpha} \in \mathscr{L}_{2 k -2}^{h} (i)$. We must have that $\rho (\mathbfe{\alpha}) = 0$. Indeed, consider the matrix $H := H_{k,k} (\mathbfe{\alpha})$ that is associated to $\mathbfe{\alpha}$. It's rank is $i$, and so we must have that $\rho (\mathbfe{\alpha}) \leq i$. However, the fact that $i \leq h$ means that the following matrices are lower skew-triangular, and thus not invertible:
\begin{align*}
H[1,1] , H[2,2] , \ldots , H[i,i].
\end{align*}
Therefore, $\rho (\mathbfe{\alpha}) \not\in \{ 1 , 2 , \ldots , i \}$, and so we must have $\rho (\mathbfe{\alpha}) = 0$. Note this implies that $\pi (\mathbfe{\alpha}) = i - \rho (\mathbfe{\alpha}) = i$. Hence, by Claim 1, we have
\begin{align} \label{statement L_(2k-2)^h (i), i leq h, breakup condition on (rho , pi)-form}
\lvert \mathscr{L}_{2 k -2}^{h} (i) \rvert
= \lvert \mathscr{L}_{2 k -2}^{h} (i , 0 , i) \rvert
= (q-1) q^{i-1} . 
\end{align}

Now suppose that $h+1 \leq i \leq k-1$, and let $\mathbfe{\alpha} \in \mathscr{L}_{2 k -2}^{h} (i)$. By similar reasoning as above, we must have that $\rho (\mathbfe{\alpha}) = 0$ or $h+1 \leq \rho (\mathbfe{\alpha}) \leq i$. Hence, 
\begin{align}
\begin{split} \label{statement L_(2k-2)^h (i), h+1 leq i leq k-1, breakup condition on (rho , pi)-form}
\lvert \mathscr{L}_{2 k -2}^{h} (i) \rvert
= &\lvert \mathscr{L}_{2 k -2}^{h} (i , 0 , i) \rvert
	+ \sum_{j=h+1}^{i} \lvert \mathscr{L}_{2 k -2}^{h} (i , j , i-j) \rvert \\
= &(q-1) q^{i-1}
	+ \sum_{j=h+1}^{i} \lvert \mathscr{L}_{2 k -2}^{h} (i , j , i-j) \rvert .
	\end{split}
\end{align}
Substituting (\ref{statement L_(2k-2)^h (i), i leq h, breakup condition on (rho , pi)-form}) and (\ref{statement L_(2k-2)^h (i), h+1 leq i leq k-1, breakup condition on (rho , pi)-form}) into (\ref{statement L_(2k-2)^h (k), breakup condition on rank}), we obtain
\begin{align*}
\lvert \mathscr{L}_{2 k -2}^{h} (k ) \rvert
= &q^{2 k - h -1} - q^{k -1}
	- \sum_{i=h+1}^{k -1} \sum_{j=h+1}^{i} \lvert \mathscr{L}_{2 k -2}^{h} (i , j , i-j) \rvert \\
= &q^{2 \rho_1 - h -1} - q^{k -1}
	- \sum_{j=h+1}^{k -1} \sum_{i=j}^{k -1} \lvert \mathscr{L}_{2 k -2}^{h} (i , j , i-j) \rvert \\
= &q^{2 k - h -1} - q^{k -1}
	- q^{k} \sum_{j=h+1}^{k -1} \lvert \mathscr{L}_{2 j -2}^{h} (j) \rvert \cdot q^{-j} ,
\end{align*}
where the last lines applies (\ref{statement, L_n^h (rho + pi , rho , pi ) in terms of L_(2 rho -2)^h (rho)}). This is a recurrence relation. The initial condition is 
\begin{align*}
\lvert \mathscr{L}_{2 (h+1) -2}^{h} (h+1) \rvert
= (q-1) q^h ;
\end{align*}
Indeed, if $\mathbfe{\alpha} \in \mathscr{L}_{2 (h+1) -2}^{h} (h+1)$ then $H_{h+1 , h+1} (\mathbfe{\alpha})$ has all entries above the main skew-diagonal equal to $0$, and so to have rank equal to $h+1$ (i.e. full rank) we must have that the entries in the main skew-diagonal are in $\mathbb{F}_q^*$, while the entries in the last $h$ skew-diagonals are free to take any values in $\mathbb{F}_q$. Now, it can easily be verified that the solution to the recurrence relation is
\begin{align*}
\lvert \mathscr{L}_{2 k -2}^{h} (k ) \rvert
= (q-1) q^{2k-h-2} .
\end{align*}
Substituting this into (\ref{statement, L_n^h (rho + pi , rho , pi ) in terms of L_(2 rho -2)^h (rho)}) proves Case 3. \\

\textbf{Claims 3 and 4:} We begin with Claim 4. If $r=0$, the only element in $\mathscr{L}_n^h (r )$ is the sequence of zeros, thus proving this case. \\

Suppose instead that $1 \leq r \leq \min \{ h , n-h+1 \}$. As described in the theorem, if $\mathbfe{\alpha} \in \mathscr{L}_n^h (r )$ then $\rho (\mathbfe{\alpha}) = 0$, and so this case follows from Claim 1. \\

Now suppose that $h+1 \leq r \leq n_1 - 1$. If $\mathbfe{\alpha} \in \mathscr{L}_n^h (r )$ then $\rho (\mathbfe{\alpha}) = 0$ or $\rho (\mathbfe{\alpha}) \in \{ h+1 , h+2 , \ldots , r \}$. Hence, we have
\begin{align*}
\lvert \mathscr{L}_n^h (r ) \rvert
= & \lvert \mathscr{L}_n^h (r , 0 , r) \rvert
	+ \sum_{\rho_1 = h+1}^{r} \lvert \mathscr{L}_n^h (r , \rho_1 , r - \rho_1 ) \rvert \\
= &(q-1) q^{r-1}
	+ (q-1)^2 \sum_{\rho_1 = h+1}^{r-1} q^{\rho_1 + r - h - 2}
	\hspace{1em} + (q-1) q^{2r-h-1} \\ 
= & (q^2 -1) q^{2r-h-2} ,
\end{align*}
where the second equality uses Claim 2. \\

Finally, suppose that $r=n_1$. Then, 
\begin{align*}
\lvert \mathscr{L}_n^h (r ) \rvert
= &\lvert \mathscr{L}_n^h \rvert
	- \sum_{r=0}^{n_1 -1} \lvert \mathscr{L}_n^h (r ) \rvert \\
= &q^{n-h+1}
	- 1 - \sum_{r=1}^{h} (q-1) q^{r-1} - \sum_{r=h+1}^{n_1 -1} (q^2 -1) q^{2r-h-2} \\
= &q^{n-h+1} - q^{2n_1 - h -2}.
\end{align*}

For Claim 3, if $n$ is even, then we have $\mathscr{L}_n^h (n_1 , n_1 , 0 ) = \mathscr{L}_n^h (n_1 )$. So, by the last case of Claim 4, we have
\begin{align*}
\lvert \mathscr{L}_n^h (n_1 , n_1 , 0 )  \rvert
= q^{n-h+1} - q^{2n_1 - h -2}
= (q-1) q^{n-h} .
\end{align*}
Now suppose $n$ is odd, and let
\begin{align*}
\mathbfe{\alpha} = &(\alpha_0 , \alpha_1 , \ldots , \alpha_n ) \in \mathscr{L}_n^h (n_1 , n_1 , 0 ), \\
\mathbfe{\alpha}' := &(\alpha_0 , \alpha_1 , \ldots , \alpha_{n-1} ),
\end{align*}
 and $H := H_{n_1 , n_2 } (\mathbfe{\alpha})$. Since $\rho ( \mathbfe{\alpha }) = n_1$, we have that $H_{n_1 , n_1 } (\mathbfe{\alpha}') = H[n_1 ; n_1]$ has full rank. Therefore, $\mathbfe{\alpha}' \in \mathscr{L}_{n-1}^h (n_1 , n_1 , 0 ) = \mathscr{L}_{n-1}^h (n_1 )$, of which there are $(q-1) q^{n-h-1}$ possible values it could take (by the first case of Claim 3). Meanwhile, $\alpha_n$ is free to take any value in $\mathbb{F}_q$, of which there are $q$ possibilities. Thus, $\lvert \mathscr{L}_n^h (n_1 , n_1 , 0 ) \rvert = (q-1) q^{n-h}$, as required. \\
 
We remark that the difference between the odd case and the even case is that when $n$ is odd, the matrix $H_{n_1 , n_2} (\mathbfe{\alpha})$ is not quite square; the additional column allows the matrix to have full rank without necessarily having $\rho (\mathbfe{\alpha}) = n_1$. This is why Claim 3 gives the same result as the last case of Claim 4 only when $n$ is even, but not when $n$ is odd. \\
 
\textbf{Claim 5:} If $r < \min \{ l , m \}$, then Corollary \ref{corollary, rank of recatnagular Hankel matrix given r of sequence} implies that there is a bijection between $\mathscr{H}_{l,m}^h (r)$ and $\mathscr{L}_{n}^{h} (r)$. The result then follows by the first three cases of Claim 4. \\

Now suppose $r = \min \{ l , m \}$. If $\min \{ l , m \} - 1 \leq \min \{ h , n-h+1 \}$, then
\begin{align*}
\lvert \mathscr{H}_{l,m}^h (r) \rvert
= &\lvert \mathscr{H}_{l,m}^h \big( \min \{ l , m \} \big) \rvert \\
= &\lvert \mathscr{H}_{l,m}^h \rvert
	- \sum_{i=0}^{\min \{ l , m \} -1} \lvert \mathscr{H}_{l,m}^h (i) \rvert \\
= &q^{l+m-h-1}
	- 1 - (q-1) \sum_{i=1}^{\min \{ l , m \} -1} q^{i-1} \\
= &q^{l+m-h-1} - q^{\min \{ l , m \} -1} ,
\end{align*}
where the third equality uses the first part of Claim 5. Now suppose that $\min \{ l , m \} - 1 \geq h+1$. Note that this gives $h \leq \min \{ l , m \} -2 \leq n_1 -2$, and so $h < n-h+1$. Thus, we have
\begin{align*}
\lvert \mathscr{H}_{l,m}^h (r) \rvert
= &\lvert \mathscr{H}_{l,m}^h \big( \min \{ l , m \} \big) \rvert \\
= &\lvert \mathscr{H}_{l,m}^h \rvert
	- \sum_{i=0}^{\min \{ l , m \} -1} \lvert \mathscr{H}_{l,m}^h (i) \rvert \\
= &q^{l+m-h-1}
	- 1 
	- (q-1) \sum_{i=1}^{h} q^{i-1} 
	\hspace{1em} - (q^2 -1) \sum_{i=h+1}^{\min \{ l , m \} -1} q^{2i-h-2}  \\
= &q^{l+m-h-1} - q^{2 \min \{ l , m \} -h-2}.
\end{align*}
Again, the third equality uses the first part of Claim 5.
\end{proof}

\subsection{Kernel Structure} \label{subsection, kernel structure}

We will now investigate the kernel structure of Hankel matrices. We begin with an extension to Corollary \ref{corollary, rank of recatnagular Hankel matrix given r of sequence}.

\begin{corollary} \label{corollary, dim kernel of recatnagular Hankel matrix given r of sequence}
Suppose $\mathbfe{\alpha} \in \mathscr{L}_n^h (r)$. We have
\begin{align*}
\dim \kernel H_{l,m} (\mathbfe{\alpha})
= \begin{cases}
0 &\text{ if $1 \leq m \leq r$,} \\
\dim \kernel h_{l+1 , m-1} (\mathbfe{\alpha}) + 1 &\text{ if $r < m \leq n+2-r$,} \\
\dim \kernel h_{l+1 , m-1}(\mathbfe{\alpha}) + 2 &\text{ if $n+2-r < m \leq n+1$.} 
\end{cases}
\end{align*}
(For the case $r=0$ we must define $\dim \kernel H_{n+2,0} (\mathbfe{\alpha}) := 0$). Thus,
\begin{align*}
\dim \kernel H_{l,m} (\mathbfe{\alpha})
= \begin{cases}
0 &\text{ if $1 \leq m \leq r$,} \\
m-r &\text{ if $r < m \leq n+2-r$,} \\
2m - n -2 &\text{ if $n+2-r < m \leq n+1$.} 
\end{cases}
\end{align*}
\end{corollary}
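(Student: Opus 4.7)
The plan is to reduce everything to the rank-nullity theorem together with Corollary \ref{corollary, rank of recatnagular Hankel matrix given r of sequence}, which already tells us the rank of $H_{l,m}(\boldsymbol{\alpha})$ in terms of $r$ and $\min\{l,m\}$. Since we are given $l+m-2=n$, i.e.\ $l+m = n+2$, both the closed form and the recurrence should drop out by a short case analysis, and I expect no real obstacle beyond bookkeeping the three boundary regions correctly.

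I would first observe that $\dim\ker H_{l,m}(\boldsymbol{\alpha}) = m - \rank H_{l,m}(\boldsymbol{\alpha})$, so it suffices to compute the rank. By Corollary \ref{corollary, rank of recatnagular Hankel matrix given r of sequence}, $\rank H_{l,m}(\boldsymbol{\alpha}) = \min\{l,m,r\}$. Using $l = n+2-m$, I would split into three cases matching those in the statement. For $1 \le m \le r$, Remark \ref{remark, bounds for rho_1 and pi_1} gives $r \le n_1 \le (n+2)/2$, hence $l = n+2-m \ge m$, so $\min\{l,m,r\} = m$ and the kernel is trivial. For $r < m \le n+2-r$, one has $m > r$ and $l = n+2-m \ge r$, so $\min\{l,m,r\} = r$ and $\dim\ker H_{l,m}(\boldsymbol{\alpha}) = m - r$. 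For $n+2-r < m \le n+1$, we have $l = n+2-m < r$ and $l < m$, so $\min\{l,m,r\} = l = n+2-m$, yielding $\dim\ker H_{l,m}(\boldsymbol{\alpha}) = 2m-n-2$. This establishes the closed-form expression.

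To obtain the recursive formulation, I would simply verify that the pair $(l+1,m-1)$ still satisfies $(l+1)+(m-1) = n+2$ and then check how the three regions transition. In the interior of Case 2 (namely $r+1 < m \le n+2-r$), $(l+1,m-1)$ remains in Case 2 and the closed formula gives $(m-1)-r = (m-r)-1$, matching the $+1$ shift. At the boundary $m = r+1$, $(l+1,m-1) = (n+1-r,r)$ falls into Case 1 with zero-dimensional kernel, and again $0+1 = 1 = m-r$. In the interior of Case 3, $(l+1,m-1)$ remains in Case 3 and $2(m-1)-n-2 = (2m-n-2)-2$, matching the $+2$ shift. At the boundary $m = n+3-r$, $(l+1,m-1) = (r,n+2-r)$ lies in Case 2 with dimension $(n+2-r)-r = n+2-2r$, and $(n+2-2r)+2 = n+4-2r = 2m-n-2$, as required. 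The convention $\dim\ker H_{n+2,0}(\boldsymbol{\alpha}) := 0$ handles the degenerate initialisation when $r = 0$, in which case Case 2 covers all of $1 \le m \le n+1$ and the recursion $\dim\ker H_{l,m}(\boldsymbol{\alpha}) = \dim\ker H_{l+1,m-1}(\boldsymbol{\alpha}) + 1$ telescopes to $m$, consistent with the zero matrix. This completes the proof with no substantive obstacle.
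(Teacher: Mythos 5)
Your proposal is correct and follows essentially the same route as the paper: the paper likewise derives the closed form from Corollary \ref{corollary, rank of recatnagular Hankel matrix given r of sequence} via rank--nullity (kernel dimension equals number of columns minus rank) and then reads off the recursive statement. You have merely written out explicitly the case analysis (using $r \leq n_1 \leq (n+2)/2$ and $l = n+2-m$) that the paper leaves implicit.
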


\begin{proof}
The first statement follows from the second. The second statement follows directly from Corollary \ref{corollary, rank of recatnagular Hankel matrix given r of sequence} and the fact that the dimension of the kernel of a matrix is just the number of columns subtracted by the rank.
\end{proof}

\begin{remark}
For intuition it is helpful to understand the first result in Corollary \ref{corollary, dim kernel of recatnagular Hankel matrix given r of sequence} by making use of the $(\rho , \pi )$-form. We start with $m=1$, and add a column and remove a row incrementally (while maintaining that we have a Hankel matrix). For simplicity, assume $2 \leq r \leq n_1 -1$ and $\rho (\mathbfe{\alpha}) = \rho_1 \in \{ 1 , 2 , \ldots , r -1 \}$. When $m \leq r$ we have full column rank and hence the kernel is trivial. If $l,m > r$, then (\ref{remark, (rho,pi) form when 1 rho < n_1 lemma, l,m geq rho_1 + pi_1 form}) gives
\begin{align*}
H_{l,m} (\mathbfe{\alpha})
= \begin{pmatrix}
H [\rho_1 , \rho_1] & \vline & H [\rho_1 , -(m-\rho_1)] \\
\hline
\mathbf{0} & \vline & 
\begin{NiceMatrix}
0 & \Cdots & \Cdots & \Cdots & \Cdots & \Cdots & 0 \\
\Vdots &  &  &  &  &  & \Vdots \\
0 & \Cdots & \Cdots & \Cdots & \Cdots & \Cdots & 0 \\
0 & \Cdots & \Cdots & \Cdots & \Cdots & 0 & 1 \\
0 & \Cdots & \Cdots & \Cdots & 0 & 1 & * \\
\Vdots &  &  & \Iddots & \Iddots & \Iddots & \Vdots \\
0 & \Cdots & 0 & 1 & * & \Cdots & * 
\end{NiceMatrix}
\end{pmatrix} ,
\end{align*}
where there are $r - \rho_1$ number of $1$s at the bottom-right (recall $1$ represents an element in $\mathbb{F}_q^*$ while $*$ represents an element in $\mathbb{F}_q$). The rank of this matrix is $r$. Now, adding a column and removing a row maintains this form until $l=r$; that is until $m = n+2-r$. In particular the rank remains the same, but the number of columns increases by $1$ each time; thus, the dimension of the kernel increases by $1$ each time. If we now take $2 \leq l \leq r$, that is $n+2-r \leq m \leq n$, then (\ref{remark, (rho,pi) form when 1 rho < n_1 lemma, rho_1 leq l < rho_1 + pi_1 form}) gives
\begin{align*}
H_{l,m} (\mathbfe{\alpha})
= \begin{pmatrix}
H [\rho_1 , \rho_1] & \vline & H [\rho_1 , -(m-\rho_1)] \\
\hline
\mathbf{0} & \vline & 
\begin{NiceMatrix}
0 & \Cdots & \Cdots & \Cdots & \Cdots & 0 & 1 & * & \Cdots & * \\
0 & \Cdots & \Cdots & \Cdots & 0 & 1 & * & \Cdots & \Cdots & * \\
\Vdots &  &  & \Iddots & \Iddots & \Iddots &  &  &  & \Vdots \\
0 & \Cdots & 0 & 1 & * & \Cdots & \Cdots & \Cdots & \Cdots & *
\end{NiceMatrix}
\end{pmatrix} .
\end{align*}
Removing a row will decrease the rank by $1$. If we also add a column then the effect is to increase the dimension of the kernel by $2$.
\end{remark} 

\begin{definition}[Characteristic Degrees] \label{definition, characteristic degrees of sequence and associated Hankel matrices}
Suppose $\mathbfe{\alpha} \in \mathscr{L}_n^h (r)$. The characteristic degrees of $\mathbfe{\alpha}$ are defined to be $r$ and $n+2-r$. We extend this definition to any Hankel matrix $H_{l,m} (\mathbfe{\alpha})$ associated to $\mathbfe{\alpha}$.
\end{definition}

The characteristic degrees are just the boundaries for the cases in Corollary \ref{corollary, dim kernel of recatnagular Hankel matrix given r of sequence}. Note that we always have $r \leq n+2-r$, with equality occurring if $n$ is even and $r= \frac{n+2}{2}$. Corollary \ref{corollary, dim kernel of recatnagular Hankel matrix given r of sequence} is given in \cite{HeinigRost1984_AlgMethToeplitzMatrOperat} as Proposition 5.4, although it is stated differently and the proof is different. Definition \ref{definition, characteristic degrees of sequence and associated Hankel matrices} is also given in \cite{HeinigRost1984_AlgMethToeplitzMatrOperat} as Definition 5.3. \\

Now, in what follows, it will be necessary to view vectors in $\mathbb{F}_q^{k+1}$ (for any integer $k \geq 0$) as polynomials in $\mathcal{A} := \mathbb{F}_q [T]$. A vector $(v_0 , v_1 , \ldots, v_k)^T$ should be considered the same as the polynomial $v_0 + v_1 T + \ldots v_k T^k$ and vice versa. Clearly, a vector has a unique polynomial associated with it. However, a polynomial does not have a unique vector associated with it. For example, $\mathbf{v}_1 := (v_0 , v_1 , \ldots, v_k)^T$ and $\mathbf{v}_2 := (v_0 , v_1 , \ldots, v_k , 0)^T$ are different vectors but they are associated with the same polynomial. In order to avoid confusion and to ensure everything is well defined, we will make it clear what vector space we are working with, and its dimension will inform us of the number of zeros that should appear at the end of the vector. It should be noted that in \cite{HeinigRost1984_AlgMethToeplitzMatrOperat} the polynomial associated with $\mathbf{v}_2$ is said to have a root at infinity (associated with the $0$ in the last entry of the vector), thus distinguishing it from the polynomial associated with $\mathbf{v}_1$. However, we will not employ this. Finally, it is helpful to keep in mind that a polynomial of degree $k$ has $k+1$ coefficients, and so any vector associated to it must be in at least $(k+1)$-dimensional space. \\

In this subsection we prove the following four theorems and their associated corollaries. The proofs of the theorems are provided at the end of this subsection.

\begin{theorem} \label{theorem, all Hankel matrices have characteristic polynomial kernel}
Let $\mathbfe{\alpha} \in \mathscr{L}_n (r , \rho_1 , \pi_1 )$, where $n > 0$. Denote the characteristic degrees by $c_1 := r$ and $c_2 := n-r+2$. In what follows, $m$ is given and $l$ should be taken such that $l+m-2=n$. \\

There exist coprime polynomials $A_1 , A_2 \in \mathcal{A}$ with
\begin{align*}
\degree A_1 = \rho_1 , \\
\degree A_2 \leq c_2 ,
\end{align*}
such that
\begin{align*}
\kernel H_{l,m} (\mathbfe{\alpha}) 
= \begin{cases}
\{ \mathbf{0} \} &\text{ if $1 \leq m \leq  c_1$,} \\
\Big\{ B_1 A_1 : \substack{B_1 \in \mathcal{A} \\ \degree B_1 \leq m - c_1 - 1} \Big\} \subseteq \mathbb{F}_q^m &\text{ if $c_1 + 1 \leq m \leq c_2$. } \\
\bigg\{ B_1 A_1 + B_2 A_2 : \substack{B_1 , B_2 \in \mathcal{A} \\ \degree B_1 \leq m - c_1 - 1 \\ \degree B_2 \leq m - c_2 - 1} \bigg\} \subseteq \mathbb{F}_q^m &\text{ if $c_2 +1 \leq m \leq n+1$.}
\end{cases}
\end{align*}
If $\rho_1$ is not equal to $r=c_1$, then $\degree A_2$ is necessarily equal to $c_2$. \\

If $r= 0,1$, then this can be simplified to
\begin{align*}
\kernel H_{l,m} (\mathbfe{\alpha}) 
= \begin{cases}
\{ \mathbf{0} \} &\text{ if $1 \leq m \leq  c_1$,} \\
\Big\{ B_1 A_1 : \substack{B_1 \in \mathcal{A} \\ \degree B_1 \leq m - c_1 - 1} \Big\} \subseteq \mathbb{F}_q^m &\text{ if $c_1 + 1 \leq m \leq n+1$, } \\
\end{cases}
\end{align*}
but we still define 
\begin{align} \label{statement, all Hankel matrices have characteristic polynomial kernel therem proof, A_2 def when r leq 1}
A_2
:= \begin{cases}
0 &\text{ if $\mathbfe{\alpha} \in \mathscr{L}_n (0 , 0 , 0 )$ (i.e. $\mathbfe{\alpha}= \mathbf{0}$),} \\
1 &\text{ if $\mathbfe{\alpha} \in \mathscr{L}_n (1 , 1 , 0 )$,} \\
T^{n+1} &\text{ if $\mathbfe{\alpha} \in \mathscr{L}_n (1 , 0 , 1 )$.}
\end{cases}
\end{align}
\end{theorem}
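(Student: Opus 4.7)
The plan is to extract $A_1$ directly from the $(\rho,\pi)$-form of Lemma \ref{lemma, (rho,pi) form when 1 rho < n_1}, obtain $A_2$ by applying the analogous construction to the reversed sequence, and verify the kernel decomposition by a dimension count against Corollary \ref{corollary, dim kernel of recatnagular Hankel matrix given r of sequence}. I would first dispose of the cases $r=0,1$ by taking $A_1,A_2$ as specified in the theorem statement; in these cases the kernel description reduces to a direct computation using Lemma \ref{lemma, Hankel rho_1 = 0 form} (when $\rho_1=0$) or using $x_0=\alpha_1/\alpha_0$ (when $\rho_1=1$).

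For the main case $\rho_1\geq 1$, I would use the vector $\mathbf{x}=(x_0,\ldots,x_{\rho_1-1})^T$ of Lemma \ref{lemma, (rho,pi) form when 1 rho < n_1} to define
\[
A_1(T):=T^{\rho_1}-x_{\rho_1-1}T^{\rho_1-1}-\cdots-x_1T-x_0,
\]
of degree exactly $\rho_1$. A short computation identifies the $i$-th coordinate of $H_{l,m}(\mathbfe{\alpha})$ applied to the coefficient vector of $T^jA_1$ (viewed in $\mathbb{F}_q^m$) with the entry $\beta_{i+j+\rho_1}$, where $\mathbfe{\beta}$ is the sequence produced by the row operations of Lemma \ref{lemma, (rho,pi) form when 1 rho < n_1}. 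Since that lemma guarantees $\beta_k=0$ exactly on $\rho_1\leq k\leq n-\pi_1$, the shift $T^jA_1$ lies in the kernel iff $j\leq m-r-1$. Hence $\{B_1A_1:\deg B_1\leq m-r-1\}$ is an $(m-r)$-dimensional subspace of $\kernel H_{l,m}(\mathbfe{\alpha})$, which matches the kernel dimension from Corollary \ref{corollary, dim kernel of recatnagular Hankel matrix given r of sequence} throughout $c_1<m\leq c_2$, settling Case~2.

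For $m>c_2$ (equivalently $l<r$), Corollary \ref{corollary, dim kernel of recatnagular Hankel matrix given r of sequence} gives $\dim\kernel H_{l,m}=2m-n-2$, which exceeds $m-r$ by exactly $m-c_2$, so an additional $(m-c_2)$-dimensional family is needed. I would obtain it by applying the same $(\rho,\pi)$-form analysis to the reversed sequence $\mathbfe{\alpha}^{\mathrm{rev}}:=(\alpha_n,\alpha_{n-1},\ldots,\alpha_0)$; the identity $H_{l,m}(\mathbfe{\alpha}^{\mathrm{rev}})=J_l\,H_{l,m}(\mathbfe{\alpha})\,J_m$ (with $J$ the counter-identity) means kernels correspond under the coordinate reversal $J_m$, so a characteristic polynomial $\widetilde{A}_1$ for $\mathbfe{\alpha}^{\mathrm{rev}}$ would yield, after reversing its coefficients and multiplying by a suitable power of $T$, a polynomial $A_2$ of degree $c_2=n+2-r$ (whenever $\pi_1\geq 1$). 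The shifts $B_2A_2$ with $\deg B_2\leq m-c_2-1$ then lie in $\kernel H_{l,m}(\mathbfe{\alpha})$ by the $\beta$-calculation of the previous paragraph applied to $\mathbfe{\alpha}^{\mathrm{rev}}$. If $\gcd(A_1,A_2)$ had positive degree, the dimension of $\{B_1A_1+B_2A_2\}$ would fall below $2m-n-2$, contradicting Corollary \ref{corollary, dim kernel of recatnagular Hankel matrix given r of sequence}; this forces coprimality, and the two families together fill the entire kernel. The main obstacle is precisely this dual construction: one must identify the bottom-right analogue of $\rho_1$ coming from $\mathbfe{\alpha}^{\mathrm{rev}}$, check that it equals $\pi_1$ whenever $\pi_1\geq 1$ (so that $\deg A_2=c_2$), and carry out the $\beta$-type vanishing argument for $\widetilde{A}_1$ on the reversed side.
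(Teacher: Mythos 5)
Your treatment of $A_1$ is sound and in places cleaner than the paper's: identifying $A_1$ explicitly as $T^{\rho_1}-x_{\rho_1-1}T^{\rho_1-1}-\cdots-x_0$ and observing that row $i$ of $H_{l,m}(\mathbfe{\alpha})$ applied to the coefficient vector of $T^jA_1$ produces $\beta_{i+j+\rho_1-1}$ (your index is off by one, and you silently skip the case $r\geq 2$, $\rho_1=0$, where $A_1$ is a constant; both are cosmetic) does yield Case 2 after comparing dimensions with Corollary \ref{corollary, dim kernel of recatnagular Hankel matrix given r of sequence}. The paper reaches the same point by reading $\degree A_1=\rho_1$ off the $(\rho,\pi)$-form at $m=c_1+1$ and propagating with Lemma \ref{lemma, if poly in kernel of Hankel matrix then multiples of poly in wider matrix}.

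The construction of $A_2$ by reversal, however, does not work; you have correctly located the obstacle but it cannot be resolved in the way you propose. First, the identity $H_{l,m}(\mathbfe{\alpha}^{\mathrm{rev}})=J_l\,H_{l,m}(\mathbfe{\alpha})\,J_m$ means the kernel of $H_{l,m}(\mathbfe{\alpha}^{\mathrm{rev}})$ is exactly $J_m$ applied to the kernel of $H_{l,m}(\mathbfe{\alpha})$ for every $l,m$; in particular the first characteristic polynomial $\widetilde{A}_1$ of $\mathbfe{\alpha}^{\mathrm{rev}}$ spans $J_{r+1}\big(\kernel H_{n+1-r,\,r+1}(\mathbfe{\alpha})\big)$, which is the reversal of the span of $A_1$. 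Undoing the reversal therefore reproduces a $T$-power multiple of $A_1$, not a new generator. (Concretely, for $\mathbfe{\alpha}=(1,1,1,1,0)$ one finds $A_1=1-T$ and $\widetilde{A}_1=T-T^2$, whose reversal is $-A_1$; the actual $A_2$ here is $T^4$.) Second, the hoped-for identity between $\pi_1$ and the bottom-right analogue of $\rho_1$ is false: this same $\mathbfe{\alpha}$ has $\rho=\pi=1$, while $\mathbfe{\alpha}^{\mathrm{rev}}=(0,1,1,1,1)$ has invertible $2\times2$ top-left block, so $\rho(\mathbfe{\alpha}^{\mathrm{rev}})=2$. Third, your coprimality argument is circular as stated: a relation $B_1A_1+B_2A_2=0$ would only make the span $\{B_1A_1+B_2A_2\}$ a proper subspace of the kernel, which contradicts nothing until you have independently shown that span exhausts the kernel. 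The paper sidesteps all of this by producing $A_2$ non-constructively --- at $m=c_2+1$, Corollary \ref{corollary, dim kernel of recatnagular Hankel matrix given r of sequence} gives one more dimension than the $A_1$-shifts supply, so some $A_2$ outside that span exists --- and then proving separately that $\degree A_2=c_2$ when $\pi_1\geq1$ (via Lemma \ref{lemma, shifted poly in kernel of Hankel matrix is in kernel of removed row matrix}) and that no relation $B_1A_1+B_2A_2=0$ holds in the stated degree ranges, which simultaneously yields coprimality and the full kernel description for $m>c_2+1$. You need some version of that existence-plus-no-relations argument; the reversal symmetry will not supply $A_2$.
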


This leads us to the following definition.

\begin{definition}[Characteristic Polynomials] \label{definition, characteristic polynomials}
In Theorem \ref{theorem, all Hankel matrices have characteristic polynomial kernel}, we define the polynomials $A_1 , A_2$ to be the characteristic polynomials of the sequence $\mathbfe{\alpha}$. Of course, when $r=0,1$, the polynomial $A_2$ is not required. However, it is sometimes helpful to define the second characteristic polynomial as is done in the theorem. For example, in Theorem \ref{theorem, kernel structure subsection, char polys of extended sequences} we take an extension $\mathbfe{\alpha}' := (\mathbfe{\alpha} \mid \alpha_{n+1})$ and express the characteristic polynomials of $\mathbfe{\alpha}'$ in terms of the characteristic polynomials of $\mathbfe{\alpha}$, and thus it is natural and easier to have two characteristic polynomials for both sequences. \\

Now, suppose $c_1 \neq c_2$. We can see that $A_1$ is unique up to multiplication by a unit in $\mathbb{F}_q^*$, but unless otherwise stated $A_1$ should be taken to be monic. For $r \geq 2$, the polynomial $A_2$ should be taken to be monic unless otherwise stated. However, even then it is not unique: We can multiply it by a unit in $\mathbb{F}_q^*$ and add $B_2 A_2$ to it, for any $\degree B_2 \leq c_2 - c_1$. Thus, if we state that $A_2$ is the second characteristic polynomial, it is with the understanding that it is generally not unique. Note that all possibilities for $A_2$ are equivalent modulo $A_1$; and in particular if $\rho_1$ is equal to $r=c_1$ (that is, the sequence $\mathbfe{\alpha}$ is quasi-regular), then we can choose $A_2$ to be monic and have degree less than $\degree A_1$. \\

The case when $c_1 = c_2$ occurs when $n$ is even and $c_1 = c_2 = n_1$. In this case we have
\begin{align*}
\kernel H_{n_1 , n_1 } (\mathbfe{\alpha}) = &\{ \mathbf{0} \} , \\
\kernel H_{n_1 -1 , n_1 +1 } (\mathbfe{\alpha}) = &\{ B A + B' A' : B , B' \in \mathbb{F}_q \} ,
\end{align*}
for some $A,A' \in \mathcal{A}$ with at least one having degree equal to $n_1$. As both these polynomials first appear in the same matrix, it is not immediately obvious how to define the first characteristic polynomial and how to define the second. However, this can be addressed in the following manner. We let $A_2$ be the polynomial that is of smaller degree between $A,A'$, and multiplied by an element in $\mathbb{F}_q^*$ so that it is monic; and we let $A_1$ be the polynomial of higher degree, multiplied so that it is monic. If both $A,A'$ have the same degree, then we take $A_2$ to be the smallest monic representative of $A$ modulo $A'$; and we take $A_1$ to be $A'$, again multiplied so that it is monic. There is more than one possibility for the specific values that $A,A'$ can take, with all possibilities spanning $\kernel H_{n_1 -1 , n_1 +1 } (\mathbfe{\alpha})$ as above; but regardless of which possibility we have, the value of $A_2$ is the same. In cases where we do not have $c_1 = c_2 = n_1$, this uniqueness would apply to $A_1$, not $A_2$; however, the definition we have here is consistent with the degree bounds on $A_1 , A_2$ given in the theorem for the other cases.
\end{definition}

It should be noted that the characteristic degrees and characteristic polynomials of a sequence $\mathbfe{\alpha}$ completely determine the kernel structure. However, the characteristic polynomials alone do not, as we will see in Theorem \ref{theorem, kernel structure subsection, char polys of extended sequences} where a sequence $\mathbfe{\alpha} = (\alpha_0 , \alpha_1 , \ldots , \alpha_n )$ and a certain extension $\mathbfe{\alpha}' = (\alpha_0 , \alpha_1 , \ldots , \alpha_n , \alpha_{n+1} )$ can have the same characteristic polynomials (but different characteristic degrees). \\

The following corollary is easily deduced from Theorem \ref{theorem, all Hankel matrices have characteristic polynomial kernel}.

\begin{corollary}
Suppose $\mathbfe{\alpha} \in \mathscr{L}_n (r , \rho_1 , \pi_1 )$ and $H:= H_{l,m} (\mathbfe{\alpha})$ where $l+m-2=n$. We have already established that if $m \leq r$, then the kernel of $H$ is trivial. \\

If $r < m \leq n+2-r$ and $\mathbfe{\alpha}$ is not quasi-regular (that is, $\pi_1 \neq 0$), then there are no vectors in the kernel of $H$ of the form $(v_0 , v_1 , \ldots , v_{m-1} , 1)^T$, for some $v_0 , \ldots , v_{m-1} \in \mathbb{F}_q$; that is, none of the polynomials in the kernel are monic and of degree $m$. Whereas, if $\mathbfe{\alpha}$ is quasi-regular (that is, $\pi_1 = 0$), then exactly $\frac{1}{q}$ of the vectors in the kernel of $H$ of the form $(v_0 , v_1 , \ldots , v_{m-1} , 1)^T$, for some $v_0 , \ldots , v_{m-1} \in \mathbb{F}_q$; that is, $\frac{1}{q}$ of the polynomials in the kernel are monic and of degree $m$. \\

If $n+2-r < r \leq n+1$, regardless of the value of $\pi_1$, exactly $\frac{1}{q}$ of the vectors in the kernel of $H$ of the form $(v_0 , v_1 , \ldots , v_{m-1} , 1)^T$, for some $v_0 , \ldots , v_{m-1} \in \mathbb{F}_q$; that is, $\frac{1}{q}$ of the polynomials in the kernel are monic and of degree $m$.
\end{corollary}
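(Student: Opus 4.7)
The plan is to directly invoke the parametrization of $\kernel H_{l,m}(\mathbfe{\alpha})$ provided by Theorem~\ref{theorem, all Hankel matrices have characteristic polynomial kernel} and, in each subcase, count those elements whose associated polynomial has coefficient $1$ at $T^{m-1}$: since $\kernel H_{l,m}(\mathbfe{\alpha}) \subseteq \mathbb{F}_q^m$, this top coefficient is exactly the last coordinate of the kernel vector, so the condition is the same as requiring the vector to have the form with a trailing $1$. Throughout I would write $c_1 = r$ and $c_2 = n - r + 2$ for the two characteristic degrees, use that $A_1$ is monic of degree $\rho_1$, and, whenever it appears, take $A_2$ monic as well.

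For the range $c_1 < m \leq c_2$ every kernel element has the form $B_1 A_1$ with $\deg B_1 \leq m - c_1 - 1$. If $\pi_1 \neq 0$ then $\rho_1 = r - \pi_1 < r$, so $\deg(B_1 A_1) \leq (m - r - 1) + \rho_1 = m - 1 - \pi_1 \leq m - 2$; hence no kernel vector has last coordinate $1$. If $\pi_1 = 0$ then $A_1$ is monic of degree $r$, so the $T^{m-1}$-coefficient of $B_1 A_1$ equals the $T^{m-r-1}$-coefficient of $B_1$; of the $q^{m-r}$ admissible $B_1$ exactly $q^{m-r-1}$ have that coefficient equal to $1$, giving the claimed fraction $1/q$.

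For the range $c_2 < m \leq n+1$ the parametrization is by pairs $(B_1, B_2)$ with $\deg B_1 \leq m - c_1 - 1$ and $\deg B_2 \leq m - c_2 - 1$, and the identity $q^{m-c_1} q^{m-c_2} = q^{2m - n - 2}$ together with Corollary~\ref{corollary, dim kernel of recatnagular Hankel matrix given r of sequence} shows that the map $(B_1, B_2) \mapsto B_1 A_1 + B_2 A_2$ is a bijection once $A_2$ is fixed. The key observation is that in each subcase only one of $B_1 A_1$ and $B_2 A_2$ can actually reach degree $m - 1$. If $\pi_1 = 0$, choose $A_2$ monic with $\deg A_2 < r$; since the range forces $2r \leq n+1$, one obtains $\deg(B_2 A_2) \leq m + 2r - n - 4 \leq m - 3$, so only $B_1$ contributes to the $T^{m-1}$ coefficient and the count reduces to the quasi-regular subcase of the previous paragraph. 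If $\pi_1 \neq 0$, then $\deg A_1 = \rho_1 < r$ forces $\deg(B_1 A_1) \leq m - 2$, while $A_2$ is monic of degree $c_2$ by Theorem~\ref{theorem, all Hankel matrices have characteristic polynomial kernel}, so the $T^{m-1}$-coefficient of $B_1 A_1 + B_2 A_2$ equals the $T^{m - c_2 - 1}$-coefficient of $B_2$; exactly $q^{m - c_2 - 1}$ of the $q^{m - c_2}$ admissible $B_2$ realize this, and multiplying by the $q^{m - c_1}$ free choices of $B_1$ again gives the fraction $1/q$. There is no substantive obstacle here beyond this degree bookkeeping and the verification, via the dimension equality, that the $(B_1, B_2)$-parametrization is bijective.
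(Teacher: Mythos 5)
Your proposal is correct and is essentially the deduction the paper leaves implicit (the paper offers no separate proof, stating only that the corollary follows from Theorem \ref{theorem, all Hankel matrices have characteristic polynomial kernel}): you read off the kernel parametrisation, check via Corollary \ref{corollary, dim kernel of recatnagular Hankel matrix given r of sequence} that it is bijective, and count top coefficients. One small arithmetic slip: the range only forces $2r \leq n+2$ (equality occurs when $n$ is even and $r=n_1$), so in the quasi-regular subcase of the third range you get $\degree (B_2 A_2) \leq m-2$ rather than $m-3$, which still suffices for the conclusion that $B_2 A_2$ cannot contribute to the $T^{m-1}$ coefficient.
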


The following can be viewed as a converse to Theorem \ref{theorem, all Hankel matrices have characteristic polynomial kernel}.

\begin{theorem} \label{theorem, all corpime characteristic polynomial have a Hankel matrix}

\textbf{Claim 1:} Suppose we have $A_1 \in \mathcal{A} \backslash \{ 0 \}$ with $\rho_1 := \degree A_1 \leq 1$, and let $n \geq \rho_1$. Then, there exists a sequence $\mathbfe{\alpha} \in \mathscr{L}_n (\rho_1 , \rho_1 , 0)$ with first characteristic polynomial equal to $A_1$. If $\rho_1 = 0$, then there also exists a sequence $\mathbfe{\alpha} \in \mathscr{L}_n (0 , 0 , 1)$ with first characteristic polynomial equal to $A_1$. The second characteristic polynomials will be as in (\ref{statement, all Hankel matrices have characteristic polynomial kernel therem proof, A_2 def when r leq 1}). \\

\textbf{Claim 2:} Suppose we have $A_1 \in \mathcal{A} \backslash \{ 0 \}$ with $\rho_1 := \degree A_1 \leq 1$, and $A_2 \in \mathcal{A}$ with $\degree A_2 \geq \degree A_1 +2$. Also, let $n \geq \degree A_2$ and $r := n+2- \degree A_2$. Then, there exists a sequence $\mathbfe{\alpha} \in \mathscr{L}_n (r , \rho_1 , r - \rho_1 )$ with characteristic polynomials equal to $A_1 , A_2$.\\

\textbf{Claim 3:} Suppose we have coprime $A_1 , A_2 \in \mathcal{A}$ with $r := \degree A_1 \geq 2$, and let $n \geq \max \{ r , \degree A_2 \} \break + r - 2$. Then, there exists a sequence $\mathbfe{\alpha} \in \mathscr{L}_n (r,r,0)$ with characteristic polynomials $A_1 , A_2$. Furthermore, $\mathbfe{\alpha}$ is unique up to multiplication by elements in $\mathbb{F}_q^*$. \\

\textbf{Claim 4:} Suppose we have coprime $A_1 , A_2 \in \mathcal{A}$ with $\degree A_2 > \degree A_1 \geq 2$, and let 
\begin{align*}
\rho_1 := &\degree A_1 \\
\pi_1 := &\degree A_2 - \degree A_1 \\
r := &\rho_1 + \pi_1 \\
n := &\degree A_2 + r -2 .
\end{align*}
Then, there exists a sequence $\mathbfe{\alpha} \in \mathscr{L}_n (r , \rho_1 , \pi_1 )$ with characteristic polynomials $A_1 , A_2$. Furthermore, $\mathbfe{\alpha}$ is unique up to multiplication by elements in $\mathbb{F}_q^*$.
\end{theorem}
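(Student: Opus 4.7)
The plan is to construct the sequence $\mathbfe{\alpha}$ directly in each case, exploiting the fact that requiring a polynomial $A_1 = \sum_{j=0}^{\rho_1} a_j T^j$ to lie in $\kernel H_{l,\rho_1+1}(\mathbfe{\alpha})$ is equivalent to imposing the linear recurrence $\sum_{j=0}^{\rho_1} a_j \alpha_{i+j} = 0$ for $i = 0, 1, \ldots, l-1$. Since $a_{\rho_1} \neq 0$, this determines $\alpha_{\rho_1}, \alpha_{\rho_1+1}, \ldots, \alpha_n$ once the initial segment $\alpha_0, \ldots, \alpha_{\rho_1 - 1}$ is fixed. The remaining task is to choose the initial segment so that the second characteristic polynomial is $A_2$, and then to verify that the resulting $\mathbfe{\alpha}$ lies in the claimed stratum.

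For Claims 1 and 2, where $\rho_1 = \degree A_1 \leq 1$, the construction is explicit. If $A_1 = 1$ (degree $0$), then either $\mathbfe{\alpha} = \mathbf{0}$ gives $\mathscr{L}_n(0,0,0)$, or choosing $\mathbfe{\alpha}$ to be zero except for its last $\pi_1$ entries (with position $(n+1)-\pi_1$ in $\mathbb{F}_q^*$) delivers sequences in $\mathscr{L}_n(\pi_1, 0, \pi_1)$ by Lemma \ref{lemma, Hankel rho_1 = 0 form}. If $A_1 = T - c$ (degree $1$), the recurrence produces the geometric sequence $\alpha_i = \alpha_0 c^i$, which gives the quasi-regular sub-case, and a straightforward modification of the tail handles the non-quasi-regular sub-case of Claim 2. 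In each case direct verification matches the prescribed $A_2$.

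For Claims 3 and 4, where $\rho_1 \geq 2$, I will select the initial segment via the Laurent expansion of $A_2(T)/A_1(T)$ at infinity. Writing $A_2(T)/A_1(T) = P(T) + \sum_{i \geq 0} \alpha_i T^{-i-1}$ (where $P$ is a polynomial part, present only when $\degree A_2 \geq \degree A_1$) and multiplying through by $A_1$, matching the coefficients of $T^0, T^{-1}, \ldots, T^{-(\rho_1 - 1)}$ yields a triangular linear system in $\alpha_0, \ldots, \alpha_{\rho_1 - 1}$ with constant diagonal $a_{\rho_1} \neq 0$. This system is uniquely solvable, so the initial values are determined by $A_2$ (up to rescaling $A_2$ by an element of $\mathbb{F}_q^*$, which simply rescales $\mathbfe{\alpha}$). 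Extending by the recurrence from $A_1$ produces the candidate sequence, and by construction $A_1$ lies in the appropriate kernels while the polynomial encoded by $A_2$ does too.

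The main obstacle is verifying that this candidate satisfies $\rho(\mathbfe{\alpha}) = \rho_1$, i.e.\ $\det H_{\rho_1, \rho_1}(\mathbfe{\alpha}) \neq 0$. Classically this determinant is, up to a nonzero scalar, the resultant $\operatorname{Res}(A_1, A_2)$, which is nonzero precisely under the coprimality hypothesis; but rather than computing the resultant directly, I plan to argue by contradiction using the $(\rho, \pi)$-form. If $\rho(\mathbfe{\alpha}) < \rho_1$, then Lemmas \ref{lemma, Hankel rho_1 = 0 form} and \ref{lemma, (rho,pi) form when 1 rho < n_1} show that $\mathbfe{\alpha}$ actually satisfies a recurrence of lower order than $\rho_1$, whose characteristic polynomial must then divide $A_1$; meanwhile, tracing $A_2$ through the Laurent-series correspondence shows the same divisor reduces $A_2$, contradicting coprimality. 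The value of $\pi(\mathbfe{\alpha})$ is then forced by Corollary \ref{corollary, rank of recatnagular Hankel matrix given r of sequence} together with the degree of $A_2$, and the identification of $A_1, A_2$ as the characteristic polynomials follows from Theorem \ref{theorem, all Hankel matrices have characteristic polynomial kernel}. Uniqueness up to $\mathbb{F}_q^*$ in Claims 3 and 4 is immediate, since any valid $\mathbfe{\alpha}$ must satisfy the same recurrence and the same initial-value system.
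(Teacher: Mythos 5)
Your route---building $\mathbfe{\alpha}$ from the Laurent expansion of $A_2/A_1$ at infinity and propagating by the linear recurrence attached to $A_1$---is the classical Kronecker-type correspondence and is genuinely different from the paper's proof, which instead runs the Euclidean algorithm on $A_1,A_2$ down to a degree-$\leq 1$ base case, constructs that short sequence by hand, and lifts back up using the uniqueness of extensions from Theorem \ref{Theorem, Hankel matrices incorporate Euclidean algorithm} and Corollary \ref{corollary, Hankel matrices incorporate Euclidean algorithm theorem, full algorithm presented}. For Claim 3 your plan is workable: the triangular system does pin down $\alpha_0,\ldots,\alpha_{\rho_1-1}$, and your contradiction argument for $\rho(\mathbfe{\alpha})=\rho_1$ can be completed, though it needs one more observation than you state. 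Having $\rho(\mathbfe{\alpha})<\rho_1$ does \emph{not} by itself give a lower-order recurrence; you must first rule out $\pi(\mathbfe{\alpha})>0$. This does follow, because $A_1$ (of exact degree $\rho_1$) lies in $\kernel H_{n+1-\rho_1,\rho_1+1}(\mathbfe{\alpha})$ by construction, so Theorem \ref{theorem, all Hankel matrices have characteristic polynomial kernel} forces $A_1$ to be a multiple $B_1A_1^{\dagger}$ of the first characteristic polynomial with $\degree B_1\leq \rho_1-\rho(\mathbfe{\alpha})-\pi(\mathbfe{\alpha})$, whence $\pi(\mathbfe{\alpha})=0$; but this step should be made explicit.

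The genuine gap is Claim 4. There $\pi_1=\degree A_2-\degree A_1\geq 1$, the target stratum $\mathscr{L}_n(r,\rho_1,\pi_1)$ is \emph{not} quasi-regular, and the required second characteristic polynomial is $A_2$ itself, of degree $c_2=n+2-r=\degree A_2$, not its reduction modulo $A_1$. Your construction cannot produce such a sequence: the Laurent expansion of $A_2/A_1$ discards the polynomial part $P$, so it only sees $A_2\bmod A_1$, and ``extending by the recurrence from $A_1$'' through the full index range $0\leq i\leq n$ yields a sequence whose every row beyond the first $\rho_1$ is a combination of the preceding $\rho_1$ rows --- i.e.\ a quasi-regular sequence in $\mathscr{L}_n(\rho_1,\rho_1,0)$ with second characteristic polynomial $A_2\bmod A_1$, which is the wrong stratum and the wrong $A_2$. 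The last $\pi_1$ entries $\alpha_{n-\pi_1+1},\ldots,\alpha_n$ must \emph{break} the $A_1$-recurrence in a prescribed way; they are determined not by $A_1$ but by the requirement that the unreduced $A_2$ lie in $\kernel H_{r-1,n+3-r}(\mathbfe{\alpha})$ (this is exactly how the paper finishes Claim 4, solving for each new entry from the corresponding row-orthogonality condition against $A_2$, whose leading coefficient is nonzero). Without this extra step your candidate sequence exists but proves a different statement, and correspondingly your uniqueness argument for Claim 4 --- ``any valid $\mathbfe{\alpha}$ must satisfy the same recurrence'' --- fails on the tail, where the recurrence is precisely what is violated.
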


This theorem demonstrates the extent to which we can take any coprime polynomials $A_1 , A_2$ and integer $n$ such that there is a sequence $\mathbfe{\alpha} = (\alpha_0 , \alpha_1 , \ldots , \alpha_n )$ with characteristic polynomials equal to $A_1 , A_2$. Claims 1 and 2 address the cases where $\rho_1 \leq 1$. This is not difficult and it is included for completeness. Claim 3 considers the case where $\mathbfe{\alpha}$ is quasi-regular, and we can see by Theorem \ref{theorem, all Hankel matrices have characteristic polynomial kernel} that this allows for the possibility that $\degree A_2 \leq \degree A_2$. On the other hand, if $\mathbfe{\alpha}$ is not quasi-regular, then by Theorem \ref{theorem, all Hankel matrices have characteristic polynomial kernel} we must have $\degree A_2 > \degree A_1$, and this is the case that Claim 4 considers. \\

With regards to the definition of $r$ in Claim 2, this follows from the fact that if $A_2$ is to be the second characteristic polynomial, then we need the second characteristic degree of $\mathbfe{\alpha}$, which is $n+2 - r$, to be equal to $\degree A_2$. With regards to the bounds on $n$, for Claim 3 we note that the characteristic degrees are $r$ and $n+2-r$, and since the latter must be at least as large as the former, we obtain the requirement that $n \geq r+r-2$. Furthermore, by Theorem \ref{theorem, all Hankel matrices have characteristic polynomial kernel}, we must have that $\degree A_2 \leq n+2-r$, and thus $n \geq \degree A_2 + r -2$. For Claim 4, by Theorem \ref{theorem, all Hankel matrices have characteristic polynomial kernel} we must have that $\degree A_2 = n+2-r$; that is, $n = \degree A_2 + r -2$. The values given for $r, \rho_1 , \pi_1$ are also required by Theorem \ref{theorem, all Hankel matrices have characteristic polynomial kernel}.

\begin{theorem} \label {Theorem, Hankel matrices incorporate Euclidean algorithm}
Suppose
\begin{align*}
\mathbfe{\alpha} = (\alpha_0 , \alpha_1 , \ldots , \alpha_n ) \in \mathscr{L}_n (r , r , 0 )
\end{align*}
with $r \geq 2$ (note that $\mathbfe{\alpha}$ is quasi-regular). Let $A_1 , A_2$ be the characteristic polynomials. We necessarily have
\begin{align*}
d_1 := \degree A_1 = r ,
\end{align*}
and we can choose $A_2$ such that
\begin{align*}
d_2 := \degree A_2 < \degree A_1 .
\end{align*}
Now, if $d_2 \geq 1$, then let $A_3$ be the unique polynomial satisfying
\begin{align*}
A_1 = R_2 A_2 + A_3 \hspace{1em} \text{ and } \hspace{1em} d_3 := \degree A_3 < \degree A_2 ,
\end{align*}
for some polynomial $R_2$. \\

\textbf{Case 1:} If $d_2 \geq 2$, then 
\begin{align*}
\mathbfe{\alpha}^{(2)}
:= (\alpha_0 , \alpha_1 , \ldots , \alpha_{d_1 + d_2 -2})
\end{align*}
is in $\mathscr{L}_{d_1 + d_2 -2}  (d_2 , d_2 , 0 )$ and has characteristic polynomials $A_2 , A_3$. \\

\textbf{Case 2:} If $d_2 = 1$, then 
\begin{align*}
\mathbfe{\alpha}^{(2)}
:= (\alpha_0 , \alpha_1 , \ldots , \alpha_{d_1 })
\end{align*}
is in $\mathscr{L}_{d_1 }  (2 , 1 , 1 )$ and has characteristic polynomials $A_2 , A_1$ (note that the order is important). \\

\textbf{Case 3:} If $d_2 = 0$, then 
\begin{align*}
\mathbfe{\alpha}^{(2)}
:= (\alpha_0 , \alpha_1 , \ldots , \alpha_{d_1 })
\end{align*}
is in $\mathscr{L}_{d_1 }  (2 , 0 , 2 )$ and has characteristic polynomials $A_2 , A_1$ (note that the order is important). \\

Furthermore, $\mathbfe{\alpha}$ is the unique sequence in $\mathscr{L}_n (r , r , 0 )$ that has characteristic polynomials $A_1 , A_2$ and gives the above properties for $\mathbfe{\alpha}^{(2)}$. \\

Suppose now that
\begin{align*}
\mathbfe{\alpha} = (\alpha_0 , \alpha_1 , \ldots , \alpha_n ) \in \mathscr{L}_n (r , \rho_1 , \pi_1 ) ,
\end{align*}
where $r \geq 2$ and $\pi_1 \geq 1$; and let $A_1 , A_2$ be the characteristic polynomials. Then
\begin{align*}
\mathbfe{\alpha}^{(1)}
:= (\alpha_0 , \alpha_1 , \ldots , \alpha_{n - \pi_1 })
\end{align*}
is in $\mathscr{L}_{n - \pi_1 }  (\rho_1 , \rho_1 , 0 )$ and has characteristic polynomials $A_1 , A_2$. A similar rusult holds for the cases $r \leq 1$, but the second characteristic polynomial of $\mathbfe{\alpha}^{(1)}$ will not be that of $\mathbfe{\alpha}$, but it will be defined as in Theorem \ref{theorem, all Hankel matrices have characteristic polynomial kernel}. \\
\end{theorem}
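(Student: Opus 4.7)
The plan is to handle the quasi-regular case by extracting linear recurrences from the characteristic polynomials of $\mathbfe{\alpha}$, combining them via the Euclidean identity $A_1 = R_2 A_2 + A_3$, and invoking Theorem~\ref{theorem, all corpime characteristic polynomial have a Hankel matrix}. By Theorem~\ref{theorem, all Hankel matrices have characteristic polynomial kernel}, the polynomial $A_2$ (zero-padded) lies in $\kernel H_{d_1 - 1,\, n - d_1 + 3}(\mathbfe{\alpha})$, which unpacks row by row to
\begin{equation*}
\sum_{k=0}^{d_2} b_k \alpha_{m+k} = 0 \qquad \text{for } m = 0, 1, \ldots, d_1 - 2,
\end{equation*}
where $A_2 = \sum_k b_k T^k$; similarly $A_1 = \sum_j a_j T^j$ gives $\sum_j a_j \alpha_{k+j} = 0$ for $k = 0, \ldots, n - d_1$. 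Since every entry in the $A_2$-recurrence lies within $\mathbfe{\alpha}^{(2)}$, this immediately yields $A_2 \in \kernel H_{d_1 - 1,\, d_2 + 1}(\mathbfe{\alpha}^{(2)})$.

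For Case~1 ($d_2 \geq 2$), I next derive the analogous $A_3$-recurrence. Writing $a_j = \sum_{i+l=j} r_i b_l + c_j$ with $A_3 = \sum_j c_j T^j$ and $c_j = 0$ for $j > d_3$, direct expansion gives
\begin{equation*}
\sum_j a_j \alpha_{k+j}
 = \sum_{i=0}^{d_1 - d_2} r_i \Bigl( \sum_l b_l \alpha_{k+i+l} \Bigr) + \sum_{j=0}^{d_3} c_j \alpha_{k+j}.
\end{equation*}
For $k \leq d_2 - 2$ each inner bracket vanishes by the $A_2$-recurrence (since $k + i \leq d_1 - 2$) and the left-hand side vanishes by the $A_1$-recurrence, so $\sum_j c_j \alpha_{k+j} = 0$ for $k = 0, \ldots, d_2 - 2$, which means $A_3 \in \kernel H_{d_2 - 1,\, d_1 + 1}(\mathbfe{\alpha}^{(2)})$ after padding $A_3$. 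Since $A_2, A_3$ are coprime (a common factor would divide $A_1$), Claim~3 of Theorem~\ref{theorem, all corpime characteristic polynomial have a Hankel matrix} produces a sequence $\mathbfe{\gamma}$, unique up to nonzero scalar, lying in $\mathscr{L}_{d_1 + d_2 - 2}(d_2, d_2, 0)$ with characteristic polynomials $A_2, A_3$.

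To identify $\mathbfe{\alpha}^{(2)}$ with a scalar multiple of $\mathbfe{\gamma}$, I extend $\mathbfe{\gamma}$ to a sequence $\bar{\mathbfe{\alpha}}$ of length $n+1$ by applying $A_1$'s recurrence, and verify $\bar{\mathbfe{\alpha}} \in \mathscr{L}_n(r, r, 0)$ with characteristic polynomials $A_1, A_2$: the inclusion of $A_1$ in the appropriate kernel is by construction; that of $A_2$ follows because the $A_2$-recurrence on $[0, d_1 - 2]$ uses only entries already supplied by $\mathbfe{\gamma}$; the vanishing of $\det H_{k,k}(\bar{\mathbfe{\alpha}})$ for $d_1 < k \leq n_1$ follows from the $k - d_1$ independent shifts $A_1, T A_1, \ldots, T^{k-d_1-1} A_1$ lying in the kernel; and $\det H_{d_1, d_1}(\bar{\mathbfe{\alpha}}) \neq 0$ is obtained by using the $A_2$-recurrence as row operations to reduce the determinant to a product containing $\det H_{d_2, d_2}(\mathbfe{\gamma})$, which is nonzero since $\rho(\mathbfe{\gamma}) = d_2$. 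Uniqueness in Claim~3 applied now to $(A_1, A_2)$ forces $\bar{\mathbfe{\alpha}} = c\,\mathbfe{\alpha}$, whence $\mathbfe{\alpha}^{(2)} = c^{-1} \mathbfe{\gamma}$. Cases~2 and~3 ($d_2 \leq 1$) go through the same way but substitute Claims~1 and~2 of Theorem~\ref{theorem, all corpime characteristic polynomial have a Hankel matrix}; the resulting $\mathbfe{\alpha}^{(2)}$ is no longer quasi-regular because $A_1$, of degree strictly greater than $d_2$, is forced into the role of the larger-degree second characteristic polynomial.

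The final assertion for $\mathbfe{\alpha}^{(1)}$ in the non-quasi-regular case reduces to the quasi-regular analysis via Lemma~\ref{lemma, (rho,pi) form when 1 rho < n_1}: the top $\rho_1$-row block of the $(\rho, \pi)$-form of $H_{n_1, n_2}(\mathbfe{\alpha})$ is simply $H_{\rho_1, n_2}(\mathbfe{\alpha})$ whose entries make up $\mathbfe{\alpha}^{(1)}$, and tracking the row operations shows this block has the same first characteristic polynomial $A_1$ and the same second $A_2$ (read off from the unaffected top of the original matrix). The main obstacle, in my view, is the determinant factorisation step in the quasi-regular verification: one must show that the scalar residual obtained when reducing $\det H_{d_1, d_1}(\bar{\mathbfe{\alpha}})$ by the $A_2$-row operations is nonzero. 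This relies on the fact that the $A_2$-recurrence provably does not extend to $m = d_1 - 1$---a reflection of $A_1$ having genuine degree $d_1$ and of coprimality of the pair $(A_1, A_2)$---and careful bookkeeping of the remaining $(d_1 - d_2) \times (d_1 - d_2)$ block is where the argument is tightest.
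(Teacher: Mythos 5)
Your opening derivation is sound: unpacking the kernel memberships into the linear recurrences $\sum_k b_k\alpha_{m+k}=0$ ($0\le m\le d_1-2$) and $\sum_j a_j\alpha_{k+j}=0$ ($0\le k\le n-d_1$), and combining them through $A_1=R_2A_2+A_3$ to get $\sum_j c_j\alpha_{k+j}=0$ for $0\le k\le d_2-2$, is a correct and rather transparent way of showing $A_2\in\kernel H_{d_1-1,d_2+1}(\mathbfe{\alpha}^{(2)})$ and $A_3\in\kernel H_{d_2-1,d_1+1}(\mathbfe{\alpha}^{(2)})$; the paper reaches the same two facts by deleting trailing columns and rows from $H_{d_1-1,n+3-d_1}(\mathbfe{\alpha})$ and applying Lemma \ref{lemma, shifted poly in kernel of Hankel matrix is in kernel of removed row matrix}. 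The problem is everything after that. Your route to pinning down the $(\rho,\pi)$-characteristic of $\mathbfe{\alpha}^{(2)}$ and the uniqueness claim is to invoke Theorem \ref{theorem, all corpime characteristic polynomial have a Hankel matrix} (Claim 3 for the existence and uniqueness of $\mathbfe{\gamma}$ with characteristic polynomials $A_2,A_3$, and again to force $\bar{\mathbfe{\alpha}}=c\,\mathbfe{\alpha}$). But in this paper that theorem is proved \emph{after} the present one, and its Claims 3 and 4 are deduced from Corollary \ref{corollary, Hankel matrices incorporate Euclidean algorithm theorem, full algorithm presented}, which is itself an immediate consequence of the theorem you are proving. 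As written, the argument is circular; to repair it you would have to supply an independent proof of the existence/uniqueness statements you cite, which is essentially the content of the theorem itself. The paper avoids this entirely: it never reconstructs a sequence from $(A_2,A_3)$, but instead computes $\dim\kernel H_{d_2-1,d_1+1}(\mathbfe{\alpha}^{(2)})$ directly from the invertibility of $H_{d_1,d_1}(\alpha_0,\ldots,\alpha_{2d_1})$ together with the row-removal lemma, and reads off the characteristic polynomials and the $(\rho,\pi)$-data of $\mathbfe{\alpha}^{(2)}$ from that; uniqueness is then an elementary induction expressing each later entry of $\mathbfe{\alpha}$ in terms of earlier ones via the nonzero leading coefficient of $A_1$.

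Two further points. First, even granting the cited theorem, your verification that $\bar{\mathbfe{\alpha}}\in\mathscr{L}_n(r,r,0)$ hinges on showing $\det H_{d_1,d_1}(\bar{\mathbfe{\alpha}})\neq 0$ by a row-reduction/factorisation argument that you yourself describe as incomplete; this is not a cosmetic gap, since without it $\rho(\bar{\mathbfe{\alpha}})=d_1$ is unproved and the uniqueness comparison with $\mathbfe{\alpha}$ collapses. Second, in the final ($\pi_1\ge 1$) claim, the top $\rho_1$-row block of the $(\rho,\pi)$-form is a Hankel matrix built from $(\alpha_0,\ldots,\alpha_{\rho_1+n_2-2})$, which is in general a strict truncation of $\mathbfe{\alpha}^{(1)}=(\alpha_0,\ldots,\alpha_{n-\pi_1})$ (e.g.\ $n=10$, $\rho_1=2$, $\pi_1=2$ gives $\rho_1+n_2-2=6$ versus $n-\pi_1=8$), so that block does not by itself determine $\mathbfe{\alpha}^{(1)}$; the paper instead removes the last $\pi_1$ columns of $H_{n_1,n_2}(\mathbfe{\alpha})$ (and, for $A_2$, the last $\pi_1$ rows of $H_{r-1,n+3-r}(\mathbfe{\alpha})$), which lands exactly on matrices associated to $\mathbfe{\alpha}^{(1)}$.
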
 

The theorem above demonstrates the manifestation of the Euclidean algorithm in Hankel matrices, and this is made clearer in the corollaries below. The final claim in the theorem is given in order to demonstrate that even if a sequence is not quasi-regular (which is required for the main part of the theorem) a truncation can be taken that is quasi-regular and has the same characteristic polynomials.

\begin{corollary} \label{corollary, Hankel matrices incorporate Euclidean algorithm theorem, full algorithm presented}
Let
\begin{align*}
\mathbfe{\alpha} = (\alpha_0 , \alpha_1 , \ldots , \alpha_n ) \in \mathscr{L}_n (r , r , 0 ) 
\end{align*}
where $r \geq 2$. Let the characteristic polynomials of $\mathbfe{\alpha}$ be $A_1 , A_2$, where we choose $A_2$ such that $\degree A_2 < \degree A_1$ (which is possible since $\mathbfe{\alpha}$ is quasi-regular) and note that degree $A_1 = r$. Define $d_1 := \degree A_1$ and $d_2 := \degree A_2$. Suppose the Euclidean algorithm gives
\begin{align*}
A_1 = &R_2 A_2 + A_3 			&&d_3 := \degree A_3 < \degree A_2 , \\
A_2 = &R_3 A_3 + A_4 			&&d_4 := \degree A_4 < \degree A_3 , \\
&\vdots 					&&\vdots \\
A_t = &R_{t+1} A_{t+1} + A_{t+2} 		&&d_{t+2} := \degree A_{t+2} < \degree A_{t+1} ,
\end{align*}
where $t$ is such that $\degree A_{t} \geq 2 > \degree A_{t+1} \geq 0$. Finally, let $\mathbfe{\alpha}^{(1)} := \mathbfe{\alpha}$; if $t \geq 2$ then let
\begin{align*}
\mathbfe{\alpha}^{(2)} := &(\alpha_0 , \alpha_1 , \ldots , \alpha_{d_1 + d_2 - 2} ) , \\
\mathbfe{\alpha}^{(3)} := &(\alpha_0 , \alpha_1 , \ldots , \alpha_{d_2 + d_3 - 2} ) , \\
&\vdots \\
\mathbfe{\alpha}^{(t)} := &(\alpha_0 , \alpha_1 , \ldots , \alpha_{d_{t-1} + d_{t} - 2} ) ; 
\end{align*}
and for all $t \geq 1$, let
\begin{align*}
\mathbfe{\alpha}^{(t+1)} := &(\alpha_0 , \alpha_1 , \ldots , \alpha_{d_{t}} ) .
\end{align*}
Then,
\begin{align}
\begin{split} \label{statement, Hankel matrices incorporate Euclidean algorithm theorem, full algorithm presented corollary, alpha^(1) ... alpha^(t) results}
&\text{$\mathbfe{\alpha}^{(1)} \in \mathscr{L}_{n} (r , r , 0)$ and has characteristic polynomials $A_1 , A_2$,} \\
&\text{$\mathbfe{\alpha}^{(2)} \in \mathscr{L}_{d_1 + d_2 - 2} (d_2 , d_2 , 0)$ and has characteristic polynomials $A_2 , A_3$,} \\
&\text{$\mathbfe{\alpha}^{(3)} \in \mathscr{L}_{d_2 + d_3 - 2} (d_3 , d_3 , 0)$ and has characteristic polynomials $A_3 , A_4$,} \\
&\vdots \\
&\text{$\mathbfe{\alpha}^{(t)} \in \mathscr{L}_{d_{t-1} + d_t -2} (d_{t} , d_{t} , 0)$ and has characteristic polynomials $A_{t} , A_{t+1}$;}
\end{split}
\end{align}
and
\begin{align*}
\mathbfe{\alpha}^{(t+1)}
\in \begin{cases}
\mathscr{L}_{d_{t}} (2 , 1 , 1) &\text{ if $d_{t+1} =1$,} \\
\mathscr{L}_{d_{t}} (2 , 0 , 2) &\text{ if $d_{t+1} =0$,} 
\end{cases}
\hspace{1em} \text{ and has characteristic polynomials $A_{t+1} , A_{t}$.} 
\end{align*}
Furthermore, for any given $1 \leq i \leq t$, the sequence $\mathbfe{\alpha}^{(i)}$ is the unique extension of $\mathbfe{\alpha}^{(t+1)}$ satisfying the associated conditions in (\ref{statement, Hankel matrices incorporate Euclidean algorithm theorem, full algorithm presented corollary, alpha^(1) ... alpha^(t) results}).
\end{corollary}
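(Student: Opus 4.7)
The plan is to prove the corollary by induction on $i$, with Theorem~\ref{Theorem, Hankel matrices incorporate Euclidean algorithm} serving as the inductive engine; at step $i$, the roles of $\mathbfe{\alpha}, A_1, A_2, A_3$ in that theorem will be played by $\mathbfe{\alpha}^{(i)}, A_i, A_{i+1}, A_{i+2}$. The base case $i = 1$ is exactly the hypothesis: $\mathbfe{\alpha}^{(1)} = \mathbfe{\alpha} \in \mathscr{L}_n(r,r,0)$ has characteristic polynomials $A_1, A_2$ with $\degree A_1 = d_1 = r$ and $\degree A_2 = d_2 < d_1$, which is the first line of (\ref{statement, Hankel matrices incorporate Euclidean algorithm theorem, full algorithm presented corollary, alpha^(1) ... alpha^(t) results}).

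For the inductive step, fix $1 \leq i \leq t-1$ and suppose $\mathbfe{\alpha}^{(i)}$ has already been shown to lie in its stated $\mathscr{L}$-set, to be quasi-regular with $\rho\big(\mathbfe{\alpha}^{(i)}\big) = d_i$, and to have characteristic polynomials $A_i, A_{i+1}$. Because $i+1 \leq t$, the degree $d_{i+1}$ satisfies $d_{i+1} \geq d_t \geq 2$, so Case~1 of Theorem~\ref{Theorem, Hankel matrices incorporate Euclidean algorithm} applies to $\mathbfe{\alpha}^{(i)}$. It produces the length-$(d_i+d_{i+1}-1)$ truncation $(\alpha_0, \ldots, \alpha_{d_i+d_{i+1}-2})$ as an element of $\mathscr{L}_{d_i+d_{i+1}-2}(d_{i+1}, d_{i+1}, 0)$ whose characteristic polynomials are $A_{i+1}$ together with the remainder produced by one step of the Euclidean algorithm on $(A_i, A_{i+1})$. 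That remainder coincides with $A_{i+2}$ by the definition of the Euclidean chain given in the hypothesis, and since $\mathbfe{\alpha}^{(i)}$ is itself a truncation of $\mathbfe{\alpha}$, the truncation above equals $\mathbfe{\alpha}^{(i+1)}$ as defined in the corollary. This closes the induction.

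The terminating step $i = t$ is identical in spirit, except that now $d_{t+1} \in \{0,1\}$, so we invoke Case~2 (if $d_{t+1} = 1$) or Case~3 (if $d_{t+1} = 0$) of Theorem~\ref{Theorem, Hankel matrices incorporate Euclidean algorithm}. This yields $\mathbfe{\alpha}^{(t+1)} = (\alpha_0, \ldots, \alpha_{d_t})$ lying in $\mathscr{L}_{d_t}(2,1,1)$ or $\mathscr{L}_{d_t}(2,0,2)$ respectively, with characteristic polynomials $A_{t+1}, A_t$ (in that order, as the theorem records). Finally, uniqueness of each $\mathbfe{\alpha}^{(i)}$ as an extension of $\mathbfe{\alpha}^{(t+1)}$ satisfying (\ref{statement, Hankel matrices incorporate Euclidean algorithm theorem, full algorithm presented corollary, alpha^(1) ... alpha^(t) results}) follows by chaining the uniqueness clause of Theorem~\ref{Theorem, Hankel matrices incorporate Euclidean algorithm} backwards from $i=t$ down to $i=1$: at every stage the theorem tells us $\mathbfe{\alpha}^{(i)}$ is the unique element of its $\mathscr{L}$-set whose truncation yields $\mathbfe{\alpha}^{(i+1)}$ with the prescribed $(\rho,\pi)$-form and characteristic polynomials. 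There is no serious conceptual obstacle; the only item requiring care is the bookkeeping of the lengths $d_{i-1}+d_i-2$ under repeated truncation and the matching of the Euclidean remainders with the polynomials $A_i$ named in the hypothesis.
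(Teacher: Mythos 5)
Your proposal is correct and matches the paper's approach: the paper's proof of this corollary is literally the single line ``This follows by successive applications of Theorem \ref{Theorem, Hankel matrices incorporate Euclidean algorithm},'' and your induction on $i$ (Case~1 at intermediate steps, Cases~2/3 at the terminal step, with the uniqueness clause chained backwards) is exactly the intended unpacking of that sentence.
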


\begin{proof}
This follows by successive applications of Theorem \ref{Theorem, Hankel matrices incorporate Euclidean algorithm}.
\end{proof}

\begin{corollary} \label{corollary, h-1 zeros at start of sequence iff degree diff in last Euclidean alg is h}
Suppose $\mathbfe{\alpha} = (\alpha_0 , \alpha_1 , \ldots , \alpha_n ) \in \mathscr{L}_n (r,r,0)$ with $r \geq 2$, and let the characteristic polynomials be $A_1 , A_2$. Note that $\degree A_1 = r = 2$, and we choose $A_2$ such that $\degree A_2 < \degree A_1$. Let $A_1 , A_2 , \ldots , A_s , 1$ be the polynomials we obtain by applying the Euclidean algorithm to $A_1 , A_2$, and let $d_1 , d_2 , \ldots , d_s , 0$ be their respective degrees. Then, there are exactly $h := \degree A_{s} -1$ consecutive zeros at the beginning of the sequence $\mathbfe{\alpha}$. 
\end{corollary}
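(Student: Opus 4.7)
The plan is to leverage Corollary~\ref{corollary, Hankel matrices incorporate Euclidean algorithm theorem, full algorithm presented}, which already tells us that every step of the Euclidean algorithm on $A_1,A_2$ corresponds to a specific prefix of $\mathbfe{\alpha}$ with prescribed $(\rho,\pi)$-characteristic. In particular, the final prefix $\mathbfe{\alpha}^{(t+1)}$ has an extremely constrained form dictated by the last two nontrivial remainders, and reading off its leading zeros will immediately give the number of leading zeros of $\mathbfe{\alpha}$ itself.

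First, I would reconcile the indexing. Let $t$ be the index from Corollary~\ref{corollary, Hankel matrices incorporate Euclidean algorithm theorem, full algorithm presented}, defined by $\degree A_t \geq 2 > \degree A_{t+1} \geq 0$. In the notation of the present statement, the Euclidean sequence is written $A_1,\ldots,A_s,1$. If $\degree A_{t+1} = 1$, then $A_{t+1}$ is the last nonconstant remainder, so $s=t+1$ and $A_s = A_{t+1}$ has degree $1$; if $\degree A_{t+1}=0$, then $A_{t+1}$ is already the terminating constant $1$, so $s=t$ and $A_s = A_t$ has degree $d_t \geq 2$. In either case $h = \degree A_s - 1$, which in the two subcases equals $0$ and $d_t-1$ respectively.

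Next, I would apply Corollary~\ref{corollary, Hankel matrices incorporate Euclidean algorithm theorem, full algorithm presented} directly: the prefix $\mathbfe{\alpha}^{(t+1)} = (\alpha_0,\ldots,\alpha_{d_t})$ lies in $\mathscr{L}_{d_t}(2,1,1)$ when $d_{t+1}=1$ and in $\mathscr{L}_{d_t}(2,0,2)$ when $d_{t+1}=0$. Because $\mathbfe{\alpha}^{(t+1)}$ is an initial segment of $\mathbfe{\alpha}$ containing a nonzero entry (verified below), the number of leading zeros in $\mathbfe{\alpha}$ equals the number of leading zeros in $\mathbfe{\alpha}^{(t+1)}$. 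In the first subcase, $\rho(\mathbfe{\alpha}^{(t+1)})=1$ forces $\alpha_0 = \det H_{1,1}(\mathbfe{\alpha}^{(t+1)}) \neq 0$, so there are $0 = h$ leading zeros. In the second subcase, $\rho(\mathbfe{\alpha}^{(t+1)})=0$ and $\pi(\mathbfe{\alpha}^{(t+1)})=2$, so Lemma~\ref{lemma, Hankel rho_1 = 0 form} applied to the length-$(d_t+1)$ sequence gives $\alpha_i=0$ for $i<d_t-1$ and $\alpha_{d_t-1}\neq 0$, yielding exactly $d_t-1 = h$ leading zeros.

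The one subtle point — and effectively the only nontrivial step — is matching the present index $s$ with the Corollary's index $t$ through the two parity cases $\degree A_{t+1}\in\{0,1\}$. Once this bookkeeping is done, the result is essentially an immediate consequence of the structural description of quasi- or anti-quasi-regular short sequences given by Lemma~\ref{lemma, Hankel rho_1 = 0 form}, combined with the observation that leading zeros are preserved under passing to initial segments.
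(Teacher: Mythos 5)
Your proof is correct, and it follows the same overall strategy as the paper -- both arguments funnel everything through Corollary \ref{corollary, Hankel matrices incorporate Euclidean algorithm theorem, full algorithm presented} to reduce the question to the terminal prefix $\mathbfe{\alpha}^{(t+1)} = (\alpha_0, \ldots, \alpha_{d_t})$ of the Euclidean chain, and your reconciliation of the index $s$ with the corollary's index $t$ via the two cases $\degree A_{t+1} \in \{0,1\}$ matches the paper's case split on $\degree A_s = 1$ versus $\degree A_s \geq 2$. Where you diverge is in how you extract the leading-zero count from that prefix. The paper uses the \emph{characteristic polynomials} of the terminal prefix: it lists the explicit kernel elements $A_{t+1}, TA_{t+1}, \ldots, T^{d_t-2}A_{t+1}$ and $A_t$ of the $1 \times (d_t+1)$ matrix and runs a contradiction argument (assuming one more leading zero forces the whole prefix to vanish, contradicting rank $2$). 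You instead use only the \emph{$(\rho,\pi)$-characteristic} of the prefix -- $\mathscr{L}_{d_t}(2,1,1)$ gives $\alpha_0 = \det H_{1,1} \neq 0$ directly from the definition of $\rho$, and $\mathscr{L}_{d_t}(2,0,2)$ hands you the zero pattern via Lemma \ref{lemma, Hankel rho_1 = 0 form}. Your route is slightly cleaner in that it needs less information from the corollary (no appeal to which polynomials span the kernel) and replaces the two contradiction arguments with a single citation of an already-proved structural lemma; the paper's version has the mild advantage of making the role of the last two remainders $A_s, A_{s+1}$ explicit. Both are complete.
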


\begin{proof}
We begin with the case where $\degree A_{s} = 1$. We must show that the first term of $\mathbfe{\alpha}$ is non-zero. Note that since $\degree A_1 \geq 2$, we must have $s \geq 2$. Let us define $\mathbfe{\alpha}^{(s)}$ as in Corollary \ref{corollary, Hankel matrices incorporate Euclidean algorithm theorem, full algorithm presented}. That is, $\mathbfe{\alpha}^{(s)} = (\alpha_0 , \alpha_1 , \ldots , \alpha_{d_{s-1}}) \in \mathscr{L}_{d_{s-1}} (2,1,1)$ and has characteristic polynomials $A_s$ and $A_{s-1}$. In particular, the kernel of the matrix $(\alpha_0 , \alpha_1 , \ldots , \alpha_{d_{s-1}})$ contains the polynomials
\begin{align} \label{statement, h-1 zeros at start of sequence iff degree diff in last Euclidean alg is h corollary proof, deg A_s = 1, A_s polys in kernel }
A_s \hspace{1em} , \hspace{1em} T A_s \hspace{1em} , \hspace{1em} \ldots \hspace{1em} , \hspace{1em} T^{d_{s-1} -2} A_s 
\end{align}
and
\begin{align} \label{statement, h-1 zeros at start of sequence iff degree diff in last Euclidean alg is h corollary proof, deg A_s = 1, A_(s-1) poly in kernel }
A_{s-1}.
\end{align}
Suppose for a contradiction that $\alpha_0 = 0$. Then, (\ref{statement, h-1 zeros at start of sequence iff degree diff in last Euclidean alg is h corollary proof, deg A_s = 1, A_s polys in kernel }) implies that $\alpha_1 , \ldots , \alpha_{d_{s-1} -1} = 0$, and then (\ref{statement, h-1 zeros at start of sequence iff degree diff in last Euclidean alg is h corollary proof, deg A_s = 1, A_(s-1) poly in kernel }) implies also that $\alpha_{d_{s-1}} = 0$. Thus, $\mathbfe{\alpha}^{(s)} = \mathbf{0}$, contradicting that $\mathbfe{\alpha}^{(s)} \in \mathscr{L}_{d_{s-1}} (2,1,1)$. \\

We now consider the case where $\degree A_s \geq 2$, and we define $A_{s+1} =1$. We define $\mathbfe{\alpha}^{(s+1)}$ as in Corollary \ref{corollary, Hankel matrices incorporate Euclidean algorithm theorem, full algorithm presented}. That is, $\mathbfe{\alpha}^{(s+1)} = (\alpha_0 , \alpha_1 , \ldots , \alpha_{d_{s}}) \in \mathscr{L}_{d_{s-1}} (2,1,1)$ and has characteristic polynomials $A_{s+1}$ and $A_{s}$. In particular, the kernel of the matrix $(\alpha_0 , \alpha_1 , \ldots , \alpha_{d_{s}})$ contains the polynomials
\begin{align} \label{statement, h-1 zeros at start of sequence iff degree diff in last Euclidean alg is h corollary proof, deg A_s > 1, A_(s+1) polys in kernel }
A_{s+1} \hspace{1em} , \hspace{1em} T A_{s+1} \hspace{1em} , \hspace{1em} \ldots \hspace{1em} , \hspace{1em} T^{d_{s} -2} A_{s+1} 
\end{align}
and
\begin{align} \label{statement, h-1 zeros at start of sequence iff degree diff in last Euclidean alg is h corollary proof, deg A_s > 1, A_s poly in kernel }
A_{s}.
\end{align}
Since $A_{s+1} = 1$, we deduce from (\ref{statement, h-1 zeros at start of sequence iff degree diff in last Euclidean alg is h corollary proof, deg A_s > 1, A_(s+1) polys in kernel }) that the first $d_s - 1$ entries of $\mathbfe{\alpha}^{(s+1)}$ are $0$. We now need only show that $\alpha_{d_s -1}$ is non-zero, which follows easily by a contradiction argument: If it were zero, then (\ref{statement, h-1 zeros at start of sequence iff degree diff in last Euclidean alg is h corollary proof, deg A_s > 1, A_s poly in kernel }) would imply that $\alpha_{d_s}$ is also zero, meaning $\mathbfe{\alpha}^{(s+1)} = \mathbf{0}$, which contradicts that $\mathbfe{\alpha}^{(s+1)} \in \mathscr{L}_{d_{s-1}} (2,1,1)$.
\end{proof}

The following theorem demonstrates how the characteristic polynomials of a sequence change if we increase the length of the sequence. The intuition behind this result is made clear in the proof.

\begin{theorem} \label{theorem, kernel structure subsection, char polys of extended sequences}
Let 
\begin{align*}
\mathbfe{\alpha} = (\alpha_0 , \alpha_1 , \ldots , \alpha_n ) \in \mathbb{F}_q^{n+1} ,
\end{align*}
and let $c_1 , c_2$ be the characteristic degrees and $A_1 , A_2$ be the characteristic polynomials. Now let
\begin{align*}
\mathbfe{\alpha}' := (\alpha_0 , \alpha_1 , \ldots , \alpha_n , \alpha_{n+1} ) \in \mathbb{F}_q^{n+2}. 
\end{align*}
be an extension of $\mathbfe{\alpha}$, and denote the characteristic degrees by $c_1' , c_2'$ and the characteristic polynomials by $A_1' , A_2'$. In what follows we define $n_1' := \lfloor \frac{(n+1)+2}{2} \rfloor$ and $n_2' := \lfloor \frac{(n+1)+3}{2} \rfloor$. \\

\textbf{Claim 1:} Suppose that $\mathbfe{\alpha} \in \mathscr{L}_n (r,r,0)$ where $0 \leq r \leq n_1 -1$; and so $c_1 =r$ and $c_2 = n+2-r$, and $A_1 \in \mathcal{A}_{r}$ and $A_2 \in \mathcal{A}_{<r}$. \\

There is one value of $\alpha_{n+1}$ such that $\mathbfe{\alpha}' \in \mathscr{L}_{n+1} (r,r,0)$. In which case, we have $c_1' = c_1 = r$ and $c_2' = c_2 +1 = n+3-r$, and $A_1' = A_1$ and $A_2'= A_2$. \\

There are $q-1$ values of $\alpha_{n+1}$ such that $\mathbfe{\alpha}' \in \mathscr{L}_{n+1} (r+1,r,1)$. In which case, we have $c_1' = c_1 +1 = r+1$ and $c_2' = c_2 = n+2-r$, and $A_1' = A_1$ and $A_2' = \beta A_2 + T^{c_2 - c_1} A_1$ for some $\beta \in \mathbb{F}_q^*$. There is a one-to-one correspondence between $\alpha_{n+1}$ and $\beta$. \\

\textbf{Claim 2:} Suppose that $\mathbfe{\alpha} \in \mathscr{L}_n (r, \rho_1, \pi_1 )$ where $\pi_1 \geq 1$ and $0 \leq r \leq n_1 -1$ (and, by definition, $r=\rho_1 + \pi_1$); and so $c_1 = r$ and $c_2 = n+2-r$, and $A_1 \in \mathcal{A}_{\rho_1 }$ and $A_2 \in \mathcal{A}_{c_2}$. \\

For any value that $\alpha_{n+1}$ takes in $\mathbb{F}_q$, we have $\mathbfe{\alpha}' \in \mathscr{L}_{n+1} (r+1, \rho_1, \pi_1 +1 )$. We have $c_1' = c_1 +1 = r+1$ and $c_2' = c_2 = n+2-r$, and $A_1' = A_1$ and $A_2' = \beta T^{c_2 - c_1} A_1 + A_2$ for some $\beta \in \mathbb{F}_q$. There is a one-to-one correspondence between $\alpha_{n+1}$ and $\beta$. \\

\textbf{Claim 3:} Suppose $n$ is even and $\mathbfe{\alpha} \in \mathscr{L}_n (n_1, n_1, 0 )$; and so $c_1' , c_2' = n_1$, and $A_1 \in \mathcal{A}_{n_1}$ and $A_2 \in \mathcal{A}_{<n_1}$. For any value that $\alpha_{n+1}$ takes, we have $\mathbfe{\alpha} \in \mathscr{L}_{n+1} (n_1, n_1, 0 )$. We also have $c_1' = c_1 = n_1$ and $c_2' = c_2 +1 = n+3 - n_1 = n_1 +1$, and $A_1' = \beta A_2 + A_1$ and $A_2' = A_2$. There is a one-to-one correspondence between $\alpha_{n+1}$ and $\beta$. \\

Suppose $n$ is odd and $\mathbfe{\alpha} \in \mathscr{L}_n (n_1, n_1, 0 )$; and so $c_1' = n_1$ and $c_2' = n_1 +1$, and $A_1 \in \mathcal{A}_{n_1}$ and $A_2 \in \mathcal{A}_{<n_1}$. 
\begin{itemize}
\item There is one value of $\alpha_{n+1}$ such that $\mathbfe{\alpha}' \in \mathscr{L}_{n+1} (n_1, n_1, 0 )$; in which case $c_1' = c_1 = n_1$ and $c_2' = c_2 +1 = n_1 +2$, and $A_1' = A_1$ and $A_2' = A_2$. \\

\item There are $q-1$ values of $\alpha_{n+1}$ such that $\mathbfe{\alpha}' \in \mathscr{L}_{n+1} (n_1 +1, n_1 +1, 0 )$; in which case $c_1' = c_1 +1 = n_1 +1$ and $c_2' = c_2 = n_1 +1$, and $A_1' = \beta A_2 + T A_1$ and $A_2' = A_1$. There is a one-to-one correspondence between $\alpha_{n+1}$ and $\beta$. \\
\end{itemize}

Suppose $n$ is odd and $\mathbfe{\alpha} \in \mathscr{L}_n (n_1, \rho_1, \pi_1 )$, where $\pi_1 \geq 1$ and $0 \leq \rho_1 \leq n_1 -1$; and so $c_1' = n_1$ and $c_2' = n_1 +1$, and $A_1 \in \mathcal{A}_{\rho_1}$ and $A_2 \in \mathcal{A}_{n_1 +1}$. For any value that $\alpha_{n+1}$ takes, we have $\mathbfe{\alpha} \in \mathscr{L}_{n+1} (n_1 +1, n_1 +1, 0 )$; and $c_1' , c_2' = n_1 +1$, and $A_1' = \beta T A_1 + A_2$ and $A_2' = A_1$. There is a one-to-one correspondence between $\alpha_{n+1}$ and $\beta$.
\end{theorem}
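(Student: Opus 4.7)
The plan is to prove each claim by tracking how the kernel of the defining matrix $H_{n_1', n_2'}(\mathbfe{\alpha}')$ is inherited from that of $H_{n_1, n_2}(\mathbfe{\alpha})$, and then applying Theorems \ref{theorem, all Hankel matrices have characteristic polynomial kernel} and \ref{theorem, all corpime characteristic polynomial have a Hankel matrix} to read off $(\rho_1', \pi_1')$ and the new characteristic polynomials. The basic observation is that, for any $l, m$ with $l + m - 2 = n$, the matrix $H_{l, m+1}(\mathbfe{\alpha}')$ is obtained from $H_{l, m}(\mathbfe{\alpha})$ by adjoining one column whose only new entry (compared with data already determined by $\mathbfe{\alpha}$) is the bottom-right entry $\alpha_{n+1}$. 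Hence any polynomial of degree $< m$ in $\kernel H_{l, m}(\mathbfe{\alpha})$ stays in $\kernel H_{l, m+1}(\mathbfe{\alpha}')$, while any new kernel element is necessarily monic of exact degree $m$ and corresponds to a solution of an inhomogeneous linear system whose right-hand side depends affinely on $\alpha_{n+1}$.

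For Claim 1, Theorem \ref{theorem, all Hankel matrices have characteristic polynomial kernel} says that $\kernel H_{l, c_2}(\mathbfe{\alpha})$ is spanned by $A_1, T A_1, \ldots, T^{c_2 - c_1 - 1} A_1$. I would test whether the natural next polynomial $T^{c_2 - c_1} A_1$ lies in the kernel of the enlarged matrix for $\mathbfe{\alpha}'$: this is a single scalar condition on $\alpha_{n+1}$, giving the unique value that preserves quasi-regularity (then $A_1' = A_1$ and $A_2' = A_2$). For the other $q - 1$ values a new monic kernel vector of exact degree $c_2$ appears, and reducing it modulo the previously known kernel elements forces it to have the form $\beta A_2 + T^{c_2 - c_1} A_1$; the invertibility of the $\rho_1 \times \rho_1$ block appearing in the $(\rho, \pi)$-form makes the map $\alpha_{n+1} \mapsto \beta$ a bijection onto $\mathbb{F}_q^*$. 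Claim 2 goes through identically but with $A_1$ already of degree $\rho_1 < r$, so $A_1$ faces no degree-up constraint and remains the first characteristic polynomial for every value of $\alpha_{n+1}$; only $A_2$ absorbs the $\beta T^{c_2 - c_1} A_1$ correction, with the same bijectivity giving $\beta$ ranging uniformly over $\mathbb{F}_q$.

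For Claim 3, the analysis splits by the parity of $n$. When $n$ is even and $\rho_1 = n_1$, the square matrix $H_{n_1, n_1}(\mathbfe{\alpha}') = H_{n_1, n_1}(\mathbfe{\alpha})$ is unchanged by the extension, so $\rho(\mathbfe{\alpha}') = n_1$ automatically and only $c_2'$ increases by one; the argument is structurally identical to Claim 1 with the roles of $A_1$ and $A_2$ exchanged (since it is now the first characteristic polynomial whose degree class gains new members), which explains the formula $A_1' = \beta A_2 + A_1$. The two odd sub-cases parallel Claims 1 and 2 respectively, once one tracks that $n_1'$ and $n_2'$ increase in the opposite order from the even case. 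The main obstacle throughout is verifying that each map $\alpha_{n+1} \mapsto \beta$ is genuinely a bijection, which reduces to showing that the coefficient of $\alpha_{n+1}$ in the defining scalar equation is a product of non-zero entries dictated by the $(\rho, \pi)$-form. This non-vanishing follows from $\det H[\rho_1, \rho_1] \neq 0$ (the definition of $\rho_1$) together with the non-zero skew-diagonal entries identified in Lemmas \ref{lemma, Hankel rho_1 = 0 form} and \ref{lemma, (rho,pi) form when 1 rho < n_1}. Uniqueness of $A_1'$ and $A_2'$ in each case then follows from Theorem \ref{theorem, all corpime characteristic polynomial have a Hankel matrix}, and a direct count confirms that the $q$ possible values of $\alpha_{n+1}$ are distributed among the stated sub-cases.
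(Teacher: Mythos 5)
Your plan is correct and follows essentially the same route as the paper's proof: both rest on comparing $\kernel H_{l,m}(\mathbfe{\alpha}')$ with the known kernels for $\mathbfe{\alpha}$ (the new kernel element is pinned down by sandwiching $\kernel H_{c_1'-1,c_2'+1}(\mathbfe{\alpha}')$ between the span of the $A_1$-multiples and the old kernel $\{B_1A_1+B_2A_2\}$ of the row-deleted matrix), invoking Theorem \ref{theorem, all Hankel matrices have characteristic polynomial kernel} to force $\degree A_2'=c_2'$, and extracting the correspondence $\alpha_{n+1}\leftrightarrow\beta$ from the single linear orthogonality condition against the one new row or column. The only real difference is that you classify $\mathbfe{\alpha}'$ into $\mathscr{L}_{n+1}(r,r,0)$ versus $\mathscr{L}_{n+1}(r+1,r,1)$ by testing whether $T^{c_2-c_1}A_1$ survives in the extended kernel, whereas the paper reads the same dichotomy off the $(\rho,\pi)$-form of $H'$, which inherits the row operations of $H$ and acquires one new corner entry $\gamma$ in bijection with $\alpha_{n+1}$; the two criteria are equivalent, and yours sidesteps having to re-verify that the reduction of $H'$ uses the same vector $\mathbf{x}$ as that of $H$.
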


We now proceed to prove our four theorems, but first we will need the following two lemmas.

\begin{lemma} \label{lemma, if poly in kernel of Hankel matrix then multiples of poly in wider matrix}
Suppose $\mathbfe{\alpha} = (\alpha_0 , \alpha_1 , \ldots , \alpha_n ) \in \mathbb{F}_q^{n+1}$, and that we have integers $l,m,k$ satisfying $l+m-2 = n$ and $l>k \geq 1$. \\

A vector
\begin{align*}
\mathbf{v} = (v_0 , \ldots , v_{m-1})^T
\end{align*}
is in the kernel of $H_{l,m} (\mathbfe{\alpha})$ if and only if the vectors
\begin{align*}
(\mathbf{v} \mid 0) = &(v_0 , \ldots , v_{m-1} , 0)^T , \\
(0 \mid \mathbf{v} ) = &(0 , v_0 , \ldots , v_{m-1})^T
\end{align*}
are in the kernel of $H_{l-1,m+1} (\mathbfe{\alpha})$. This can be extended, and expressed in terms of polynomials, to give the following result: \\

A polynomial $A \in \mathcal{A}$ with $\degree A \leq m-1$ is in the kernel of $H_{l,m} (\mathbfe{\alpha})$ if and only if $Y A$ is in the kernel of $H_{l-k,m+k} (\mathbfe{\alpha})$ for any $\degree Y \leq k$.
\end{lemma}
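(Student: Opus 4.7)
The plan is to unpack the matrix--vector equation into its row-by-row scalar equations and observe that the two padded-vector conditions cover complementary index ranges whose union is the full system. Specifically, $H_{l,m}(\mathbfe{\alpha})\mathbf{v} = \mathbf{0}$ is equivalent to
\[
\sum_{j=0}^{m-1} \alpha_{i+j} v_j = 0 \qquad \text{for } i = 0, 1, \ldots, l-1.
\]

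For $(\mathbf{v} \mid 0) \in \mathbb{F}_q^{m+1}$, the trailing zero kills the contribution from the last column of $H_{l-1,m+1}(\mathbfe{\alpha})$, so the condition $H_{l-1,m+1}(\mathbfe{\alpha})(\mathbf{v}\mid 0) = \mathbf{0}$ reduces to exactly these equations for $i = 0, 1, \ldots, l-2$. Symmetrically, for $(0 \mid \mathbf{v})$ the leading zero kills the first column, and after the change of variable $i' = i+1$ the condition $H_{l-1,m+1}(\mathbfe{\alpha})(0\mid\mathbf{v}) = \mathbf{0}$ reduces to the equations for $i' = 1, 2, \ldots, l-1$. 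The two ranges $\{0,\ldots,l-2\}$ and $\{1,\ldots,l-1\}$ together cover $\{0,1,\ldots,l-1\}$, so the two padded-vector conditions hold simultaneously if and only if $\mathbf{v} \in \kernel H_{l,m}(\mathbfe{\alpha})$. This proves the first equivalence.

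For the extended, polynomial version, I would first observe that if $\mathbf{v}$ corresponds to $V \in \mathcal{A}$ with $\degree V \leq m-1$, then $(\mathbf{v}\mid 0)$ corresponds to $V$ (viewed in the larger space $\mathbb{F}_q^{m+1}$) and $(0\mid\mathbf{v})$ corresponds to $TV$. Thus the first equivalence reads: $V \in \kernel H_{l,m}(\mathbfe{\alpha})$ if and only if $V, TV \in \kernel H_{l-1,m+1}(\mathbfe{\alpha})$. I would then induct on $k$. Applying this base case to each of $V, TV, \ldots, T^{k} V$ (with $l$ and $m$ shifted appropriately) shows that $T^i V \in \kernel H_{l-k,m+k}(\mathbfe{\alpha})$ for $0 \leq i \leq k$ if and only if $T^i V \in \kernel H_{l-k-1,m+k+1}(\mathbfe{\alpha})$ for $0 \leq i \leq k+1$, completing the induction. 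Finally, by linearity of the kernel, the condition that every $T^i V$ with $0 \leq i \leq k$ lies in $\kernel H_{l-k,m+k}(\mathbfe{\alpha})$ is equivalent to $YV$ lying there for every polynomial $Y$ of degree at most $k$.

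There is no substantive obstacle: the proof is a bookkeeping exercise. The only mildly delicate point is the index shift in the $(0 \mid \mathbf{v})$ case, but this is handled routinely by the change of variable above; the hypothesis $l > k \geq 1$ is exactly what is needed to keep each matrix $H_{l-i, m+i}(\mathbfe{\alpha})$ in the induction non-degenerate.
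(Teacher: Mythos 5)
Your proof is correct and follows essentially the same route as the paper's: the padded-vector equivalence is the observation that appending or prepending a zero restricts the system to the rows of $H_{l,m}(\mathbfe{\alpha})$ obtained by deleting the last or first row respectively (you phrase this via the scalar equations, the paper via truncated sequences $\mathbfe{\alpha}'$, $\mathbfe{\alpha}''$), and the polynomial version is the same iteration of the single-step equivalence followed by linearity. No gaps.
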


\begin{proof}
For the forward implication of the first claim, suppose $\mathbf{v}$ is in the kernel of $H_{l,m} (\mathbfe{\alpha})$, and let
\begin{align*}
\mathbfe{\alpha}' := (\alpha_0 , \alpha_1 , \ldots , \alpha_{n-2}) ,
\mathbfe{\alpha}'' := (\alpha_1 , \alpha_2 , \ldots , \alpha_{n-1}) .
\end{align*}
Due to the last entry being zero, we can see that $(\mathbf{v} \mid 0)$ is in the kernel of $H_{l-1,m+1} (\mathbfe{\alpha})$ if and only if $\mathbf{v}$ is in the kernel of $H_{l-1 , m} (\mathbfe{\alpha}')$. The latter is true, because $H_{l-1 , m} (\mathbfe{\alpha}')$ is the matrix we obtain by removing the last row from $H_{l,m} (\mathbfe{\alpha})$. \\

Similarly, $(0 \mid \mathbf{v})$ is in the kernel of $H_{l-1,m+1} (\mathbfe{\alpha})$ if and only if $\mathbf{v}$ is in the kernel of $H_{l-1 , m} (\mathbfe{\alpha}'')$. Again, the latter is true, because $H_{l-1 , m} (\mathbfe{\alpha}'')$ is the matrix we obtain by removing the first row from $H_{l,m} (\mathbfe{\alpha})$. \\

The backward implication of the first claim follows from what we have established above. \\

We now consider the second claim. The first claim tells us that a polynomial $A \in \mathcal{A} \in \mathbb{F}_q [T]$ with $\degree A \leq m-1$ is in the kernel of $H_{l,m} (\mathbfe{\alpha})$ if and only if $A$ and $T A$ are in the kernel of $H_{l-1,m+1} (\mathbfe{\alpha})$. \\

Successive applications of this tell us that this holds if and only if
\begin{align*}
A,  T A, \ldots, T^k A
\end{align*}
are in the kernel of $H_{l-k,m+k} (\mathbfe{\alpha})$ . \\

Using the fact that any polynomial in the kernel remains in the kernel after being multiplied by an element of $\mathbb{F}_q$, we can see that the above holds if and only if $Y \cdot A$ is in the kernel of $H_{l-k,m+k} (\mathbfe{\alpha})$ for any $\degree Y \leq k$.
\end{proof}

A related lemma is the following.

\begin{lemma} \label{lemma, shifted poly in kernel of Hankel matrix is in kernel of removed row matrix}
Let $\mathbfe{\alpha} = (\alpha_0 , \alpha_1 , \ldots , \alpha_n ) \in \mathbb{F}_q^{n+1}$, and define $\mathbfe{\alpha}' := (\alpha_0 , \alpha_1 , \ldots , \alpha_{n-1})$. Also, let $l+m-2 = n$ with $l \geq 2$. A vector
\begin{align*}
\mathbf{v} = (v_0 , \ldots , v_{m-1})^T
\end{align*}
is in the kernel of $H_{l,m} (\mathbfe{\alpha} )$, if and only if the vectors
\begin{align*}
(\mathbf{v} \mid 0) = &(v_0 , \ldots , v_{m-1} , 0)^T , \\
(0 \mid \mathbf{v} ) = &(0 , v_0 , \ldots , v_{m-1})^T
\end{align*}
are in the kernel of $H_{l-1 ,m} (\mathbfe{\alpha}' )$ (which is just the matrix $H_{l,m} (\mathbfe{\alpha} )$ after removing the last row).
\end{lemma}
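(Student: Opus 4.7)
The strategy mirrors that of Lemma \ref{lemma, if poly in kernel of Hankel matrix then multiples of poly in wider matrix}: I would unpack each of the two kernel conditions into an explicit list of row equations and match them one-to-one with the rows of $H_{l,m}(\mathbfe{\alpha})$. The two augmented vectors $(\mathbf{v}\mid 0)$ and $(0\mid\mathbf{v})$ are tailored so that appending a zero on the right leaves the Hankel indexing unchanged (the extra entry in each row is killed), while prepending a zero on the left shifts the effective indexing up by one (the first column contribution vanishes). Thus the $i$th row of the smaller Hankel matrix applied to $(\mathbf{v}\mid 0)$ reproduces exactly the $i$th row of $H_{l,m}(\mathbfe{\alpha})$ applied to $\mathbf{v}$, namely $\sum_{j=1}^m \alpha_{i+j-2} v_{j-1}$, while the same row applied to $(0\mid\mathbf{v})$ reproduces $\sum_{j=1}^m \alpha_{i+j-1} v_{j-1}$, which is the $(i+1)$th row equation of $H_{l,m}(\mathbfe{\alpha})$.

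For the forward direction, if $\mathbf{v} \in \ker H_{l,m}(\mathbfe{\alpha})$, then all $l$ row equations are satisfied, and in particular the first $l-1$ of them yield the condition on $(\mathbf{v}\mid 0)$ while the last $l-1$ yield the condition on $(0\mid\mathbf{v})$.

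For the backward direction, the condition on $(\mathbf{v}\mid 0)$ encodes annihilation by rows $1, 2, \ldots, l-1$ of $H_{l,m}(\mathbfe{\alpha})$, while the condition on $(0\mid\mathbf{v})$ encodes annihilation by rows $2, 3, \ldots, l$. The hypothesis $l \geq 2$ is precisely what guarantees
\begin{align*}
\{1, 2, \ldots, l-1\} \cup \{2, 3, \ldots, l\} = \{1, 2, \ldots, l\},
\end{align*}
so the two conditions together force $\mathbf{v} \in \ker H_{l,m}(\mathbfe{\alpha})$. The only real obstacle is the careful bookkeeping of Hankel indices under the two zero augmentations, but once the $(i,j)$-entries are written out explicitly this is routine and no deeper argument is required.
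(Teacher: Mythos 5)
Your row-equation bookkeeping is internally consistent, but it is the bookkeeping for Lemma \ref{lemma, if poly in kernel of Hankel matrix then multiples of poly in wider matrix}, not for this one. For your formula $\sum_{j=1}^{m}\alpha_{i+j-1}v_{j-1}$ (the $i$-th row of the ``smaller'' matrix against $(0\mid\mathbf{v})$) to make sense, that matrix must have $m+1$ columns with $i$-th row $(\alpha_{i-1},\ldots,\alpha_{i+m-1})$; taking $i=l-1$ and $j=m$, this requires the entry $\alpha_{(l-1)+m-1}=\alpha_{n}$. The matrix you are actually computing with is therefore $H_{l-1,m+1}(\mathbfe{\alpha})$, built from the \emph{full} sequence --- i.e.\ the previous lemma's matrix --- whereas the matrix named here, $H_{l-1,m}(\mathbfe{\alpha}')$, has only $m$ columns and contains no occurrence of $\alpha_n$ whatsoever. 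So your argument never engages with the actual content of this lemma, which is precisely that one can pass to the truncated sequence $\mathbfe{\alpha}'$; you have in effect re-proved the one-step case of the preceding lemma.

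In fairness, the printed statement is dimensionally off ($(\mathbf{v}\mid 0)$ has $m+1$ entries while $H_{l-1,m}(\mathbfe{\alpha}')$ has $m$ columns), but the parenthetical and every application in the paper (each of which stresses that the relevant kernel vectors must have a zero in their last entry) force the intended reading: $\mathbf{v}$ should have length $m-1$, and the left-hand condition is that $(\mathbf{v}\mid 0)\in\mathbb{F}_q^{m}$ lies in $\kernel H_{l,m}(\mathbfe{\alpha})$. Under that reading your union-of-index-sets mechanism does go through, and the point it rests on is exactly what your write-up glosses over: the trailing zero annihilates the coefficient of $\alpha_n$ in the $l$-th row equation of $H_{l,m}(\mathbfe{\alpha})$, so all $l$ equations involve only $\alpha_0,\ldots,\alpha_{n-1}$ and can be recovered from the two conditions on the $(l-1)\times m$ row-deleted matrix. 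Without restricting to such vectors the ``if and only if'' is false --- the last row of $H_{l,m}(\mathbfe{\alpha})$ depends on $\alpha_n$, which $\mathbfe{\alpha}'$ does not determine. A correct proof must either impose this restriction or flag the mismatch; yours does neither. (The paper itself gives no proof here, only the remark that it is similar to the preceding lemma's.)
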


The proof of this lemma is similar to the proof of Lemma \ref{lemma, if poly in kernel of Hankel matrix then multiples of poly in wider matrix}. \\

We now give the proofs of the four theorems, beginning with Theorem \ref{theorem, all Hankel matrices have characteristic polynomial kernel}.

\begin{proof}[Proof of Theorem \ref{theorem, all Hankel matrices have characteristic polynomial kernel}]
If $r=0$, then $\mathbfe{\alpha} = 0$ and so $\kernel H_{l,m} (\mathbfe{\alpha}) = \mathbb{F}_q^m$ for all $m$. Therefore, we can take any $A_1 \in \mathcal{A}$ with $\degree A_1 = 0$. \\

Now suppose $r=1$ and $\rho_1 = 1$. Then, $\alpha_0 \neq 0$, and so the matrix $H_{n+1,1} (\mathbfe{\alpha})$ has full column rank and its kernel is trivial. The $(\rho , \pi )$-form of $H_{n,2} (\mathbfe{\alpha})$ is
\begin{align*}
\begin{pmatrix}
\alpha_0 & \vline & \alpha_1 \\
\hline
0 &  & 0 \\
0 &  & 0 \\
\vdots &  & \vdots \\
0 &  & 0
\end{pmatrix} .
\end{align*}
So, we can see that $\kernel H_{n,2} (\mathbfe{\alpha}) = \{ \gamma A_1 : \gamma \in \mathbb{F}_q \}$ for some $A_1 \in \mathcal{A}$ with $\degree A_1 = \rho_1 = 1$. Lemma \ref{lemma, if poly in kernel of Hankel matrix then multiples of poly in wider matrix} tells us that
\begin{align*}
\Big\{ B_1 A_1 : \substack{B_1 \in \mathcal{A} \\ \degree B_1 \leq m - 2} \Big\}
\subseteq \kernel H_{l,m} (\mathbfe{\alpha})
\end{align*}
for $2 \leq m \leq n+1$. The dimension of the left side is $m-1$ (recall a polynomial of degree $m-2$ has $m-1$ coefficients); while Corollary \ref{corollary, dim kernel of recatnagular Hankel matrix given r of sequence} tells us that the right side has dimension $m-1$ as well. Therefore, we must have equality, as required. \\

If, instead, we have $r=1$ and $\rho_1 = 0$, then, $\mathbfe{\alpha} = (0 , \ldots , 0 , \alpha_n)$ with $\alpha_n \neq 0$, and so the matrix $H_{n+1,1} (\mathbfe{\alpha})$ has full column rank and its kernel is trivial. We have that
\begin{align*}
H_{n,2} (\mathbfe{\alpha})
= \begin{pmatrix}
0 & 0 \\
0 & 0 \\
\vdots & \vdots \\
0 & 0 \\
0 & \alpha_n 
\end{pmatrix} ,
\end{align*}
and so by similar means as above we have
\begin{align*}
\kernel H_{l,m} (\mathbfe{\alpha})
= \Big\{ B_1 A_1 : \substack{B_1 \in \mathcal{A} \\ \degree B_1 \leq m - 2} \Big\}
\end{align*}
where $\degree A_1 = 0 = \rho_1$, as required. \\

We now consider the case $r \geq 2$. We will work up to $m = c_2 +1$ first, before considering $m > c_2 +1$. \\

We will first address the special subcase when $c_1 = c_2$. This occurs when $n$ is even and $r = \frac{n+2}{2} = n_1$ (which also implies that $\rho_1 = r$). When $1 \leq m \leq c_1$, we have $\kernel H_{l,m} (\mathbfe{\alpha}) = \{ 0 \}$, by Corollary \ref{corollary, dim kernel of recatnagular Hankel matrix given r of sequence}. In this subcase, there are no $m$ satisfying $c_1 + 1 \leq m \leq c_2$. Suppose now that $m = c_2 +1 = n_1 +1$ and $l=c_2 -1=n_1 -1$. Corollary \ref{corollary, dim kernel of recatnagular Hankel matrix given r of sequence} tells us that $\dim \kernel H_{l,m} (\mathbfe{\alpha}) = 2$. Thus, there are polynomials $A_1 , A_2$ (neither being a multiple of the other) such that
\begin{align*}
\kernel H_{l,m} (\mathbfe{\alpha}) 
= \bigg\{ B_1 A_1 + B_2 A_2 : \substack{B_1 , B_2 \in \mathcal{A} \\ \degree B_1 \leq 0 \\ \degree B_2 \leq 0} \bigg\} .
\end{align*}
All that remains to be shown is that at least one of $A_1 , A_2$ have degree equal to $\rho_1 = r$ (without loss of generality, this will be $A_1$). To show this, suppose for a contradiction that $\degree A_1 , \degree A_2 < \rho_1 = r$. Then, the vectors associated to these polynomials are of the form 
\begin{align*}
\mathbf{v} = &(v_0 , v_1 , \ldots , v_{r-1} , 0), \\
\mathbf{w} = &(w_0 , w_1 , \ldots , w_{r-1} , 0) ;
\end{align*}
and so the vectors
\begin{align*}
\mathbf{v}' = &(v_0 , v_1 , \ldots , v_{r-1}) , \\
\mathbf{w}' = &(w_0 , w_1 , \ldots , w_{r-1}) 
\end{align*}
are in the kernel of $H_{l,m-1} (\mathbfe{\alpha}')$, where $\mathbfe{\alpha}' := (\alpha_0 , \alpha_1 , \ldots , \alpha_{n-1})$. In particular, $\break \dim \kernel H_{l,m-1} (\mathbfe{\alpha}') = 2$. From this, and the fact that $H_{l,m-1} (\mathbfe{\alpha}')$ is the matrix we obtain by removing the last row from $H_{l+1 , m-1} (\mathbfe{\alpha}) = H_{n_1 , n_1} (\mathbfe{\alpha})$, we deduce that the kernel of $H_{n_1 , n_1} (\mathbfe{\alpha})$ is at least one-dimensional. This contradicts that the kernel is trivial (since the matrix is invertible). \\

Suppose now that $c_1 \neq c_2$. When $1 \leq m \leq c_1 = r$, Corollary \ref{corollary, dim kernel of recatnagular Hankel matrix given r of sequence} tells us that $\kernel H_{l,m} (\mathbfe{\alpha}) = \{ \mathbf{0} \}$. \\

Now suppose that $m= c_1 +1 = r+1$. Corollary \ref{corollary, dim kernel of recatnagular Hankel matrix given r of sequence} tells us that the kernel of $H : = H_{l,m} (\mathbfe{\alpha})$ has dimension $1$, and so we let $\mathbf{v} \neq \mathbf{0}$ be a vector that spans the kernel. The $(\rho , \pi )$-form of $H$ is 
\begin{align*}
\begin{pmatrix}
H [\rho_1 , \rho_1] & \vline & H [\rho_1 , -(m-\rho_1)] \\
\hline
\mathbf{0} & \vline & 
\begin{NiceMatrix}
0 & \Cdots & \Cdots & \Cdots & 0 \\
\Vdots &  &  &  & \Vdots \\
0 & \Cdots & \Cdots & \Cdots & 0 \\
0 & \Cdots & \Cdots & 0 & 1 \\
0 & \Cdots & 0 & 1 & * \\
\Vdots & \Iddots & \Iddots & \Iddots & \Vdots \\
0 & 1 & * & \Cdots & * 
\end{NiceMatrix}
\end{pmatrix} ,
\end{align*}
where $1$ represents an element in $\mathbb{F}_q^*$ and $*$ represents an element in $\mathbb{F}_q$. The submatrix $H [\rho_1 , \rho_1]$ is invertible, and there are $\pi_1$ number of $1$s in the bottom-right submatrix. Of course, if $\rho_1 = 0$, then the top two submatrices and the bottom-left submatrix should disappear. If $\pi_1 = 0$, then the bottom-right submatrix should be a zero matrix. Regardless, the $(\rho , \pi )$-form shows us that $\mathbf{v}$ must have zeros in its last $\pi_1$ entries. That is,
\begin{align*}
\mathbf{v}
= \begin{pmatrix} v_0 \\ \vdots \\ v_{\rho_1 -1} \\ v_{\rho_1 } \\ 0 \\ \vdots \\ 0 \end{pmatrix} .
\end{align*}
We must also have that $v_{\rho_1 } \neq 0$. When $\rho_1 = 0$, this is clear. When $\rho_1 > 0$, because $\mathbf{v}$ is in the kernel of $H$, we can see that $(v_0 , \ldots , v_{\rho_1 -1} , v_{\rho_1 })^T$ is in the kernel of 
\begin{align*}
\begin{pmatrix}
H [\rho_1 , \rho_1 ] & \vline &
\begin{matrix}
\alpha_{\rho_1} \\
\vdots \\
\alpha_{2 \rho_1 -2} \\
\alpha_{2 \rho_1 -1}
\end{matrix}
\end{pmatrix} ;
\end{align*}
and so, if $v_{\rho_1 } = 0$, then $(v_0 , \ldots , v_{\rho_1 -1})^T \neq \mathbf{0}$ is in the kernel of $H [\rho_1 , \rho_1 ]$, contradicting that it is invertible. \\

In terms of polynomials, we have shown, for $m= c_1 +1 = r+1$, that
\begin{align*}
\kernel H_{l,m} (\mathbfe{\alpha}) 
= \Big\{ B_1 A_1 : \substack{B_1 \in \mathcal{A} \\ \degree B_1 \leq 0} \Big\} .
\end{align*}
As previously, Lemma \ref{lemma, if poly in kernel of Hankel matrix then multiples of poly in wider matrix} and Corollary \ref{corollary, dim kernel of recatnagular Hankel matrix given r of sequence} tell us, for $c_1 +1 \leq m \leq c_2$, that
\begin{align*}
\kernel H_{l,m} (\mathbfe{\alpha}) 
= \Big\{ B_1 A_1 : \substack{B_1 \in \mathcal{A} \\ \degree B_1 \leq m - c_1 -1} \Big\} .
\end{align*}

Now suppose that $m = c_2 +1$. Lemma \ref{lemma, if poly in kernel of Hankel matrix then multiples of poly in wider matrix} tells us that 
\begin{align*}
\Big\{ B_1 A_1 : \substack{B_1 \in \mathcal{A} \\ \degree B_1 \leq m - c_1 -1} \Big\}
\subseteq \kernel H_{l,m} (\mathbfe{\alpha}) .
\end{align*}
However, the left side has dimension $m - c_1 = n+3 - 2r$, while, by Corollary \ref{corollary, dim kernel of recatnagular Hankel matrix given r of sequence}, the right side has dimension $2m-n-2 = n+4 - 2r$. Thus, there is some polynomial $A_2 \in \mathcal{A}$ in the kernel of $H_{l,m} (\mathbfe{\alpha})$ with
\begin{align} \label{statement, Hankel kernel structure lemma, A_2 not in span of A_1}
A_2 \not\in \Big\{ B_1 A_1 : \substack{B_1 \in \mathcal{A} \\ \degree B_1 \leq m - c_1 -1} \Big\} .
\end{align}
Thus, we have 
\begin{align*}
\kernel H_{l,m} (\mathbfe{\alpha})
= \bigg\{ B_1 A_1 + B_2 A_2 : \substack{B_1 , B_2 \in \mathcal{A} \\ \degree B_1 \leq m - c_1 - 1 \\ \degree B_2 \leq 0} \bigg\} .
\end{align*}
The fact that $\degree A_2 \leq c_2$ (that is, $A_2$ has at most its first $c_2 +1$ coefficients being non-zero) follows from the fact that the kernel is in $(c_2 +1)$-dimensional space. \\

We will now show that if $\rho_1$ is not equal to $r=c_1$, then $\degree A_2$ is necessarily equal to $c_2$. Let $m = c_2 +1$, and let $\mathbf{v} = (a_0 , a_1 , \ldots , a_{c_2})^T$ be the vector associated with $A_2$. Note that the condition $\degree A_2 = c_2$ is equivalent to $a_{c_2} \neq 0$. \\

Suppose for a contradiction that $\rho_1 \neq r$ and $\degree A_2 \neq c_2$. This means that $\pi_1 \geq 1$ and $\mathbf{v} = (a_0 , a_1 , \ldots , a_{c_2 -1} , 0)^T$. The latter implies that $\mathbf{v}' := (a_0 , a_1 , \ldots , a_{c_2 -1})^T$ is in the kernel of $H_{l , m-1} (\mathbfe{\alpha}')$, where $\mathbfe{\alpha}' := (\alpha_0 , \alpha_1 , \ldots , \alpha_{n-1})$. In terms of polynomials, this means $A_2 \in H_{l , m-1} (\mathbfe{\alpha}')$. \\

Note that $H_{l , m-1} (\mathbfe{\alpha}')$ is the matrix we obtain by removing the last row from $H_{l+1 , m-1} (\mathbfe{\alpha})$. We have already established that
\begin{align*}
\kernel H_{l+1 , m-1} (\mathbfe{\alpha})
= \kernel H_{c_1 , c_2} (\mathbfe{\alpha})
= \Big\{ B_1 A_1 : \substack{B_1 \in \mathcal{A} \\ \degree B_1 \leq c_2 - c_1 - 1} \Big\} .
\end{align*}
Hence, an application of Lemma \ref{lemma, shifted poly in kernel of Hankel matrix is in kernel of removed row matrix} gives us that
\begin{align*}
\kernel H_{l , m-1} (\mathbfe{\alpha}')
= \Big\{ B_1 A_1 : \substack{B_1 \in \mathcal{A} \\ \degree B_1 \leq c_2 - c_1 } \Big\} .
\end{align*}
(Note that, because $\pi_1 \geq 1$ every vector in the kernel of $H_{l+1 , m-1} (\mathbfe{\alpha})$ has a zero in its last entry, which is a requirement for our application of Lemma \ref{lemma, shifted poly in kernel of Hankel matrix is in kernel of removed row matrix}). However, since we have established $A_2 \in H_{l , m-1} (\mathbfe{\alpha}')$, this implies that 
\begin{align*}
A_2 \in \Big\{ B_1 A_1 : \substack{B_1 \in \mathcal{A} \\ \degree B_1 \leq c_2 - c_1 } \Big\} ,
\end{align*}
which contradicts (\ref{statement, Hankel kernel structure lemma, A_2 not in span of A_1}). \\

Finally, it remains to consider when $m > c_2 + 1$. By Lemma \ref{lemma, if poly in kernel of Hankel matrix then multiples of poly in wider matrix}, we can see that
\begin{align} \label{statement, all Hankel matrices have characteristic polynomial kernel theorem proof, m>c_2 +1 subset result}
\bigg\{ B_1 A_1 + B_2 A_2 : \substack{B_1 , B_2 \in \mathcal{A} \\ \degree B_1 \leq m - c_1 - 1 \\ \degree B_2 \leq m - c_2 - 1} \bigg\} 
\subseteq \kernel H_{l,m} (\mathbfe{\alpha}) .
\end{align} 
We will show that 
\begin{align} \label{statement, all Hankel matrices have characteristic polynomial kernel theorem proof, Y_1 A_1 + Y_2 A_2 neq 0}
B_1 A_1 + B_2 A_2 \neq 0
\end{align}
for all $\degree B_1 \leq n+1 - c_1$ and $\degree B_2 \leq n+1 - c_2$. Thus, the left side of (\ref{statement, all Hankel matrices have characteristic polynomial kernel theorem proof, m>c_2 +1 subset result}) has dimension $2m - c_1 - c_2$, which is equal to the dimension of the right side by Corollary \ref{corollary, dim kernel of recatnagular Hankel matrix given r of sequence}, and thus we have equality. \\

Note that (\ref{statement, all Hankel matrices have characteristic polynomial kernel theorem proof, Y_1 A_1 + Y_2 A_2 neq 0}) also proves that $A_1 , A_2$ are coprime. Indeed, if they were not then we could write
\begin{align*}
A_1 = &C A_1' , \\
A_2 = &C A_2' ,
\end{align*}
where
\begin{align*}
\degree C \geq &1 , \\
\degree A_1' \leq &\degree A_1 -1 = \rho_1 -1 \leq c_1 -1 = n+1 -c_2, \\
\degree A_2' \leq &\degree A_2 -1 = c_2 -1 = n+1 - c_1 .
\end{align*}
In particular, taking $B_1 = A_2'$ and $B_2 = -A_1'$ would contradict (\ref{statement, all Hankel matrices have characteristic polynomial kernel theorem proof, Y_1 A_1 + Y_2 A_2 neq 0}). \\

To prove (\ref{statement, all Hankel matrices have characteristic polynomial kernel theorem proof, Y_1 A_1 + Y_2 A_2 neq 0}), suppose for a contradiction that 
\begin{align*}
B_1 A_1 + B_2 A_2
= 0
\end{align*}
with
\begin{align*}
\degree B_1 \leq &m' - c_1 , \\
\degree B_2 \leq &m' - c_2 ,
\end{align*}
where $c_2 \leq m' \leq n+1$. Suppose further that $m'$ is minimal with this property; in particular, there is equality in at least one the inequalities above. Let us write
\begin{align*}
B_1 = &x_0 + x_1 T + \ldots + x_{m' - c_1} T^{m' - c_1} , \\
B_2 = &y_0 + y_1 T + \ldots + y_{m' - c_2} T^{m' - c_2} ;
\end{align*}
and we note that at least one of $x_{m' - c_1} , y_{m' - c_2}$ is non-zero due to there being at least one equality in the inequalities above. Since $B_1 A_1 + B_2 A_2 = 0$, we have that
\begin{align*}
&( x_{m' - c_1} T^{m' - c_1} A_1 + y_{m' - c_2} T^{m' - c_2} A_2 ) \\
= &- \Big( ( x_0 + x_1 T + \ldots + x_{m' - c_1 -1} T^{m' - c_1 -1}) A_1
	+ (y_0 + y_1 T + \ldots + y_{m' - c_2 -1} T^{m' - c_2 -1}) A_2 \Big) .
\end{align*}
(Note that this is non-zero because at least one of $x_{m' - c_1} , y_{m' - c_2}$ is non-zero, and because $A_2$ is not a multiple of $A_1$). By (\ref{statement, all Hankel matrices have characteristic polynomial kernel theorem proof, m>c_2 +1 subset result}), the right side tells us that
\begin{align*}
T^{m' - c_2} Z
\in H_{l' , m'} (\mathbfe{\alpha}) ,
\end{align*}
where $l'$ is such that $l' + m' = n+2$ and 
\begin{align*}
Z
:= ( x_{m' - c_1} T^{c_2 - c_1} A_1 + y_{m' - c_2} A_2 ) .
\end{align*}
Again by (\ref{statement, all Hankel matrices have characteristic polynomial kernel theorem proof, m>c_2 +1 subset result}), we also have that
\begin{align*}
T^{s} Z
= ( x^{m' - c_1} T^{s + c_2 - c_1} A_1 + y^{m' - c_2} T^{s} A_2 )
\in H_{l' , m'} (\mathbfe{\alpha}) 
\end{align*}
for all $0 \leq s \leq m' - c_2 -1$. But then, using the second result in Lemma \ref{lemma, if poly in kernel of Hankel matrix then multiples of poly in wider matrix} for the first relation below, we have that
\begin{align*}
Z
\in H_{l' + (m' - c_2) , m' - (m' - c_2)} (\mathbfe{\alpha}) 
= H_{c_1 , c_2} (\mathbfe{\alpha}) 
= \Big\{ X A_1 : \substack{X \in \mathcal{A_1} \\ \degree X \leq c_2 - c_1 - 1} \Big\} .
\end{align*}
Recalling the definition of $Z$, we can see that this implies
\begin{align*}
A_2
\in \Big\{ X A_1 : \substack{X \in \mathcal{A} \\ \degree X \leq c_2 - c_1} \Big\} ,
\end{align*}
which contradicts (\ref{statement, Hankel kernel structure lemma, A_2 not in span of A_1}).
\end{proof}

We now prove Theorem \ref{Theorem, Hankel matrices incorporate Euclidean algorithm}, which will be required for the proof of Theorem \ref{theorem, all corpime characteristic polynomial have a Hankel matrix} afterwards. 

\begin{proof}[Proof of Theorem \ref{Theorem, Hankel matrices incorporate Euclidean algorithm}]

For what follows, we define $\mathbfe{\alpha}' := (\alpha_0 , \alpha_1 , \ldots , \alpha_{2 d_1})$. \\

Now, consider the matrix $H_{d_1 -1 , n+3 - d_1 } (\mathbfe{\alpha})$. It has kernel
\begin{align*}
\kernel H_{d_1 -1 , n+3 - d_1 } (\mathbfe{\alpha})
= \bigg\{ B_1 A_1 + B_2 A_2 : \substack{B_1 , B_2 \in \mathcal{A} \\ \degree B_1 \leq n+3 - 2 d_1 \\ \degree B_2 \leq 0} \bigg\} .
\end{align*}
Recall that $\degree A_1 = d_1$ and $\degree A_2 = d_2 < d_1$. In particular, the vectors associated to $A_1 , A_2$ in $H_{d_1 -1 , n+3 - d_1 } (\mathbfe{\alpha})$ have at least $n+2 - 2d_1$ zero entries at the end. Therefore, consider the matrix that we obtain by removing the last $n+2 - 2d_1$ columns of $H_{d_1 -1 , n+3 - d_1 } (\mathbfe{\alpha})$, which is $H_{d_1 -1 , d_1 +1} (\mathbfe{\alpha}')$. From the above we can see that
\begin{align*}
\bigg\{ B_1 A_1 + B_2 A_2 : \substack{B_1 , B_2 \in \mathcal{A} \\ \degree B_1 \leq 0 \\ \degree B_2 \leq 0} \bigg\}
\subseteq \kernel H_{d_1 -1 , d_1 +1} (\mathbfe{\alpha}') .
\end{align*}
Given that $H_{d_1 , d_1 } (\mathbfe{\alpha}') = H_{r , r } (\mathbfe{\alpha}')$ is invertible, we can see that the dimension of $\break \kernel H_{d_1 -1 , d_1 +1} (\mathbfe{\alpha}')$ is $2$ and so we must have equality above:
\begin{align} \label{statement, Theorem Hankel matrices incorporate Euclidean algorithm proof, kernel of H_(d_1 -1 , d_1 +1) (alpha')}
\kernel H_{d_1 -1 , d_1 +1} (\mathbfe{\alpha}')
= \bigg\{ B_1 A_1 + B_2 A_2 : \substack{B_1 , B_2 \in \mathcal{A} \\ \degree B_1 \leq 0 \\ \degree B_2 \leq 0} \bigg\} .
\end{align}
Now, we have $\degree A_2 = d_2 < d_1$, and so the polynomial associated to $A_2$ in $\kernel H_{d_1 -1 , d_1 +1} (\mathbfe{\alpha}')$ has $d_1 - d_2$ zeros entries at the end. What we do next depends on the size of $d_2$. \\

\textbf{Case 1:} If $d_2 \geq 2$, then $d_1 - d_2 < d_1 -1$. Thus, we can remove the last $d_1 - d_2$ columns from $\kernel H_{d_1 -1 , d_1 +1} (\mathbfe{\alpha}')$. This gives the matrix $\kernel H_{d_2 -1 , d_1 +1} (\mathbfe{\alpha}^{(2)} )$, and we can see that it has kernel
\begin{align*}
\kernel H_{d_2 -1 , d_1 +1} (\mathbfe{\alpha}^{(2)} )
= \bigg\{ B_1 A_1 + B_2 A_2 : \substack{B_1 , B_2 \in \mathcal{A} \\ \degree B_1 \leq 0 \\ \degree B_2 \leq d_1 - d_2} \bigg\} .
\end{align*}
The fact that the left side is a subset of the right side follows from (\ref{statement, Theorem Hankel matrices incorporate Euclidean algorithm proof, kernel of H_(d_1 -1 , d_1 +1) (alpha')}) and successive applications of Lemma \ref{lemma, shifted poly in kernel of Hankel matrix is in kernel of removed row matrix}. Equality then follows by noting that removing a row will increase the dimension by (at most) $1$. Also, we can see that $A_3$ is in the kernel of $H_{d_2 -1 , d_1 +1} (\mathbfe{\alpha}^{(2)} )$ and we can replace $A_1$ with $A_3$:
\begin{align*}
\kernel H_{d_2 -1 , d_1 +1} (\mathbfe{\alpha}^{(2)} )
= \bigg\{ B_2 A_2 + B_3 A_3: \substack{B_2 , B_3 \in \mathcal{A} \\ \degree B_2 \leq d_1 - d_2 \\ \degree B_3 \leq 0 } \bigg\} .
\end{align*}
We can now deduce that the characteristic polynomials of $\mathbfe{\alpha}^{(2)}$ are $A_2 , A_3$, and that $\mathbfe{\alpha}^{(2)} \in \mathscr{L}_{d_1 + d_2 -2} (d_2 , d_2 , 0)$. \\

\textbf{Case 2:} If $d_2 = 1$, then $d_1 - d_2 = d_1 -1$. Thus, we can only remove the last $d_1 - d_2 -1 = d_1 -2$ columns from $\kernel H_{d_1 -1 , d_1 +1} (\mathbfe{\alpha}')$. This gives the matrix $\kernel H_{d_2  , d_1 +1} (\mathbfe{\alpha}^{(2)} ) = \kernel H_{1  , d_1 +1} (\mathbfe{\alpha}^{(2)} )$ (recall that the definition of $\mathbfe{\alpha}^{(2)}$ differs between the cases), and we can see by similar means as in Case 1 that it has kernel
\begin{align*}
\kernel H_{1 , d_1 +1} (\mathbfe{\alpha}^{(2)} )
= \bigg\{ B_1 A_1 + B_2 A_2 : \substack{B_1 , B_2 \in \mathcal{A} \\ \degree B_1 \leq 0 \\ \degree B_2 \leq d_1 - d_2 -1} \bigg\} .
\end{align*}
Note that $B_2$ cannot have degree equal to $d_1 - d_2$, and so $A_3$ is not in the kernel above. Thus, we have that the characteristic polynomials are $A_2 , A_1$ (recall that the order matters). The fact that $B_2$ cannot have degree equal to $d_1 - d_2$ also tells us that $\pi (\mathbfe{\alpha}^{(2)} ) = 1$. We deduce that $\mathbfe{\alpha}^{(2)} \in \mathscr{L}_{d_1 } (2 , 1 , 1)$. \\

\textbf{Case 3:} This case is very similar to Case 2. \\

We now prove the uniqueness claim that is made in the theorem. We do so for Case 1, and the remaining cases are almost identical and only need to take into account the difference in definition of $\mathbfe{\alpha}^{(2)}$. To this end, consider the sequences
\begin{align*}
\mathbfe{\alpha}^{(2)} = &(\alpha_0 , \alpha_1 , \ldots , \alpha_{d_1 + d_2 -1}) \\
\mathbfe{\alpha}' = &(\alpha_0 , \alpha_1 , \ldots , \alpha_{d_1 + d_2 -1} , \alpha_{d_1 + d_2 } , \ldots , \alpha_{2 d_1} ) .
\end{align*}
Note that $\alpha_{d_1 + d_2 } , \ldots , \alpha_{2 d_1}$ form the last $d_1 - d_2 +1$ entries in the last column of the matrix $H_{d_1 -1 , d_1 +1} (\mathbfe{\alpha}')$, and by assumption we must have that $A_1$ is in the kernel of this matrix. Thus, for $d_1 + d_2 \leq i \leq 2 d_1$, the $i$-th row of $H_{d_1 -1 , d_1 +1} (\mathbfe{\alpha}')$ is orthogonal to the vector associated with $A_1$. Since this vector has non-zero final entry, we can express the last entry in the $i$-th row in terms of the previous entries. An inductive argument proves that the entries $\alpha_{d_1 + d_2 } , \ldots , \alpha_{2 d_1}$ can be uniquely determined in terms of the entries $\alpha_0 , \alpha_1 , \ldots , \alpha_{d_1 + d_2 -1}$. \\

Now consider the sequence 
\begin{align*}
\mathbfe{\alpha} = &(\alpha_0 , \alpha_1 , \ldots , \alpha_{2 d_1} , \alpha_{2 d_1 +1} , \ldots , \alpha_{n} ) ,
\end{align*}
and note that $\alpha_{2 d_1 +1} , \ldots , \alpha_{n}$ form the last $n - 2 d_1$ entries of the final row of $H_{d_1 -1 , n+3 - d_1 } (\mathbfe{\alpha})$. By the assumptions made in the uniqueness claim, we must have that the polynomials
\begin{align*}
A_1 , T A_1 , \ldots , T^{n - 2 d_1 -1} A_1
\end{align*}
are in the kernel of $H_{d_1 -1 , n+3 - d_1 } (\mathbfe{\alpha})$ (and thus orthogonal to its final row). Hence, a similar inductive argument as above will show that the entries $\alpha_{2 d_1 +1} , \ldots , \alpha_{n}$ can be uniquely determined in terms of the entries $\alpha_0 , \alpha_1 , \ldots , \alpha_{2 d_1}$, thus concluding the proof of the uniqueness claim. \\

All that remains is to prove the final claim made in the theorem, which we do for the case $r \geq 2$. We have that
\begin{align*}
\mathbfe{\alpha} = (\alpha_0 , \alpha_1 , \ldots , \alpha_n ) \in \mathscr{L}_n (r , \rho_1 , \pi_1 ) .
\end{align*}
By considering the $(\rho , \pi )$-form of the matrix $H_{n_1 , n_2} (\mathbfe{\alpha})$ and removing last $\pi_1$ columns, we can deduce that $\mathbfe{\alpha}^{(1)}$ is in $\mathscr{L}_{n - \pi_1 } ( \rho_1 , \rho_1 , 0)$. Given that $A_1$ is in the kernel of $H_{n+1-r , r+1} (\mathbfe{\alpha})$, and that the associated vector has $\pi_1$ number of zero entries at the end, we can see that $A_1$ is in the kernel of matrix obtained by removing the last $\pi_1$ columns of $H_{n+1-r , r+1} (\mathbfe{\alpha})$; that is, the matrix $H_{n+1-r , \rho_1 +1} (\mathbfe{\alpha}^{(1)} )$. This tells us that $A_1$ is the first characteristic polynomial. Similarly, since $A_2$ is in the kernel of $H_{r-1 , n+3-r} (\mathbfe{\alpha})$, we can see that it is in the kernel of the matrix obtained by removing the last $\pi_1$ rows of $H_{r-1 , n+3-r} (\mathbfe{\alpha})$; that is, the matrix $H_{\rho_1 -1 , n+3-r} (\mathbfe{\alpha}^{(1)} )$. This tells us that $A_2$ is the second characteristic polynomial.
\end{proof}

We now prove Theorem \ref{theorem, all corpime characteristic polynomial have a Hankel matrix}.

\begin{proof}[Proof of Theorem \ref{theorem, all corpime characteristic polynomial have a Hankel matrix}]
The construction of the sequences $\mathbfe{\alpha}$ in Claims 1 and 2 are not difficult, and so we proceed to Claims 3 and 4. \\

\textbf{Claim 3:} Let us write $A_2'$ to be the unique polynomial satisfying
\begin{align*}
A_2 = R A_1 + A_2' \hspace{3em} \degree A_2' < \degree A_1 .
\end{align*}
That is, $A_2'$ is the smallest representative of $A_2$ modulo $A_1$. Recalling our discussion on uniqueness in Definition \ref{definition, characteristic polynomials}, it is equivalent to prove that there exists a sequence $\mathbfe{\alpha} \in \mathscr{L}_n (r,r,0)$ with characteristic polynomials $A_1 , A_2'$. Now, let us define $d_1 := \degree A_1$ and $d_2 := \degree A_2'$, and we apply the Euclidean algorithm to $A_1, A_2'$:
\begin{align*}
A_1 = &R_2 A_2' + A_3 			&&d_3 := \degree A_3 < \degree A_2' , \\
A_2' = &R_3 A_3 + A_4 			&&d_4 := \degree A_4 < \degree A_3 , \\
A_3 = &R_4 A_4 + A_5 			&&d_5 := \degree A_5 < \degree A_4 , \\
&\vdots 					&&\vdots \\
A_t = &R_{t+1} A_{t+1} + A_{t+2} 		&&d_{t+2} := \degree A_{t+2} < \degree A_{t+1} ,
\end{align*}
where $t$ is such that $\degree A_{t} \geq 2 > \degree A_{t+1} \geq 0$. Also, let
\begin{align*}
\mathbfe{\alpha}^{(t+1)} := (\alpha_0 , \alpha_1 , \ldots , \alpha_{d_{t+1}} ) .
\end{align*}
Corollary \ref{corollary, Hankel matrices incorporate Euclidean algorithm theorem, full algorithm presented} tells us that our desired $\mathbfe{\alpha}$ exists if and only if we have
\begin{align*}
\mathbfe{\alpha}^{(t+1)}
\in \begin{cases}
\mathscr{L}_{d_{t}} (2 , 1 , 1) &\text{ if $d_{t+1} =1$,} \\
\mathscr{L}_{d_{t}} (2 , 0 , 2) &\text{ if $d_{t+1} =0$,} 
\end{cases}
\end{align*}
with characteristic polynomials $A_{t+1} , A_{t}$. \\

Suppose $d_{t+1} = 1$. Then, it is equivalent to find $\mathbfe{\alpha}^{(t+1)}$ such that
\begin{align} \label{statement, all corpime characteristic polynomial have a Hankel matrix theorem proof, claim 2, d_(t+1)=1, A_(t+2) mults in kernel}
A_{t+1} \hspace{1em} , \hspace{1em} T A_{t+1} \hspace{1em} , \hspace{1em} \ldots \hspace{1em} , \hspace{1em} T^{d_{t} -2} A_{t+1}
\in \kernel (\alpha_0 , \alpha_1 , \ldots , \alpha_{d_{t}} )
\end{align}
and
\begin{align} \label{statement, all corpime characteristic polynomial have a Hankel matrix theorem proof, claim 2, d_(t+1)=1, A_(t+1) in kernel}
A_{t}
\in \kernel (\alpha_0 , \alpha_1 , \ldots , \alpha_{d_{t}} ) .
\end{align}
To this end, we let $\alpha_0$ take any value in $\mathbb{F}_q$. Since $\degree A_{t+1} = 1$, we can see that (\ref{statement, all corpime characteristic polynomial have a Hankel matrix theorem proof, claim 2, d_(t+1)=1, A_(t+2) mults in kernel}) uniquely determines the values of $\alpha_1 , \ldots , \alpha_{d_{t} -1}$ in terms of $\alpha_0$. Similarly, since $\degree A_{t} = d_{t}$, we can see that (\ref{statement, all corpime characteristic polynomial have a Hankel matrix theorem proof, claim 2, d_(t+1)=1, A_(t+1) in kernel}) uniquely determines the value of $\alpha_{d_{t}}$ in terms of $\alpha_0$. Ultimately, we have shown that our desired $\mathbfe{\alpha}$ exists. The fact that it is unique up to multiplication by elements in $\mathbb{F}_q^*$ follows from the following four facts:
\begin{enumerate}
\item We could let $\alpha_0$ take any value in $\mathbb{F}_q$. \\

\item The entries $\alpha_1 , \ldots , \alpha_{d_{t}}$ can be expressed uniquely in terms of $\alpha_0$. In fact, each such $\alpha_i$ can be expressed as a linear function of $\alpha_0$ that passes through the origin. \\

\item The sequence $\mathbfe{\alpha}$ is uniquely determined by the sequence $\mathbfe{\alpha}^{(t+1)}$, which follows from the uniqueness claim at the end of Corollary \ref{corollary, Hankel matrices incorporate Euclidean algorithm theorem, full algorithm presented}. In fact, we can show that for all $i \leq n$ the entry $\alpha_i$ can be expressed as a linear function of $\alpha_0$ that passes through the origin. \\

\item Finally, we note from the above that if $\alpha_0 = 0$, then $\mathbfe{\alpha} = \mathbf{0}$. We dismiss this case as it does not give us $\mathbfe{\alpha}^{(t+1)} \in \mathscr{L}_{d_{t}} (2 , 1 , 1)$. \\
\end{enumerate}

Now suppose that we have $d_{t+1} = 0$ instead. We can use a similar argument as above. The main difference will be that $\alpha_0 , \ldots , \alpha_{d_{t} -2} = 0$, while $\alpha_{d_{t} -1}$ will be free to take any value in $\mathbb{F}_q^*$, and for all $d_{t} \leq i \leq n$ the entry $\alpha_i$ can be expressed as a linear function of $\alpha_{d_{t} -1}$ that passes through the origin. \\

\textbf{Claim 4:} We know by Claim 2 that there is a sequence $\mathbfe{\alpha}^{(1)} = (\alpha_0 , \alpha_1 , \ldots , \alpha_{n - \pi_1} ) \in \mathscr{L}_{n-\pi_1 } (\rho_1 , \rho_1 , 0)$ with characteristic polynomials $A_1 , A_2$. Note that
\begin{align} \label{statement, all corpime characteristic polynomial have a Hankel matrix theorem proof, claim 3, kernel H_(rho_1 -1 , n+3-r) (alpha^(1))}
\kernel H_{\rho_1 -1 , n+3 - r} (\mathbfe{\alpha}^{(1)})
= \bigg\{ B_1 A_1 + B_2 A_2 : \substack{B_1 , B_2 \in \mathcal{A} \\ \degree B_1 \leq n+2 - r - \rho_1 \\ \degree B_2 \leq 0} \bigg\} .
\end{align}
We now define the extension $\mathbfe{\alpha} = (\alpha_0 , \alpha_1 , \ldots , \alpha_n )$ of $\mathbfe{\alpha}^{(1)}$ by the following property:
\begin{align} \label{statement, all corpime characteristic polynomial have a Hankel matrix theorem proof, claim 3, kernel H_(r -1 , n+3-r) (alpha)}
\kernel H_{r -1 , n+3 - r} (\mathbfe{\alpha})
\subseteq \bigg\{ B_1 A_1 + B_2 A_2 : \substack{B_1 , B_2 \in \mathcal{A} \\ \degree B_1 \leq n+2 - 2r \\ \degree B_2 \leq 0} \bigg\} .
\end{align}
Note that removing the last $\pi_1$ rows of $H_{r -1 , n+3 - r} (\mathbfe{\alpha})$ will leave us with the matrix $\break H_{\rho_1 -1 , n+3 - r} (\mathbfe{\alpha}^{(1)})$ from (\ref{statement, all corpime characteristic polynomial have a Hankel matrix theorem proof, claim 3, kernel H_(rho_1 -1 , n+3-r) (alpha^(1))}). Hence, by successive applications of Lemma \ref{lemma, shifted poly in kernel of Hankel matrix is in kernel of removed row matrix}, regardless of the way we extended $\mathbfe{\alpha}^{(1)}$, we certainly have that
\begin{align*}
\kernel H_{r -1 , n+3 - r} (\mathbfe{\alpha})
\supseteq \Big\{ B_1 A_1 : \substack{B_1 \in \mathcal{A} \\ \degree B_1 \leq n+2 - 2r} \bigg\} .
\end{align*}
The requirement that $A_2$ is in the kernel above will actually uniquely determine the entries $\alpha_{n - \pi_1 +1} , \ldots , \alpha_{n}$ (which form the extended part of $\mathbfe{\alpha}$) in terms of $\alpha_0 , \ldots , \alpha_{n - \pi_1 }$. This follows from the fact that $\alpha_{n - \pi_1 +1} , \ldots , \alpha_{n}$ appear as the final entries in the last $\pi_1$ rows of $H_{r -1 , n+3 - r} (\mathbfe{\alpha})$, and the fact that $A_2$ has degree $n+2-r$. \\

We will now show that $\mathbfe{\alpha}$ is in $\mathscr{L}_n (r , \rho_1 , \pi_1 )$. This is a condition of Claim 3, but it also allows us to determine the dimension of the left side of (\ref{statement, all corpime characteristic polynomial have a Hankel matrix theorem proof, claim 3, kernel H_(r -1 , n+3-r) (alpha)}) by using Corollary \ref{corollary, dim kernel of recatnagular Hankel matrix given r of sequence}. By comparing this to the dimension of the right side of (\ref{statement, all corpime characteristic polynomial have a Hankel matrix theorem proof, claim 3, kernel H_(r -1 , n+3-r) (alpha)}), we see that we must have equality, and thus allowing us to deduce that $A_1 , A_2$ are the characteristic polynomials of $\mathbfe{\alpha}$. All that will be left to prove is that $\mathbfe{\alpha}$ is unique up to multiplication by  elements in $\mathbb{F}_q^*$. \\

Consider the sequence $\mathbfe{\alpha}' := (\alpha_0 , \alpha_1 , \ldots , \alpha_{n - \pi_1 +1} )$ and the associated matrix $H' := \break H_{\rho_1 , n+3 - r} (\mathbfe{\alpha}')$. Note that if we remove the last row from this matrix then we are left with the matrix $H := H_{\rho_1 -1 , n+3 - r} (\mathbfe{\alpha}^{(1)})$ from (\ref{statement, all corpime characteristic polynomial have a Hankel matrix theorem proof, claim 3, kernel H_(rho_1 -1 , n+3-r) (alpha^(1))}). Hence, by Lemma \ref{lemma, shifted poly in kernel of Hankel matrix is in kernel of removed row matrix} we have that
\begin{align*}
\Big\{ B_1 A_1 : \substack{B_1 \in \mathcal{A} \\ \degree B_1 \leq n+1 - r - \rho_1 } \Big\}
\subseteq \kernel H' .
\end{align*}
By our construction of $\mathbfe{\alpha}$,we also have that $A_2$ is in this kernel, and thus
\begin{align*}
\bigg\{ B_1 A_1 + B_2 A_2 : \substack{B_1 , B_2 \in \mathcal{A} \\ \degree B_1 \leq n+1 - r - \rho_1 \\ \degree B_2 \leq 0} \bigg\}
\subseteq \kernel H' .
\end{align*}
In fact, we must have equality above. This follows from the fact that the dimension of $\kernel H'$ is one less than that of $\kernel H$. Indeed, the number of columns remains the same, but the rank of $H'$ is one more than the rank of $H$, which follows by the fact that $H'$ has full row rank since $\rho (\mathbfe{\alpha}')  \geq \rho (\mathbfe{\alpha}^{(1)}) = \rho_1$. Now, because $T^{n+2 - r - \rho_1} A_1$ is not in the kernel of $H'$, we can see that $\pi (H') = 1$, and thus $\pi (\mathbfe{\alpha}') = 1$. By considering $(\rho , \pi )$-forms, we can see that this forces $\pi (\mathbfe{\alpha}') = \pi_1$ (indeed, extending the sequence by one entry will increase the $\pi$-characteristic by one), and thus $\mathbfe{\alpha} \in \mathscr{L}_n (r , \rho_1 , \pi_1 )$. \\

Finally, for the uniqueness claim, this follows from the uniqueness claim in Claim 2 and the way we formed our extension $\mathbfe{\alpha}$ of $\mathbfe{\alpha}^{(1)}$. Technically, we should also prove a converse result; that any sequence $\mathbfe{\alpha}$ with the properties in Claim 3 has a truncation $\mathbfe{\alpha}^{(1)} \in \mathscr{L}_{n-\pi_1 } (\rho_1 , \rho_1 , 0)$ with characteristic polynomials $A_1 , A_2$. This is to ensure that our construction above actually addresses all possibilities for $\mathbfe{\alpha}$. This is not difficult to prove, and we have done something similar in the proof of Theorem \ref{Theorem, Hankel matrices incorporate Euclidean algorithm}.
\end{proof}

Finally, we prove Theorem \ref{theorem, kernel structure subsection, char polys of extended sequences}.

\begin{proof}[Proof of Theorem \ref{theorem, kernel structure subsection, char polys of extended sequences}]

\textbf{Claim 1:} The cases $r \leq 1$ are considerably easier than the cases $r \geq 2$; we consider only the latter. \\

Let $H :=H_{n_1 , n_2} (\mathbfe{\alpha})$. The $(\rho , \pi )$-form of $H$ is
\begin{align*}
\begin{pmatrix}
H [r , r] & \vline & H [r , -(n_2 -r)] \\
\hline
\mathbf{0} & \vline & 
\begin{NiceMatrix}
0 & \Cdots & \Cdots & 0 \\
\Vdots &  &  & \Vdots \\
0 & \Cdots & \Cdots & 0 \\
0 & \Cdots & \Cdots & 0 
\end{NiceMatrix}
\end{pmatrix} .
\end{align*}
By recalling the definition of $(\rho , \pi )$-form, we can see that the $(\rho , \pi )$-form of $H' :=H_{n_1' , n_2'} (\mathbfe{\alpha}')$ is
\begin{align*}
\begin{pmatrix}
H [r , r] & \vline & H [r , -(n_2' -r)] \\
\hline
\mathbf{0} & \vline & 
\begin{NiceMatrix}
0 & \Cdots & \Cdots & 0 \\
\Vdots &  &  & \Vdots \\
0 & \Cdots & \Cdots & 0 \\
0 & \Cdots & 0 & \gamma 
\end{NiceMatrix}
\end{pmatrix} ,
\end{align*}
where $\gamma \in \mathbb{F}_q$, and there is a one-to-one correspondence between $\gamma$ and $\alpha_{n+1}$. If $\gamma = 0$, then we have $\mathbfe{\alpha}' \in \mathscr{L}_{n+1} (r,r,0)$, while if $\gamma \neq 0$, then we have $\mathbfe{\alpha}' \in \mathscr{L}_{n+1} (r+1,r,1)$. The claims on the characteristic degrees follow by definition. \\

Let us now consider the characteristic polynomials. Consider the case $\gamma = 0$ first. By definition, we have that $A_1$ spans the kernel of $H_{c_2 -1 , c_1 +1} (\mathbfe{\alpha})$. By comparing the $(\rho , \pi )$-form of $H_{c_2 -1 , c_1 +1} (\mathbfe{\alpha})$ to the $(\rho , \pi )$-form of $H_{c_2' -1 , c_1' +1} (\mathbfe{\alpha}') = H_{c_2  , c_1 +1} (\mathbfe{\alpha}')$ (the latter having an extra row of zeros at the bottom, compared to the former), we see that $A_1$ spans the kernel of $H_{c_2' -1 , c_1' +1} (\mathbfe{\alpha}')$. Thus, $A_1' = A_1$. \\

Regarding $A_2'$, by definition it is the polynomial in the kernel of $H_{c_1' -1 , c_2' +1} (\mathbfe{\alpha}')$ that is not a multiple of $A_1'$. Now, let $\mathbf{a}_2$ be the vector in the kernel of $H_{c_1 -1 , c_2 +1} (\mathbfe{\alpha})$ that is associated to $A_2$. We can see that $(\mathbf{a}_2 \mid 0)$ is in the kernel of $H_{c_1' -1 , c_2' +1} (\mathbfe{\alpha}') = H_{c_1 -1 , c_2 +2} (\mathbfe{\alpha}')$ (the latter matrix has an extra column compared to the former). In terms of polynomials, this means $A_2$ is in the kernel of $H_{c_1' -1 , c_2' +1} (\mathbfe{\alpha}')$; and since it is not a multiple of $A_1' = A_1$, we have that $A_2' = A_2$. \\

Now consider the case where $\gamma \neq 0$. Let us write $\mathbf{a}_1$ for the vector associated to $A_1$ in the kernel of $H_{c_2 -1 , c_1 +1} (\mathbfe{\alpha})$. Similar to above, we compare the $(\rho , \pi )$-form of $H_{c_2 -1 , c_1 +1} (\mathbfe{\alpha})$ to the $(\rho , \pi )$-form of $H_{c_2' -1 , c_1' +1} (\mathbfe{\alpha}') = H_{c_2 -1  , c_1 +2} (\mathbfe{\alpha}')$ (the latter having an additional column compared to the former, with a non-zero entry in the bottom-right), and we we see that $(\mathbf{a}_1 \mid 0)$ spans the kernel of $H_{c_2' -1 , c_1' +1} (\mathbfe{\alpha}')$. Thus, $A_1' = A_1$. \\

For $A_2'$, as above, it is the polynomial in the kernel of $H_{c_1' -1 , c_2' +1} (\mathbfe{\alpha}') = H_{c_1 , c_2 +1} (\mathbfe{\alpha}')$ that is not a multiple of $A_1'$. Since $A_1' = A_1$ is the first characteristic polynomial, we have 
\begin{align*}
\kernel H_{c_1' -1 , c_2' +1} (\mathbfe{\alpha}') 
\supseteq \Big\{ B_1 A_1' : \substack{B_1 \in \mathcal{A} \\ \degree B_1 \leq c_2' - c_1' } \Big\}
= \Big\{ B_1 A_1 : \substack{B_1 \in \mathcal{A} \\ \degree B_1 \leq c_2 - c_1 -1 } \Big\} .
\end{align*}
We also have
\begin{align} \label{lemma, kernel structure subsection, extending sequence by one, T^(c_1 - c_1) A_1 not in kernel}
T^{c_2 - c_1} A_1 \not\in \kernel H_{c_1' -1 , c_2' +1} (\mathbfe{\alpha}').
\end{align}
Otherwise, by Lemma \ref{lemma, if poly in kernel of Hankel matrix then multiples of poly in wider matrix}, we would have that $A_1$ is in the kernel of $H_{c_2' -2 , c_1'} (\mathbfe{\alpha}')$, which is a contradiction. Note also that $H_{c_1 -1 , c_2 +1} (\mathbfe{\alpha})$ is the matrix we obtain after removing the last row from $H_{c_1' -1 , c_2' +1} (\mathbfe{\alpha}') = H_{c_1 , c_2 +1} (\mathbfe{\alpha}')$. In particular, the kernel of the latter is a subspace of the former. That is,
\begin{align*}
\kernel H_{c_1' -1 , c_2' +1} (\mathbfe{\alpha}')
\subseteq \kernel H_{c_1 -1 , c_2 +1} (\mathbfe{\alpha})
= \bigg\{ B_1 A_1 + B_2 A_2 : \substack{B_1 , B_2 \in \mathcal{A} \\ \degree B_1 \leq c_2 - c_1 \\ \degree B_2 \leq 0 } \bigg\} .
\end{align*}
Thus, we have established that
\begin{align*}
\Big\{ B_1 A_1 : \substack{B_1 \in \mathcal{A} \\ \degree B_1 \leq c_2 - c_1 -1 } \Big\}
\subseteq \kernel H_{c_1' -1 , c_2' +1} (\mathbfe{\alpha}')
\subseteq \bigg\{ B_1 A_1 + B_2 A_2 : \substack{B_1 , B_2 \in \mathcal{A} \\ \degree B_1 \leq c_2 - c_1 \\ \degree B_2 \leq 0 } \bigg\} .
\end{align*}
Corollary \ref{corollary, dim kernel of recatnagular Hankel matrix given r of sequence} tells us that the dimensions of adjacent vector spaces above differ by $1$. Now, $A_2'$ is the vector in $\kernel H_{c_1' -1 , c_2' +1} (\mathbfe{\alpha}')$ that is not a multiple of $A_1' = A_1$, and recall that by Theorem \ref{theorem, all Hankel matrices have characteristic polynomial kernel} the degree of $A_2'$ must be equal to $c_2'$. Thus, we can deduce
\begin{align*}
A_2' = \beta A_2 + T^{c_2 - c_1} A_1
\end{align*}
for some $\beta \in \mathbb{F}_q^*$ ($\beta$ cannot be $0$ by (\ref{lemma, kernel structure subsection, extending sequence by one, T^(c_1 - c_1) A_1 not in kernel})). It is not difficult to see that if we change the value of $\alpha_{n+1}$ (which appears in the last row of $H_{c_1' -1 , c_2' +1} (\mathbfe{\alpha}')$), then the value of $\beta$ will have to change to ensure that the vector associated to $A_2'$ is orthogonal to the last row of $H_{c_1' -1 , c_2' +1} (\mathbfe{\alpha}')$, and thus in the kernel of $H_{c_1' -1 , c_2' +1} (\mathbfe{\alpha}')$.( Note that this makes use of the fact that $A_2$ is not in the kernel of $H_{c_1' -1 , c_2' +1} (\mathbfe{\alpha}')$; indeed, otherwise, $A_2$ would be orthogonal to the last row and altering the value of $\beta$ would have no effect). Hence, there is a one-to-one correspondence between $\alpha_{n+1}$ and $\beta$. \\

\textbf{Claim 2:} Let $H := H_{n_1 , n_2} (\mathbfe{\alpha})$. The $(\rho , \pi )$-form of $H$ is of the form
\begin{align} \label{statement, kernel structure subsection, sequence extension theorem, claim 2, H rho, pi matrix}
\begin{pmatrix}
H [\rho_1 , \rho_1] & \vline & H [\rho_1 , -(n_2 - \rho_1)] \\
\hline
\mathbf{0} & \vline & 
\begin{NiceMatrix}
0 & \Cdots &  &  &  &  & 0 \\
\Vdots &  &  &  &  &  & \Vdots \\
0 & \Cdots &  &  &  &  & 0 \\
0 & \Cdots &  &  &  & 0 & 1 \\
0 & \Cdots &  &  & 0 & 1 & * \\
\Vdots &  &  & \Iddots & \Iddots & \Iddots & \Vdots \\
0 & \Cdots & 0 & 1 & * & \Cdots & * 
\end{NiceMatrix}
\end{pmatrix} ,
\end{align}
where there are $\pi_1$ number of $1$s in the bottom right submatrix (recall, $1$ represents an element in $\mathbb{F}_q^*$, $0$ represents $0$ as usual, and $*$ represents an element in $\mathbb{F}_q$). Now consider the matrix $H' = H_{n_1' , n_2'} (\mathbfe{\alpha}')$, and note that $H'$ is the same matrix as $H$ but it has either an additional row or column at the end (depending on whether $n$ is even or odd). Thus, we can deduce that the $(\rho , \pi)$-form of $H'$ is the same as (\ref{statement, kernel structure subsection, sequence extension theorem, claim 2, H rho, pi matrix}) but with an additional row or column. This additional row or column contributes an additional $1$ in the bottom-right submatrix, and thus we have $\pi (\mathbfe{\alpha}') = \pi_1 +1$. Clearly $\rho (\mathbfe{\alpha}') = \rho_1$, and thus $\mathbfe{\alpha}' \in \mathscr{L}_n (r+1, \rho_1, \pi_1 +1 )$, as required. The claims on the characteristic degrees follow by definition. \\

Now, the proof that $A_1' = A_1$ is similar as in the second case of Claim 1. The proof for the second characteristic polynomial is also similar, but slightly different, and so we give an outline of the proof. \\

By definition of the first characteristic polynomial, we have that $A_1' = A_1$ spans the kernel of $H_{c_2' -1 , c_1' +1} (\mathbfe{\alpha})$. Thus, using Lemma \ref{lemma, if poly in kernel of Hankel matrix then multiples of poly in wider matrix} for the second relation below, we have that
\begin{align*}
\Big\{ B_1 A_1 : \substack{B_1 \in \mathcal{A} \\ \degree B_1 \leq c_2 - c_1 -1} \Big\}
= \Big\{ B_1 A_1' : \substack{B_1 \in \mathcal{A} \\ \degree B_1 \leq c_2' - c_1'} \Big\}
\subseteq \kernel H_{c_1'-1 , c_2'+1} (\mathbfe{\alpha}') ,
\end{align*}
but
\begin{align} \label{lemma, kernel structure subsection, extending sequence by one, pi_1 geq 1 case, T^(c_2 -c_1) A_1 not in kernel}
T^{c_2 - c_1} A_1
\not\in \kernel H_{c_1'-1 , c_2'+1} (\mathbfe{\alpha}') .
\end{align}
We also have that
\begin{align*}
\kernel H_{c_1'-1 , c_2'+1} (\mathbfe{\alpha}')
\subseteq \kernel H_{c_1'-2 , c_2'+1} (\mathbfe{\alpha})
= \kernel H_{c_1-1 , c_2+1} (\mathbfe{\alpha})
= \bigg\{ B_1 A_1 + B_2 A_2 : \substack{B_1 , B_2 \in \mathcal{A} \\ \degree B_1 \leq c_2 - c_1 \\ \degree B_2 \leq 0 } \bigg\} .
\end{align*}
Thus, we have
\begin{align*}
\Big\{ B_1 A_1 : \substack{B_1 \in \mathcal{A} \\ \degree B_1 \leq c_2 - c_1 -1} \Big\}
\subseteq \kernel H_{c_1'-1 , c_2'+1} (\mathbfe{\alpha}') 
\subseteq \bigg\{ B_1 A_1 + B_2 A_2 : \substack{B_1 , B_2 \in \mathcal{A} \\ \degree B_1 \leq c_2 - c_1 \\ \degree B_2 \leq 0 } \bigg\} ,
\end{align*}
and, by Corollary \ref{corollary, dim kernel of recatnagular Hankel matrix given r of sequence}, the dimensions of adjacent vector spaces above differ by $1$. Thus, by (\ref{lemma, kernel structure subsection, extending sequence by one, pi_1 geq 1 case, T^(c_2 -c_1) A_1 not in kernel}), and the fact that Theorem \ref{theorem, all Hankel matrices have characteristic polynomial kernel} tells us that $A_2'$ must be of degree $c_2'$, we have that
\begin{align*}
A_2' = \beta T^{c_2 - c_1} A_1 + A_2
\end{align*}
for some $\beta \in \mathbb{F}_q$. The one-to-one correspondence between $\alpha_{n+1}$ and $\beta$ follows by similar reasoning as in Claim 2. \\

\textbf{Claim 3:}  Consider the first case given in Claim 3. The fact that $\mathbfe{\alpha} \in \mathscr{L}_{n+1} (n_1, n_1, 0 )$, and the claims on the characteristic degrees, are not difficult to deduce. Thus, we restrict our attention to the characteristic polynomials. By definition, $A_1'$ is the polynomial that spans the kernel of $H_{n_1 , n_1 +1} (\mathbfe{\alpha}')$, and Theorem \ref{theorem, all Hankel matrices have characteristic polynomial kernel} tells us that $\degree A_1' = n_1$. Now, the matrix $H_{n_1 , n_1 +1} (\mathbfe{\alpha}')$ is the same as the matrix $\kernel H_{n_1 -1 , n_1 +1} (\mathbfe{\alpha})$ but with an additional row, and we know that
\begin{align*}
\kernel H_{n_1 -1 , n_1 +1} (\mathbfe{\alpha})
= \{ B_1 A_1 + B_2 A_2 : B_1 , B_2 \in \mathbb{F}_q \},
\end{align*}
and so 
\begin{align*}
\kernel H_{n_1 , n_1 +1} (\mathbfe{\alpha}')
\subseteq \{ B_1 A_1 + B_2 A_2 : B_1 , B_2 \in \mathbb{F}_q \}.
\end{align*}
Since we know $\degree A_1' = n_1$, and that $\degree A_1 = n_1$ and $\degree A_2 < n_1$, we must have that $A_1' = \beta A_2 + A_1$ for some $\beta \in \mathbb{F}_q$. The one-to-one correspondence between $\alpha_{n+1}$ and $\beta$ follows similarly as previously. The second characteristic polynomial is, by definition, the polynomial in the kernel of $H_{n_1 -1 , n_1 +2} (\mathbfe{\alpha}')$ that is not a multiple of $A_1'$. Similar to the previous claims, we note that $H_{n_1 -1 , n_1 +2} (\mathbfe{\alpha}')$ is the same as the matrix $H_{n_1 -1 , n_1 +1} (\mathbfe{\alpha}')$ but with an additional column. So, because $A_2$ is in the kernel of the latter, we can see that it must be in the kernel of the former. Hence $A_2' = A_2$. \\

The second case of Claim 3 is very similar to Claim 1. However, for the second bullet point, one should note that, because $\mathbfe{\alpha}$ is quasi-regular (unlike in the analogous result in Claim 1), we require that $\degree A_1' = n_1 +1$, and this is why we take $A_1' = \beta A_2 + T A_1$ (as opposed to $A_2' = \beta A_2 + T A_1 = \beta A_2 + T^{c_2 - c_1} A_1$ which would be completely analogous to Claim 1). However, this ``swapping'' of $A_1'$ and $A_2'$ is important and natural as it plays a role in the manifestation of the Euclidean algorithm that we saw in Theorem \ref{Theorem, Hankel matrices incorporate Euclidean algorithm} and its corollaries. \\

The third case of Claim 3 is very similar to Claim 2. Again, there is a similar ``swapping'' of $A_1'$ and $A_2'$.
\end{proof}

\begin{remark} \label{remark, hankel matrices, kernel structure, higher moments remark}
Recall that in Subsection \ref{subsection, intro, extensions} we considered the generalisation of Theorem \ref{main theorem, variance divisor function intervals} to higher moments such as the third, and we described that we would need to determine how many $\mathbfe{\alpha} , \mathbfe{\beta} , \mathbfe{\gamma}$ there are such that $H_{l+1 , m+1}(\mathbfe{\alpha})$, $H_{l+1 , m+1}(\mathbfe{\beta})$, and $H_{l+1 , m+1}(\mathbfe{\gamma})$ have certain given ranks, and $\mathbfe{\alpha} + \mathbfe{\beta} + \mathbfe{\gamma} = \mathbfe{0}$. Now that we have established various results on Hankel matrices, we can reduce this problem to certain special cases. \\

Let us write
\begin{align*}
\mathbfe{\alpha} = &(\alpha_0 , \alpha_1 , \ldots , \alpha_n ) \\
\mathbfe{\beta} = &(\beta_0 , \beta_1 , \ldots , \beta_n ) \\
\mathbfe{\gamma} = &(\gamma_0 , \gamma_1 , \ldots , \gamma_n ) 
\end{align*}
and 
\begin{align*}
\mathbfe{\alpha}^- = &(\alpha_0 , \alpha_1 , \ldots , \alpha_{n-1} ) \\
\mathbfe{\beta}^- = &(\beta_0 , \beta_1 , \ldots , \beta_{n-1} ) \\
\mathbfe{\gamma}^- = &(\gamma_0 , \gamma_1 , \ldots , \gamma_{n-1} ) 
\end{align*}
Suppose we know how many $\mathbfe{\alpha}^- , \mathbfe{\beta}^- , \mathbfe{\gamma}^-$ there are with certain given $(\rho , \pi )$-characteristics and $\mathbfe{\alpha}^- + \mathbfe{\beta}^- + \mathbfe{\gamma}^- = \mathbfe{0}$. If at least one of these sequences $\mathbfe{\alpha}^- , \mathbfe{\beta}^- , \mathbfe{\gamma}^-$ has non-zero $\pi$-characteristic (that is, not all are quasi-regular) then we can determine the $(\rho , \pi )$-characteristics of $\mathbfe{\alpha} , \mathbfe{\beta} , \mathbfe{\gamma}$, even when we impose the condition that $\mathbfe{\alpha} + \mathbfe{\beta} + \mathbfe{\gamma} = \mathbfe{0}$. \\

Indeed, without loss of generality, suppose that $\pi (\mathbfe{\gamma}^- ) \geq 1$. Theorem \ref{theorem, kernel structure subsection, char polys of extended sequences} allows us to determine the $(\rho , \pi )$-characteristics of $\mathbfe{\alpha} , \mathbfe{\beta}$ based on the $(\rho , \pi )$-characteristics of $\mathbfe{\alpha}^- , \mathbfe{\beta}^-$. However, if, for example, $\mathbfe{\alpha}^-$ is quasi-regular, then the $(\rho , \pi )$-characteristic of $\mathbfe{\alpha}$ would depend on $\alpha_n$. Theorem \ref{theorem, kernel structure subsection, char polys of extended sequences} tells us how many values $\alpha_n$ can take so that the $(\rho , \pi )$-characteristic of $\mathbfe{\alpha}$ takes a specific value. However, it does not tell us exactly what those values are, and this is why it is important that $\pi (\mathbfe{\gamma}^- ) \geq 1$: Regardless of the values of $\alpha_n$ and $\beta_n$, we know there exists $\gamma_n$ such that $\alpha_n + \beta_n + \gamma_n = 0$, and we still know what the $(\rho , \pi )$-characteristic of $\mathbfe{\gamma}$ is because it is independent of the value of $\gamma_n$ (which uses the fact that $\pi (\mathbfe{\gamma}^- ) \geq 1$). \\

Thus we have reduce our problem to the cases where $\mathbfe{\alpha}^- , \mathbfe{\beta}^- , \mathbfe{\gamma}^-$ are all quasi-regular.
\end{remark}

\section{The Variance of the Divisor Function} \label{section, variance of divisor function}

We now prove Theorem \ref{main theorem, variance divisor function intervals}. 

\begin{proof}[Proof of Theorem \ref{main theorem, variance divisor function intervals}]

In what follows, $n \geq 4$. \\

We have that
\begin{align} \label{statement, div var section, variance as Delta in terms of noncentred d(B)}
\frac{1}{q^n} \sum_{A \in \mathcal{M}_n} \lvert \Delta (A;h) \rvert^2
= \frac{1}{q^n} \sum_{A \in \mathcal{M}_n} \bigg( \sum_{B \in I (A;h)} d(B) \bigg)^2 - q^{2h} (n+1)^2 .
\end{align}
So, we will consider the first term on the right side.
\begin{align*}
&\sum_{A \in \mathcal{M}_n} \bigg( \sum_{B \in I (A;h)} d(B) \bigg)^2 \\
= &\sum_{A \in \mathcal{M}_n} \bigg( \sum_{\substack{B \in \mathcal{M}_n \\ \degree (B-A) < h}} d(B) \bigg)^2 \\
= &\sum_{A \in \mathcal{M}_n}
	\bigg( \sum_{\substack{B \in \mathcal{M}_n \\ \degree (B-A) < h}}  
	\sum_{\substack{0 \leq l,m \leq n \\ l+m=n}}
	\sum_{E \in \mathcal{M}_l} \sum_{F \in \mathcal{M}_m} \mathbbm{1}_{EF=B} \bigg)^2 \\
= &\sum_{A \in \mathcal{A}_{\leq n }}
	\bigg( \sum_{\substack{B \in \mathcal{M}_n \\ \degree (B-A) < h}}  
	\sum_{\substack{0 \leq l,m \leq n \\ l+m=n}}
	\sum_{E \in \mathcal{M}_l} \sum_{F \in \mathcal{M}_m} \mathbbm{1}_{EF=B} \bigg)^2 .
\end{align*}
For the last line, we changed the first summation range from $A \in \mathcal{M}_n$ to all $A \in \mathcal{A}$ with $\degree A \leq n$. This does not change the result because the conditions on $E$ and $F$ force $A$ to be in $\mathcal{M}_n$. Continuing, we have
\begin{align*}
&\sum_{A \in \mathcal{M}_n} \bigg( \sum_{B \in I (A;h)} d(B) \bigg)^2 \\
= &\sum_{A \in \mathcal{A}_{\leq n }}
	\bigg( \sum_{\substack{B \in \mathcal{M}_n \\ \degree (B-A) < h}}  
	\sum_{\substack{0 \leq l,m \leq n \\ l+m=n}}
	\sum_{E \in \mathcal{M}_l} \sum_{F \in \mathcal{M}_m} 
	\prod_{i=0}^{n} \mathbbm{1}_{\{ EF \}_i = b_i } \bigg)^2 .
\end{align*}
Here, for a polynomial $C$, we let $\{ C \}_i$ denote its $i$-ith coefficient, which is convenient when working with products of polynomials such as $EF$. We also  denote $i$-th coefficient of $A,B,E,F$ by $a_i , b_i , e_i , f_i$, respectively. \\

We will now express the sums over $A,B,E,F$ by sums over their coefficients. For example $\sum_{A \in \mathcal{A}_{\leq n }}$ will be expressed as $\sum_{a_0 , \ldots , a_n \in \mathbb{F}_q}$. We also note that if $A'$ satisfies $\degree (A-A') < h$, then (due to the sum over $B$) both $A$ and $A'$ give the same contribution. For a given $A$, there are $q^h$ such $A'$. Thus, we can multiply the whole expression by $q^h$ and consider only the $A$ that are of the form
\begin{align*}
A = 0 + 0 T + 0 T^2 + \ldots + 0 T^{h-1} + a_h T^h + a_{h+1} T^{h+1} + \ldots + a_{n} T^{n} .
\end{align*}
Furthermore, this means that $B$ is of the form
\begin{align*}
B = b_0 + b_1 T + b_2 T^2 + \ldots + b_{h-1} T^{h-1} + a_h T^h + a_{h+1} T^{h+1} + \ldots + a_{n} T^{n} .
\end{align*}
Thus, using the above and applying (\ref{statement, intro, exp identity sum}) to the terms $\mathbbm{1}_{\{ EF \}_i = b_i }$, we obtain
\begin{align}
\begin{split} \label{statement, div var section, variance as exp sums before alpha_i = 0 for i < h}
&\sum_{A \in \mathcal{M}_n} \bigg( \sum_{B \in I (A;h)} d(B) \bigg)^2 \\
= &\frac{q^{h}}{q^{2n+2}} \sum_{a_h , \ldots , a_n \in \mathbb{F}_q }
	\Bigg( \sum_{b_0 , \ldots , b_{h-1} \in \mathbb{F}_q}  
	\sum_{\substack{0 \leq l,m \leq n \\ l+m=n}}
	\sum_{\substack{e_0 , \ldots , e_{l-1} \in \mathbb{F}_q \\ e_l =1 }}
	\sum_{\substack{f_0 , \ldots , f_{m-1} \in \mathbb{F}_q \\ f_m =1 }} 
	\prod_{i=0}^{h-1} \sum_{\alpha_i \in \mathbb{F}_q }
	\psi \bigg( \alpha_i \Big( b_i - \sum_{\substack{0 \leq l_1 \leq l \\ 0 \leq m_1 \leq m \\ l_1 + m_1 = i }} e_{l_1} f_{m_1} \Big) \bigg) \\
	&\hspace{15em} \times \prod_{i=h}^{n} \sum_{\alpha_i \in \mathbb{F}_q } 
	\psi \bigg( \alpha_i \Big( a_i - \sum_{\substack{0 \leq l_1 \leq l \\ 0 \leq m_1 \leq m \\ l_1 + m_1 = i }} e_{l_1} f_{m_1} \Big) \bigg) \Bigg)^2 .
\end{split}
\end{align}
Now, consider the sum over one of the coefficients $b_i$, and apply (\ref{statement, intro, exp identity sum}). We obtain,
\begin{align*}
\frac{1}{q} \sum_{b_i \in \mathbb{F}_q } \psi ( \alpha_i b_i )
= \begin{cases}
1 &\text{ if $\alpha_i = 0$,} \\
0 &\text{ if $\alpha_i \in \mathbb{F}_q^*$.} 
\end{cases}
\end{align*}
Thus, we require $\alpha_i = 0$ in order to have a non-zero contribution to (\ref{statement, div var section, variance as exp sums before alpha_i = 0 for i < h}). Applying this for $i = 0 , \ldots , h-1$, we obtain
\begin{align}
\begin{split} \label{statement, div var section, variance as exp sums after alpha_i = 0 for i < h}
&\sum_{A \in \mathcal{M}_n} \bigg( \sum_{B \in I (A;h)} d(B) \bigg)^2 \\
= &\frac{q^{3h}}{q^{2n+2}} \sum_{a_h , \ldots , a_n \in \mathbb{F}_q }
	\Bigg( \sum_{\substack{0 \leq l,m \leq n \\ l+m=n}}
	\sum_{\substack{e_0 , \ldots , e_{l-1} \in \mathbb{F}_q \\ e_l =1 }}
	\sum_{\substack{f_0 , \ldots , f_{m-1} \in \mathbb{F}_q \\ f_m =1 }} 
	\prod_{i=h}^{n} \sum_{\alpha_i \in \mathbb{F}_q } 
	\psi \bigg( \alpha_i \Big( a_i - \sum_{\substack{0 \leq l_1 \leq l \\ 0 \leq m_1 \leq m \\ l_1 + m_1 = i }} e_{l_1} f_{m_1} \Big) \bigg) \Bigg)^2 \\
= &\frac{q^{3h}}{q^{2n+2}} \sum_{a_h , \ldots , a_n \in \mathbb{F}_q }
	\Bigg( \sum_{\substack{0 \leq l,m \leq n \\ l+m=n}}
	\sum_{\substack{e_0 , \ldots , e_{l-1} \in \mathbb{F}_q \\ e_l =1 }}
	\sum_{\substack{f_0 , \ldots , f_{m-1} \in \mathbb{F}_q \\ f_m =1 }} 
	\prod_{i=h}^{n} \sum_{\alpha_i \in \mathbb{F}_q } 
	\psi \bigg( \alpha_i \Big( a_i - \sum_{\substack{0 \leq l_1 \leq l \\ 0 \leq m_1 \leq m \\ l_1 + m_1 = i }} e_{l_1} f_{m_1} \Big) \bigg) \Bigg) \\
	&\hspace{6em} \times \Bigg( \sum_{\substack{0 \leq l' , m' \leq n \\ l' + m' = n}}
	\sum_{\substack{g_0 , \ldots , g_{l' -1} \in \mathbb{F}_q \\ g_{l'} =1 }}
	\sum_{\substack{h_0 , \ldots , h_{m' -1} \in \mathbb{F}_q \\ h_{m'} =1 }} 
	\prod_{i=h}^{n} \sum_{\beta_i \in \mathbb{F}_q } 
	\psi \bigg( \beta_i \Big( a_i - \sum_{\substack{0 \leq l_1' \leq l' \\ 0 \leq m_1' \leq m' \\ l_1' + m_1' = i }} g_{l_1'} h_{m_1'} \Big) \bigg) \Bigg) .
	\end{split}
\end{align}
We now consider the sum over one of the coefficients $a_i$. Unlike the $b_i$ which appeared within the largest parentheses, the $a_i$ appear outside. Thus, we must simultaneously consider the terms within each pair of parentheses whose product forms the square, and that is why we have written them explicitly in the last line above. Again, we apply (\ref{statement, intro, exp identity sum}) to obtain
\begin{align*}
\frac{1}{q} \sum_{a_i \in \mathbb{F}_q } \psi \big( (\alpha_i + \beta_i) a_i \big)
= \begin{cases}
1 &\text{ if $\alpha_i = - \beta_i$,} \\
0 &\text{ if $\alpha_i \neq - \beta_i$.} 
\end{cases}
\end{align*}
Applying this to (\ref{statement, div var section, variance as exp sums after alpha_i = 0 for i < h}) gives
\begin{align}
%\begin{split}
\label{statement, div var section, variance 2nd last step}
&\sum_{A \in \mathcal{M}_n} \bigg( \sum_{B \in I (A;h)} d(B) \bigg)^2 \notag \\
= &q^{2h-n-1} \sum_{\alpha_{h} , \alpha_{h+1} , \ldots , \alpha_{n} \in \mathbb{F}_q}
	\Bigg( \sum_{\substack{0 \leq l,m \leq n \\ l+m=n}}
	\sum_{\substack{e_0 , \ldots , e_{l-1} \in \mathbb{F}_q \\ e_l =1 }}
	\sum_{\substack{f_0 , \ldots , f_{m-1} \in \mathbb{F}_q \\ f_m =1 }} 
	\prod_{i=h}^{n} \psi \bigg( - \alpha_i \sum_{\substack{0 \leq l_1 \leq l \\ 0 \leq m_1 \leq m \\ l_1 + m_1 = i }} e_{l_1} f_{m_1} \bigg) \Bigg) \notag \\
	&\hspace{6em} \times \Bigg( \sum_{\substack{0 \leq l' , m' \leq n \\ l' + m' = n}}
	\sum_{\substack{g_0 , \ldots , g_{l' -1} \in \mathbb{F}_q \\ g_{l'} =1 }}
	\sum_{\substack{h_0 , \ldots , h_{m' -1} \in \mathbb{F}_q \\ h_{m'} =1 }} 
	\prod_{i=h}^{n} \psi \bigg( \alpha_i \sum_{\substack{0 \leq l_1' \leq l' \\ 0 \leq m_1' \leq m' \\ l_1' + m_1' = i }} g_{l_1'} h_{m_1'} \bigg) \Bigg) \notag \\
= &q^{2h-n-1} \sum_{\mathbfe{\alpha} \in \mathscr{L}_n^h }
	\Bigg( \sum_{\substack{0 \leq l,m \leq n \\ l+m=n}}
	\sum_{\substack{e_0 , \ldots , e_{l-1} \in \mathbb{F}_q \\ e_l =1 }}
	\sum_{\substack{f_0 , \ldots , f_{m-1} \in \mathbb{F}_q \\ f_m =1 }} 
	\prod_{i=h}^{n} \psi \bigg( - \alpha_i \sum_{\substack{0 \leq l_1 \leq l \\ 0 \leq m_1 \leq m \\ l_1 + m_1 = i }} e_{l_1} f_{m_1} \bigg) \Bigg) \notag \\
	&\hspace{6em} \times \Bigg( \sum_{\substack{0 \leq l' , m' \leq n \\ l' + m' = n}}
	\sum_{\substack{g_0 , \ldots , g_{l' -1} \in \mathbb{F}_q \\ g_{l'} =1 }}
	\sum_{\substack{h_0 , \ldots , h_{m' -1} \in \mathbb{F}_q \\ h_{m'} =1 }} 
	\prod_{i=h}^{n} \psi \bigg( \alpha_i \sum_{\substack{0 \leq l_1' \leq l' \\ 0 \leq m_1' \leq m' \\ l_1' + m_1' = i }} g_{l_1'} h_{m_1'} \bigg) \Bigg) \\
= &q^{2h-n-1} \sum_{\mathbfe{\alpha} \in \mathscr{L}_n^h }
	\Bigg( \sum_{\substack{0 \leq l,m \leq n \\ l+m=n}}
	\sum_{\substack{\mathbf{e} \in {\mathbb{F}_q}^l \times \{ 1 \} \\ \mathbf{f} \in {\mathbb{F}_q}^m \times \{ 1 \} }}
	\psi \bigg( - \mathbf{e}^T H_{l+1 , m+1} (\mathbfe{\alpha}) \mathbf{f} \bigg) \Bigg) \notag \\
	&\hspace{6em} \times \Bigg( \sum_{\substack{0 \leq l' , m' \leq n \\ l' + m' = n}}
	\sum_{\substack{\mathbf{g} \in {\mathbb{F}_q}^{l'} \times \{ 1 \} \\ \mathbf{h} \in {\mathbb{F}_q}^{m'} \times \{ 1 \} }}
	\psi \bigg( \mathbf{g}^T H_{l' + 1 , m' + 1} (\mathbfe{\alpha}) \mathbf{h} \bigg) \Bigg) \notag \\
= &q^{2h+n} \sum_{r=0, h+1 , h+2 , \ldots , n_1 -1} \sum_{\mathbfe{\alpha}^- \in \mathscr{L}_{n-1}^h (r,r,0) } (n+1-2r)^2 p^{-2r} \notag \\
= &(q-1) q^{h+n-1} \sum_{r=h+1}^{n_1 -1} (n+1-2r)^2
	\hspace{2em} + q^{2h+n} (n+1)^2 \notag 
%\end{split}
\end{align}
For the second equality, $\mathbfe{\alpha} := (\alpha_0 , \alpha_1 , \ldots , \alpha_n)$; of course, since $\mathbfe{\alpha} \in \mathscr{L}_n^h$, we have $\alpha_0 , \alpha_1 , \ldots , \break \alpha_{h-1} = 0$, which is consistent with the previous line. For the third equality, we are writing $\mathbf{e} = (e_1 , e_2 , \ldots , e_l )^T$ and similarly for $\mathbf{f}, \mathbf{g}, \mathbf{h}$. The fourth equality uses Lemma \ref{lemma, variance section, evaluation of exp sums of vectors and matrices} below, and the fifth equality uses Claims 2 and 3 from Theorem \ref{theorem, matrices of given rank and size}. So, applying this to (\ref{statement, div var section, variance as Delta in terms of noncentred d(B)}) gives
\begin{align*}
\frac{1}{q^n} \sum_{A \in \mathcal{M}_n} \lvert \Delta (A;h) \rvert^2
= \begin{cases}
(q-1) q^{h-1} \frac{(n-2h-1)(n-2h)(n-2h+1)}{6} &\text{ for $h \leq n_1 -2$,} \\
0 &\text{ for $h \geq n_1 -1$.}
\end{cases}
\end{align*}

\end{proof}

\begin{lemma} \label{lemma, variance section, evaluation of exp sums of vectors and matrices}

In what follows we have
\begin{align*}
\mathbfe{\alpha} = (\alpha_0 , \alpha_1 , \ldots , \alpha_n ) \in \mathbb{F}_q^{n+1}
\end{align*}
and we define
\begin{align*}
\mathbfe{\alpha}^- := (\alpha_0 , \alpha_1 , \ldots , \alpha_{n-1} ) .
\end{align*}
The two claims below address all possible values that $\mathbfe{\alpha}$ could take.

\textbf{Claim 1:} Suppose $\mathbfe{\alpha}$ is such that $\mathbfe{\alpha}^{-} \in \mathscr{L}_{n-1} (r , \rho_1 , \pi_1 )$, where $0 \leq r \leq n_1 -1$ and $\pi_1 \geq 1$. For all $l,m \geq 0$ with $l+m=n$, we have
\begin{align} \label{statement, variance section, evaluation of exp sums of vectors and matrices lemma, pi geq 2 eq}
\sum_{\substack{\mathbf{e} \in {\mathbb{F}_q}^l \times \{ 1 \} \\ \mathbf{f} \in {\mathbb{F}_q}^m \times \{ 1 \} }}
	\psi \bigg( - \mathbf{e}^T H_{l+1 , m+1} (\mathbfe{\alpha}) \mathbf{f} \bigg)
= 0 .
\end{align}

Now suppose $n$ is odd and $\mathbfe{\alpha}$ is such that $\mathbfe{\alpha}^{-} \in \mathscr{L}_{n-1} (n_1 , 0 , 0 )$. For all $l,m \geq 0$ with $l+m=n$, we have
\begin{align*}
\sum_{\substack{\mathbf{e} \in {\mathbb{F}_q}^l \times \{ 1 \} \\ \mathbf{f} \in {\mathbb{F}_q}^m \times \{ 1 \} }}
	\psi \bigg( - \mathbf{e}^T H_{l+1 , m+1} (\mathbfe{\alpha}) \mathbf{f} \bigg)
= 0 .
\end{align*}

\textbf{Claim 2:} Suppose $\mathbfe{\alpha}$ is such that $\mathbfe{\alpha}^- \in \mathscr{L}_{n-1} (r,r,0)$, where $0 \leq r \leq n_1 -1$. We will fix $\mathbfe{\alpha}^-$ and consider all possible extensions $\mathbfe{\alpha}$; that is, we let $\alpha_n$ vary in $\mathbb{F}_q$. We have
\begin{align*}
&\sum_{\alpha_n \in \mathbb{F}_q }
	\Bigg( \sum_{\substack{0 \leq l,m \leq n \\ l+m=n}}
	\sum_{\substack{\mathbf{e} \in {\mathbb{F}_q}^l \times \{ 1 \} \\ \mathbf{f} \in {\mathbb{F}_q}^m \times \{ 1 \} }}
	\psi \big( - \mathbf{e}^T H_{l+1 , m+1} (\mathbfe{\alpha}) \mathbf{f} \big) \Bigg) \\
	&\hspace{6em} \times \Bigg( \sum_{\substack{0 \leq l' , m' \leq n \\ l' + m' = n}}
	\sum_{\substack{\mathbf{g} \in {\mathbb{F}_q}^{l'} \times \{ 1 \} \\ \mathbf{h} \in {\mathbb{F}_q}^{m'} \times \{ 1 \} }}
	\psi \big( \mathbf{g}^T H_{l' + 1 , m' + 1} (\mathbfe{\alpha}) \mathbf{h} \big) \Bigg) \\
= &q^{2n - 2r +1} (n+1-2r)^2 .
\end{align*}
\end{lemma}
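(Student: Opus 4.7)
The plan is to use the orthogonality identity \eqref{statement, intro, exp identity sum} to sum over the free coordinates of $\mathbf{e}$ in each inner sum, thereby reducing the bilinear exponential to a sum over the kernel of a smaller Hankel matrix built from the truncation $\mathbfe{\alpha}^-$. Summing $e_0, \ldots, e_{l-1} \in \mathbb{F}_p$ first forces the first $l$ components of $H_{l+1, m+1}(\mathbfe{\alpha})\mathbf{f}$ to vanish, equivalently $\mathbf{f} \in \kernel H_{l, m+1}(\mathbfe{\alpha}^-)$, and leaves a phase determined by the last row $R_{l+1} = (\alpha_l, \alpha_{l+1}, \ldots, \alpha_n)$:
\begin{equation*}
\sum_{\substack{\mathbf{e} \in \mathbb{F}_p^l \times \{1\} \\ \mathbf{f} \in \mathbb{F}_p^m \times \{1\}}} \exp\Bigl(-\frac{2\pi i}{p}\mathbf{e}^T H_{l+1, m+1}(\mathbfe{\alpha}) \mathbf{f}\Bigr) = p^l \sum_{\substack{\mathbf{f} \in \kernel H_{l, m+1}(\mathbfe{\alpha}^-) \\ f_m = 1}} \exp\Bigl(-\frac{2\pi i}{p} R_{l+1} \mathbf{f}\Bigr).
\end{equation*}
I would then apply Theorem \ref{theorem, all Hankel matrices have characteristic polynomial kernel} to $\mathbfe{\alpha}^-$ (with characteristic degrees $c_1 = r$, $c_2 = n+1-r$ and characteristic polynomials $A_1, A_2$ of degrees $\rho_1$ and generically $c_2$) to describe the kernel in each range of $l$.

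For \textbf{Claim 1}, I would show the admissible set $\{\mathbf{f} \in \kernel H_{l, m+1}(\mathbfe{\alpha}^-) : f_m = 1\}$ is empty for every $l$. The range $l \geq n - r + 1$ has trivial kernel; the middle range $r \leq l \leq n-r$ has kernel $\{B_1 A_1 : \degree B_1 \leq m - r\}$, whose elements have degree $\leq \rho_1 + (m-r) = m - \pi_1 < m$ since $\pi_1 \geq 1$; and the upper range $l \leq r - 1$ has kernel $\{B_1 A_1 + B_2 A_2\}$ with $\degree B_1 \leq m - r$ and $\degree B_2 \leq m - c_2 - 1$, so $\degree(B_1 A_1) \leq m - \pi_1 < m$ and $\degree(B_2 A_2) \leq m - 1 < m$. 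The second case in Claim 1 (the $n$ odd degenerate situation) leaves only the upper range since $c_1 = c_2 = n_1$ collapses the middle range to empty, and the same degree bounds then preclude $f_m = 1$.

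For \textbf{Claim 2}, quasi-regularity ($\rho_1 = r$) makes $A_1$ monic of degree exactly $r$, so monic polynomials of degree $m$ do appear in the middle-range kernel. Taking $\mathbf{f} = B_1 A_1$ with $B_1$ monic of degree $m - r$, I would expand the convolution for $R_{l+1} \mathbf{f}$ and apply the kernel relations $\sum_{j=0}^r a_j \alpha_{i+j} = 0$ for $i = 0, \ldots, n-r-1$ to kill every inner sum except the one at index $i = n - r$, which equals $K := \sum_{j=0}^{r} a_j \alpha_{n-r+j}$. Hence $R_{l+1} \mathbf{f} = K$ is independent of the lower coefficients of $B_1$, and each middle $l$ contributes $p^{n-r} \exp(-2\pi i K/p)$; summed over the $n - 2r + 1$ middle values of $l$, this gives $(n + 1 - 2r) p^{n-r} \exp(-2\pi i K/p)$. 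For the upper range $l \leq r - 1$, the analogous computation for $\mathbf{f} = B_1 A_1 + B_2 A_2$ yields $R_{l+1} \mathbf{f} = K + (b_2)_{r-1-l} K_2$, where $K_2 := \sum_{k} (a_2)_k \alpha_{r-1+k}$; summing over the leading coefficient of $B_2$ then produces a factor $p\,\mathbbm{1}_{K_2 = 0}$. I would then prove $K_2 \neq 0$ in the quasi-regular case: $K_2 = 0$ combined with the defining kernel relations of $A_2$ gives $\sum_k (a_2)_k \alpha_{i+k} = 0$ for $i = 0, \ldots, r-1$, placing the zero-padded coefficient vector of $A_2$ in $\kernel H_{r, r}(\mathbfe{\alpha}^-)$; invertibility of $H_{r,r}(\mathbfe{\alpha}^-)$ then forces $A_2 = 0$, contradicting that $A_2$ is a nonzero second characteristic polynomial. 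Hence only the middle range survives, and both bracketed sums (with $\mp$ signs in their exponents) have absolute value $(n+1-2r) p^{n-r}$. Writing $K = K' + \alpha_n$ with $K' = \sum_{j=0}^{r-1} a_j \alpha_{n-r+j}$ independent of $\alpha_n$, the opposite-sign exponentials cancel in the product, yielding $(n+1-2r)^2 p^{2n-2r}$ independent of $\alpha_n$. Summing over $\alpha_n \in \mathbb{F}_p$ gives the claimed $p^{2n-2r+1}(n+1-2r)^2$.

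The main technical obstacle is establishing $K_2 \neq 0$ under quasi-regularity, and this is overcome cleanly by the invertibility of $H_{r,r}(\mathbfe{\alpha}^-)$ noted above; a minor bookkeeping point is that the propagation of the kernel relations through the convolution must cover exactly the indices $i = l, l+1, \ldots, l + (m-r)$, whose maximum $l + (m-r) = n - r$ is the unique index at which the kernel relation fails and contributes $K$.
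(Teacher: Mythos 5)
Your reduction of each inner sum to $p^l \sum_{\mathbf{f}} \exp\bigl(-\tfrac{2\pi i}{p} R_{l+1}\mathbf{f}\bigr)$ over the coset $\{\mathbf{f} \in \kernel H_{l,m+1}(\mathbfe{\alpha}^-) : f_m = 1\}$ is exactly the paper's starting point, and your Claim 2 argument is sound: the middle-range phase computation and the $K_2 \neq 0$ step via invertibility of $H_{r,r}(\mathbfe{\alpha}^-)$ correctly dispose of the range $l \leq r-1$ there.

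The gap is in Claim 1, in the range $l \leq r-1$ (equivalently $m \geq n-r+1$), and likewise in the degenerate odd-$n$ case. The matrix $H_{l,m+1}(\mathbfe{\alpha}^-)$ has $m+1$ columns, so Theorem \ref{theorem, all Hankel matrices have characteristic polynomial kernel} gives $\degree B_2 \leq (m+1) - c_2 - 1 = m - c_2$, not $m - c_2 - 1$ (you substituted $m+1$ correctly in the $B_1$ bound but not in the $B_2$ bound; with your bound the parametrised kernel would have dimension one less than Corollary \ref{corollary, dim kernel of recatnagular Hankel matrix given r of sequence} requires). Since $\pi_1 \geq 1$ forces $\degree A_2 = c_2$, the products $B_2 A_2$ reach degree exactly $m$, so monic degree-$m$ kernel elements do exist and the admissible set is \emph{not} empty; similarly, in the odd-$n$ case $\degree A_1 = c_1$ and $B_1 A_1$ itself attains degree $m$. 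Hence for these $(l,m)$ you must show that the phase sum over a nonempty coset vanishes, and this cannot be rescued by a degree count. This is where the paper does the real work: it invokes Theorem \ref{theorem, kernel structure subsection, char polys of extended sequences} to obtain $\kernel H_{l,m+1}(\mathbfe{\alpha}^-) = \kernel H_{l+1,m+1}(\mathbfe{\alpha}) \oplus \{\gamma\, T^{m-c_1} A_1 : \gamma \in \mathbb{F}_p\}$, observes that $T^{m-c_1}A_1$ has degree $m - \pi_1 < m$ (so it lies in the subspace with vanishing top coordinate and translates the coset into itself) while $R_{l+1}\bigl(T^{m-c_1}A_1\bigr) \neq 0$ because this polynomial is not in the kernel of the extended matrix, and then sums over $\gamma$ to get zero. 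You would need this (or an equivalent nontriviality-of-the-linear-functional argument, in the spirit of your own $K_2 \neq 0$ step from Claim 2) to close Claim 1.
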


\begin{proof}
In what follows, $e_i$ and $f_i$ are the $i$-th entries of $\mathbf{e}$ and $\mathbf{f}$, respectively. \\

\textbf{Claim 1:} Consider the first result in this claim. Since $\pi_1 \geq 1$, we can see from Theorem \ref{theorem, all Hankel matrices have characteristic polynomial kernel} tells us that for $m \leq n-r-1$ there is no monic polynomial of degree $m$ in the kernel of $H_{l , m+1} (\mathbfe{\alpha}^-)$. That is, there is no vector $\mathbf{f} \in {\mathbb{F}_q}^m \times \{ 1 \}$ such that $H_{l , m+1} (\mathbfe{\alpha}^-) \mathbf{f} = \mathbf{0}$. Therefore, for any $\mathbf{f} \in {\mathbb{F}_q}^m \times \{ 1 \}$ we can find a row, $R_i$, of $H_{l , m+1} (\mathbfe{\alpha})$ such that $R_i \mathbf{f} \neq 0$. Now, consider only the sum and terms involving $e_i$ on the left side of (\ref{statement, variance section, evaluation of exp sums of vectors and matrices lemma, pi geq 2 eq}); we have
\begin{align*}
\sum_{e_i \in \mathbb{F}_q} \psi ( - e_i R_i \mathbf{f} )
= 0 ,
\end{align*}
where we have used (\ref{statement, intro, exp identity sum}) and the fact that $R_i \mathbf{f} \neq 0$. Thus, the left side of (\ref{statement, variance section, evaluation of exp sums of vectors and matrices lemma, pi geq 2 eq}) is indeed $0$. \\

Let us now consider when $m \geq n-r$. Let $c_1^- := r$ and $c_2^- := n+1-r$. Theorem \ref{theorem, all Hankel matrices have characteristic polynomial kernel} tells us that 
\begin{align} \label{statement, variance section, evaluation of exp sums of vectors and matrices lemma proof, kernel smaller matr}
\kernel H_{l,m+1} (\mathbfe{\alpha}^-) 
= \bigg\{ B_1 A_1 + B_2 A_2 : \substack{B_1 , B_2 \in \mathcal{A} \\ \degree B_1 \leq m - c_1^- \\ \degree B_2 \leq m - c_2^- } \bigg\} ,
\end{align}
for some $A_1 \in \mathcal{A}_{\rho_1}$ and $A_2 \in \mathcal{A}_{c_2^-}$. Note that in this case, there are monic polynomials of degree $m$ in the kernel of $H_{l,m+1} (\mathbfe{\alpha}^-)$. That is, there are vectors $\mathbf{f} \in {\mathbb{F}_q}^m \times \{ 1 \}$ such that $H_{l , m+1} (\mathbfe{\alpha}^-) \mathbf{f} = \mathbf{0}$. By similar reasoning as above, these are the only candidates for non-zero contributions to the left side of (\ref{statement, variance section, evaluation of exp sums of vectors and matrices lemma, pi geq 2 eq}). \\

Now, $A_1 , A_2$ are the characteristic polynomials of $\mathbfe{\alpha}^-$. Theorem \ref{theorem, kernel structure subsection, char polys of extended sequences} tells us that 
\begin{align*}
\mathbfe{\alpha} 
\in \begin{cases}
\mathscr{L}_{n} (n_1, n_1 , 0 ) &\text{ if $n$ is even and $r= n_1 -1$,} \\
\mathscr{L}_{n} (r+1 , \rho_1 , \pi +1)  &\text{ otherwise.}
\end{cases}
\end{align*} 
Consider the latter case. Theorem \ref{theorem, kernel structure subsection, char polys of extended sequences} also tells us that the characteristic polynomials of $\mathbfe{\alpha}$ are $A_1$ and $\beta T^{c_2^- - c_1^- } A_1 + A_2$, and so Theorem \ref{theorem, all Hankel matrices have characteristic polynomial kernel} tells us that
\begin{align*}
\kernel H_{l+1 , m+1} (\mathbfe{\alpha}) 
= \bigg\{ B_1 A_1 + B_2 (\beta T^{c_2^- - c_1^- } A_1 + A_2 ) : \substack{B_1 , B_2 \in \mathcal{A} \\ \degree B_1 \leq m - c_1^- -1 \\ \degree B_2 \leq m - c_2^- } \bigg\} .
\end{align*}
Comparing this to (\ref{statement, variance section, evaluation of exp sums of vectors and matrices lemma proof, kernel smaller matr}), we see that
\begin{align} \label{statement, variance section, evaluation of exp sums of vectors and matrices lemma proof, kernel smaller matr in terms of larger matr}
\kernel H_{l,m+1} (\mathbfe{\alpha}^-) 
=\kernel H_{l+1 , m+1} (\mathbfe{\alpha}) 
	\hspace{1em} \oplus \hspace{1em} \{ \gamma T^{m-c_1^- } A_1 : \gamma \in \mathbb{F}_q \} .
\end{align}
(This is not surprising as the dimension of $\kernel H_{l,m+1} (\mathbfe{\alpha}^-) $ is just one more than the dimension of $\kernel H_{l+1 , m+1} (\mathbfe{\alpha})$). So, for $\mathbf{f} \in {\mathbb{F}_q}^m \times \{ 1 \}$ that are in $\kernel H_{l,m+1} (\mathbfe{\alpha}^-) $, we can write $\mathbf{f} = \mathbf{f}^- + \gamma \mathbf{g}_m$; where $\mathbf{f}^- \in \kernel H_{l+1 , m+1} (\mathbfe{\alpha})$, $\gamma \in \mathbb{F}_q$, and $\mathbf{g}_m$ is the polynomial associated with $T^{m-c_1^- } A_1$. Hence,
\begin{align*}
\mathbf{e}^T H_{l+1 , m+1} (\mathbfe{\alpha}) \mathbf{f}
= \gamma R_{l+1} \mathbf{g}_m ,
\end{align*}
where $R_{l+1}$ is the last row of $H_{l+1 , m+1} (\mathbfe{\alpha})$. Thus, we have
\begin{align*}
&\sum_{\substack{\mathbf{e} \in {\mathbb{F}_q}^l \times \{ 1 \} \\ \mathbf{f} \in {\mathbb{F}_q}^m \times \{ 1 \} }}
	\psi \Big( - \mathbf{e}^T H_{l+1 , m+1} (\mathbfe{\alpha}) \mathbf{f} \Big) \\
= &\sum_{\substack{\mathbf{e} \in {\mathbb{F}_q}^l \times \{ 1 \} \\ \mathbf{f}^- \in ({\mathbb{F}_q}^m \times \{ 1 \}) \cap \kernel H_{l+1 , m+1} (\mathbfe{\alpha}) \\ \gamma \in \mathbb{F}_q }}
	\psi \Big( - \mathbf{e}^T H_{l+1 , m+1} (\mathbfe{\alpha}) (\mathbf{f}^- +\gamma \mathbf{g}_m ) \Big) \\
= &\sum_{\substack{\mathbf{e} \in {\mathbb{F}_q}^l \times \{ 1 \} \\ \mathbf{f}^- \in ({\mathbb{F}_q}^m \times \{ 1 \}) \cap \kernel H_{l+1 , m+1} (\mathbfe{\alpha}) \\ \gamma \in \mathbb{F}_q }}
	\psi \Big( - \gamma R_{l+1} \mathbf{g}_m \Big) \\
= &q^l C \sum_{\gamma \in \mathbb{F}_q} \psi \Big( - \gamma R_{l+1} \mathbf{g}_m \Big) 
= 0 ,
\end{align*}
where for the last equality we have used (\ref{statement, intro, exp identity sum}) and the fact that $R_{l+1} \mathbf{g} \neq 0$ (since $\mathbf{g} \not\in \break \kernel H_{l+1 , m+1} (\mathbfe{\alpha})$), and $C$ is the number of $\mathbf{f}^-$ in $({\mathbb{F}_q}^m \times \{ 1 \}) \cap \kernel H_{l+1 , m+1} (\mathbfe{\alpha})$ (which we can calculate explicitly but do not need to). \\

The case where $n$ is even and $r=n_1 -1$ follows similarly as above. \\

The second result of Claim 1 is also proved similarly to the above. \\

\textbf{Claim 2:} By similar means as in Claim 1, we can show that
\begin{align} \label{statement, variance section, evaluation of exp sums of vectors and matrices lemma proof, claim 2, equals 0 cases}
\sum_{\substack{\mathbf{e} \in {\mathbb{F}_q}^l \times \{ 1 \} \\ \mathbf{f} \in {\mathbb{F}_q}^m \times \{ 1 \} }}
	\psi \Big( \mathbf{e}^T H_{l+1 , m+1} (\mathbfe{\alpha}) \mathbf{f} \Big)
= 0 ,
\end{align}
for $m \leq r-1$ and $m \geq n+1-r$. \\

So, suppose that $r \leq m \leq n-r$. Consider
\begin{align*}
\sum_{\substack{\mathbf{e} \in {\mathbb{F}_q}^l \times \{ 1 \} \\ \mathbf{f} \in {\mathbb{F}_q}^m \times \{ 1 \} }}
	\psi \Big( - \mathbf{e}^T H_{l+1 , m+1} (\mathbfe{\alpha}) \mathbf{f} \Big) ;
\end{align*}
As previously, a non-zero contribution requires that $\mathbf{f} \in \kernel H_{l,m+1} (\mathbfe{\alpha}^-)$. Let $R_{l+1} (\mathbfe{\alpha})$ be the last row of $H_{l+1,m+1} (\mathbfe{\alpha})$. We can deduce the following two points from Theorem \ref{theorem, kernel structure subsection, char polys of extended sequences}:
\begin{itemize}
\item There are $q-1$ values of $\alpha_{n}$ such that 
\begin{align*}
\mathbfe{\alpha} 
\in \begin{cases}
\mathscr{L}_{n} (n_1 , n_1 , 0) &\text{ if $n$ is even and $r=n_1 -1$,} \\
\mathscr{L}_{n} (r+1 , r , 1) &\text{ otherwise.}
\end{cases} 
\end{align*}
Theorem \ref{theorem, all Hankel matrices have characteristic polynomial kernel} tells us that 
\begin{align*}
\kernel H_{l,m+1} (\mathbfe{\alpha}^-)
= \Big\{ B_1 A_1 : \substack{ B_1 \in \mathcal{A} \\ \degree B_1 \leq m-r} \Big\}
\end{align*}
and
\begin{align*}
\kernel H_{l+1,m+1} (\mathbfe{\alpha})
= \Big\{ B_1 A_1 : \substack{ B_1 \in \mathcal{A} \\ \degree B_1 \leq m-1-r} \Big\} ,
\end{align*}
for some $A_1 \in \mathcal{M}_{r}$. Thus, any $\mathbf{f} \in ({\mathbb{F}_q}^m \times \{ 1 \}) \cap \kernel H_{l,m+1} (\mathbfe{\alpha}^-)$ can be written as $\mathbf{f} = \mathbf{f}^- + \mathbf{a}_m$ for some $\mathbf{f}^- \in \kernel H_{l+1,m+1} (\mathbfe{\alpha})$, and $\mathbf{a}_{m}$ is the polynomial associated with $T^{m-r} A_1$. This gives 
\begin{align*}
\mathbf{e}^T H_{l+1 , m+1} (\mathbfe{\alpha}) \mathbf{f} = R_{l+1} (\mathbfe{\alpha}) \mathbf{a}_m \neq 0. 
\end{align*}
Note that $R_{l+1} (\mathbfe{\alpha}) \mathbf{a}_m$ is independent of the values of $l,m$ (so long as $r \leq m \leq n-r$) This is because the non-zero entries of $\mathbf{a}_m$ occur in the last $r+1$ entries, and the last $r+1$ entries $R_{l+1} (\mathbfe{\alpha})$ are independent of the value of $l$. \\

\noindent Similar statements hold for the sums over $\mathbf{g} , \mathbf{h}$, but we should keep in mind that there is no negative sign before $\mathbf{e}^T$ in these sums. Hence, we have
\begin{align*}
&\Bigg( \sum_{\substack{0 \leq l,m \leq n \\ l+m=n}}
	\sum_{\substack{\mathbf{e} \in {\mathbb{F}_q}^l \times \{ 1 \} \\ \mathbf{f} \in {\mathbb{F}_q}^m \times \{ 1 \} }}
	\psi \Big( - \mathbf{e}^T H_{l+1 , m+1} (\mathbfe{\alpha}) \mathbf{f} \Big) \Bigg) \\
	&\hspace{6em} \times \Bigg( \sum_{\substack{0 \leq l' , m' \leq n \\ l' + m' = n}}
	\sum_{\substack{\mathbf{g} \in {\mathbb{F}_q}^{l'} \times \{ 1 \} \\ \mathbf{h} \in {\mathbb{F}_q}^{m'} \times \{ 1 \} }}
	\psi \Big( \mathbf{g}^T H_{l' + 1 , m' + 1} (\mathbfe{\alpha}) \mathbf{h} \Big) \Bigg) \\
= &\Bigg( \sum_{\substack{ l+m=n \\ r \leq m \leq n-r}}
	\sum_{\substack{\mathbf{e} \in {\mathbb{F}_q}^l \times \{ 1 \} \\ \mathbf{f}^- \in \kernel H_{l+1,m+1} (\mathbfe{\alpha}) }}
	\psi \Big( - R_{l+1} (\mathbfe{\alpha}) \mathbf{a}_m \Big) \Bigg) \\
	&\hspace{6em} \times \Bigg( \sum_{\substack{l' + m' = n \\ r \leq m' \leq n-r}}
	\sum_{\substack{\mathbf{g} \in {\mathbb{F}_q}^{l'} \times \{ 1 \} \\ \mathbf{h}^- \in \kernel H_{l' +1,m' +1} (\mathbfe{\alpha}) }}
	\psi \Big( R_{l+1} (\mathbfe{\alpha}) \mathbf{a}_m \Big) \Bigg) \\
= &\Bigg( \sum_{\substack{ l+m=n \\ r \leq m \leq n-r}}
	\sum_{\substack{\mathbf{e} \in {\mathbb{F}_q}^l \times \{ 1 \} \\ \mathbf{f}^- \in \kernel H_{l+1,m+1} (\mathbfe{\alpha}) }} 1 \Bigg) 
	\Bigg( \sum_{\substack{l' + m' = n \\ r \leq m' \leq n-r}}
	\sum_{\substack{\mathbf{g} \in {\mathbb{F}_q}^{l'} \times \{ 1 \} \\ \mathbf{h}^- \in \kernel H_{l' +1,m' +1} (\mathbfe{\alpha}) }} 1 \Bigg) \\
=& \big( (n+1-2r) q^{n-r} \big)^2 ,
\end{align*}
where we have used the fact that $\lvert {\mathbb{F}_q}^l \times \{ 1 \} \rvert = q^l$ and $\lvert \kernel H_{l+1,m+1} (\mathbfe{\alpha}) \rvert = q^{(m+1) - (r+1)}$. \\

\item There is one value of $\alpha_{n+1}$ such that $\mathbfe{\alpha} \in \mathscr{L}_{n}^h (r,r,0)$. In which case, all $\mathbf{f}$ in $\kernel H_{l,m+1} (\mathbfe{\alpha}^-)$ are also in $\kernel H_{l+1,m+1} (\mathbfe{\alpha})$, thus giving $R_{l+1} (\mathbfe{\alpha}) \mathbf{f} = 0$. A similar statement holds for the sums over $\mathbf{g} , \mathbf{h}$. Hence, we have
\begin{align*}
&\Bigg( \sum_{\substack{0 \leq l,m \leq n \\ l+m=n}}
	\sum_{\substack{\mathbf{e} \in {\mathbb{F}_q}^l \times \{ 1 \} \\ \mathbf{f} \in {\mathbb{F}_q}^m \times \{ 1 \} }}
	\psi \Big( - \mathbf{e}^T H_{l+1 , m+1} (\mathbfe{\alpha}) \mathbf{f} \Big) \Bigg) \\
	&\hspace{6em} \times \Bigg( \sum_{\substack{0 \leq l' , m' \leq n \\ l' + m' = n}}
	\sum_{\substack{\mathbf{g} \in {\mathbb{F}_q}^{l'} \times \{ 1 \} \\ \mathbf{h} \in {\mathbb{F}_q}^{m'} \times \{ 1 \} }}
	\psi \Big( \mathbf{g}^T H_{l' + 1 , m' + 1} (\mathbfe{\alpha}) \mathbf{h} \Big) \Bigg) \\
= &\Bigg( \sum_{\substack{ l+m=n \\ r \leq m \leq n-r}}
	\sum_{\substack{\mathbf{e} \in {\mathbb{F}_q}^l \times \{ 1 \} \\ \mathbf{f}^- \in \kernel H_{l+1,m+1} (\mathbfe{\alpha}) }} 1 \Bigg) 
	\Bigg( \sum_{\substack{l' + m' = n \\ r \leq m' \leq n-r}}
	\sum_{\substack{\mathbf{g} \in {\mathbb{F}_q}^{l'} \times \{ 1 \} \\ \mathbf{h}^- \in \kernel H_{l' +1,m' +1} (\mathbfe{\alpha}) }} 1 \Bigg) \\
=& \big( (n+1-2r) q^{n-r} \big)^2 .
\end{align*}
\end{itemize}

Thus, we have
\begin{align*}
&\sum_{\alpha_n \in \mathbb{F}_q }
	\Bigg( \sum_{\substack{0 \leq l,m \leq n \\ l+m=n}}
	\sum_{\substack{\mathbf{e} \in {\mathbb{F}_q}^l \times \{ 1 \} \\ \mathbf{f} \in {\mathbb{F}_q}^m \times \{ 1 \} }}
	\psi \Big( - \mathbf{e}^T H_{l+1 , m+1} (\mathbfe{\alpha}) \mathbf{f} \Big) \Bigg) \\
	&\hspace{6em} \times \Bigg( \sum_{\substack{0 \leq l' , m' \leq n \\ l' + m' = n}}
	\sum_{\substack{\mathbf{g} \in {\mathbb{F}_q}^{l'} \times \{ 1 \} \\ \mathbf{h} \in {\mathbb{F}_q}^{m'} \times \{ 1 \} }}
	\psi \Big( \mathbf{g}^T H_{l' + 1 , m' + 1} (\mathbfe{\alpha}) \mathbf{h} \Big) \Bigg) \\
= &\sum_{\alpha_n \in \mathbb{F}_q } \big( (n+1-2r) q^{n-r} \big)^2 \\
= &q^{2n - 2r +1} (n+1-2r)^2 . 
\end{align*}
\end{proof}

\section{Correlations} \label{section, correlations}

We begin this section by proving Theorem \ref{theorem, correlations d(A) d(A+B)}.

\begin{proof}[Proof of Theorem \ref{theorem, correlations d(A) d(A+B)}]
The proof is very similar to the proof of Theorem \ref{main theorem, variance divisor function intervals}. We have
\begin{align*}
&\frac{1}{q^{n+h}} \sum_{A \in \mathcal{M}_n } \sum_{B \in \mathcal{A}_{<h} } d (A) d (A+B) \\
= &\frac{1}{q^{n+h}} \sum_{A \in \mathcal{M}_n } \sum_{B \in \mathcal{A}_{<h} }
	\bigg( \sum_{\substack{l,m \geq 0 \\ l+m=n }} \sum_{\substack{E \in \mathcal{M}_l \\ F \in \mathcal{M}_m }} \mathbbm{1}_{EF=A} \bigg)
	\bigg( \sum_{\substack{l' , m' \geq 0 \\ l' + m' = n }} \sum_{\substack{G \in \mathcal{M}_{l'} \\ H \in \mathcal{M}_{m'} }} \mathbbm{1}_{GH=A+B} \bigg) \\
= &\frac{1}{q^{n+h}} \sum_{A \in \mathcal{A}_{\leq n} } \sum_{B \in \mathcal{A}_{<h} }
	\bigg( \sum_{\substack{l,m \geq 0 \\ l+m=n }} \sum_{\substack{E \in \mathcal{M}_l \\ F \in \mathcal{M}_m }} \prod_{i=0}^{n} \mathbbm{1}_{ \{ EF \}_i = \{ A \}_i } \bigg)
	\bigg( \sum_{\substack{l' , m' \geq 0 \\ l' + m' = n }} \sum_{\substack{G \in \mathcal{M}_{l'} \\ H \in \mathcal{M}_{m'} }} \prod_{i=0}^{n} \mathbbm{1}_{ \{ GH \}_i = \{ A \}_i + \{ B\}_i } \bigg) \\
= &\frac{1}{q^{3n+h+2}} \sum_{\mathbf{a} \in \mathbb{F}_q^{n+1} } \sum_{\mathbf{b} \in \mathbb{F}_q^{h} }
	\bigg( \sum_{\mathbfe{\alpha} \in \mathbb{F}_q^{n+1} }
		\sum_{\substack{l,m \geq 0 \\ l+m=n }} 
		\sum_{\substack{ \mathbf{e} \in \mathbb{F}_q^{l} \times \{ 1 \} \\ \mathbf{f} \in \mathbb{F}_q^{m} \times \{ 1 \} }}
	\psi \Big( \mathbf{e}^T H_{l+1 , m+1} (\mathbfe{\alpha}) \mathbf{f} - \mathbf{a} \cdot \mathbfe{\alpha} \Big) \bigg) \\
	&\hspace{6em} \times \bigg( \sum_{\mathbfe{\beta} \in \mathbb{F}_q^{n+1} }
		\sum_{\substack{l' , m' \geq 0 \\ l' + m' = n' }} 
		\sum_{\substack{ \mathbf{g} \in \mathbb{F}_q^{l'} \times \{ 1 \} \\ \mathbf{h} \in \mathbb{F}_q^{m'} \times \{ 1 \} }}
	\psi \Big( \mathbf{g}^T H_{l'+1 , m'+1} (\mathbfe{\beta}) \mathbf{h} - (\mathbf{a} + \mathbf{b}) \cdot \mathbfe{\beta} \Big) \bigg) .
\end{align*}

As previously, the sum over $\mathbf{a}$ will force $\mathbfe{\beta} = - \mathbfe{\alpha}$, while the sum over $\mathbf{b}$ will force the first $h$ entries of $\mathbfe{\alpha}$ (and $\mathbfe{\beta}$) to be zero. Thus, we have
\begin{align*}
&\frac{1}{q^{n+h}} \sum_{A \in \mathcal{M}_n } \sum_{B \in \mathcal{A}_{<h} } d (A) d (A+B) \\
= &\frac{1}{q^{2n+1}} \sum_{\mathbfe{\alpha} \in \mathscr{L}_{n}^h }
	\bigg( \sum_{\substack{l,m \geq 0 \\ l+m=n }} 
		\sum_{\substack{ \mathbf{e} \in \mathbb{F}_q^{l} \times \{ 1 \} \\ \mathbf{f} \in \mathbb{F}_q^{m} \times \{ 1 \} }}
	\psi \Big( \mathbf{e}^T H_{l+1 , m+1} (\mathbfe{\alpha}) \mathbf{f} \Big) \bigg) 
	\bigg( \sum_{\substack{l' , m' \geq 0 \\ l' + m' = n' }} 
		\sum_{\substack{ \mathbf{g} \in \mathbb{F}_q^{l'} \times \{ 1 \} \\ \mathbf{h} \in \mathbb{F}_q^{m'} \times \{ 1 \} }}
	\psi \Big( \mathbf{g}^T H_{l'+1 , m'+1} (- \mathbfe{\alpha}) \mathbf{h} \Big) \bigg) .
\end{align*}
This is identical to the third-to-last line of (\ref{statement, div var section, variance 2nd last step}) multiplied by $q^{-n-2h}$. In particular, (\ref{statement, div var section, variance 2nd last step}) and (\ref{statement, div var section, variance as Delta in terms of noncentred d(B)}) gives us
\begin{align*}
\frac{1}{q^{n+h}} \sum_{A \in \mathcal{M}_n } \sum_{B \in \mathcal{A}_{<h} } d (A) d (A+B)
= &\frac{1}{q^{2h+n}} \sum_{A \in \mathcal{M}_n} \bigg( \sum_{B \in I (A;h)} d(B) \bigg)^2 \\
= &(n+1)^2 + \frac{1}{q^{2h+n}} \sum_{A \in \mathcal{M}_n} \lvert \Delta (A;h) \rvert^2 ,
\end{align*}
and the proof is completed by an application of Theorem \ref{main theorem, variance divisor function intervals}.
\end{proof}

We will now prove Theorem \ref{theorem, divisor correlations d(KQ+N)d(N)}.

\begin{proof}[Proof of Theorem \ref{theorem, divisor correlations d(KQ+N)d(N)}]
As with previous proofs, we will use additive characters. We first prove that
\begin{align*}
&\frac{1}{q^{k+n}} \sum_{K \in \mathcal{M}_k } \sum_{N \in \mathcal{M}_n } d (KQ+N) d (N) \\
= &(\degree Q + k +1) (n+1) - q^{-\degree Q} (k - \degree Q -1) (n+1) .
\end{align*}
We have
\begin{align*}
&\frac{1}{q^{k+n}} \sum_{K \in \mathcal{M}_k } \sum_{N \in \mathcal{M}_n } d (KQ+N) d (N) \\
= &\frac{1}{q^{k+n}} \sum_{K \in \mathcal{M}_k } \sum_{N \in \mathcal{M}_n }
	\bigg( \sum_{\substack{l,m \geq 0 \\ l + m = k + \degree Q }} \sum_{\substack{E \in \mathcal{M}_{l} \\ F \in \mathcal{M}_{m} \\ EF = KQ+N }} 1 \bigg)
	\bigg( \sum_{\substack{l' , m' \geq 0 \\ l' + m' = n }} \sum_{\substack{G \in \mathcal{M}_{l'} \\ H \in \mathcal{M}_{m'} \\ GH = N }} 1 \bigg) \\
= &\frac{1}{q^{k+n}} \sum_{K \in \mathcal{A}_{\leq k} } \sum_{N \in \mathcal{A}_{\leq n} } 
	\bigg( \sum_{\substack{l,m \geq 0 \\ l + m = k + \degree Q }} \sum_{\substack{E \in \mathcal{M}_{l} \\ F \in \mathcal{M}_{m} \\ EF = KQ+N }} 
		\prod_{i=0}^{k+\degree Q} \mathbbm{1}_{ \{ EF \}_i = \{ KQ \}_i + \{ N \}_i } \bigg) \\
	&\hspace{6em}  \times \bigg( \sum_{\substack{l' , m' \geq 0 \\ l' + m' = n }} \sum_{\substack{G \in \mathcal{M}_{l'} \\ H \in \mathcal{M}_{m'} \\ GH = N }} 
		\prod_{i=0}^{n} \mathbbm{1}_{ \{ GH \}_i = \{ N \}_i } \bigg) \\
= &\frac{1}{q^{\degree Q + 2k + 2n + 2}} \sum_{\mathbf{k} \in \mathbb{F}_q^{k+1}} \sum_{\mathbf{n} \in \mathbb{F}_q^{n+1}} \\
	&\hspace{3em} \bigg( \sum_{\substack{l,m \geq 0 \\ l + m = k + \degree Q }}
		\sum_{\substack{\mathbf{e} \in \mathbb{F}_q^{l} \times \{ 1 \} \\ \mathbf{f} \in \mathbb{F}_q^{m} \times \{ 1 \} }}
		\sum_{\mathbfe{\alpha} \in \mathbb{F}_q^{k+\degree Q +1}}
		\psi \big( \mathbf{e}^T H_{l+1 , m+1} (\mathbfe{\alpha}) \mathbf{f}
			- \mathbf{k}^T H_{k+1 , \degree Q + 1} (\mathbfe{\alpha}) \mathbf{q}
			- \mathbf{n} \cdot \mathbfe{\alpha}' \big) \bigg) \\
	&\hspace{3em} \times \bigg( \sum_{\substack{l' , m' \geq 0 \\ l' + m' = n}}
		\sum_{\substack{\mathbf{g} \in \mathbb{F}_q^{l'} \times \{ 1 \}  \\ \mathbf{h} \in \mathbb{F}_q^{m'} \times \{ 1 \} }}
		\sum_{\mathbfe{\beta} \in \mathbb{F}_q^{n +1}}
		\psi \big( \mathbf{g}^T H_{l'+1 , m'+1} (\mathbfe{\beta}) \mathbf{h}
			- \mathbf{n} \cdot \mathbfe{\beta} \big) \bigg) ;
\end{align*}
where $\mathbfe{\alpha} = (\alpha_0 , \alpha_1 , \ldots , \alpha_{k + \degree Q})$ and we define $\mathbfe{\alpha}' := (\alpha_0 , \alpha_1 , \ldots , \alpha_{n})$, and we also write $Q = q_0 + q_1 T + \ldots + q_{\degree Q} T^{\degree Q}$ and define $\mathbf{q} := (q_0 , q_1 , \ldots , q_{\degree Q})$. Now, similar to what we have seen in previous proofs, the sum over $\mathbf{n}$ will force $\mathbfe{\beta}$ to equal $-\mathbfe{\alpha}'$, while the sum over $\mathbf{k}$ will require $\mathbfe{\alpha}$ to be such that $H_{k+1 , \degree Q + 1} (\mathbfe{\alpha}) \mathbf{q} = \mathbf{0}$. Thus, we have		
\begin{align}
\begin{split} \label{statement, divisor correlations d(KQ+N)d(N) theorem proof, express as prod of two add sums}
&\frac{1}{q^{k+n}} \sum_{K \in \mathcal{M}_k } \sum_{N \in \mathcal{M}_n } d (KQ+N) d (N) \\
= &\frac{1}{q^{\degree Q +k+n}} 
	\sum_{\substack{\mathbfe{\alpha} \in \mathbb{F}_q^{k+\degree Q +1} \\ H_{k+1 , \degree Q + 1} (\mathbfe{\alpha}) \mathbf{q} = \mathbf{0} }} 
	\bigg( \sum_{\substack{l,m \geq 0 \\ l + m = k + \degree Q}} \sum_{\substack{\mathbf{e} \in \mathbb{F}_q^{l} \times \{ 1 \}  \\ \mathbf{f} \in \mathbb{F}_q^{m} \times \{ 1 \} }}
		\psi \big( \mathbf{e}^T H_{l+1 , m+1} (\mathbfe{\alpha}) \mathbf{f} \big) \bigg) \\
	& \hspace{10em} \times \bigg( \sum_{\substack{l' , m' \geq 0 \\ l' + m' = n}} \sum_{\substack{\mathbf{g} \in \mathbb{F}_q^{l'} \times \{ 1 \}  \\ \mathbf{h} \in \mathbb{F}_q^{m'} \times \{ 1 \} }}
		\psi \big( \mathbf{g}^T H_{l'+1 , m'+1} (- \mathbfe{\alpha}') \mathbf{h} \big) \bigg) .
\end{split}
\end{align}

First, we consider the sum
\begin{align*}
\sum_{\substack{l , m \geq 0 \\ l + m = \degree Q +k }} 
	\sum_{\substack{\mathbf{e} \in \mathbb{F}_q^{l} \times \{ 1 \}  \\ \mathbf{f} \in \mathbb{F}_q^{m} \times \{ 1 \} }} 
	\psi \big( \mathbf{e}^T H_{l+1 , m+1} (\mathbfe{\alpha}) \mathbf{f} \big).
\end{align*}
Let $\mathbfe{\alpha} \in \mathscr{L}_n (r , \rho_1 , \pi_1 )$, and let us investigate the values of $r , \rho_1 , \pi_1$. The theorem assumes that $k \geq \degree Q -1$, and so, by Theorem \ref{theorem, all Hankel matrices have characteristic polynomial kernel}, there exists some $A_1 \in \mathcal{M}_{\rho_1 }$ such that $\kernel H_{k+1 , \degree Q + 1} (\mathbfe{\alpha}) = \{ B_1 A_1 : B_1 \in \mathcal{A} , \degree B_1 \leq \degree Q - r \}$. Since $Q$ is in this kernel, and since it is prime, we must have that either $B_1$ can take the value $Q$, or $A_1 = Q$. That is, either $r=0$ and so $\rho_1 , \pi_1 = 0$, or $r = \degree Q$ and so $\rho_1 = \degree Q$ and $\pi_1 = 0$. The former simply means $\mathbfe{\alpha} = \mathbf{0}$. We will consider each case separately. \\

For $\mathbfe{\alpha} = \mathbf{0}$, we have
\begin{align*}
&\sum_{\substack{l , m \geq 0 \\ l + m = \degree Q +k }}
	\sum_{\substack{\mathbf{e} \in \mathbb{F}_q^{l} \times \{ 1 \}  \\ \mathbf{f} \in \mathbb{F}_q^{m} \times \{ 1 \} }} 
	\psi \big( \mathbf{e}^T H_{l+1 , m+1} (\mathbf{0}) \mathbf{f} \big) 
= \sum_{\substack{l , m \geq 0 \\ l + m = \degree Q +k }}
	\sum_{\substack{\mathbf{e} \in \mathbb{F}_q^{l} \times \{ 1 \}  \\ \mathbf{f} \in \mathbb{F}_q^{m} \times \{ 1 \} }} 
	\psi \big( 0 \big) 
= q^{\degree Q +k} \sum_{\substack{l , m \geq 0 \\ l + m = \degree Q +k }} 1 \\
= &q^{\degree Q +k} (\degree Q + k +1) .
\end{align*}

For $\mathbfe{\alpha} \in \mathscr{L}_n (\degree Q , \degree Q , 0 )$, we have
\begin{align*}
&\sum_{\substack{l , m \geq 0 \\ l + m = \degree Q +k }}
	\sum_{\substack{\mathbf{e} \in \mathbb{F}_q^{l} \times \{ 1 \}  \\ \mathbf{f} \in \mathbb{F}_q^{m} \times \{ 1 \} }} 
	\psi \big( \mathbf{e}^T H_{l+1 , m+1} (\mathbfe{\alpha}) \mathbf{f} \big) 
= \sum_{\substack{l + m = \degree Q +k \\ \degree Q \leq m \leq k }}
	\sum_{\substack{\mathbf{e} \in \mathbb{F}_q^{l} \times \{ 1 \}  \\ \mathbf{f} \in \mathbb{F}_q^{m} \times \{ 1 \} }} 
	\psi \big( \mathbf{e}^T H_{l+1 , m+1} (\mathbfe{\alpha}) \mathbf{f} \big) \\
= &\sum_{\substack{l + m = \degree Q +k \\ \degree Q \leq m \leq k }} q^l
	\sum_{\substack{\mathbf{f} \in \mathbb{F}_q^{m} \times \{ 1 \} \\ \mathbf{f} \in \kernel H_{l , m+1} (\mathbfe{\alpha}) }} 1 
= \sum_{\substack{l + m = \degree Q +k \\ \degree Q \leq m \leq k }} q^l q^{m - \degree Q} 
= q^k (k - \degree Q +1) ,
\end{align*}
where the first equality uses (\ref{statement, variance section, evaluation of exp sums of vectors and matrices lemma proof, claim 2, equals 0 cases}). \\

We apply these two results to (\ref{statement, divisor correlations d(KQ+N)d(N) theorem proof, express as prod of two add sums}). To do so, we define
\begin{align*}
S
:= \bigg\{ \mathbfe{\alpha} \in \mathbb{F}_{q}^{\degree Q + k +1} :
	\substack{ \alpha_i = 0 \text{ for all } i \in \{ 0 , \ldots , n -1 , n+1 , \ldots , \degree Q -1 \} \\
		H_{k+1 , \degree Q + 1} (\mathbfe{\alpha}) \mathbf{q} = \mathbf{0} } \bigg\} .
\end{align*}
For $\mathbfe{\alpha} \in S$, it may be helpful to note that, due to the condition $H_{k+1 , \degree Q + 1} (\mathbfe{\alpha}) \mathbf{q} = \mathbf{0}$, the terms $\alpha_{\degree Q} , \ldots , \alpha_{\degree Q + k}$ can be expressed entirely in terms of $\alpha_{0} , \ldots , \alpha_{\degree Q -1}$ of which only $\alpha_{n}$ could be non-zero. Further, if $\alpha_n = 0$, then $\mathbfe{\alpha} = \mathbf{0}$; while if $\alpha_{n} \neq 0$, then $\mathbfe{\alpha} \in \mathscr{L}_n (\degree Q , \degree Q , 0 )$. We also note that for $\mathbfe{\alpha} \not\in S$ satisfying $H_{k+1 , \degree Q + 1} (\mathbfe{\alpha}) \mathbf{q} = \mathbf{0}$, we have $\mathbfe{\alpha} \in \mathscr{L}_n (\degree Q , \degree Q , 0 )$. \\

Now, we consider the cases $\mathbfe{\alpha} = \mathbf{0}$, $\mathbfe{\alpha} \in S \backslash \{ \mathbf{0} \}$, and $\mathbfe{\alpha} \not\in S$ separately. We have
\begin{align}
\begin{split} \label{statement, divisor correlations d(KQ+N)d(N) theorem proof, express in terms of S}
&\frac{1}{q^{k+n}} \sum_{K \in \mathcal{M}_k } \sum_{N \in \mathcal{M}_n } d (KQ+N) d (N) \\
= &q^{-n} (\degree Q + k +1) \sum_{\substack{l' , m' \geq 0 \\ l' + m' = n}} \sum_{\substack{\mathbf{g} \in \mathbb{F}_q^{l'} \times \{ 1 \}  \\ \mathbf{h} \in \mathbb{F}_q^{m'} \times \{ 1 \} }}
		\psi \big( \mathbf{g}^T H_{l'+1 , m'+1} (\mathbf{0} ) \mathbf{h} \big) \\
&+ q^{-\degree Q - n} (k - \degree Q +1)
	\sum_{\substack{\mathbfe{\alpha} \in S \\ \mathbfe{\alpha} \backslash \{ \mathbf{0} \} }} 
	\bigg( \sum_{\substack{l' , m' \geq 0 \\ l' + m' = n}} \sum_{\substack{\mathbf{g} \in \mathbb{F}_q^{l'} \times \{ 1 \}  \\ \mathbf{h} \in \mathbb{F}_q^{m'} \times \{ 1 \} }}
		\psi \big( \mathbf{g}^T H_{l'+1 , m'+1} (- \mathbfe{\alpha}') \mathbf{h} \big) \bigg) \\
&+ q^{-\degree Q - n} (k - \degree Q +1)
	\sum_{\substack{\mathbfe{\alpha} \in \mathbb{F}_q^{k+\degree Q +1} \\ H_{k+1 , \degree Q + 1} (\mathbfe{\alpha}) \mathbf{q} = \mathbf{0} \\ \mathbfe{\alpha} \not\in S }} 
	\bigg( \sum_{\substack{l' , m' \geq 0 \\ l' + m' = n}} \sum_{\substack{\mathbf{g} \in \mathbb{F}_q^{l'} \times \{ 1 \}  \\ \mathbf{h} \in \mathbb{F}_q^{m'} \times \{ 1 \} }}
		\psi \big( \mathbf{g}^T H_{l'+1 , m'+1} (- \mathbfe{\alpha}') \mathbf{h} \big) \bigg) .
\end{split}
\end{align}

Consider the case $\mathbfe{\alpha} \not\in S$ first. Let $\mathbfe{\alpha}'' = ( \alpha_0 , \alpha_1 , \ldots , \alpha_{n-1})$. By Claim 1 in Lemma \ref{lemma, variance section, evaluation of exp sums of vectors and matrices}, the non-zero contributions occur when $\mathbfe{\alpha}'' \in \mathscr{L}_{n-1} (r,r,0)$ for some $0 \leq r \leq n_1 -1$. By (\ref{statement, variance section, evaluation of exp sums of vectors and matrices lemma proof, claim 2, equals 0 cases}), we need only consider when $r \leq m' \leq n-r$. \\

Note that if $r \neq 0$, then $\alpha_{n+1} , \ldots , \alpha_{\degree Q -1}$ do not affect our sum, and so they are free to take any values in $\mathbb{F}_q$ (of which there are $q^{\degree Q -n-1}$ possibilities); while if $r=0$, they can take any value but they cannot all be $0$ simultaneously (of which there are $q^{\degree Q -n-1} -1$ possibilities), otherwise $\mathbfe{\alpha} \in S$. We define
\begin{align*}
c_{\mathbfe{\alpha}'' } 
= \begin{cases}
q^{\degree Q -n-1} &\text{ if $\mathbfe{\alpha} \in \mathscr{r,r,0}$ with $r \neq 0$,} \\
q^{\degree Q -n-1} -1 &\text{ if $\mathbfe{\alpha} \in \mathscr{r,r,0}$ with $r=0$.}
\end{cases}
\end{align*}
So, we have
\begin{align*}
&\sum_{\substack{\mathbfe{\alpha} \in \mathbb{F}_q^{k+\degree Q +1} \\ H_{k+1 , \degree Q + 1} (\mathbfe{\alpha}) \mathbf{q} = \mathbf{0} \\ \mathbfe{\alpha} \not\in S }} 
	\sum_{\substack{l' , m' \geq 0 \\ l' + m' = n}} \sum_{\substack{\mathbf{g} \in \mathbb{F}_q^{l'} \times \{ 1 \}  \\ \mathbf{h} \in \mathbb{F}_q^{m'} \times \{ 1 \} }}
		\psi \big( \mathbf{g}^T H_{l'+1 , m'+1} (- \mathbfe{\alpha}') \mathbf{h} \big) \\
= &\sum_{r=0}^{n_1 -1} 
	\sum_{\substack{\mathbfe{\alpha}'' \in \mathscr{L}_{n-1} (r,r,0) \\ \alpha_{n} \in \mathbb{F}_q }} 
		c_{\mathbfe{\alpha}'' }
	\sum_{\substack{l' + m' = n \\ r \leq m' \leq n-r}} \sum_{\substack{\mathbf{g} \in \mathbb{F}_q^{l'} \times \{ 1 \}  \\ \mathbf{h} \in \mathbb{F}_q^{m'} \times \{ 1 \} }}
		\psi \big( \mathbf{g}^T H_{l'+1 , m'+1} (\mathbfe{\alpha}') \mathbf{h} \big) \\
= &\sum_{r=0}^{n_1' -1} 
	\sum_{\mathbfe{\alpha}'' \in \mathscr{L}_{n-1} (r,r,0)} 
		c_{\mathbfe{\alpha}'' } 
	\sum_{\substack{l' + m' = n \\ r \leq m' \leq n-r}} q^{l'}
	\sum_{\substack{\mathbf{h} \in \mathbb{F}_q^{m'} \times \{ 1 \} \\ H_{l' , m'+1} (- \mathbfe{\alpha}') \mathbf{h} = \mathbf{0} }}
	\sum_{\alpha_{n} \in \mathbb{F}_q} \psi \Big( R_{l' +1} (\mathbfe{\alpha}') \cdot \mathbf{h} \Big) \\
= &0 ,
\end{align*}
where $R_{l' +1} (\mathbfe{\alpha}')$ is the $(l'+1)$-th row of $H_{l' , m'+1} (- \mathbfe{\alpha}')$, and we have used the fact that $\sum_{\alpha_{n} \in \mathbb{F}_q} \psi \Big( R_{l' +1} (\mathbfe{\alpha}') \cdot \mathbf{h} \Big) = \sum_{\alpha_{n} \in \mathbb{F}_q} \psi ( \alpha_n ) = 0$. \\

Let us now consider the case $\mathbfe{\alpha} \in S \backslash \{ 0 \}$ in (\ref{statement, divisor correlations d(KQ+N)d(N) theorem proof, express in terms of S}). By a similar argument as above, but using $\sum_{\alpha_{n} \in \mathbb{F}_q \backslash \{ 0 \}} \psi ( \alpha_n ) = -1$ instead, we obtain
\begin{align*}
&\sum_{\substack{\mathbfe{\alpha} \in S \\ \mathbfe{\alpha} \backslash \{ \mathbf{0} \} }} 
	\bigg( \sum_{\substack{l' , m' \geq 0 \\ l' + m' = n}} \sum_{\substack{\mathbf{g} \in \mathbb{F}_q^{l'} \times \{ 1 \}  \\ \mathbf{h} \in \mathbb{F}_q^{m'} \times \{ 1 \} }}
		\psi \big( \mathbf{g}^T H_{l'+1 , m'+1} (- \mathbfe{\alpha}') \mathbf{h} \big) \bigg) \\
= &\sum_{\substack{l' , m' \geq 0 \\ l' + m' = n}} \sum_{\substack{\mathbf{g} \in \mathbb{F}_q^{l'} \times \{ 1 \}  \\ \mathbf{h} \in \mathbb{F}_q^{m'} \times \{ 1 \} }}
	\sum_{\alpha_{n} \in \mathbb{F}_q \backslash \{ 0 \}} \psi \big( \alpha_n \big) \\
= &- q^n (n+1) .
\end{align*}

Finally, it is not difficult to see that
\begin{align*}
&\sum_{\substack{l' , m' \geq 0 \\ l' + m' = n}} \sum_{\substack{\mathbf{g} \in \mathbb{F}_q^{l'} \times \{ 1 \}  \\ \mathbf{h} \in \mathbb{F}_q^{m'} \times \{ 1 \} }}
		\psi \big( \mathbf{g}^T H_{l'+1 , m'+1} (\mathbf{0} ) \mathbf{h} \big) \\
= &q^n (n+1) .
\end{align*}

Apply these three results to (\ref{statement, divisor correlations d(KQ+N)d(N) theorem proof, express in terms of S}) gives
\begin{align*}
&\frac{1}{q^{k+n}} \sum_{K \in \mathcal{M}_k } \sum_{N \in \mathcal{M}_n } d (KQ+N) d (N) \\
= &(\degree Q + k +1) (n+1) - q^{-\degree Q} (k - \degree Q + 1) (n+1) . \\
\end{align*}

We now prove that
\begin{align*}
&\bigg( \frac{1}{q^n } \sum_{N \in \mathcal{M}_n} d(N) \bigg)
	\bigg( \frac{1}{q^{k + n}} \sum_{K \in \mathcal{M}_k } \sum_{N \in \mathcal{M}_n } d (KQ+N) \bigg) \\
= &(\degree Q + k +1) (n+1) - q^{-\degree Q} (k - \degree Q -1) (n+1) .
\end{align*}
We have
\begin{align*}
\frac{1}{q^n } \sum_{N \in \mathcal{M}_n} d(N) 
= &\frac{1}{q^{2n+1} } \sum_{\mathbf{n} \in \mathbb{F}_q^{n+1}}
	\sum_{\substack{l' , m' \geq 0 \\ l' + m' = n}}
	\sum_{\substack{\mathbf{g} \in \mathbb{F}_q^{l'} \times \{ 1 \}  \\ \mathbf{h} \in \mathbb{F}_q^{m'} \times \{ 1 \} }}
	\sum_{\mathbfe{\alpha} \in \mathbb{F}_q^{n+1} }
	\psi \big( \mathbf{g}^T H_{l'+1 , m'+1} (\mathbfe{\alpha} ) \mathbf{h} - \mathbf{n} \cdot \mathbfe{\alpha} \big) \\
= &\frac{1}{q^{n} } \sum_{\substack{l' , m' \geq 0 \\ l' + m' = n}}
	\sum_{\substack{\mathbf{g} \in \mathbb{F}_q^{l'} \times \{ 1 \}  \\ \mathbf{h} \in \mathbb{F}_q^{m'} \times \{ 1 \} }}
	\psi \big( \mathbf{g}^T H_{l'+1 , m'+1} (\mathbf{0} ) \mathbf{h} \big) \\
= & n+1 ,
\end{align*}
where, for the second equality, similar to what we have seen previously, the sum over $\mathbf{n}$ forces $\mathbfe{\alpha} = \mathbf{0}$. \\

We also have
\begin{align*}
&\frac{1}{q^{k + n}} \sum_{K \in \mathcal{M}_k } \sum_{N \in \mathcal{M}_n } d (KQ+N) \\
= &\frac{1}{q^{\degree Q + 2k + n + 1}} \sum_{\mathbf{k} \in \mathbb{F}_q^{k+1}} \sum_{\mathbf{n} \in \mathbb{F}_q^{n} \times \{ 1 \} }
	\sum_{\substack{l,m \geq 0 \\ l + m = k + \degree Q }} 
	\sum_{\substack{\mathbf{e} \in \mathbb{F}_q^{l} \times \{ 1 \} \\ \mathbf{f} \in \mathbb{F}_q^{m} \times \{ 1 \} }}
	\sum_{\mathbfe{\alpha} \in \mathbb{F}_q^{k+\degree Q +1}} \\
	&\hspace{6em} \psi \big( \mathbf{e}^T H_{l+1 , m+1} (\mathbfe{\alpha}) \mathbf{f}
			- \mathbf{k}^T H_{k+1 , \degree Q + 1} (\mathbfe{\alpha}) \mathbf{q}
			- \mathbf{n} \cdot \mathbfe{\alpha}' \big) \\
= &\frac{1}{q^{\degree Q + k }} \sum_{\substack{l,m \geq 0 \\ l + m = k + \degree Q }} 
	\sum_{\substack{\mathbfe{\alpha} \in \{ 0 \}^{n} \times \mathbb{F}_q^{k+\degree Q - n +1} \\ H_{k+1 , \degree Q + 1} (\mathbfe{\alpha}) \mathbf{q} = \mathbf{0} }}
	\sum_{\substack{\mathbf{e} \in \mathbb{F}_q^{l} \times \{ 1 \} \\ \mathbf{f} \in \mathbb{F}_q^{m} \times \{ 1 \} }}
		\psi \big( \mathbf{e}^T H_{l+1 , m+1} (\mathbfe{\alpha}) \mathbf{f} - \alpha_n \big) .
\end{align*}
Again, for the last equality, the sum of $\mathbf{n}$ over $\mathbb{F}_q^{n} \times \{ 1 \}$ forces $\alpha_0 , \alpha_1 . \ldots , \alpha_{n-1} = 0$, while the sum over $\mathbf{k}$ forces the requirement that $H_{k+1 , \degree Q + 1} (\mathbfe{\alpha}) \mathbf{q} = \mathbf{0}$. As previously, the contribution of $\mathbfe{\alpha} \in S$ is zero. Thus, we have
\begin{align*}
&\frac{1}{q^{k + n}} \sum_{K \in \mathcal{M}_k } \sum_{N \in \mathcal{M}_n } d (KQ+N) \\
= &\frac{1}{q^{\degree Q + k }} \sum_{\substack{l,m \geq 0 \\ l + m = k + \degree Q }} 
	\sum_{\substack{\mathbf{e} \in \mathbb{F}_q^{l} \times \{ 1 \} \\ \mathbf{f} \in \mathbb{F}_q^{m} \times \{ 1 \} }}
		\psi \big( \mathbf{e}^T H_{l+1 , m+1} (\mathbf{0}) \mathbf{f} \big) \\
&+ \frac{1}{q^{\degree Q + k }} \sum_{\substack{l,m \geq 0 \\ l + m = k + \degree Q }} 
	\sum_{\substack{\mathbfe{\alpha} \in S \backslash \{ 0 \} }}
	\sum_{\substack{\mathbf{e} \in \mathbb{F}_q^{l} \times \{ 1 \} \\ \mathbf{f} \in \mathbb{F}_q^{m} \times \{ 1 \} }}
		\psi \big( \mathbf{e}^T H_{l+1 , m+1} (\mathbfe{\alpha}) \mathbf{f} - \alpha_n \big) \\
= &(\degree Q + k +1)
	+ \frac{1}{q^{\degree Q + k }} \sum_{\substack{l + m = k + \degree Q \\ \degree Q \leq m \leq k }} 
	\sum_{\substack{\mathbf{e} \in \mathbb{F}_q^{l} \times \{ 1 \} \\ \mathbf{f} \in \mathbb{F}_q^{m} \times \{ 1 \} }}
		\psi \big( \mathbf{e}^T H_{l+1 , m+1} (\mathbfe{\alpha}) \mathbf{f} \big) 
	\sum_{\alpha_n \in \mathbb{F}_q^* } \alpha_n \\
= &(\degree Q + k +1)
	+ \frac{1}{q^{\degree Q + k }} \bigg( \sum_{\substack{l + m = k + \degree Q \\ \degree Q \leq m \leq k }} 
	q^{l+m-\degree Q} \bigg)
	\bigg( \sum_{\alpha_n \in \mathbb{F}_q^* } \alpha_n \bigg) \\
= &(\degree Q + k +1)  - \frac{1}{q^{\degree Q }} (k - \degree Q +1) . 
\end{align*}
\end{proof}

\begin{remark} \label{remark, extending FF Dir L func Corr to k < deg Q -1}
We can see from the proof of Theorem \ref{theorem, divisor correlations d(KQ+N)d(N)} that in evaluating the sum
\begin{align*}
\frac{1}{q^{k+n}} \sum_{K \in \mathcal{M}_k } \sum_{N \in \mathcal{M}_n } d (KQ+N) d (N) ,
\end{align*}
the sequences denoted by $\mathbfe{\alpha}$ address the polynomial $KQ+N$, while their truncations $\mathbfe{\alpha}'$ address the polynomial $N$. However, because of the range we have for $k$ (the degree of $K$), the value of $\mathbfe{\alpha}'$ does not affect the $(\rho , \pi )$-form of $\mathbfe{\alpha}$ (except the special case where $\mathbfe{\alpha} = \mathbf{0}$). This is why $d (KQ+N)$ and $d (N)$ are uncorrelated for the given ranges of $k$ and $n$. \\

If, instead, we took a smaller value of $k$, which is what we would find in fourth moment calculations of Dirichlet $L$-functions, then the $(\rho , \pi)$-form of $\mathbfe{\alpha}$ becomes dependent on the value of $\mathbfe{\alpha}'$, thus making it more difficult to evaluate the sum. In effect, for given $r , \rho_1 , \pi_1$ and $r' , \rho_1 ', \pi_1 '$ we must determine how many $\mathbfe{\alpha}$ there are such that \\

\begin{enumerate}
\item $\mathbfe{\alpha} \in \mathscr{L}_n (r , \rho_1 , \pi_1 )$, \\ \label{mom corr reqs rho pi of alpha}

\item $H_{k+1 , \degree Q + 1} (\mathbfe{\alpha}) \mathbf{q} = \mathbf{0}$, \\ \label{mom corr reqs kernel}

\item $\mathbfe{\alpha}' \in \mathscr{L}_n (r' , \rho_1 ' , \pi_1 ' )$. \\ \label{mom corr reqs rho pi of alpha'}
\end{enumerate}
In fact, by Claim 1 of Lemma \ref{lemma, variance section, evaluation of exp sums of vectors and matrices}, we need only consider the cases where $\pi_1 , \pi_1 ' \in \{ 0 , 1 \}$. We can reformulate the three conditions above in terms of coprime polynomials $A_1 , A_2$. Indeed, by Theorem \ref{theorem, all Hankel matrices have characteristic polynomial kernel}, condition \ref{mom corr reqs rho pi of alpha} is equivalent to certain degree restrictions on the characteristic polynomials $A_1 , A_2$; condition \ref{mom corr reqs kernel} is equivalent to $Q$ being a certain linear combination of $A_1 , A_2$; and by Corollary \ref{corollary, Hankel matrices incorporate Euclidean algorithm theorem, full algorithm presented}, condition \ref{mom corr reqs rho pi of alpha'} is equivalent to certain degree restrictions on the polynomials we obtain by applying the Euclidean algorithm to $A_1 , A_2$. It is not difficult to satisfy any two of the three conditions, but satisfying all three is more difficult. 
\end{remark}

\textbf{Acknowledgements:} This research was conducted during a postdoctoral fellowship funded by the Leverhulme Trust research project grant ``Moments of $L$-functions in Function Fields and Random Matrix Theory'' (grant number RPG-2017-320) secured by Julio Andrade. The author is most grateful for this support. \\

The author is very grateful to Ze\'{e}v Rudnick for pointing out that the function $a \mapsto \exp \Big( \frac{2 \pi i}{p} a \Big)$ on what is essentially $\mathbb{F}_p$ (where $p$ is prime) can be replaced by a non-trivial additive character on $\mathbb{F}_q$ (indeed, the former is a special case of the latter), thus allowing an immediate generalisation of our results on divisor sums from $\mathbb{F}_p [T]$ to $\mathbb{F}_q [T]$.

%\addcontentsline{toc}{section}{References}
\bibliography{YiasemidesBibliography1}{}
\bibliographystyle{YiaseBstNumer1}

\end{document}